\documentclass{amsart}
\usepackage{amssymb}
\usepackage{amsbsy}
\usepackage{amscd}
\usepackage{amsmath}
\usepackage{amsthm}
\usepackage{amsxtra}
\usepackage{latexsym}
\usepackage[mathscr]{eucal}
\usepackage{upgreek}
\usepackage{wasysym}
\usepackage[all,cmtip]{xy}

\usepackage[dvipdfm, colorlinks]{hyperref}


\newcommand{\Vb}{V^{\kappa_c}(\g)^{\*_{\mf{z}(\g)} b}}
\newcommand{\QZ}{Q^{\LZ}}

\newcommand{\Slo}{\mathscr{S}}
\newcommand{\V}{\mathbf{V}^\mathcal{S}}
\newcommand{\LZ}
{\mc{Z}}

\newcommand{\eqW}{\mathbf{W}}
\newcommand{\g}{\mathfrak{g}}
\newcommand{\Dch}{\mathcal{D}^{ch}}



\newcommand{\D}{\mc{D}}

\newcommand{\bw}[1]{\bigwedge\nolimits^{#1}}

\newcommand{\mc}{\mathcal}
\newcommand{\mf}{\mathfrak}

\newcommand{\on}{\operatorname}

\newcommand{\KL}{\on{KL}}



\newcommand{\isomap}{{\;\stackrel{_\sim}{\to}\;}}

\newcommand{\W}{\mathscr{W}}

\newcommand{\h}{\mathfrak{h}}
\renewcommand{\a}{\mathfrak{a}}

\newcommand{\affg}{\widehat{\mathfrak{g}}}
\newcommand{\affa}{\widehat{\mathfrak{a}}}

\newcommand{\fing}{\mathfrak{g}}


\newcommand{\Q}{\mathbb{Q}}
\renewcommand{\1}{{\mathbf{1}}}

\newcommand{\bra}{{\langle}}
\newcommand{\ket}{{\rangle}}

\newcommand{\lam}{\lambda}
\newcommand{\ra}{\rightarrow}
\newcommand{\+}{\mathop{\oplus}}
\newcommand{\Z}{\mathbb{Z}}
\newcommand{\Mod}{\text{-}\mathrm{Mod}}

\newcommand{\cprime}{$'$}

\renewcommand{\*}{{\otimes}}
\newcommand{\C}{\mathbb{C}}

\theoremstyle{plain}
\newtheorem{Th}{Theorem}[section]

\newtheorem{Pro}[Th]{Proposition}

\newtheorem{Lem}[Th]{Lemma}

\newtheorem{Co}[Th]{Corollary}

\theoremstyle{definition}

\theoremstyle{remark}
\newtheorem{Def}[Th]{Definition}
\newtheorem{Rem}[Th]{Remark}
\newtheorem{Conj}{Conjecture}

\newcommand{\semiinf}{\frac{\infty}{2}}
\newcommand{\Zhu}{\on{Zhu}}

\DeclareMathOperator{\im}{Im}

\DeclareMathOperator{\ch}{ch}

\DeclareMathOperator{\id}{id}

\DeclareMathOperator{\op}{op}
\DeclareMathOperator{\End}{End}
\DeclareMathOperator{\gr}{gr}

\DeclareMathOperator{\Hom}{Hom}

\DeclareMathOperator{\ad}{ad}

\DeclareMathOperator{\Spec}{Spec}

\title[Chiral algebras of class $\mathcal{S}$ and  symplectic varieties]{Chiral algebras of class $\mathcal{S}$ and Moore-Tachikawa symplectic varieties}
\author{Tomoyuki Arakawa}
\address{Research Institute for Mathematical Sciences, Kyoto University,
 Kyoto 606-8502 JAPAN}

\email{arakawa@kurims.kyoto-u.ac.jp}

\thanks{This work is partially  supported 
by JSPS KAKENHI Grant Number
No.\ 20340007 and No.\ 23654006.}



\begin{document}
\maketitle

\begin{abstract}
We give a functorial construction
of 
the genus zero {\em chiral algebras of class $\mc{S}$},
that is,
the vertex algebras corresponding to the {\em theory of class $\mc{S}$}
associated with genus zero punctured Riemann surfaces
 via the 4d/2d duality
discovered by 
Beem, Lemos, Liendo, Peelaers, Rastelli and van Rees in physics.
We show that there is a unique family of vertex algebras
satisfying the required conditions
and that they are all simple and conformal.
In fact, our construction works for any
complex semisimple group $G$ that is not necessarily simply laced.
Furthermore,
we show that the associated varieties 
of these vertex algebras 
are exactly the genus zero {\em Moore-Tachikawa symplectic varieties} 
that have been recently constructed by 
Braverman, Finkelberg and Nakajima 
using the geometry of the affine Grassmannian for the Langlands dual group.
\end{abstract}

\section{Introduction}
Let $G$ be a simply connected semisimple linear algebraic group over $\C$,
$\g=\on{Lie}(G)$.
In \cite{BFN17},
Braverman, Finkelberg and Nakajima have constructed
a new family of 
the (possibly singular) symplectic varieties
\begin{align*}
W^b_G=\on{Spec}H^*_{\check{G}[[t]]}(\on{Gr}_{\check{G}},i_{\Delta}^!(\boxtimes_{k=1}^b\mc{A}_R)),\quad b\in \Z_{\geq 1},
\end{align*}
equipped with a Hamiltonian action of $\prod_{k=1}^b G$.
Here 
$\check{G}$ is the Langlands dual group of $G$,
$\on{Gr}_{\check{G}}$ is the 
affine Grassmannian for $\check{G}$,
$i_{\Delta} : \on{Gr}_{\check{G}}\ra \prod_{k=1}^b  \on{Gr}_{\check{G}}$
is the diagonal
embedding,
and $\mc{A}_R$ is the perverse sheaf corresponding to the regular representation $\mc{O}(G)$ of  $G$ under the geometric Satake correspondence \cite{Mirkovic:2007vl}. 
These symplectic varieties 
satisfy the following properties:
\begin{align*}
W^{b=1}_G\cong G\times \Slo,\quad W^{b=2}_G\cong T^*G,\quad 
(W^b_G \times W^{b'}_G)/\!/\!/\Delta(G)\cong W_G^{b+b'-2},
\end{align*}
where $\Slo$ is a Kostant-Slodowy slice in $\g^*$
and the left-hand side in the last isomorphism is the symplectic reduction
of $W^b_G\times W^{b'}_G$ with respect to the diagonal action of $G$.
The existence of 
the symplectic varieties satisfying the above conditions 
was conjectured by Moore and Tachikawa \cite{MooTac12}
 and  $W^b_G$  is called 
a (genus zero) {\em Moore-Tachikawa symplectic variety}.
We note that $W^b_G$ is conical for $b\geq 3$.

In this article
we perform a
chiral quantization of the above construction.
More precisely,
 we construct a family of  vertex algebras
$\V_{G,b}$, $b\in \Z_{\geq 1}$,
equipped with a vertex algebra homomorphism
(chiral quantum moment map)
$
\bigotimes_{i=1}^b V^{\kappa_c}(\g)\ra  \V_{G,b}
$
satisfying the following properties:
\begin{align}
&\V_{G,b=1}\cong H_{DS}^0(\Dch_G),\quad \V_{G,b=2}\cong \Dch_G,
\label{eq:initial}\\
&H^{\semiinf+i}(\affg_{-\kappa_\g},\g,\V_{G,b}\* \V_{G,b'})\cong \delta_{i,0}\V_{G,b+b'-2}
\quad  \text{(associativity)},
\label{eq:ass-property}
\end{align}
where 
$V^{\kappa_c}(\g)$ is the universal affine vertex algebra associated with $\g$ at the critical level $\kappa_c$,
$H_{DS}^0(?)$ is the quantized Drinfeld-Sokolov reduction functor \cite{FF90},
$\Dch_G$ is the algebra of chiral differential operators \cite{MalSchVai99,BeiDri04} on $G$ at $\kappa_c$,
$\affg_{-\kappa_\g}$ is the affine Kac-Moody algebra associated with $\g$ and the minus of the Killing form 
$\kappa_\g$ of $\g$,
and
$H^{\semiinf+\bullet}(\affg_{-\kappa_\g},\g,\V_{G,b}\* \V_{G,b'})$ is the relative semi-infinite cohomology 
with coefficients in $\V_{G,b}\* \V_{G,b'}$,
which can be regarded as an affine analogue of the Hamiltonian reduction with respect to the diagonal $G$-action,
see \eqref{eq;initial},
\eqref{eq:main-def},
\eqref{eq:cylinder}
and Theorem \ref{Th:fusion}
 for the details.
We show that
$\V_{G,b}$,
$b\in \Z_{\geq 1}$,
is the unique
family of 
vertex algebra objects in $ \on{KL}^{\otimes b}$ that satisfies the conditions
\eqref{eq:initial} and
\eqref{eq:ass-property}
(Remark~\ref{Rem:uniquness})
and that
they are all simple and conformal (Theorem \ref{Th:simple} and Proposition \ref{Pro:central-charge}).
The central charge of 
$\V_{G,b}$ is given by 
$$b\dim \g -(b-2)\on{rk}\g-24 (b-2)(\rho|\rho^{\vee})
,$$
and  the character of $\V_{G,b}$ is given by the following formula (Proposition \ref{Pro:conic}):
\begin{align}
\on{tr}_{\V_{G,b}}(q^{L_0}z_1z_2\dots z_b)=\sum_{\lam\in P_+}\left(\frac{q^{\bra \lam,\rho^{\vee}\ket}
\prod\limits_{j=1}^{\infty}(1-q^j)^{\on{rk}\g}}{\prod\limits_{\alpha\in \Delta_+}(1-q^{\bra \lam+\rho,\alpha^{\vee}\ket})}\right)^{b-2}
\prod_{k=1}^b\on{tr}_{\mathbb{V}_{\lam}} 
(q^{-D}z_k),
\label{eq:ch-formula}
\end{align}
where 
$(z_1,\dots,z_b)\in T^b$,
$T$ is a maximal torus of $G$,
$\Delta_+$ is the set of positive roots of $\g$,
$\rho=\frac{1}{2}\sum_{\alpha\in \Delta_+}\alpha$,
$\rho^{\vee}=\frac{1}{2}\sum_{\alpha\in \Delta_+}\alpha^{\vee}$,
$P_+$ is the set of integrable dominant weights of $\g$,
$\mathbb{V}_{\lam}$ is the Weyl module of 
the affine Kac-Moody algebra
$\affg_{\kappa_c}$ at  $\kappa_c$ with highest weight $\lam$
(see \eqref{eq:Weyl-module}, \eqref{eq:critical-Weyl}),
and $D$ is the standard degree operator
of the affine Kac-Moody algebra
that acts as zero on the highest weight
of $\mathbb{V}_{\lam}$.


We construct $\V_{G,b}$ as a cohomology of a certain BRST complex, see \eqref{eq:main-def}.
Our construction is an affine analogue of 
that of Ginzburg and Kazhdan \cite{GinKaz}.

In the case that $G$ is simply laced,
the above stated results were
conjectured  by
Beem, Lemos, Liendo, Peelaers, Rastelli and van Rees
\cite{BeeLemLie15},
and
Beem, Peelaers, Rastelli and van Rees
\cite{Beem:2015yu}
in the theory of
the 4d/2d duality in physics discovered in \cite{BeeLemLie15},
as we explain below.

The  4d/2d duality \cite{BeeLemLie15}
associates a conformal vertex algebra $V_{\mc{T}}$ to any
four-dimensional $\mathcal{N}=2$ superconformal field theory
(4d $\mathcal{N}=2$ SCFT)  $\mc{T}$.
There is a distinguished class of 4d $\mathcal{N}=2$ SCFTs called 
the {\em theory of class $\mc{S}$} \cite{Gai12,GaiMooNei13},
which is labeled by a complex semisimple group $G$
and
a punctured Riemann surface $\Sigma$.
The  vertex algebras
associated to the theory of class $\mc{S}$ 
by the 
4d/2d duality are called 
the {\em chiral algebras of class $\mc{S}$} \cite{Beem:2015yu}.
The vertex algebra $\V_{G,b}$ 
constructed in this article is exactly the
chiral algebra of class $\mc{S}$
corresponding to the group $G$
and a $b$-punctured Riemann surface of genus zero.
Chiral algebras
of class $\mc{S}$
associated with higher genus Riemann surfaces
are obtained by glueing $\V_{G,b}$'s and 
such a gluing procedure 
%
is described in terms of 2d TQFT whose targets are vertex algebras
(\cite{Beem:2015yu}),
which is well-defined by 
the properties
\eqref{eq:initial} and
\eqref{eq:ass-property},
see \cite{TachikawaVOA, Tac18}
for mathematical expositions.

It is known that
4d $\mathcal{N}=2$ SCFTs
have several important invariants (or observables).
One of them is the {\em Schur index},
which is a formal series.
The 4d/2d duality  \cite{BeeLemLie15}
is constructed
in such a way 
that 
the  Schur index of a 4d $\mathcal{N}=2$ SCFT $\mc{T}$
is obtained as the character of the corresponding vertex algebra $V_{\mc{T}}$.
A
recent remarkable conjecture
of  Beem and Rastelli \cite{BeeRas}
states  that one can also recover the 
geometric invariant  $Higgs(\mc{T})$ of $\mc{T}$, called
the {\em Higgs branch},
which is a  possibly singular symplectic variety, 
from the vertex algebra $V_{\mc{T}}$.
More precisely, 
they expect that we have an isomorphism
\begin{align}
Higgs(\mc{T})\cong X_{V_{\mc{T}}}
\label{eq;Higgs-branch-conj}
\end{align}
of Poisson varieties
for any 4d $\mathcal{N}=2$ SCFT $\mc{T}$,
where 
$X_V$ is the {\em associated variety} \cite{Ara12}
of a  vertex algebra $V$,
 see \cite{A.Higgs} for a survey.

In this paper we prove the conjecture  \eqref{eq;Higgs-branch-conj} for the genus zero class $\mc{S}$ theories as well.

According to Moore and Tachikawa \cite{MooTac12},
the Higgs branches of the theory of class $\mc{S}$
 are
exactly\footnote{There is a subtlety
for higher genus cases due to the non-flatness of the moment map.}
 the Moore-Tachikawa symplectic varieties
that have been constructed mathematically 
as explained above
(\cite{BFN17}).
 Therefore
 showing  the isomorphism
between the Higgs branches and the associated varieties
is equivalent 
to the following statement (Theorem \ref{Th:Higgs-branch-conj}):
\begin{align}
X_{\V_{G,b}}\cong W^b_G\quad \text{
for all }b\geq 1.
\label{eq:higgs-branch}
\end{align}
In other words,
$\V_{G,b}$ is a {\em chiral quantization} (Definition \ref{def:cq}) of the
Moore-Tachikawa symplectic variety $W^b_G$.
Note that this in particular proves that 
the associated variety of $\V_{G,b}$ is symplectic for all $b\geq 1$ (Corollary \ref{Co:symplectic}).

We remark that there is a close relationship between
the Higgs branches of 4d $\mathcal{N}=2$ SCFTs and the {\em Coulomb branches} \cite{BraFinNak}
of 3d $\mathcal{N}=4$ SUSY gauge theories.
Indeed,
it is known \cite[Theorem 5.1]{BFN17} 
 that  in type $A$
the Moore-Tachikawa variety $W_G^b$ is isomorphic to 
 the Coulomb branch
 of a  star shaped quiver gauge theory.


In view of \cite{BeeRas},
we conjecture that 
$\V_{G,b}$ is {\em quasi-lisse}  (\cite{Arakawam:kq}),
that is,
$W^b_G$ has finitely many symplectic leaves for all $b\geq 1$
(Conjecture \ref{Conj}).

Although  
it is very difficult to describe the vertex algebra $\V_{G,b}$ explicitly in general,
conjectural descriptions of   $\V_{G,b}$
have been given in several cases in \cite{BeeLemLie15},
and it is possible to confirm them 
 using 
Remark \ref{Rem:showing-isomorphisms} below.
For instance,
$\V_{G=SL_2,b=3}$ is isomorphic to the $\beta\gamma$-system
$SB((\C^2)^{\otimes 3})$ associated with the 
symplectic vector space $W^{b=3}_{G=SL_2}=(\C^2)^{\otimes 3}$
(Theorem \ref{Th:SL23pt});
$\V_{G=SL_2,b=4}$ is the simple affine vertex algebra $L_{-2}(D_4)$ associated with $D_4$ at level $-2$
(Theorem \ref{Th:SL24pt}).
(Note that $-2$ is the critical level for $\widehat{\mathfrak{sl}_2}$.)
The statement \eqref{eq:higgs-branch}
for $G=SL_2$, $b=4$  reproves the fact \cite{AM15} that
$X_{L_{-2}(D_4)}$ is isomorphic to the minimal nilpotent orbit closure $\overline{\mathbb{O}_{min}}$ in $D_4$,
and
\eqref{eq:ass-property} gives a non-trivial isomorphism
\begin{align*}
L_{-2}(D_4)\cong H^{\semiinf+0}((\widehat{\mf{sl}}_{2})_{-4},\mf{sl}_2,
SB((\C^2)^{\otimes 3})\* SB((\C^2)^{\otimes 3}))
\end{align*}
that was  conjectured in \cite{BeeLemLie15}.
Also,
\eqref{eq:ch-formula}
provides a non-trivial identity of 
the normalized character \cite{KasTan00} of $L_{-2}(D_4)$,
whose homogeneous specialization is known to be
$E_4'(\tau)/240\eta(\tau)^{10}$
 (\cite{Arakawam:kq}).
 It would be interesting to compare \eqref{eq:ch-formula}
 with \cite[Conjecture 3.4]{KWnegative}.
 We also show that
 $\V_{G=SL_3,b=3}$ is the simple affine vertex algebra $L_{-3}(E_6)$ associated with $E_6$ at level $-3$
 (Theorem \ref{Th:E6}).
 The statement \eqref{eq:higgs-branch}
for $G=SL_3$, $b=3$  reproves the fact \cite{AM15} that
$X_{L_{-3}(E_6)}$ is isomorphic to the minimal nilpotent orbit closure $\overline{\mathbb{O}_{min}}$ in $E_6$.

In general,
$\V_{G,b}$ is a $W$-algebra in the sense that it is not generated by a Lie algebra
(see \cite{BeeLemLie15,Beem:2015yu,LemPee15}),
and the associativity isomorphism \eqref{eq:ass-property} 
provides non-trivial isomorphisms involving such simple vertex algebras.

\medskip

This paper is organized as follows.
In Section \ref{section:Vertex algebras and associated varieties}
we fix some notation and collect some facts on vertex algebras
 needed in the later sections.
In Section \ref{section:Some variants of Chiral Hamiltonian reductions}
we discuss the BRST formalism of the Hamiltonian reduction and its chiral analogue
that is used in later sections.
Most of the results here are slight modifications of those in \cite{Ara09b}.
In Section \ref{Sectioin:Feigin-Frenkel center}
we fix notation for the Feigin-Frenkel center $\mf{z}(\affg)$,
which plays an important role in our construction.
In fact,
we construct the vertex algebra
$\V_{G,b}$
from $b$ copies of $\V_{G,1}$ by ^^ ^^ letting them talk to each other through 
Feigin-Frenkel center",
see \eqref{eq:main-def}.
Section \ref{Section:cdoonG}
is devoted to the chiral differential operators $\Dch_{G,\kappa}$ on $G$.
By the property \eqref{eq:identify-cdo}
that has been proved by Arkhipov  and Gaitsgory
\cite{ArkGai02},
$\Dch_{G,\kappa}$  at the critical level $\kappa=\kappa_c$
is identified with
 the {\em cylinder chiral algebra} $\V_{G,2}$ of class $\mathcal{S}$ (see \cite[Conjecture 4]{Beem:2015yu}).  
The main purpose of
Section \ref{section:Quantum Drinfeld-Sokolov reduction and equivariant affine $W$-algebras}
is to introduce the {\em equivariant affine $W$-algebra} $\eqW_{G,f}^\kappa$,
which is an
affinization of Losev's  equivariant finite affine $W$-algebra
(see Theorem \ref{Th:Zhu-eq-W}).
The {\em cap chiral algebra} 
$\V_{G,b=1}$ (\cite[Section 5]{Beem:2015yu})
of class $\mathcal{S}$  is the equivariant affine $W$-algebra
$\eqW_{G,f_{prin}}^{\kappa_c}$ for
a principal nilpotent element $f_{prin}$ at the critical level.
We show that
the usual affine $W$-algebra $\W^\kappa(\g,f)$
can be defined as the  commutant of the affine vertex algebra inside  $\eqW_{G,f}^\kappa$ (Proposition \ref{Pro:g[t]t-invariant-part})
and 
gives a yet another realization  of the quantized Drinfeld-Sokolov reduction in terms of $\eqW_{G,f}^\kappa$
(Theorem \ref{Th:DS-realization}).
This  in particular proves the  fundamental property
\begin{align}
H^{\semiinf+i}(\affg_{-\kappa_\g},\g,\V_{G,1}\* \V_{G,b})\cong \delta_{i,0}H_{DS}(\V_{G,b})
\cong H^{\semiinf+i}(\affg_{-\kappa_\g},\g,\V_{G,b}\* \V_{G,1})
\label{eq:ass-red}.
\end{align}
Section \ref{sec:ch-uv}
is a short digression
to the {\em chiral universal centralizer} $\mathbf{I}_G^\kappa$,
which 
 is a strict chiral quantization of the {\em universal centralizer}
$(G\times \Slo)\times_{\g^*}\Slo$.
For $G=SL_2$ and a generic $\kappa$,
$\mathbf{I}_G^\kappa$ is identified with the {\em modified regular representation}
of the Virasoro algebra introduced by 
Frenkel and Styrkas \cite{FreSty06}.
At the critical level  $\kappa=\kappa_c$,
the  chiral universal centralizer $\mathbf{I}_G^{\kappa_c}$
is noting but the ^^ ^^ vacuum chiral algebra" $\V_{G,b=0}$.
However, the role of this interesting vertex algebra is not clear to us at this point.
In Section \ref{section:Drinfeld-Sokolov reduction at the Critical level},
we collect some results on the Drinfeld-Sokolov reduction at the critical level.
The Drinfeld-Sokolov reduction functor $H_{DS}^0(?)$ is important
in our construction since
the associativity \eqref{eq:ass-property} 
and \eqref{eq:ass-red}
imply that
\begin{align}
\V_{G,b}=H_{DS}^0(\V_{G,b+1}).
\label{eq:DS-reduction-reduces-b}
\end{align}
It follows from
\eqref{eq:DS-reduction-reduces-b}
that it is enough to construct an inverse functor
to $H_{DS}^0(?)$ in order to define $\V_{G,b}$,
since we already know $\V_{G,2}$ and $\V_{G,1}$.
It is indeed possible to construct such a functor at
the critical level  $\kappa=\kappa_c$,
and this is done in
Section \ref{Sec:main-construction}.
More precisely,
let 
\begin{align*}
 H^{\semiinf+\bullet}(\LZ,\eqW_{G,f_{prin}}^{\kappa_c}\* M),
\end{align*}
denote the semi-infinite $\mf{z}(\affg)$-cohomology  with coefficients
$\eqW_{G,f_{prin}}^{\kappa_c}\* M$,
where $\mf{z}(\affg)$ is viewed as a commutative Lie algebra
and $\mf{z}(\affg)$ acts on $\eqW_{G,f_{prin}}^{\kappa_c}\* M$ diagonally
(see Section \ref{Sec:main-construction} for the precise definition).
We show that
\begin{align*}
M\cong  H^{\semiinf+0}(\LZ,\eqW_{G,f_{prin}}^{\kappa_c}\* H_{DS}^0(M))
\end{align*}
for any $M\in \on{KL}$ (Theorem \ref{Th:it-is-an-inverse-to-DS}).
Finally in Section \ref{section:chiral-algebras-of-classs-S},
 we define the genus zero chiral algebra  $\V_{G,b}$ of class $\mathcal{S}$,
 and show that they satisfy the above stated properties.
 In Appendix \ref{appendix},
 we give some examples of
 $\V_{G,b}$.
 

\subsection*{Acknowledgments}
The author 
is grateful to
Christopher Beem, 
Gurbir Dhillon,
Boris Feigin,
Michael Finkelberg,
Davide Gaiotto,
Victor Ginzburg,
Madalena Lemos,
Victor Kac, 
Anne Moreau, Hiraku Nakajima, Takahiro Nishinaka, Wolfger Peelaers, Leonardo Rastelli, Shu-Heng Shao
and Yuji Tachikawa
for very useful discussions.
The results in this paper has been announced 
in part
in 
^^ ^^ Conference in Finite Groups and Vertex Algebras dedicated to Robert L. Griess on the occasion of his 71st birthday",
Academia Sinica, Taiwan, August 2016,
^^ ^^ 60th Annual Meeting of the Australian Mathematical Society",
Camberra, December 2016,
^^ ^^ Exact operator algebras in superconformal field theories",
Perimeter Institute for Theoretical Physics,
Waterloo,
December 2016, 
^^ ^^ 7th Seminar on Conformal Field Theory",
Darmstadt, February 2017,
^^ ^^ Representation Theory XV",
Dubrovnik,
June 2017,
^^ ^^ Affine, Vertex and W-algebras",
Rome, December 2017,
^^ ^^ The 3rd KTGU Mathematics Workshop for Young Researchers",
Kyoto, February 2018,
^^ ^^ Gauge theory, geometric langlands and vertex operator algebras",
Perimeter Institute for Theoretical Physics, Waterloo, March 2018,
^^ ^^ Vertex Operator Algebras and Symmetries",
RIMS, Kyoto, July 2018,
^^ ^^ Vertex algebras, factorization algebras and applications",
IPMU, Kashiwa,  July 2018,
^^ ^^ The  International Congress of Mathematics",
Rio de Janeiro, August, 2018,
^^ ^^ Workshop on Mathematical Physics",
ICTP-SAIFR,
 S\~{a}o Paulo, August 2018,
and
^^ ^^ Geometric and Categorical Aspects of CFTs",
the Casa Matem\'{a}tica Oaxaca,
September 2018.
He thanks  the organizers of these conferences and
 apologies for the delay of the paper.
The author 
is partially supported in part by JSPS KAKENHI Grant Numbers 17H01086, 17K18724.

\section{Vertex algebras and associated varieties}
\label{section:Vertex algebras and associated varieties}
A \emph{vertex algebra} 
\cite{Bor86}
consists of a vector space $V$ with a distinguished vacuum vector $|0\rangle  \in V$ and a vertex operation, which is a linear map $V \otimes V \rightarrow V((z))$, written $a \otimes b \mapsto a(z)b = (\sum_{n \in \Z} a_{(n)} z^{-n-1})b$, such that the following are satisfied:
\begin{itemize}
\item (Unit axioms) $(|0\rangle)(z) = \id_V$ and $a(z)|0\rangle  \in a + zV[[z]]$ for all $a \in V$;

\item (Locality)
$(z-w)^n[a(z),b(w)]=0$ for a sufficiently large $n$
for all $a, b\in V$.
\end{itemize}
The operator $\partial: a \mapsto a_{(-2)}|0\rangle $ is called the {\em translation operator} and it satisfies $(\partial a)( z) =
[\partial,a(z)]= \partial_z a(z)$. The operators $a_{(n)}$ are called \emph{modes}.

For elements $a,b$ of a vertex algebra $V$
 we have the following
{\em Borcherds identity}
for any
$m,n\in \Z$:
\begin{align}
&[a_{(m)},b_{(n)}]=\sum_{j\geq 0}\begin{pmatrix}m\\j\end{pmatrix}(a_{(j)}b)_{(m+n-j)},
\label{eq:B1}\\
& (a_{(m)}b)_{(n)}=\sum_{j\geq 0}(-1)^j\begin{pmatrix}m\\j\end{pmatrix}
(a_{(m-j)}b_{(n+j)}-(-1)^m b_{(m+n-j)}a_{(j)}).
\end{align}
By regarding the Borcherds identity
as fundamental relations,
 representations of a vertex algebra are naturally defined
(see \cite{Kac98,FreBen04} for the details).

We  write 
\eqref{eq:B1}
as 
\begin{align*}
a(z)b(w)\sim \sum_{j\geq 1}\frac{1}{(z-w)^j}(a_{(j)}b)(w)
\end{align*}
and call it the operator product expansion (OPE) of $a$ and $b$.

For a vertex algebra $V$,
let $V^{op}$ denote the opposite vertex algebra of $V$,
that is, the vertex algebra $V$ equipped with the new vertex operation
$a(z)^{op}b=a(-z)b$
and the new translation operator $\partial^{op}=-\partial$.

A vertex algebra $V$ is called {\em commutative}
if the left side (or the right side) of \eqref{eq:B1} are zero for all $a,b\in V$,
$m,n\in \Z$.
If this is the case, $V$ can be regarded as a {\em differential algebra} (=a unital commutative algebra with a derivation)
 by the multiplication
$
a.b=a_{(-1)}b
$
and the derivation
$\partial$.
Conversely,
any differential algebra can be naturally equipped with the structure of a commutative vertex algebra.
Hence,
commutative vertex algebras are the same\footnote{However,
their modules are different.} as differential algebras (\cite{Bor86}).

For an affine scheme $X$,
let
$J_{\infty}X$ be the
{\em arc space} of $X$ that is defined by the functor of points
$\on{Hom}(\on{Spec}R,J_{\infty}X)=\on{Hom}(\on{Spec}R[[t]], X)$
for all $\C$-algebra $R$.
The ring $\C[J_{\infty}X]$ is naturally a differential algebra,
and hence is a commutative vertex algebra.
In the case that 
$X$ is an affine Poisson scheme,
 $\C[J_{\infty}X]$ has \cite{Ara12} the structure of 
a {\em Poisson vertex algebra},
see
\cite[16.2]{FreBen04}, 
\cite{Kaclecture17}
for the definition of Poisson vertex algebras.

It is known by  Haisheng Li \cite{Li05} that
any vertex algebra $V$ is canonically filtered,
and hence can be regarded
 as a quantization of 
the associated graded Poisson vertex algebra 
$\on{gr} V=\bigoplus_p F^pV/F^{p+1}V$,
where 
$F^{\bullet}V$ is the canonical filtration of $V$.
By definition,
\begin{align*}
F^pV=\on{span}_\C\{(a_1)_{(-n_1-1)}\dots (a_r)_{(-n_r-1)}|0\rangle\mid a_i\in V,\ n_i\geq 0,\ \sum_i n_i\geq p\}.
\end{align*}
A vertex algebra is called {\em separated} if
the filtration $F^\bullet V$ is separated, that is, 
$\bigcap_p F^p V=0$.
For instance, any positively graded vertex algebra is separated.

The subspace 
 $$R_V:=V/F^1V=F^0V/F^1V\subset \on{gr}V$$
is called 
 {\em Zhu's $C_2$-algebra of $V$}.
The Poisson vertex algebra structure of $\on{gr} V$ restricts to a Poisson algebra structure
of $R_V$,
which  is given by
\begin{align*}
\bar a.\bar b=\overline{a_{(-1)}b},\quad
\{\bar a,\bar b\}=\overline{a_{(0)}b},
\end{align*}
where $\bar a$ is the image of $a\in V$
in $R_V$.
The {\em associated variety}
of $V$
is by definition the 
Poisson variety
\begin{align*}
X_V=\on{Specm}(R_V)
\end{align*}
and the {\em associated scheme} of $V$
is the Poisson scheme $\tilde{X}_V=\on{Spec}(R_V)$ 
(\cite{Ara12}). 
We have a surjective homomorphism
\begin{align}
\mc{O}(J_{\infty}\tilde{X}_V)\twoheadrightarrow \gr V
\label{eq:surj-map-PVA}
\end{align}
of Poisson vertex algebras
which is the identity map on $R_V$
(\cite{Li05,Ara12}).

The scheme $\on{Spec}(\gr V)\subset J_{\infty}\tilde{X}_V$ is called the {\em singular support} of $V$
and is denoted by $SS(V)$.

\begin{Def}\label{def:cq}
 Let $X=\on{Spec}R$ be an affine Poisson scheme.
A {\em chiral quantization} of $X$
is a separated vertex algebra $V$ such that $X_V\cong X$
as Poisson varieties.
A {\em strict chiral quantization} of $X$ is a 
chiral quantization of $X$ such that 
$\tilde{X}_V\cong X$ as schemes and
\eqref{eq:surj-map-PVA} is an isomorphism, so that
the singular support of $V$ identifies with $J_{\infty}X$.
\end{Def}

\begin{Th}[\cite{AraMorCore}]\label{Th:AMoreau}
Let $V$ be a  separated vertex algebra such that 
$\tilde{X}_V$ is a reduced, smooth symplectic variety.
Then 
\begin{itemize}
\item $\gr V$ is simple as a vertex Poisson algebra;
\item $V$ is simple;
\item $V$ is a strict chiral quantization of $\tilde{X}_V$.
\end{itemize}
\end{Th}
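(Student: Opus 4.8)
The plan is to deduce all three assertions from the single hypothesis that $\tilde X_V = \Spec R_V$ is a reduced, smooth symplectic variety, using the canonical filtration and the surjection \eqref{eq:surj-map-PVA}. First I would analyze $\gr V$ as a Poisson vertex algebra. Since $\tilde X_V$ is smooth, the arc space $J_\infty \tilde X_V$ is well-behaved and $\mc{O}(J_\infty \tilde X_V)$ is reduced (smoothness of $X$ implies reducedness of $J_\infty X$, by a standard argument — the arc space of a smooth variety is pro-smooth, hence reduced); moreover, because $\tilde X_V$ carries a symplectic form, the Poisson vertex algebra $\mc{O}(J_\infty \tilde X_V)$ is simple. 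The key step here is a lemma: if $X$ is a smooth affine symplectic variety, then $\mc{O}(J_\infty X)$ has no nontrivial proper Poisson vertex algebra ideal. One argues that a Poisson vertex ideal $I$ whose zero locus is a proper closed subvariety $Z \subsetneq X$ must, by the Leibniz rule for the $\lambda$-bracket and non-degeneracy of the symplectic form, be stable under all Hamiltonian vector fields; since $X$ is symplectic these vector fields span the tangent space at every point, forcing $Z$ to be open, a contradiction — and then one lifts this from $X$ to $J_\infty X$ using the induced structure. Granting this, the surjection \eqref{eq:surj-map-PVA} $\mc{O}(J_\infty \tilde X_V) \twoheadrightarrow \gr V$ of Poisson vertex algebras, which is the identity on $R_V$ and hence nonzero, must be an isomorphism; in particular $\gr V$ is simple and $SS(V) = J_\infty \tilde X_V$, which already gives the third bullet (the strict chiral quantization statement) once separatedness is invoked.

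Next I would prove simplicity of $V$ itself. This is a general principle: if $V$ is a separated vertex algebra and $\gr V$ is simple as a Poisson vertex algebra, then $V$ is simple. Indeed, let $I \subset V$ be a nonzero vertex ideal. Since $V$ is separated, $\bigcap_p F^p V = 0$, so there is a largest $p$ with $I \subset F^p V$, and the image $\gr_p I := (I \cap F^p V + F^{p+1}V)/F^{p+1}V$ is a nonzero subspace of $\gr_p V$. One checks that $\gr I := \bigoplus_p \gr_p I$ is a Poisson vertex algebra ideal of $\gr V$: it is closed under $\partial$, under the normally ordered product ($F^\bullet$ is a vertex algebra filtration), and under the $\lambda$-bracket, because $I$ is an ideal for the $a_{(n)}$ operations and the filtration is compatible with the bracket grading. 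Since $\gr V$ is simple and $\gr I \neq 0$, we get $\gr I = \gr V$, whence $I = V$ by an induction on the filtration (using separatedness again to conclude). This gives the second bullet.

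The main obstacle I anticipate is the simplicity lemma for $\mc{O}(J_\infty X)$ when $X$ is smooth symplectic — making precise the claim that a Poisson vertex ideal is forced to be everything. The subtlety is that one must work with the $\lambda$-bracket (equivalently, with the jets of Hamiltonian vector fields) rather than just the finite-dimensional Poisson structure on $R_V$, and control the interaction between the bracket and the derivation $\partial$ across all jet levels simultaneously. A clean way around this is to first establish the statement for $R_V = \mc{O}(X)$ itself — a smooth symplectic affine variety has no nontrivial Poisson ideal, which is classical — and then bootstrap: a Poisson vertex ideal $I$ of $\mc{O}(J_\infty X)$ has an associated "reduced" zero locus in $X$ obtained by intersecting with $\mc{O}(X) \subset \mc{O}(J_\infty X)$, and a filtration/degree argument together with the smoothness of $J_\infty X$ shows $I$ must already be generated in jet-degree zero, reducing to the classical case. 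The remaining steps — reducedness of $J_\infty X$ for smooth $X$, compatibility of $\gr$ with ideals, the separatedness inductions — are routine and I would cite \cite{Li05,Ara12} and \cite{EisFre} for the arc space facts.
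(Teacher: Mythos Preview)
This theorem is not proved in the present paper; it is cited from \cite{AraMorCore}. Your outline follows the architecture of that reference: show $\mc{O}(J_\infty X)$ is a simple Poisson vertex algebra when $X$ is smooth symplectic, conclude that \eqref{eq:surj-map-PVA} is an isomorphism (yielding the first and third bullets), then lift simplicity to $V$.

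There is, however, a genuine gap in your final step. You assert that $\gr I = \gr V$ together with separatedness of $V$ forces $I = V$ ``by an induction on the filtration.'' What the induction actually yields is only $V = I + F^p V$ for all $p$, equivalently that the Li filtration on $W = V/I$ satisfies $F^p W = W$ for every $p$. To conclude $W = 0$ you need $W$ itself to be separated, and separatedness does not pass to quotients. Concretely: take $V = \mc{O}(J_\infty \mathbb{A}^1)$ and the $\partial$-stable (hence vertex-algebra) ideal $I$ generated by $1 + \partial x$; then $V$ is separated, $I$ is proper (the quotient is $\C[x]$ with $\partial$ acting as $-d/dx$), yet $\gr I = \gr V$ because $1 + \partial x \equiv 1 \pmod{F^1 V}$. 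Here $\gr V$ is not simple, so the theorem is not contradicted --- but your implication is. The repair requires either an auxiliary grading making the filtration finite on each graded piece (so quotients are automatically separated) or a direct argument exploiting the symplectic hypothesis; since the paper applies the result to $\eqW_{G,f}^\kappa$, whose conformal weights are unbounded below, the naive ``positively graded'' fix does not cover all intended applications, and this is precisely where \cite{AraMorCore} does work your sketch omits. Your treatment of the simplicity of $\mc{O}(J_\infty X)$ is also too loose: the claimed reduction ``$I$ must already be generated in jet-degree zero'' is the entire content of the lemma rather than a routine step, and your zero-locus discussion conflates $X$ with $J_\infty X$.
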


Let $\phi:V\ra W$
be a vertex algebra homomorphism.
Then $\phi(F^pV)\subset F^pW$,
and 
$\phi$ induces a homomorphism of vertex Poisson algebras
$\gr V\ra \gr W$,
which we denote by $\gr \phi$.
It restricts to a Poisson algebra
homomorphism
$R_V\ra R_W$,
and therefore induces a morphism
$X_W\ra X_V$ of Poisson varieties.

\smallskip

A vertex algebra is called {\em conformal}
if there exists a vector $\omega$,
called the {\em conformal vector},
such that the corresponding field $\omega(z)=\sum_{n\in \Z}L_nz^{-n-2}$
satisfies the following conditions:
(1) $[L_m, L_n]=(m-n)L_{m+n}+\frac{m^3-m}{12}\delta_{m+n,0}c\on{id}_V$,
 where $c$ is a constant called the {\em central charge}  of V;
 (2)
$L_0$ acts semisimply on V;
(3) $L_{-1}=\partial$.
For a conformal vertex algebra
$V$ we set $V_{\Delta}=\{v\in V\mid L_0v=\Delta V\}$,
so that $V=\bigoplus_{\Delta}V_{\Delta}$.
We write
$\Delta_a=\Delta$ if $a\in V_{\Delta}$
and call $\Delta_a$ the {\em conformal weight} of $a$.
A conformal vertex algebra is called {\em conical} 
if $V_{\Delta}=0$ unless $\Delta\in \frac{1}{m}\Z_{\geq 0}$ for some $m\in \Z_{\geq 1}$
and $V_{0}=\C$
(\cite{Arakawam:kq}).

For a conformal vertex algebra $V=\bigoplus_{\Delta}V_{\Delta}$,
one defines {\em Zhu's algebra} \cite{FreZhu92} $\on{Zhu}(V)$ of  $V$ by
\begin{align*}
\on{Zhu}(V)=V/V\circ V,\quad V\circ V=\on{span}_{\C}\{a\circ b\mid a,b\in V\},
\end{align*}
where $a\circ b=\sum_{i\geq 0}\begin{pmatrix}
\Delta_a\\
i\end{pmatrix}a_{(i-2)}b$ for $a\in V_{\Delta_a}$.
The algebra
$\on{Zhu}(V)$ is a unital associative algebra by the multiplication
$a* b=\sum_{i\geq 0}\begin{pmatrix}
\Delta_a\\
i\end{pmatrix}a_{(i-1)}b$.
The grading of $V$ gives a filtration on $\on{Zhu}(V)$
which makes it 
almost-commutative,
 and
there is a surjective map
\begin{align}
R_V\twoheadrightarrow \on{gr}\on{Zhu}(V)
\label{eq:surj-RV}
\end{align}
of Poisson algebras.
The map \eqref{eq:surj-RV}
 is an isomorphism
if  $V$ admits a PBW basis (\cite{A2012Dec}).

Let $V\Mod$
be the category of positively graded
$V$-modules.
For $M\in V\Mod$,
the top component  $M_{top}$ is naturally a $\Zhu(V)$-modules.
There is a left adjoint
$$\on{Ind}_{\Zhu(V)}^V:\Zhu(V)\Mod\ra V\Mod$$
to the functor $M\mapsto M_{top}$ (\cite{Zhu96}, see also \cite[5.1]{Ara07},\cite[6.2.1]{Rosellen}).
 
 For a vertex subalgebra $W$ of $V$,
let
\begin{align*}
\on{Com}(W,V)&=\{v\in V\mid [v_{(m)}, w_{(n)}]=0\ \forall w\in W,\ n\in \Z\}\\
&=\{v\in V\mid w_{(n)}v=0\ \forall w\in W,\ n\geq 0\}.
\end{align*}
 Then $ \on{Com}(W,V)$ is a vertex subalgebra of $V$,
 called a {\em coset vertex algebra},
 of a {\em commutant vertex algebra} (\cite{FreZhu92}).
 The vertex subalgebras $W_1$ and $W_2$ are said to form a dual pair
 in $V$ if $W_1=\on{Com}(W_2,V)$ and $W_2=\on{Com}(W_1,V)$.
 The coset vertex algebra 
$\on{Com}(V,V)$ is called the {\em center} of $V$
and is denoted by $Z(V)$.

Let $\mf{a}$ be a finite-dimensional Lie algebra.
For
 an invariant symmetric   bilinear form  $\kappa$ on $\mf{a}$,
 let $\affa_{\kappa}$ be the corresponding  Kac-Moody  affinization of $\mf{a}$:
\begin{align*}
\affa_{\kappa}=\mf{a}((t))\+ \C \mathbf{1},
\end{align*}
where the commutation relation of $\affa_{\kappa}$ is given by
\begin{align*}
[xf,yg]=[x,y]fg+\on{Res}_{t=0}(gdf)\kappa(x,y)\mathbf{1},
\quad [\mathbf{1},\affa_{\kappa}]=0.
\end{align*}
Define
\begin{align*}
V^\kappa(\mf{a})=U(\affa_{\kappa})\*_{U(\mf{a}[[t]]\+\C \mathbf{1})}\C,
\end{align*}
where $\C$ is the one-dimensional representation of $\mf{a}[t]]\+\C \mathbf{1}$
on which $\mf{a}[[t]]$ acts trivially and $\mathbf{1}$ acts as the identity.
There is a unique vertex algebra structure on $V^\kappa(\mf{a})$
such that
$|0\ket =1\* 1$ is the vacuum vector and
$$x(z)=\sum_{n\in \Z}(xt^n)z^{-n-1},$$
for $z\in \mf{a}$,
where on the left-hand-side 
we have regarded $\g$ as a  subspace of $V^{\kappa}(\mf{a})$ by the embedding
$\g\hookrightarrow V^{\kappa}(\mf{a})$, $x\mapsto (xt^{-1})|0\ket$.
The vertex algebra 
$V^\kappa(\mf{a})$ is a
strict chiral quantization of $\mf{a}^*$
and is 
called
the {\em universal affine vertex algebra associated with $\mf{a}$ at level $\kappa$}.

%
Note that we have $V^\kappa(\mf{a})^{op}\isomap V^{\kappa}(\mf{a})$,
$x\mapsto -x$,  ($x\in \mf{a}$).

A $V^\kappa(\mf{a})$-module is the same as 
 a smooth $\affa_{\kappa}$-module,
 that is, a $\affa_{\kappa}$-module $M$ such that 
 $xt^n m=0$ for $n\gg 0$ for all $x\in \a$, $m\in M$.

\section{Some variants of Chiral Hamiltonian reductions}
\label{section:Some variants of Chiral Hamiltonian reductions}

For an algebraic group
$\mathbf{G}$ and 
an   $\mathbf{G}$-scheme
$X$, 
let
$\on{QCoh}^{\mathbf{G}}(X)$ be the category of 
quasi-coherent sheaves on $X$ equivariant under the adjoint action of $\mathbf{G}$.
When $X$ is affine
we do not 
distinguish
between a coherent sheaf on an affine scheme and the module of its global sections.

 Let $G$ be a simply connected semisimple algebraic group,
$\g=\on{Lie}(G)$.
An object of $\on{QCoh}^{G}(\g^*)$
is the same as a Poisson $\mc{O}(\g^*)$-module
on which the adjoint action of $\g$ is locally finite.

 A {\em Poisson  algebra object} in $\on{QCoh}^{G}(\g^*)$
is a Poisson algebra $R$ equipped with
a Poisson algebra homomorphism
$\mu_R:\mc{O}(\g^*)\ra R$
such that 
the adjoint action of $\g$ 
on $R$
is locally finite.
If $R$ is a Poisson algebra object in  $\on{QCoh}^{G}(\g^*)$,
$X=\on{Spec}(R)$ is a $G$-scheme and
the $G$-equivariant morphism $\mu_R^*:X\ra \g^*$  is the moment map
for the $G$-action.
We denote by $R^{op}$ the Poisson  algebra object
in $\on{QCoh}^{G}(\g^*)$ with the opposite Poisson structure,
$X^{op}=\Spec R^{op}$.
We have  $\mu_{R^{op}}=-\mu_R$.

Let $R$ be a Poisson algebra object in $\on{QCoh}^G(\g^*)$,
$X=\Spec R$.
Suppose that
the moment map
$\mu_X:X\ra \g^*$ is flat,
and
that
there exists a closed subscheme $S$ of $X$
such that the action map $G\times S\ra X$, $(g,s)\ra gs$,
is an isomorphism of $G$-schemes.
Then
$\mc{O}(S)\cong \mc{O}(X)^G$ is a Poisson subalgebra of $\mc{O}(X)$,
and hence
$S$ is a Poisson subvariety of $X$.
Let $R'$ be an another Poisson algebra object in $\on{QCoh}^G(\g^*)$,
$X'=\Spec R'$,
$\mu_{X'}: X'\ra \g^*$, the moment map.
Then $R^{op}\* R$ is a Poisson algebra object in $\on{QCoh}^G(\g^*)$
with the moment map
$\mu:X^{op}\times X'\ra \g^*$, $(x,x')\mapsto -\mu_X(x)+\mu_{X'}(x')$
for the diagonal $G$-action.
Since $\mu_X$ is flat, so is $\mu$.
Moreover,
$\mu^{-1}(0)\cong X\times_{\g^*}X'\cong G\times (S\times_{\g^*} X')$,
where 
$S\ra \g^*$ is the restriction of $\mu_X$ to $S$.
Hence,
the Hamiltonian reduction
$(X^{op}\times X')/\!/\!/ \Delta(G)$ is well-defined,
and we have
\begin{align}
(X^{op}\times X')/\!/\!/ \Delta(G)=\mu^{-1}(0)/G\cong S\times_{\g^*}X'.
\end{align}
According to \cite{KosSte87,Kuw15},
this construction is realized by a BRST cohomology
as follows.
Let $\overline{Cl}(\g)$ be the classical Clifford algebra associated with  $\g\+ \g^*$,
which is a Poisson superalgebra generated by odd elements
$\bar \psi_i$,
$\bar \psi_i^*$,
$i=1,\dots,\dim \g$,
with the relations
$\{\bar \psi_i,\bar \psi_j^*\}=\delta_{ij}$,
$\{\bar \psi_i,\bar \psi_j^*\}=\{\bar \psi_i^*,\bar \psi_j^*\}=0$.
For a Poisson algebra  object $A$  in $\on{QCoh}^G(\g^*)$,
set
\begin{align*}
C(\g^{cl},A)=A\* \overline{Cl}(\g).
\end{align*}
Then $C(\g^{cl},A)$ is naturally a Poisson superalgebra.
Define  the odd element
\begin{align*}
\bar Q=\sum_{i=1}^{\dim \g}\mu_A(x_i)\* \bar \psi_i^*- 1\* \frac{1}{2}c_{ij}^k\bar \psi_i^*\bar \psi_j^*\bar \psi_k\in C(\g^{cl},A),
\end{align*}
where 
$\{x_i\}$ is a basis of $\g$ and $c_{ij}^k$ is the corresponding structure constant.
Then  $\{\bar Q,\bar Q\}=0$,
and hence, $(\ad \bar Q)^2=0$.
It follows that $(C(\g^{cl},A),\ad \bar Q)$
is a differential graded Poisson algebra,
where
its grading is defined by
$\deg a=0$ for $a\in A$,
$\deg \psi_i^*=1$,
$\deg \psi_i=-1$.
The corresponding cohomology is denoted by
$H^{\semiinf+\bullet}(\g^{cl}, A)$,
which is naturally a Poisson superalgebra.
We have
\begin{align}
H^{\semiinf+\bullet}(\g^{cl}, R^{op}\*R)\cong \mc{O}((X^{op}\times X')/\!/\!/ \Delta(G))\* H^\bullet(\g,\C)
\label{eq:BRST-cl}
\end{align}
as Poisson superalgebras
(\cite{Kuw15}).
Here
 $H^\bullet(\g,\C)$ is the  Lie algebra $\g$-cohomology
with coefficients in the trivial representation $\C$,
which
is isomorphic to the De Rham cohomology ring $H^\bullet_{DR}(G)$ 
of $G$,
and to the ring of invariant forms $\bw{\bullet}(\g^*)^\g$.
In particular,
$H^{\semiinf+0}(\g^{cl}, R^{op}\*R)=0$ for $i<0$ or $i>\dim G$,
and
$H^{\semiinf+0}(\g^{cl}, R^{op}\*R)\cong \mc{O}((X^{op}\times X')/\!/\!/ \Delta(G))\cong 
\mc{O}(S)\*_{\mc{O}(\g^*)}R=R|_{S}$ as Poisson algebras.
More generally,
for $M\in \on{Coh}^G(\g^*)$,
$C(\g^{cl},R^{op}\* M)=R^{\op}\* M\* \overline{Cl}(\g)$
is naturally a differential graded Poisson $C(\g^{cl},R^{op}\*\mc{O}(\g^*))$-module.
The corresponding cohomology
space
$H^{\semiinf+\bullet}(\g^{cl},R^{op}\*M)$
is a Poisson module over $H^{\semiinf+\bullet}(\g^{cl},R^{op}\*\mc{O}(\g^*))\cong \mc{O}(S)\* H^\bullet(\g,\C)$,
and we have
$$H^\bullet(\g^{cl},R^{op}\*M)\cong \left(\mc{O}(S)\*_{\mc{O}(\g^*)}M\right)\* H^{\bullet}(\g,\C).$$

The arc space $J_{\infty}G$ of $G$
 is the
 proalgebraic group $G[[t]]$,
 and
the Lie algebra
$J_{\infty}\g$
of $J_{\infty}G$ equals to $\g[[t]]$.
An object $M$ of $\on{Coh}^{J_{\infty}G}(J_{\infty}\g^*)$
is 
the same as 
 a Poisson
vertex $\mc{O}(J_{\infty}\g^*)$-module
on which the action of $J_{\infty}\g^*$ is locally finite.

A {\em Poisson vertex algebra object} in $\on{QCoh}^{J_{\infty}G}(J_{\infty}\g^*)$
is a Poisson vertex algebra $V$
equipped with a Poisson vertex algebra homomorphism
$\mu_{V}:\mc{O}(J_{\infty}\g^*)\ra V$
on which
the action of $J_{\infty}\g$ is locally finite.
If $V$ is a Poisson vertex algebra object in $\on{QCoh}^{J_{\infty}G}(J_{\infty}\g^*)$,
$\Spec (V)$ is a $J_{\infty}G$-scheme.
The $J_{\infty}G$-equivariant morphism
$\mu_{V}^*:\Spec V\ra J_{\infty}\g^*$
is
 called the {\em chiral  moment map}.
 For a Poisson algebra object $R$ in  $\on{QCoh}^{G}(\g^*)$,
 $J_{\infty}R:=\mc{O}(J_\infty \Spec R)$ is a Poisson vertex algebra object in $\on{QCoh}^{J_{\infty}G}(J_{\infty}\g^*)$
 with the chiral moment map $(J_{\infty}\mu_{R})^*$.
 
 For a Poisson algebra object $V$ in $\on{QCoh}^{J_{\infty}G}(J_{\infty}\g^*)$,
 set
 \begin{align*}
C^{cl}(\affg^{cl},V)=V\* \overline{\bigwedge}^{\semiinf+\bullet}(\g),
\end{align*}
where $\overline{\bigwedge}^{\semiinf+\bullet}(\g)$ is the Poisson vertex superalgebra generated by odd elements
$\bar \psi_i$, $\bar \psi_i^*$,
$i=1,\dots,\dim \g$,
with $\lam$-brackets (see \cite{Kaclecture17})
$\{(\bar \psi_i)_{\lam} \bar \psi_j^*\}=\delta_{ij}$,
$\{(\bar \psi_i)_{\lam}\bar \psi_j^*\}=\{(\bar \psi_i^*)_{\lam}\bar \psi_j^*\}=0$.
Set
\begin{align*}
Q^{cl}=\sum_{i=1}^{\dim \g}\mu_V(x_i)\* \bar \psi_i^*- 1\* \frac{1}{2}c_{ij}^k\bar \psi_i^*\bar \psi_j^*\bar \psi_k\in C(\affg^{cl},V).
\end{align*}
Then $(Q^{cl}_{(0)})^2=0$,
and $(C(\affg^{cl},V), Q_{(0)}^{cl})$ is a differential graded Poisson vertex algebra,
where 
its cohomological grading is defined by
$\deg a=$ for $a\in V$,
$\deg \partial^n\bar \psi_i^*=1$,
$\deg \partial^n \bar \psi_i=-1$,
$n\geq 0$.
The corresponding cohomology 
$H^{\semiinf+\bullet}(\affg^{cl}, V)$
is naturally a Poisson vertex superalgebra.

Let $X\cong G\times S$ be as above.
Then $J_{\infty}X\cong J_{\infty}G\times J_{\infty}S$.
Assume further that
the chiral moment map $J_{\infty}\mu_X:J_{\infty}X\ra J_{\infty}\g^*$ is flat.
For a vertex Poisson algebra object $V$ in 
$\on{QCoh}^{J_{\infty}G}(J_{\infty}\g^*)$,
$\mc{O}(J_{\infty}X^{op})\* V$
is a vertex Poisson algebra object in 
$\on{QCoh}^{J_{\infty}G}(J_{\infty}\g^*)$
with chiral moment map
$\hat \mu:J_{\infty}X^{op}\times \Spec V\ra J_{\infty}\g^*$,
$(x,y)\mapsto -J_{\infty}\mu_X(x)+\mu_V^*(y)$.
We have
$\hat\mu^{-1}(0)\cong J_{\infty}X\times_{J_{\infty}\g^*}\Spec V\cong 
J_\infty G\times (J_{\infty}S\times_{J_{\infty}\g^*}\Spec V) $.
It follows in the same way as  \eqref{eq:BRST-cl}
that 
\begin{align}
H^{\semiinf+\bullet}(\affg^{cl},  \mc{O}(J_{\infty}X^{op})\* V)
\cong \left(\mc{O}(J_{\infty}S)\*_{\mc{O}(J_{\infty*}\g^*)} V\right)\* H^\bullet(\g,\C).
\label{eq:cintg}
\end{align}
In particular,
if $R$ is an Poisson algebra object in $\on{Coh}^G(\g^*)$,
then
\begin{align}
H^{\semiinf+\bullet}(\affg^{cl},  \mc{O}(J_{\infty}X^{op}\times J_{\infty} X'))
\cong \mc{O}\left(J_{\infty}\left((X^{\op}\times X)/\!/\!/ \Delta(G)
\right)\right)\* H^\bullet(\g,\C).
\end{align}
More generally,
the same argument gives that 
\begin{align}
H^{\semiinf+\bullet}(\affg^{cl},  \mc{O}(J_{\infty}X^{op})\otimes M)
\cong
\left(\mc{O}(J_{\infty}S)\*_{\mc{O}(J_{\infty}\g^*)}M\right)\* H^\bullet(\g,\C)
\label{eq:cintgm}
\end{align}
for any $M\in \on{Coh}^{J_{\infty}G}(J_\infty\g^*)$.

Let $\kappa$ be
an invariant symmetric bilinear form  on $\g$.
Denote by $\on{KL}_{\kappa}$  the full subcategory of 
the category of  graded $V^\kappa(\g)$-modules consisting of objects $M$ on which 
$\g[[t]]t$ acts locally nilpotently and
$\g$ acts locally finitely.
For $\lam\in P_+$, 
let 
\begin{align}
\mathbb{V}_\lam^\kappa=U(\affg_{\kappa})\*_{U(\g[t]\+ \C \mathbf{1})}V_{\lam}\in \KL_{\kappa},
\label{eq:Weyl-module}
\end{align}
where
$V_{\lam}$ is the irreducible finite-dimensional representation of $\g$ with highest weight $\lam$
that is regarded as a $\g[t]\+ \C \mathbf{1}$-module on which $\g[t]t$ acts trivially and 
$\mathbf{1}$ acts as the identity.
Note that $\mathbb{V}_0^\kappa\cong V^\kappa(\g)$ as $\affg_{\kappa}$-modules.

Let $\on{KL}_{\kappa}^{ord}$ be the 
full subcategory of $\on{KL}_{\kappa}$
consisting of objects 
that are positively graded and each homogenous subspaces are finite-dimensional.
For $M\in \on{KL}_{\kappa}^{ord}$,
the Li filtration $F^\bullet M$ is separated.
Note that any object of $\on{KL}_{\kappa}$
is an inductive limit of objects of $\on{KL}_{\kappa}^{ord}$.

 A  {\em vertex algebra object}
in $\on{KL}_{\kappa}$
is a vertex algebra $V$
equipped with a vertex algebra homomorphism
$\mu_{V}:V^{\kappa}(\g)\ra V$ such that $V$
is a direct sum of objects in $\on{KL}_{\kappa}^{ord}$
 as a $V^{\kappa}(\g)$-module.
 The map $\mu_{V}$ is called the {\em chiral quantum moment map}.

Let $V$ be a  vertex algebra object 
 in $\on{KL}_{\kappa}$.
 Then $V$ is separated,
 and
$R_V$ and $\gr V$ are 
a Poisson algebra object in 
$\on{QCoh}^{G}(\g^*)$ and a Poisson vertex algebra object in $\on{QCoh}^{J_{\infty} G}(J_{\infty}\g^*)$,
respectively.
A {\em conformal vetex algebra object} 
in  $\on{KL}_{\kappa}$ is 
a vertex algebra object $V$ in $\on{KL}_\kappa$ which  is conformal 
with conformal vector $\omega_V$
and
 $\mu_V(x)$ is primary with respect to $\omega_V$ for all $x\in \g\subset V^\kappa(\g)$,
 that is,
 $(\omega_V)_{(n)}\mu_V(x)=0$ for $n\geq 2$.

 If $V$ is a vertex algebra object in $\KL_\kappa$,
 $V^{op}$ is also an vertex algebra object in $\KL_\kappa$
 with $\mu_{V^{op}}(x)=-\mu_{V}(x)$ for $x\in \g$.

Let $\kappa_\g$ be the Killing form of $\g$.
For a vertex algebra  object in $\KL_{-\kappa_\g}$,
let 
\begin{align*}
C(\affg_{-\kappa_\g},V)=V\* \bw{\semiinf+\bullet}(\g),
\end{align*}
where ${\bigwedge}^{\semiinf+\bullet}(\g)$ is the vertex superalgebra generated by odd elements
$ \psi_i$, $ \psi_i^*$,
$i=1,\dots,\dim \g$,
with OPEs 
$\psi_i(z)\psi_j^*(w)\sim \frac{\delta_{ij}}{z-w}$,
$\psi_i(z)\psi_j(w)\sim \psi_i^*(z)\psi_j^*(w)\sim 0$.
The vertex algebra $\bw{\semiinf+\bullet}(\g)$
is conformal
of central charge 
$-2\dim \g$.
We have
\begin{align*}
\bw{\semiinf+\bullet}(\g)=\bigoplus_{\Delta\in \Z_{\geq 0}}\bw{\semiinf+\bullet}(\g)_{\Delta},\quad
\bw{\semiinf+\bullet}(\g)_0\cong \bw{\bullet}(\g^*).
\end{align*}
Set
\begin{align*}
Q=\sum_{i=1}^{\dim \g}\mu_V(x_i)\*  \psi_i^*- 1\* \frac{1}{2}c_{ij}^k \psi_i^* \psi_j^* \psi_k\in C(\affg_{-\kappa_\g},V).
\end{align*}
Then $(Q_{(0)})^2=0$,
and $(C(\affg_{-\kappa_\g},V), Q_{(0)})$ is a differential graded  vertex algebra,
where 
its cohomological  grading is defined by
$\deg a=0$ for $a\in V$,
$\deg \partial^n\psi_i^*=1$,
$\deg \partial^n \psi_i=-1$,
$n\geq 0$.
The corresponding cohomology 
$H^{\semiinf+\bullet}(\affg_{-\kappa_\g}, V)$
is the semi-infinite $\affg_{-\kappa_\g}$-cohomology
with coefficients in $V$,
and 
is naturally a  vertex superalgebra.
If $V$ is conformal with central charge $c_V$,
then $H^{\semiinf+\bullet}(\affg_{-\kappa_\g}, V)$
is conformal with central charge $c_V-2\dim \g$.

More generally,
if $M$ is an object in $\KL_{-\kappa_\g}$,
then 
$C(\affg_{-\kappa_\g},M)=M\* \bw{\semiinf+\bullet}(\g)$
is a 
$C(\affg_{-\kappa_\g},V^{-\kappa_\g}(\g))$-module.
Hence,
 $(C(\affg_{-\kappa_\g},M), Q_{(0})$ is naturally a differential graded vertex algebra module over 
 $C(\affg_{-\kappa_\g},V^{-\kappa_\g}(\g))$.
 The corresponding cohomology
 $H^{\semiinf+\bullet}(\affg_{-\kappa_\g}, M)$ is naturally a module over 
 $H^{\semiinf+\bullet}(\affg_{-\kappa_\g}, V^{-\kappa_\g}(\g))$.

Set 
\begin{align}
\kappa^*=-\kappa-\kappa_\g.
\label{eq:dual-kappa}
\end{align}
For $M\in \KL_{\kappa}$,
$N\in \KL_{\kappa^*}$,
$M\* N$ is an object of $\KL_{-\kappa_\g}$ with respect to the diagonal action of $\affg_{-2\kappa}$.
\begin{Th}\label{Th:vanishing-BRST-g}
Let $V$ 
be a vertex algebra object in $\on{KL}_\kappa$,
which is a strict quantization of 
$X$.
Assume that
(1) there exists a closed subscheme $S$ of $X$
such that the action map $G\times S\ra X$, $(g,s)\ra gs$,
is an isomorphism of $G$-schemes,
(2) the chiral moment map $\gr\mu_{V}^*;J_{\infty}X\ra J_{\infty}\g^*$ is flat.
Then,
\begin{align*}
\gr H^{\semiinf+\bullet}(\affg_{-\kappa_\g},V^{op}\* M)\cong
 \left(\mc{O}(J_{\infty}S)\*_{\mc{O}(J_{\infty*}\g^*)} \gr M\right)\* H^\bullet(\g,\C)
\end{align*}
for any $M\in \KL_{\kappa^*}^{ord}$.
If $W$ is 
 a  vertex algebra object $W$ in $\KL_{\kappa^*}$,
the vertex algebra
$H^{\semiinf+\bullet}(\affg_{-\kappa_\g}, V^{op}\* W)$
is separated,
and we have
\begin{align*}
SS(H^{\semiinf+0}(\affg_{-\kappa_\g}, V^{op}\* W))\cong J_{\infty}S\times_{J_\infty \g^*}SS(W)
\\\text{and }\quad \tilde{X}_{H^{\semiinf+0}(\affg_{-\kappa_\g}, V^{op}\* W))}\cong S\times_{ \g^*}\tilde{X}_W.
\end{align*}
\end{Th}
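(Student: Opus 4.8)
The plan is to deduce the vertex-algebra statement from its classical/Poisson shadow \eqref{eq:cintgm} by a standard spectral-sequence argument with respect to the Li filtration, using that all objects involved are separated. First I would equip $C(\affg_{-\kappa_\g},V^{op}\otimes M)$ with the tensor-product filtration coming from the canonical filtration $F^\bullet V$, $F^\bullet M$, and the filtration on $\bw{\semiinf+\bullet}(\g)$ in which $\psi_i,\psi_i^*$ have degree $0$ and $\partial$ raises the filtration; one checks that the differential $Q_{(0)}$ is compatible with this filtration, so that there is a spectral sequence whose $E_1$-term is the cohomology of $\gr C(\affg_{-\kappa_\g},V^{op}\otimes M)=\gr(V^{op})\otimes\gr M\otimes \overline{\bigwedge}^{\semiinf+\bullet}(\g)$ with respect to the induced (classical BRST) differential $Q^{cl}_{(0)}$. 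Since $V$ is a strict chiral quantization of $X$, we have $\gr V\cong \mc{O}(J_\infty X)$ as Poisson vertex algebras, and $\gr V^{op}\cong\mc{O}(J_\infty X^{op})$; hypothesis (2) says the chiral moment map is flat, so \eqref{eq:cintgm} applies and gives
\begin{align*}
E_1\cong\bigl(\mc{O}(J_\infty S)\otimes_{\mc{O}(J_\infty\g^*)}\gr M\bigr)\otimes H^\bullet(\g,\C).
\end{align*}

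Next I would argue that the spectral sequence degenerates at $E_1$. The key point is a degree/weight bound: because $M\in\KL_{\kappa^*}^{ord}$ and $V$ is a vertex algebra object in $\KL_\kappa$, the complex $C(\affg_{-\kappa_\g},V^{op}\otimes M)$ is a direct sum of objects of $\KL_{-\kappa_\g}^{ord}$, hence is positively graded with finite-dimensional graded pieces and the Li filtration is separated and exhaustive; the associated graded is a finitely generated module-type object over $\mc{O}(J_\infty S)\otimes_{\mc{O}(J_\infty\g^*)}\gr M$ tensored with an exterior algebra, so in each fixed conformal weight the filtration is finite and the spectral sequence converges. To kill higher differentials I would use that $E_1$ is already ``as large as it can be'': by general principles (e.g. the argument that for a filtered complex whose cohomology injects into the $E_1$-page one has degeneration), it suffices to produce enough cocycles, which is exactly what the explicit description of $\mc{O}(J_\infty S)\otimes_{\mc{O}(J_\infty\g^*)}\gr M$ as the reduction provides. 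Concretely, one lifts the classical generators: $\mc{O}(J_\infty S)=\mc{O}(J_\infty X)^{J_\infty G}$ lifts to $Q_{(0)}$-closed elements of $V^{op}$ by invariance, and the factor $H^\bullet(\g,\C)=\bw{\bullet}(\g^*)^\g$ lifts to closed elements of $\bw{\semiinf+\bullet}(\g)$, so the $E_1$-page consists of permanent cocycles and $d_r=0$ for $r\geq 1$.

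From the degeneration I would read off the two displayed consequences. Taking associated graded of the cohomology,
\begin{align*}
\gr H^{\semiinf+\bullet}(\affg_{-\kappa_\g},V^{op}\otimes M)\cong\bigl(\mc{O}(J_\infty S)\otimes_{\mc{O}(J_\infty\g^*)}\gr M\bigr)\otimes H^\bullet(\g,\C),
\end{align*}
which is the first assertion. Separatedness of $H^{\semiinf+0}(\affg_{-\kappa_\g},V^{op}\otimes W)$ for a vertex algebra object $W$ in $\KL_{\kappa^*}$ follows because it is again an object of $\KL_{-\kappa_\g}$ (the BRST construction is functorial and the surviving degree-$0$ cohomology inherits the grading), hence positively graded, hence separated. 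For the singular support, specializing $M=W$ and taking the degree-zero, $H^0(\g,\C)=\C$ part of the above isomorphism gives $\gr H^{\semiinf+0}(\affg_{-\kappa_\g},V^{op}\otimes W)\cong\mc{O}(J_\infty S)\otimes_{\mc{O}(J_\infty\g^*)}\gr W$; since $\gr W\cong\mc{O}(SS(W))$ and $\mc{O}(J_\infty S)\otimes_{\mc{O}(J_\infty\g^*)}\mc{O}(SS(W))=\mc{O}(J_\infty S\times_{J_\infty\g^*}SS(W))$, taking $\Spec$ yields $SS(H^{\semiinf+0}(\affg_{-\kappa_\g},V^{op}\otimes W))\cong J_\infty S\times_{J_\infty\g^*}SS(W)$. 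Restricting further to conformal weight $0$ gives $R_{H^{\semiinf+0}(\dots)}\cong\mc{O}(S)\otimes_{\mc{O}(\g^*)}R_W$, i.e. $\tilde X_{H^{\semiinf+0}(\affg_{-\kappa_\g},V^{op}\otimes W)}\cong S\times_{\g^*}\tilde X_W$.

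The main obstacle I anticipate is the degeneration of the spectral sequence, or equivalently controlling higher differentials and extension problems: one must be sure that the classical computation \eqref{eq:cintgm} is not merely an upper bound. The safe route is the lifting argument above together with the flatness hypothesis (2), which guarantees the Koszul-type complex computing the reduction has no higher homology even before passing to arc spaces; flatness of $J_\infty\mu_X$ is what makes $\mc{O}(J_\infty S)\otimes_{\mc{O}(J_\infty\g^*)}(-)$ exact, so the $E_1$-page genuinely is the cohomology and carries no room for cancellation in the relevant (here, nonpositive for the Clifford part and bounded) cohomological degrees. A secondary technical point is checking compatibility of $Q_{(0)}$ with the tensor filtration and that the vertex Poisson structure on $\gr$ matches the one used in \eqref{eq:cintgm}; this is routine given Li's results quoted in Section~\ref{section:Vertex algebras and associated varieties}.
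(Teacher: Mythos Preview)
Your overall strategy matches the paper's exactly: filter the BRST complex by the Li filtration, identify the $E_1$-term with the classical cohomology $H^{\semiinf+\bullet}(\affg^{cl},\mc{O}(J_\infty X^{op})\otimes\gr M)$ via \eqref{eq:cintgm}, and read off the displayed consequences once degeneration at $E_1$ is established. The convergence argument (direct sum of finite-dimensional subcomplexes coming from $\KL_{-\kappa_\g}^{ord}$) is also what the paper uses.

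The one genuine gap is your degeneration step. You propose to lift every $E_1$-class to an honest $Q_{(0)}$-cocycle, but this is not justified as written. First, the fact that $\mc{O}(J_\infty S)=\mc{O}(J_\infty X)^{J_\infty G}$ sits inside $\gr V$ does \emph{not} mean these invariants admit $\g[t]$-invariant lifts to $V$; invariance in the associated graded is strictly weaker. Second, and more seriously, the $E_1$-page is $\bigl(\mc{O}(J_\infty S)\otimes_{\mc{O}(J_\infty\g^*)}\gr M\bigr)\otimes H^\bullet(\g,\C)$, so you would also have to lift the $\gr M$-factor through the fiber product, which you do not address; a general class there is not a product of separately closed pieces. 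The phrase ``$E_1$ is already as large as it can be'' is circular without an independent computation of the actual cohomology.

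The paper's argument is shorter and uses only the observation you already made, that $\mc{O}(J_\infty S)$ is a trivial $J_\infty G$-module. It follows that every class in $\mc{O}(J_\infty S)\otimes_{\mc{O}(J_\infty\g^*)}\gr M$ is $\g$-invariant, and since the $H^\bullet(\g,\C)$-factor is represented by $\bw{\bullet}(\g^*)^\g$, the whole $E_1$-page is represented by $\g$-invariant cochains. The higher differentials $d_r$ then vanish identically for $r\geq 1$ by exactly the mechanism in the proof of Proposition~\ref{Pro:relative-coh}: applying the induced differential to a $\g$-invariant representative tensored with a Chevalley cocycle gives zero. So keep your spectral-sequence setup, discard the lifting argument, and invoke $\g$-invariance of the $E_1$-page directly.
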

\begin{proof}
Note that 
$(C(\affg_{-\kappa_\g},V^{op}\* W), Q_{(0)})$
is a direct sum of finite-dimensional subcomplexes 
since $V^{op}\* W$  is a direct sum of objects in $\KL_{-\kappa_\g}^{ord}$.
Note also that
$\gr C(\affg_{-\kappa_\g},V^{op}\* M)\cong C(\affg^{cl},\gr V^{op}\* \gr M)
\cong C(\affg^{cl},\mc{O}(J_{\infty}X)\* \gr M)$.
Hence there is a spectral sequence $E_r\Rightarrow H^{\semiinf+\bullet}(\affg_{-\kappa_\g}, V^{op}\* M)$ such that 
\begin{align*}
\bigoplus_q E_{1}^{n-q,q}\cong H^{\semiinf+n}(\affg^{cl},\mc{O}(J_{\infty}X)\* \gr M)
\cong 
 \left(\mc{O}(J_{\infty}S)\*_{\mc{O}(J_{\infty*}\g^*)} \gr M\right)\* H^n(\g,\C).
\end{align*}
Because 
$\mc{O}(J_{\infty}S)$ is a trivial $J_{\infty}G$-module,
elements of $\mc{O}(J_{\infty}S)\*_{\mc{O}(J_{\infty*}\g^*)} \gr M$
 are  $\g$-invariant.
It follows that 
 $d_r: E_r\ra E_{r+1}$  is identically zero for all $r\geq 1$.
 Therefore,
 \begin{align*}
\gr H^{\semiinf+\bullet}(\affg_{-\kappa_\g},V^{op}\* M)\cong
 \left(\mc{O}(J_{\infty}S)\*_{\mc{O}(J_{\infty*}\g^*)} \gr M\right)\* H^\bullet(\g,\C).
\end{align*}
This completes the proof.
\end{proof}

We have a vertex algebra homomorphism
$V^0(\g)\ra C(\affg_{-\kappa_\g},V^{-\kappa_g}(\g))$,
$x_i\mapsto \widehat{x_i}:=Q_{(0)}\psi_i$,
$i=1,\dots, \dim \g$.
Thus
$C(\affg_{-\kappa_\g},M)$ 
is a $V^0(\g)$-module 
for $M\in \on{KL}_{-\kappa_\g}$.
Set
$$C(\affg_{-\kappa_g},\g,M)=\{c\in C(\affg_{-\kappa_\g},M)\mid
(\hat{x}_i)_{(0)}c=(\psi_i)_{(0)}c=0,\ \forall i=1,\dots,\dim \g\}.$$
Then 
$C(\affg_{-\kappa_\g},\g,M)$ is a subcomplex of 
$C(\affg_{-\kappa_\g},M)$,
and 
the cohomology of the complex
$(C(\affg_{-\kappa_\g},\g,M),Q_{(0)})$
is 
the  semi-infinite cohomology
$H^{\frac{\infty}{2}+\bullet}(\affg_{-\kappa_\g},\g,M)$
{\em relative to} $\g$
with coefficients in $M$.
If $V$ is a vertex algebra object in $\KL_{-\kappa_\g}$,
$H^{\frac{\infty}{2}+\bullet}(\affg_{-\kappa_\g},\g,V)$ is naturally a vertex superalgebra.

\begin{Pro}\label{Pro:relative-coh}
For $M\in \on{KL}_{-\kappa_\g}$,
\begin{align*}
H^{\semiinf+\bullet}(\affg_{-\kappa_\g},M)\cong H^{\semiinf+\bullet}(\affg_{-\kappa_\g},\g,M)\* H^\bullet(\g,\C).
\end{align*}
\end{Pro}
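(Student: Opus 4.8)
The plan is to realize $C(\affg_{-\kappa_\g},M)$ as a direct sum of two subcomplexes, one of which is the tensor product of the relative complex with the Chevalley--Eilenberg complex of $\g$, and the other of which is acyclic because $\g$ is reductive; this is the vertex-algebra counterpart of the classical comparison between basic and total cohomology of a $\g$-differential space of Cartan type, and is a minor variant of \cite{Ara09b}. Write $C=C(\affg_{-\kappa_\g},M)=M\otimes\bw{\semiinf+\bullet}(\g)$ with differential $Q_{(0)}$. From $\widehat{x_i}=Q_{(0)}\psi_i$ and the Borcherds identity one gets Cartan's formula $(\widehat{x_i})_{(0)}=\{Q_{(0)},(\psi_i)_{(0)}\}$, and (again by the Borcherds identity $[a_{(0)},b_{(-1)}]=(a_{(0)}b)_{(-1)}$) $Q_{(0)}$ is an odd derivation of the normally ordered product. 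The operators $(\psi_i)_{(0)}$ and $(\psi_i^*)_{(-1)}$ ($i=1,\dots,\dim\g$) satisfy the relations of the finite Clifford algebra $\Cl(\g\oplus\g^*)\cong\End(\bw{\bullet}\g^*)$, so $C\cong\bw{\bullet}\g^*\otimes C_0$ as a module over it, where $C_0=\bigcap_i\Ker(\psi_i)_{(0)}$; under this identification the relative subcomplex $C(\affg_{-\kappa_\g},\g,M)=\{c:(\psi_i)_{(0)}c=(\widehat{x_i})_{(0)}c=0\}$ becomes $\bw{0}\g^*\otimes C_0^\g=C_0^\g$, where $(\ )^\g$ is taken for the action $c\mapsto(\widehat{x_i})_{(0)}c$. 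The decisive point is that $Q_{(0)}$ preserves the conformal-weight-zero subspace $\bw{\bullet}\g^*=\bw{\semiinf+\bullet}(\g)_0$ and acts there as the Chevalley--Eilenberg differential $d_\g$ of $\g$ with trivial coefficients: a short mode computation gives $Q_{(0)}\psi_i^*=d_\g\psi_i^*\in\bw{2}\g^*$, with no matter part and no higher-mode ghost terms.

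Since $M\in\KL_{-\kappa_\g}$ and $\g$ is reductive, $\g$ acts locally finitely and semisimply on $C$ via the $(\widehat{x_i})_{(0)}$, so $C_0=C_0^\g\oplus C_0'$ with $C_0'$ the sum of the nontrivial isotypic components. Using the derivation property, the fact that $Q_{(0)}$ never lowers the $\bw{\bullet}\g^*$-degree, and the $\g$-equivariance of the degree-shifting components of $Q_{(0)}$, one checks that both $\bw{\bullet}\g^*\otimes C_0^\g$ and $\bw{\bullet}\g^*\otimes C_0'$ are $Q_{(0)}$-stable, so that $C=(\bw{\bullet}\g^*\otimes C_0^\g)\oplus(\bw{\bullet}\g^*\otimes C_0')$ as complexes. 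On the first summand $Q_{(0)}$ acts as $d_\g\otimes\id\pm\id\otimes Q_{(0)}|_{C_0^\g}$, so by the K\"unneth formula over $\C$ its cohomology is $H^\bullet(\g,\C)\otimes H^{\semiinf+\bullet}(\affg_{-\kappa_\g},\g,M)$, which is the right-hand side of the proposition.

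It remains to show the second summand $\bw{\bullet}\g^*\otimes C_0'$ is acyclic. I would filter it by the $\bw{\bullet}\g^*$-degree, which is nonnegative, bounded, and not lowered by $Q_{(0)}$; the associated spectral sequence has $E_1$-term $\bw{\bullet}\g^*\otimes H^\bullet(C_0',\tilde Q_0)$, where $\tilde Q_0$ is the degree-preserving part of $Q_{(0)}$, and, after identifying the degree-raising-by-one part of $Q_{(0)}$ with the coefficient part of the Chevalley--Eilenberg differential, its $E_1$-differential is the Chevalley--Eilenberg differential of $\g$ with coefficients in the locally finite $\g$-module $H^\bullet(C_0',\tilde Q_0)$. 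As this module has no trivial subquotient, its $\g$-cohomology vanishes, so $E_2=0$ and the second summand is acyclic. This yields $H^{\semiinf+\bullet}(\affg_{-\kappa_\g},M)\cong H^{\semiinf+\bullet}(\affg_{-\kappa_\g},\g,M)\otimes H^\bullet(\g,\C)$; when $M$ is a vertex algebra object all maps in sight are multiplicative, so this is an isomorphism of vertex superalgebras.

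The step I expect to require the most care, and the one that makes the argument work at all, is the claim that $\bw{\bullet}\g^*\subseteq C$ is a $Q_{(0)}$-subcomplex on which $Q_{(0)}=d_\g$. If $Q_{(0)}\psi_i^*$ had a nonzero component in $C_0$ (i.e.\ in $\bw{\bullet}\g^*$-degree zero) -- as happens for a Weil-type complex -- then $\bw{\bullet}\g^*\otimes C_0^\g$ would not be a subcomplex and the statement would fail; this is exactly where the special structure of $\bw{\semiinf+\bullet}(\g)$ enters. A secondary, routine point is convergence of the spectral sequence: one runs everything on the finite-dimensional homogeneous components of the separated complex $C$, first for $M\in\KL^{ord}_{-\kappa_\g}$ and then passing to inductive limits. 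Alternatively, once the Clifford/Cartan structure is in place one may simply invoke the classical comparison theorem for $\g$-differential spaces of Cartan type, or the analogous relative-versus-absolute semi-infinite cohomology statement in \cite{Ara09b}.
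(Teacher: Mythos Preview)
Your overall strategy---split the complex using the finite Clifford structure and the semisimplicity of the $\g$-action---is natural, but the argument has a genuine gap at precisely the step you flag as delicate, and it cannot be repaired as written.

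The claim that $Q_{(0)}$ never lowers the $\bw{\bullet}\g^*$-degree is false. In the ghost part of $Q_{(0)}$ the mode expansion of $c_{ij}^k(:\psi_i^*\psi_j^*\psi_k:)_{(0)}$ contains terms such as $c_{ij}^k(\psi_i^*)_{(-2)}(\psi_j^*)_{(0)}(\psi_k)_{(0)}$, in which $(\psi_k)_{(0)}$ lowers the degree while neither $(\psi_i^*)_{(-2)}$ nor $(\psi_j^*)_{(0)}$ raises it. Concretely, for $c=(\psi_m)_{(-1)}|0\rangle\in C_0$ one computes
\[
Q_{(0)}\bigl((\psi_l^*)_{(-1)}c\bigr)=
-c_{im}^l(\psi_i^*)_{(-2)}|0\rangle
-\tfrac{1}{2}c_{ij}^l:\psi_i^*\psi_j^*\psi_m:
+c_{mj}^k:\psi_l^*\psi_j^*\psi_k:,
\]
and the first summand lies in $C_0$, i.e.\ in $\bw{0}\g^*\otimes C_0$. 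The source of the error is the conflation of two different identifications of $\bw{\bullet}\g^*\otimes C_0$ with $C$. The Clifford identification sends $\psi_{i_1}^*\wedge\cdots\wedge\psi_{i_q}^*\otimes c$ to $(\psi_{i_1}^*)_{(-1)}\cdots(\psi_{i_q}^*)_{(-1)}c$, while the derivation property of $Q_{(0)}$ is with respect to the $(-1)$-product $\omega\otimes c\mapsto \omega_{(-1)}c$. For $\omega\in\bw{q}\g^*$ with $q\ge 2$ these differ by normal-ordering anomalies: already $(:\psi_i^*\psi_j^*:)_{(-1)}c\ne(\psi_i^*)_{(-1)}(\psi_j^*)_{(-1)}c$ in general, because $\psi_i^*$ and $\psi_k$ have a nontrivial OPE. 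So $(Q_{(0)}\psi_i^*)_{(-1)}=(d_\g\psi_i^*)_{(-1)}$ is \emph{not} a pure $\bw{\bullet}\g^*$-degree-raising operator, and neither the direct-sum splitting nor your filtration by $\bw{\bullet}\g^*$-degree is compatible with $Q_{(0)}$.

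The paper proceeds differently and more simply: it runs the Hochschild--Serre spectral sequence for the subalgebra $\g\subset\affg_{-\kappa_\g}$, whose filtration is by the \emph{relative} degree $p$ (equivalently, total degree minus $\bw{\bullet}\g^*$-degree); this filtration \emph{is} preserved by $Q_{(0)}$. One gets $E_1^{p,q}\cong C^p(\affg_{-\kappa_\g},\g,M)\otimes H^q(\g,\C)$ (using reductivity of $\g$) and $E_2^{p,q}\cong H^{\semiinf+p}(\affg_{-\kappa_\g},\g,M)\otimes H^q(\g,\C)$. The degeneration at $E_2$ is then immediate because relative cocycles are $\g$-invariant, and both factors lift canonically to the absolute cohomology via the inclusions of the relative subcomplex and of $(\bw{\bullet}\g^*,d_\g)$. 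If you want to salvage your approach, you should replace the $\bw{\bullet}\g^*$-degree by this relative-degree filtration; what you then obtain is exactly the paper's spectral-sequence argument.
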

\begin{proof}
We may assume that
$M$ is finitely generated.
Consider the 
Hochschild-Serre spectral sequence
$E_r\Rightarrow H^{\semiinf+i}(\affg_{-\kappa_\g},M)$
for the subalgebra
$\g\subset \affg_{-\kappa_\g}$.
In the first term we have
\begin{align*}
E_1^{p,q}\cong C^p(\affg_{-\kappa_\g},\g,M)\* H^q(\g,\C).
\end{align*}
 In the second term we have
\begin{align}
E_2^{p,q}\cong H^p(\affg_{-\kappa_\g},\g,M)\* H^q(\g,\C).
\label{eq:E2-dec}
\end{align}
We can therefore represent classes in 
$E_2^{p,q}$
 as sums of tensor products $\omega_1\* \omega_2$ of a cocycle  $\omega_1$
 in $C^p(\affg_{-\kappa_\g},\g,M)$ representing a class in $H^p(\affg_{-\kappa_\g},\g,M)$ and a cocycle 
$\omega_2$ representing a class in $H^q(\g,\C)$.
Applying the differential to this class, 
we find that it is identically equal to zero because $\omega_1$
is $\g$-invariant. Therefore all the classes in $E_2$ survive. 
Moreover, all of the vectors of the two factors in the decomposition
\eqref{eq:E2-dec}
lift canonically to the cohomology 
$H^{\semiinf+\bullet}(\affg_{-\kappa_\g},M)$,
and so we obtain the desired isomorphism.
\end{proof}

The following assertion follows immediately 
from Theorem \ref{Th:vanishing-BRST-g} and Proposition~\ref{Pro:relative-coh}.
\begin{Th}\label{Th:vanishing-BRST-g-rel}
Let $V$, $S$,
$M$,
be as in Theorem \ref{Th:vanishing-BRST-g}.
Then
 we have
\begin{align*}
\gr H^{\semiinf+i}(\affg_{-\kappa_\g},\g, V^{op}\* M)\cong
\begin{cases}
 \mc{O}(J_{\infty}S)\*_{\mc{O}(J_{\infty*}\g^*)} \gr M&\text{for }i=0,\\
 0&\text{otherwise}.
 \end{cases}
\end{align*}
In particular,
\begin{align*}
SS(H^{\semiinf+0}(\affg_{-\kappa_\g},\g, V^{op}\* W))\cong J_{\infty}S\times_{J_\infty \g^*}SS(W),
\\\text{and }\quad \tilde{X}_{H^{\semiinf+0}(\affg_{-\kappa_\g},\g, V^{op}\* W))}\cong S\times_{ \g^*}\tilde{X}_W,
\end{align*}
for a vertex algebra object $W$ in $\on{KL}_{\kappa^*}$.
\end{Th}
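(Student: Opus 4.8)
The plan is to deduce the statement from Proposition~\ref{Pro:relative-coh} and Theorem~\ref{Th:vanishing-BRST-g}. First I would apply Proposition~\ref{Pro:relative-coh} to the object $V^{op}\* M\in\KL_{-\kappa_\g}$, which gives
\[
H^{\semiinf+\bullet}(\affg_{-\kappa_\g},V^{op}\* M)\;\cong\;H^{\semiinf+\bullet}(\affg_{-\kappa_\g},\g,V^{op}\* M)\* H^\bullet(\g,\C).
\]
The proof of Proposition~\ref{Pro:relative-coh} produces this isomorphism from the Hochschild--Serre spectral sequence of $\g\subset\affg_{-\kappa_\g}$, which degenerates at $E_2$ with all classes lifting canonically; since the canonical filtration on $V^{op}\* M$ induces a compatible filtration on the whole Hochschild--Serre setup and $H^\bullet(\g,\C)=\bw{\bullet}(\g^*)^\g$ sits in filtration degree $0$, I would first observe that this isomorphism is strictly compatible with the canonical filtrations and the cohomological gradings, so that passing to associated graded yields
\[
\gr H^{\semiinf+\bullet}(\affg_{-\kappa_\g},V^{op}\* M)\;\cong\;\bigl(\gr H^{\semiinf+\bullet}(\affg_{-\kappa_\g},\g,V^{op}\* M)\bigr)\* H^\bullet(\g,\C).
\]

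On the other hand, Theorem~\ref{Th:vanishing-BRST-g} identifies the left-hand side with $\bigl(\mc{O}(J_{\infty}S)\*_{\mc{O}(J_{\infty}\g^*)}\gr M\bigr)\* H^\bullet(\g,\C)$. Comparing the two expressions gives an isomorphism of cohomologically graded vertex Poisson modules in which $H^\bullet(\g,\C)$ occurs as a common tensor factor on both sides, and I expect the only genuinely non-formal point to be the cancellation of this factor. Working summand-by-summand in the decomposition of $C(\affg_{-\kappa_\g},\g,V^{op}\* M)$ into finite-dimensional subcomplexes — which exists because $V^{op}\* M$ is a direct sum of objects of $\KL_{-\kappa_\g}^{ord}$ — one compares graded dimensions; since the Poincaré polynomial of the nonzero finite-dimensional graded space $H^\bullet(\g,\C)$ has constant term $1$ (its bottom component is $H^0(\g,\C)=\C$), it is a non-zero-divisor and may be cancelled. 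This forces $\gr H^{\semiinf+i}(\affg_{-\kappa_\g},\g,V^{op}\* M)=0$ for $i\neq 0$ and $\gr H^{\semiinf+0}(\affg_{-\kappa_\g},\g,V^{op}\* M)\cong\mc{O}(J_{\infty}S)\*_{\mc{O}(J_{\infty}\g^*)}\gr M$ for $M\in\KL_{\kappa^*}^{ord}$; the case of general $M\in\KL_{\kappa^*}$ then follows by passing to an inductive limit. This cancellation and the attendant finiteness bookkeeping are the main (if mild) obstacle.

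For the remaining assertions, let $W$ be a vertex algebra object in $\on{KL}_{\kappa^*}$. By the above its relative cohomology $H^{\semiinf+i}(\affg_{-\kappa_\g},\g,V^{op}\* W)$ has vanishing associated graded for $i<0$, and it is separated by Theorem~\ref{Th:vanishing-BRST-g} together with Proposition~\ref{Pro:relative-coh}, hence it is concentrated in cohomological degree $0$; taking the degree-$0$ part of the isomorphism of Proposition~\ref{Pro:relative-coh} (and using $H^0(\g,\C)=\C$) identifies $H^{\semiinf+0}(\affg_{-\kappa_\g},\g,V^{op}\* W)$ with $H^{\semiinf+0}(\affg_{-\kappa_\g},V^{op}\* W)$ as vertex algebras. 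The statements about $SS$ and $\tilde X$ are therefore exactly those already proved in Theorem~\ref{Th:vanishing-BRST-g}: one has $SS(H^{\semiinf+0}(\affg_{-\kappa_\g},\g,V^{op}\* W))=\Spec\gr H^{\semiinf+0}\cong\Spec\bigl(\mc{O}(J_{\infty}S)\*_{\mc{O}(J_{\infty}\g^*)}\gr W\bigr)\cong J_{\infty}S\times_{J_{\infty}\g^*}SS(W)$, and applying $R_{(-)}=(-)/F^1$, together with $R_{\mc{O}(J_{\infty}Z)}=\mc{O}(Z)$ for an affine scheme $Z$, gives $R_{H^{\semiinf+0}}\cong\mc{O}(S)\*_{\mc{O}(\g^*)}R_W$, that is, $\tilde X_{H^{\semiinf+0}(\affg_{-\kappa_\g},\g,V^{op}\* W)}\cong S\times_{\g^*}\tilde X_W$.
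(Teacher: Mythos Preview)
Your approach is essentially the same as the paper's: the paper simply states that the theorem follows immediately from Theorem~\ref{Th:vanishing-BRST-g} and Proposition~\ref{Pro:relative-coh}, and you have filled in the details of that deduction. The filtration-compatibility check and the cancellation of the $H^\bullet(\g,\C)$ factor that you single out are precisely the routine bookkeeping the paper leaves implicit, and your treatment of them is correct.
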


 \begin{Pro}\label{Pro:vanishing-relative}
 Let $M\in \on{KL}_{-\kappa_\g}$.
 Suppose that
 $M$ is free over $U(t^{-1}\g[t^{-1}])$ and 
 cofree over $U(t\g[t])$.
 Then 
 $H^{\semiinf+i}(\affg_{-\kappa_\g},\g,M)=0$ for $i\ne 0$.
 \end{Pro}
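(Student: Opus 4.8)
The plan is to compute $H^{\semiinf+\bullet}(\affg_{-\kappa_\g},\g,M)$ by reducing, via a standard resolution argument, to the case of a ``nice'' module and then invoking a Künneth-type vanishing. First I would observe that the relative complex $C(\affg_{-\kappa_\g},\g,M)$ admits a natural bigrading by the $\mf{h}$-weight coming from the zero-mode action and by the eigenvalue of a suitable degree operator (or, more robustly, that the complex is filtered so that its associated graded computes Lie-algebra homology/cohomology of $t\g[t]$ and $t^{-1}\g[t^{-1}]$ against $M$ separately). The hypotheses are tailored to exactly this: $M$ free over $U(t^{-1}\g[t^{-1}])$ kills all higher homology of the ``creation'' half, and $M$ cofree over $U(t\g[t])$ kills all higher cohomology of the ``annihilation'' half. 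Concretely, using the decomposition $\affg_{-\kappa_\g} = t^{-1}\g[t^{-1}] \oplus \g \oplus t\g[t]\C\mathbf 1$ and the fermionic Fock space $\bw{\semiinf+\bullet}(\g)$ built on the corresponding polarization, the relative complex factors (as a bicomplex, after passing to $\g$-invariants) into a tensor product of a Chevalley–Eilenberg cochain complex for $t\g[t]$ with coefficients in $M$ and a Chevalley–Eilenberg chain complex for $t^{-1}\g[t^{-1}]$ with coefficients in $M$.

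The key steps, in order, are: (i) set up the two-step filtration on $C(\affg_{-\kappa_\g},\g,M)$ whose associated graded differential is the sum of the $t^{-1}\g[t^{-1}]$-homology differential and the $t\g[t]$-cohomology differential, arranged so that the relative condition (killing the $(\hat x_i)_{(0)}$ and $(\psi_i)_{(0)}$) removes the finite-dimensional $\g$ from the picture; (ii) compute the $E_1$-page: by freeness of $M$ over $U(t^{-1}\g[t^{-1}])$ the ``chain'' direction has homology concentrated in degree $0$, and by cofreeness over $U(t\g[t])$ the ``cochain'' direction has cohomology concentrated in degree $0$; (iii) conclude that $E_1$ is concentrated in total degree $0$, so the spectral sequence degenerates and $H^{\semiinf+i}=0$ for $i\neq 0$. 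One should be careful that these resolutions are by possibly infinite-dimensional modules, so the relevant completions and the convergence of the spectral sequence need the usual smoothness/grading-boundedness in $\on{KL}_{-\kappa_\g}$, which is why one reduces first to finitely generated $M$ (exactly as in the proof of Proposition~\ref{Pro:relative-coh}) and then takes a direct limit.

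The main obstacle I expect is making precise the claim that the relative semi-infinite complex genuinely splits as a tensor product of the two Chevalley–Eilenberg complexes, i.e.\ that the cross-terms in $Q_{(0)}$ (those mixing creation and annihilation fermions, and the ``anomaly'' terms from the central extension at level $-\kappa_\g$) either vanish on $\g$-invariants or only contribute to higher filtration degree. Here the choice of level $-\kappa_\g$ (the critical level for the semi-infinite cohomology of $\affg$, where the induced level on the ghosts exactly cancels) is essential; I would verify by a direct mode computation that $\widehat{x_i} = Q_{(0)}\psi_i$ generates a $V^0(\g)$ — level zero, not a twist — which is precisely the statement recorded just before the Proposition, and that after imposing $(\widehat x_i)_{(0)} c = (\psi_i)_{(0)} c = 0$ the remaining differential on the invariants is the plain sum of the two half-differentials with no anomaly. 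Granting that, the vanishing is immediate from the freeness and cofreeness hypotheses.
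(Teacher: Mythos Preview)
Your approach is different from the paper's, and as written it has a real gap in step~(ii). In a bicomplex (or filtered complex) spectral sequence you compute $E_1$ by taking cohomology with respect to \emph{one} of the two differentials; you cannot invoke freeness over $U(t^{-1}\g[t^{-1}])$ and cofreeness over $U(t\g[t])$ simultaneously on the same page. Concretely, if you filter so that $d_0$ is the $t^{-1}\g[t^{-1}]$--chain differential, freeness gives $E_1$ concentrated in chain-degree~$0$, i.e.\ $E_1 \cong \big(M/(t^{-1}\g[t^{-1}])M\big)\otimes\bw{\bullet}(t\g[t])^*$; but now $d_1$ is the $t\g[t]$--cochain differential on this \emph{quotient}, and there is no reason $M/(t^{-1}\g[t^{-1}])M$ inherits cofreeness from $M$. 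The same obstruction appears if you run the filtration the other way. Your honest acknowledgment that the relative complex does not literally factor as a tensor product of the two Chevalley--Eilenberg complexes is exactly the point: without that factorization there is no K\"unneth shortcut, and the cross-terms you worry about are what prevent the two vanishings from being applied independently on a single page.

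The paper sidesteps this entirely by passing through the \emph{absolute} cohomology and using Proposition~\ref{Pro:relative-coh}. One runs two \emph{separate} Hochschild--Serre spectral sequences on $H^{\semiinf+\bullet}(\affg_{-\kappa_\g},M)$: the one for the subalgebra $t^{-1}\g[t^{-1}]$ (using only freeness) gives absolute vanishing for $i<0$, while the one for the subalgebra $\g[t]$ (using only cofreeness over $U(t\g[t])$, together with $\g$-finiteness) gives $E_1^{p,q}\cong (M\otimes\bw{-p}(\affg/\g[t]))^{\g[t]}\otimes H^q(\g,\C)$, hence absolute vanishing for $i>\dim\g$. The decomposition $H^{\semiinf+\bullet}(\affg_{-\kappa_\g},M)\cong H^{\semiinf+\bullet}(\affg_{-\kappa_\g},\g,M)\otimes H^\bullet(\g,\C)$ then converts ``absolute vanishes outside $[0,\dim\g]$'' into ``relative vanishes outside $\{0\}$'', because $H^0(\g,\C)$ and $H^{\dim\g}(\g,\C)$ are both nonzero. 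The point you are missing is precisely this use of Proposition~\ref{Pro:relative-coh} to decouple the two hypotheses: each is applied in its own spectral sequence, never on the same page.
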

 \begin{proof}
 Since $M$ is free over $U(t^{-1}\g[t^{-1}])$,
 we obtain that 
  $H^{\semiinf+i}(\affg_{-\kappa_\g},M)=0$ for $i<0$
  by considering the  Hochschild-Serre spectral sequence
  for the subalgebra
  $t^{-1}\g[t^{-1}]\subset \affg_{-\kappa_\g}$,
  see \cite[Theorem 2.3]{Vor93}.
  Therefore by Proposition \ref{Pro:relative-coh},
  $H^{\semiinf+i}(\affg_{-\kappa_\g},\g,M)=0$ for $i<0$.
  Next, consider the 
  Hochschild-Serre spectral sequence
  $E_r\Rightarrow H^{\semiinf+\bullet}(\affg_{-\kappa_\g},M)$
  for the subalgebra
  $\g[t]\subset \affg_{-\kappa_\g}$ as in \cite[Theorem 2.2]{Vor93}.
 By definition,
 we have
$  
E_1^{p,q}=H^q(\g[t], M\* \bw{-p}(\affg/\g[t]))
$.
 Since $M$ is cofree over
   $U(t\g[t])$,
   so is $M\* \bw{-p}(\affg/\g[t])$.
Hence 
\begin{align}
E_1^{p,q}\cong \left(M\* \bw{-p}(\affg/\g[t])\right)^{\g[t]}\* H^q(\g,\C).
\end{align}
It follows that
$E_1^{p,q}=0$ for $q>\dim G$,
and therefore 
 $H^{\semiinf+i}(\affg_{-\kappa_\g},M)=0$ for  $i>\dim G$.
By Proposition \ref{Pro:relative-coh},
this implies that
 $H^{\semiinf+i}(\affg_{-\kappa_\g},\g,M)=0$ for all $i>0$.

 \end{proof}

The form
\begin{align}
\kappa_c=-\frac{1}{2}\kappa_\g
\end{align}
 is called the {\em critical level}
for $\g$.
For $M,N\in \KL_{\kappa_c}$,
we have $M\* N\in \KL_{-\kappa_\g}$.
We set
\begin{align}
M\circ N:=H^{\semiinf+\bullet}(\affg_{-\kappa_\g},\g, M\* N).
\label{eq:MTproduct}
\end{align}

\section{Feigin-Frenkel center}
\label{Sectioin:Feigin-Frenkel center}
Let
 $$\mf{z}(\affg)=Z(V^{\kappa_c}(\g))=V^{\kappa_c}(\g)^{\g[t]},$$
the Feigin-Frenkel center of the vertex algebra $V^{\kappa_c}(\g)$ (\cite{FeiFre92}).
We have
 isomorphisms (\cite{EisFre01,Fre05})
\begin{align*}
\tilde{X}_{\mf{z}(\affg)}\cong  \g^*/\!/G,\quad
SS(\mf{z}(\affg))\cong J_{\infty}( \g^*/\!/G),\quad
\on{Zhu}(\mf{z}(\affg))\cong \mc{Z}(\g),
\end{align*}
where $ \mc{Z}(\g)$ denotes the center of $U(\g)$.
The grading of $V^{\kappa_c}(\g)$ induces a grading of
$\mf{z}(\affg)$:
 $\mf{z}(\affg)=\bigoplus_{\Delta\in \Z_{\geq 0}}\mf{z}(\affg)_{\Delta}$,
$\mf{z}(\affg)_{\Delta}=\mf{z}(\affg)\cap V^{\kappa}(\g)_{\Delta}$.
Let $d_1,\dots, d_{\on{rk}\g}$ be the 
exponents of $\g$,
where $\on{rk}\g$ is the rank of $\g$.
Choose homogeneous strong generators $P_1,\dots, P_{\on{rk}\g}\in \mf{z}(\affg)$ with $P_i\in \mf{z}(\g)_{d_i+1}$.
Their images
form homogeneous generators 
of $R_{\mf{z}(\affg)}=\mc{O}(\g^*)^G$,
$\gr \mf{z}(\affg)=\mc{O}(J_{\infty}\g^*)^{J_{\infty}G}$,
$\on{Zhu}(\mf{z}(\affg))=\mc{Z}(\g)$,
 respectively.
 We use the notation
 \begin{align*}
P_i(z)=\sum_{n\in \Z}P_{i,(n)}z^{-n-1}=\sum_{n\in \Z}P_{i,n}z^{-n-d_i-1},
\end{align*}
so that the operator $P_{i,n}$ has degree $-n$ on $V^{\kappa_c}(\g)$.

Let  $M\in \on{KL}_{\kappa_c}$.
Then
$\mf{z}(\affg)$ naturally acts on $M$,
 and hence, 
 $M$ can be regarded as a module of the polynomial ring
 \begin{align}
\LZ=\C[P_{i,n};i=1,\dots,\on{rk}\g,n\in \Z].
\label{eq:LZ}
\end{align}
Set
\begin{align}
\LZ_{(\geq 0)}=\C[P_{i,(n)};i=1,\dots,\on{rk}\g,n\geq 0],\quad \LZ_{(<0)}=\C[P_{i,(n)};i=1,\dots,\on{rk}\g,n< 0],
\label{eq:Z(<0)}\\
\LZ_{>0}=\C[P_{i,n};i=1,\dots,\on{rk}\g,n>0],\quad \LZ_{<0}=\C[P_{i,n};i=1,\dots,\on{rk}\g,n< 0],\\
\LZ_0=\C[P_{i,0};i=1,\dots,\on{rk}\g].
\end{align}
Then 
$\LZ_{(<0)}\subset \LZ_{<0}$,
$\LZ_{(\geq 0)}\supset \LZ_{>0}$,
and 
$\LZ_{(<0)}\cong \mf{z}(\affg)$. 
We have the isomorphism
$$o:\LZ_0\isomap \Zhu(\mf{z}(\affg))\isomap \mc{Z}(\g).$$

Let $P_+$ be the set of dominant weights of $\g$ as in Introduction.
For $\lam\in P_+$,
set 
\begin{align}
\mathbb{V}_{\lam}=\mathbb{V}^{\kappa_c}_{\lam}\label{eq:critical-Weyl}
\end{align}
(see \eqref{eq:Weyl-module}).
We regard $\mathbb{V}_{\lam}$ a $\Z_{\geq 0}$-graded 
$\affg_{\kappa_c}$-module
by giving zero degree  to the highest weight vector.
Let  $\mathbb{L}_\lam$ be the unique simple graded quotient of $\mathbb{V}_{\lam}$,
and 
let $\chi_{\lam}:\LZ\ra \C$ be the evaluation at $\mathbb{L}_{\lam}$.
Since $\mathbb{L}_{\lam}$
is graded,
$\chi_{\lam}(P_{i,n})=\begin{cases}
\gamma_{\lam}(o(P_{i,n}))&\text{for }n=0,\\
0&\text{for }n\ne 0,\end{cases}$
 where $\gamma_{\lam}:\mc{Z}(\g)\ra \C$ is the evaluation at $V_{\lam}$.
   Let $\KL^{[\lam]}$ be the 
full subcategory of $\KL$ consisting of objects $M$ such that 
$\chi_{\lam}(P_{i,0})m=\gamma_{\lam}(o(P_{i,0}))m$ for all $i$.
Then
\begin{align}
\KL=\bigoplus_{\lam\in P_+}\KL^{[\lam]}.
\label{eq:dec-KL-critical}
\end{align}
For $M\in \KL$,
let $M=\bigoplus_{\lam\in P_+}M_{[\lam]}$, $M_{[\lam]}\in \KL_{[\lam]}$,
be the corresponding decomposition.
Clearly,
$\mathbb{V}_{\lam},\mathbb{L}_{\lam}\in \in \KL^{[\lam]}$
and
any simple object in $\KL^{[\lam]}$
is isomorphic to 
 $\mathbb{L}_{\lam}[d]$ for some $d\in \C$, 
where $\mathbb{L}_{\lam}[d]$ denotes the 
$\affg_{\kappa_c}$-module $\mathbb{L}_{\lam}$
whose grading is shifted as
$(\mathbb{L}_{\lam}[d])_{d'}=
(\mathbb{L}_{\lam})_{d+d'}$.

We denote by
$\LZ\Mod$
the category of positive energy
representations of the vertex algebra $\mf{z}(\affg)$,
that is, the category
of $\LZ$-modules $M$ that admits a grading
$M=\bigoplus_{d\in h+\Z_{\geq 0}}M_{d}$, $h\in \C$,
such that 
$P_{i,n}M_d\subset M_{d-n}$
for all $i$ and $n$.
For $M\in \LZ\Mod$,
we set
$\ch M=\sum_{d\in \C}q^d \dim M_d$
if $\dim M_d<\infty$ for all $d$.
Let $\LZ\Mod^{[\lam]}$ be the 
full subcategory of $\LZ\Mod$ consisting of objects $M$ 
on which $P_{i,0}$ acts as $\chi_{\lam}(P_{i,0})$ for all $i=1,\dots,
\on{rk}\g$,
and set $\LZ\Mod_{reg}=\bigoplus_{\lam\in P_+}
\LZ\Mod^{[\lam]}$.


%

%
%
%
%
%
%
%
%
%
%
%

\section{Chiral differential operators on $G$}
\label{Section:cdoonG}
There are two commuting Hamiltonian actions on the cotangent bundle $T^*G=G\times \g^*$
of $G$
given by
$g.(h,x)=(hg^{-1},gx)$ and 
$g.(h,x)=(g h,x)$.
The corresponding moment maps
are 
\begin{align*}
\mu_L:G\times \g^*\ra \g^*,\quad (g,x)\mapsto x, \quad \text{and}\quad
\mu_R:G\times \g^*\ra \g^*,\quad (g,x)\mapsto g.x,
\end{align*}
respectively.
The algebra
$\mc{O}(T^*G)$ 
can be considered
as a Poisson algebra object in $\on{Coh}^G(\g^*)$
with the moment map $\mu_L$
or $\mu_R$.
Note that the isomorphism
$T^*G\isomap T^*G$,
$(g,x)\mapsto (g^{-1},gx)$,
exchanges $\mu_L$ and $\mu_R$.

We have 
\begin{align}
\mc{O}(T^*G)^{op}\cong \mc{O}(T^*G),\quad 
x\mapsto -x\ (x\in \g\subset \mc{O}(\g^*)),\
f\mapsto f\ (f\in \mc{O}(G)),
\end{align}
and so we do not 
distinguish
between $T^*G$ and $(T^*G)^{op}$.
\begin{Lem}
\begin{enumerate}
\item For any  Poisson algebra object $R$ in $\on{Coh}^G(\g^*)$,
we have
$$H^{\semiinf+0}(\g,\mc{O}(T^*G)\* R)\cong R.$$
\item 
For any  vertex Poisson algebra object $V$ in $\on{Coh}^{J_{\infty}G}(J_{\infty}\g^*)$,
we have
$$H^{\semiinf+0}(\affg^{cl},\mc{O}(J_{\infty}T^*G)\* V)\cong V.$$

\end{enumerate}

\end{Lem}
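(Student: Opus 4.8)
The plan is to reduce both parts to the BRST computation \eqref{eq:BRST-cl} (resp. \eqref{eq:cintg}) established above, using that $T^*G \cong G \times \g^*$ is exactly of the form $X \cong G \times S$ with $S = \g^*$ and moment map $\mu_L$. First I would treat part (1): take $X = T^*G$ with the $G$-action whose moment map is $\mu_L$, so the closed subscheme $S = \{e\}\times\g^* \subset T^*G$ satisfies that the action map $G \times S \to X$, $(g,s)\mapsto g\cdot s$, is an isomorphism of $G$-schemes (this is just the trivialization $T^*G \cong G\times\g^*$ by left translation). The moment map $\mu_L : G\times\g^* \to \g^*$ is the projection, hence visibly flat. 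Then the general formula for $M \in \on{Coh}^G(\g^*)$ preceding this lemma, namely
\begin{align*}
H^{\semiinf+\bullet}(\g^{cl},\mc{O}(X)^{op}\* M)\cong \left(\mc{O}(S)\*_{\mc{O}(\g^*)}M\right)\* H^\bullet(\g,\C),
\end{align*}
together with $\mc{O}(S) = \mc{O}(\g^*)$ (so $\mc{O}(S)\*_{\mc{O}(\g^*)} M \cong M$) and $\mc{O}(T^*G)^{op}\cong\mc{O}(T^*G)$, gives $H^{\semiinf+\bullet}(\g^{cl},\mc{O}(T^*G)\* R)\cong R\* H^\bullet(\g,\C)$. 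Taking degree-zero cohomological component and recalling $H^0(\g,\C) = \C$ yields $H^{\semiinf+0}(\g^{cl},\mc{O}(T^*G)\* R)\cong R$. Strictly one should note that $R$ here plays the role of $M$ in the cited formula; the multiplicative (Poisson-algebra, not merely module) structure is the one inherited from \eqref{eq:BRST-cl}, which applies verbatim with $X' = \Spec R$ and $X = T^*G$.

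For part (2), I would run the identical argument one level up, using the arc-space formalism. Apply \eqref{eq:cintgm} (or \eqref{eq:cintg}) with $X = T^*G$: one has $J_\infty(T^*G) \cong J_\infty G \times J_\infty \g^*$, the chiral moment map $J_\infty \mu_L : J_\infty(T^*G) \to J_\infty\g^*$ is the projection hence flat, and $J_\infty S = J_\infty\g^*$. Hence for any Poisson vertex algebra object $V$ in $\on{QCoh}^{J_\infty G}(J_\infty\g^*)$,
\begin{align*}
H^{\semiinf+\bullet}(\affg^{cl},\mc{O}(J_\infty T^*G)\* V)\cong\left(\mc{O}(J_\infty\g^*)\*_{\mc{O}(J_\infty\g^*)}V\right)\* H^\bullet(\g,\C)\cong V\* H^\bullet(\g,\C),
\end{align*}
and restricting to cohomological degree zero gives the claim. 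As before I would use $\mc{O}(J_\infty T^*G)^{op}\cong\mc{O}(J_\infty T^*G)$, which follows by applying $J_\infty$ to the isomorphism $\mc{O}(T^*G)^{op}\cong\mc{O}(T^*G)$.

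The only genuine point requiring care — and the place I would spend the most attention — is verifying the hypotheses of \eqref{eq:cintg}/\eqref{eq:cintgm} precisely as stated: that the splitting $X \cong G\times S$ is $G$-equivariant and that flatness of $\mu_L$ (immediate, a projection) survives after passing to arc spaces, i.e. that $J_\infty\mu_X$ is flat. Flatness of arc space maps is not automatic from flatness of the original map, but here $J_\infty\mu_L$ is literally the projection $J_\infty G\times J_\infty\g^*\to J_\infty\g^*$, so it is flat (indeed free). Everything else is a direct substitution into results already proved in Section~\ref{section:Some variants of Chiral Hamiltonian reductions}; there is no new homological input, so this lemma should be regarded as a worked example of that machinery rather than an independent theorem.
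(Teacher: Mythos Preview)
Your proposal is correct and takes essentially the same approach as the paper: apply \eqref{eq:BRST-cl} and \eqref{eq:cintg} with $X=T^*G$ and $S=\g^*$, so that $\mc{O}(S)\*_{\mc{O}(\g^*)}R\cong R$ (and analogously for arc spaces), then read off degree zero. The paper's proof is the one-line version of exactly this; your additional verification of the hypotheses (the $G$-equivariant splitting, flatness of $\mu_L$ and of $J_\infty\mu_L$ as a projection, and the $^{op}$ identification) is appropriate but adds no new idea.
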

\begin{proof}
By \eqref{eq:BRST-cl},
we have $H^{\semiinf+0}(\g,\mc{O}(T^*G)\* R)\cong \mc{O}(\g^*)\*_{\mc{O}(\g^*)}R\cong R$.
Similarly,
by \eqref{eq:cintg},
$H^{\semiinf+0}(\affg^{cl},\mc{O}(J_\infty T^*G)\* V)\cong \mc{O}(J_{\infty}\g^*)\*_{\mc{O}(J_{\infty}\g^*)}V\cong V$.
\end{proof}

%

Let $\kappa$ be an invariant symmetric bilinear form of $\g$ as before.
Let 
\begin{align}
\Dch_{G,\kappa}=U(\affg_{\kappa})\otimes _{U(\fing[[t]]\+ \C \1)}\mathcal{O}(J_{\infty}G),
\label{eq:cdo-def}
\end{align}
where $\fing[[t]]=J_{\infty}\g$ acts on $\mathcal{O}(J_{\infty}G)$ 
via the Poisson vertex algebra homomorphism
$(J_{\infty}\mu_L)^*: \mc{O}(J_{\infty}\g^*)\ra \mathcal{O}(J_{\infty}G)$
and $\1$ acts as the identity.
There exists a unique vertex algebra structure on $\mc{D}_{G,\kappa}^{ch}$ 
such that 
\begin{align*}
\pi_L\colon&V^{\kappa}(\g)\hookrightarrow \Dch_{G,\kappa},\quad  u|0\ket\mapsto  u\*\mathbf{1}_{J_{\infty}G}
\quad (u\in U(\affg_{\kappa})),\\
&\mathcal{O}(J_{\infty}G)\hookrightarrow \Dch_{G,\kappa},\quad f\mapsto 1\* f,
\end{align*}
are homomorphisms of vertex algebras,
and 
\begin{align*}
x(z)f(w)\sim \frac{1}{z-w}(x_Lf)(w)\quad (x\in \g\subset V^{\kappa}(\g),
\quad f\in \mc{O}(G)\subset \mc{O}(J_{\infty}G))
\end{align*}
(\cite{GorMalSch01,ArkGai02}).
The vertex algebra $\Dch_{G,\kappa}$ is 
called 
the {\em algebra of chiral differential operators} (cdo) on $G$ at level $\kappa$,
which is the special case
of the chiral differential operators
on a smooth algebraic variety introduced  independently by Malikov,
Schechtman and Vaintrob
\cite{MalSchVai99} and Beilinson and Drinfeld \cite{BeiDri04}.
The cdo $\Dch_{G,\kappa}$ 
is a strict chiral quantization of the cotangent bundle $T^*G$ to $G$.
In particular,
$\Dch_{G,\kappa}$
is simple for any $\kappa$ by Theorem \ref{Th:AMoreau}.

We have 
$\Dch_{G,\kappa}\isomap (\Dch_{G,\kappa})^{op}$,
$x\mapsto -x$, $x\in \g$,
$f\mapsto f$, $f\in \mc{O}(G)$,
and so we do not 
distinguish
between $\Dch_{G,\kappa}$ and $(\Dch_{G,\kappa})^{op}$.

According to \cite{GorMalSch01,ArkGai02}, there is a vertex algebra embedding
\begin{align*}
\pi_R:V^{\kappa^*}(\g)\hookrightarrow  \mc{D}_{G,\kappa}^{ch}
\end{align*}
(see \eqref{eq:dual-kappa}),
and
$V^{\kappa}(\g)$ and $V^{\kappa^*}(\g)$ form a dual pair in $\mc{D}_{G,\kappa}^{ch}$,
i.e.,
\begin{align}
V^{\kappa}(\g)= \on{Com}(V^{\kappa^*}(\g),\Dch_{G,\kappa})=(\Dch_
{G,\kappa})^{\pi_R(\g[t])},
\label{eq:piL}
\\
V^{\kappa^*}(\g)=\on{Com}(V^{\kappa}(\g),\Dch_{G,\kappa})=(\Dch)^{\pi_L(\g[t])}.
\label{eq:piR}
\end{align}

Suppose that $\kappa\ne \kappa_c$,
and let $\omega_L$ and  $\omega_R$ be the Sugawara conformal vector of
$V^{\kappa}(\g)$ and $V^{\kappa^*}(\g)$,
respectively.
Then
\begin{align*}
\omega_{\Dch_{G,\kappa}}=\omega_L+\omega_R
\end{align*}
gives a conformal vector  of $\mc{D}_{G,\kappa}^{ch}$ of central charge $2\dim G$.
In fact,
the left-hand side
 makes sense even at the critical level $\kappa=\kappa_c$,
and $\omega_{\Dch_{G,\kappa}}$ gives a  conformal vector of $\mc{D}_{G,\kappa_c}^{ch}$ of central charge $2\dim G$
for all $\kappa$
(\cite{GorMalSch00}).
We have
\begin{align*}
\mc{D}_{G,\kappa}=\bigoplus_{n\in \Z_{\geq 0}}(\mc{D}_{G,\kappa})_{\Delta},
\quad (\mc{D}_{G,\kappa})_{0}\cong \mc{O}(G)\cong \bigoplus_{\lam\in P_+}V_{\lam}\* V_{\lam^*},
\end{align*}
where $\lam^*=-w_0(\lam)$ and $w_0$ is the longest element of the Weyl group $W$ of $\g$.

We have  the canonical isomorphism
  (\cite{AraCheMal08})
\begin{align}
 \on{Zhu}\Dch_{G,\kappa}\cong \D_{G},
\label{eq:cdo-zhu}
\end{align}
where $\D_G$ denotes the algebra of global differential operators on $G$.
Under this identification, the filtration of $\on{Zhu}\Dch_{G,\kappa}$
coincides with the natural filtration on $\D_G$
and 
the map
\eqref{eq:surj-RV} for $V^\kappa(\g)$ recovers the well-known isomorphism
$\mc{O}(T^*G)\isomap \gr \D_G$.

The algebra
homomorphism
between Zhu's algebras
induced by $\pi_L$
and $\pi_R$ are
 the embeddings
$U(\g)\hookrightarrow
 \mc{D}_G$
 defined by
$\g\ni x\mapsto x_L$
and $\g\ni x\mapsto x_R$,
respectively.

For a $\mc{D}_G$-module $M$,
we have
\begin{align*}
\on{Ind}_{\mc{D}_G}^{\Dch_{G,\kappa}}M=U(\affg_{\kappa})\*_{U(\g[[t]]\+ \C \mathbf{1})}\left(\mc{O}(J_{\infty}G)\*_{\mc{O}(G)}M\right)
\end{align*}
as $\affg_{\kappa}$-modules,
where $\g[[t]]\+ \C \mathbf{1}$
acts on
$\mc{O}(J_{\infty}G)\*_{\mc{O}(G)}M$  by
$(u(t))(g\* m)=(u(t) .g)\* m+ g\* (u(0) m)$ for $u\in \g[[t]]$, $f\in \mc{O}(J_{\infty}G)$, $m\in M$.

\begin{Th}[{\cite[Theorem 5.2]{AraCheMal08}}]
The functor
$\on{Ind}_{\mc{D}_G}^{\Dch_{G,\kappa}}:\mc{D}_G\Mod\ra \Dch_{G,\kappa}\Mod$
gives an equivalence of categories.
\end{Th}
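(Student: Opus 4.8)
The plan is to construct an explicit quasi-inverse functor. Given a $\Dch_{G,\kappa}$-module $N$, one extracts a $\D_G$-module by taking the ``top'' with respect to the conformal grading coming from $\omega_{\Dch_{G,\kappa}}=\omega_L+\omega_R$; more precisely, since $\Dch_{G,\kappa}$ is a strict chiral quantization of $T^*G$ with $(\Dch_{G,\kappa})_0\cong\mc{O}(G)$, the functor of taking the lowest-conformal-weight component $N\mapsto N_{\mathrm{top}}$ lands in modules over $\on{Zhu}(\Dch_{G,\kappa})\cong\D_G$ by the identification \eqref{eq:cdo-zhu}. This is precisely the general Zhu-functor formalism recalled in Section \ref{section:Vertex algebras and associated varieties}: $\on{Ind}_{\D_G}^{\Dch_{G,\kappa}}=\on{Ind}_{\on{Zhu}(\Dch_{G,\kappa})}^{\Dch_{G,\kappa}}$ is the left adjoint to $N\mapsto N_{\mathrm{top}}$, so I would prove the equivalence by showing that both the unit $M\to (\on{Ind}_{\D_G}^{\Dch_{G,\kappa}}M)_{\mathrm{top}}$ and the counit $\on{Ind}_{\D_G}^{\Dch_{G,\kappa}}(N_{\mathrm{top}})\to N$ are isomorphisms.

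First I would verify the statement on $\on{Ind}$: from the explicit description
\begin{align*}
\on{Ind}_{\mc{D}_G}^{\Dch_{G,\kappa}}M=U(\affg_{\kappa})\*_{U(\g[[t]]\+ \C \mathbf{1})}\left(\mc{O}(J_{\infty}G)\*_{\mc{O}(G)}M\right)
\end{align*}
and the fact that $\mc{O}(J_\infty G)$ is free over $\mc{O}(G)$ generated in positive internal degree, the conformal grading is bounded below with bottom piece exactly $M$; hence $(\on{Ind}_{\mc{D}_G}^{\Dch_{G,\kappa}}M)_{\mathrm{top}}\cong M$ and the unit is an isomorphism. This already shows $\on{Ind}$ is fully faithful. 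Next I would show essential surjectivity (equivalently, that the counit is an isomorphism): for $N\in\Dch_{G,\kappa}\Mod$, the module $N$ is generated over $\Dch_{G,\kappa}$ by $N_{\mathrm{top}}$ because the internal-degree filtration together with the PBW-type structure of $\Dch_{G,\kappa}$ (it is a strict chiral quantization, so $\gr\Dch_{G,\kappa}\cong\mc{O}(J_\infty T^*G)$) forces any submodule containing $N_{\mathrm{top}}$ to be everything; this gives a surjection $\on{Ind}_{\mc{D}_G}^{\Dch_{G,\kappa}}(N_{\mathrm{top}})\twoheadrightarrow N$. For injectivity I would compare associated graded objects: on the classical level the analogous statement is the equivalence $\D_G\Mod\simeq \D_G\Mod$ induced by $\mc{O}(J_\infty T^*G)\cong\mc{O}(J_\infty G)\*_{\mc{O}(G)}\mc{O}(T^*G)$, i.e.\ the fact that $J_\infty T^*G\to T^*G$ realizes every $J_\infty G$-equivariant quasicoherent sheaf on $J_\infty\g^*$ pulled back appropriately; since $\gr$ of the counit is this classical isomorphism, the counit itself is an isomorphism.

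The main obstacle I expect is the exactness/flatness bookkeeping needed to run the associated-graded comparison cleanly: one must choose compatible good filtrations on an arbitrary (not necessarily finitely generated, not necessarily positively graded in a uniform way) $\Dch_{G,\kappa}$-module $N$ so that $\gr N$ is a well-behaved $\mc{O}(J_\infty T^*G)$-module, and then know that $\gr$ commutes with $\on{Ind}$ and with taking tops. This is where the hypothesis that $\Dch_{G,\kappa}$ is a \emph{strict} chiral quantization of $T^*G$ (so \eqref{eq:surj-map-PVA} is an isomorphism, $\gr\Dch_{G,\kappa}\cong\mc{O}(J_\infty T^*G)$, and $\mc{O}(T^*G)\isomap\gr\D_G$ via \eqref{eq:cdo-zhu}) does all the work, reducing everything to the commutative statement that $J_\infty$ of the trivial cotangent bundle $T^*G=G\times\g^*$ splits as $J_\infty G\times J_\infty\g^*$ over $J_\infty\g^*$. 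Once that reduction is in place the argument is the standard one, and I would cite \cite[Theorem 5.2]{AraCheMal08} for the remaining details rather than reproduce them.
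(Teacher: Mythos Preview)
The paper does not give its own proof of this statement: it is quoted verbatim as \cite[Theorem 5.2]{AraCheMal08} and used as a black box (the only thing the paper needs from it is Corollary~\ref{Co:free-cofree-modules}). So there is no in-paper argument to compare your proposal against.

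That said, your outline is the standard Zhu-theoretic strategy and is consistent with how the paper has set things up: you correctly identify $\on{Ind}_{\D_G}^{\Dch_{G,\kappa}}$ with the general $\on{Ind}_{\Zhu(V)}^{V}$ via \eqref{eq:cdo-zhu}, and the unit computation from the explicit formula for $\on{Ind}$ is exactly right. Your own caveat is the honest one: the injectivity half of the counit argument requires choosing compatible separated filtrations on an arbitrary positively graded $\Dch_{G,\kappa}$-module and checking that passing to $\gr$ detects isomorphisms there; this is precisely the content one has to import from \cite{AraCheMal08}, and you already say you would do so. In short, your proposal matches what the paper does---namely, cite the reference---while additionally sketching the shape of the argument behind that citation.
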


\begin{Co}\label{Co:free-cofree-modules}
Any $M\in \Dch_{G,\kappa}\Mod$ is free over $U(t^{-1}\g[t^{-1}])$
and cofree over $U(t\g[t])$.
\end{Co}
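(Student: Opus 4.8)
The plan is to reduce the statement to the structural description of $\Dch_{G,\kappa}$ as a module induced from $\mc{O}(J_\infty G)$ over the Kac--Moody algebra $\affg_\kappa$. By the equivalence of categories in the preceding theorem (Arakawa--Chebotarov--Malikov), every $M\in\Dch_{G,\kappa}\Mod$ is of the form $\on{Ind}_{\mc{D}_G}^{\Dch_{G,\kappa}}N$ for a $\mc{D}_G$-module $N$, so by the explicit formula recalled just before the statement we have, as $\affg_\kappa$-modules,
\begin{align*}
M\cong U(\affg_{\kappa})\*_{U(\g[[t]]\+\C\mathbf 1)}\bigl(\mc{O}(J_\infty G)\*_{\mc{O}(G)}N\bigr).
\end{align*}
First I would record the triangular-type decomposition $\affg_\kappa = t^{-1}\g[t^{-1}]\oplus(\g[[t]]\+\C\mathbf 1)$ as vector spaces, so that by the PBW theorem $U(\affg_\kappa)$ is free as a right $U(\g[[t]]\+\C\mathbf 1)$-module with basis a PBW basis of $U(t^{-1}\g[t^{-1}])$. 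Hence as a $U(t^{-1}\g[t^{-1}])$-module (restricting the left action) $M\cong U(t^{-1}\g[t^{-1}])\*_\C\bigl(\mc{O}(J_\infty G)\*_{\mc{O}(G)}N\bigr)$, which is manifestly free. This gives the first half.

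For cofreeness over $U(t\g[t])$, the cleanest route is to pass to the restricted dual. I would argue that $\mc{O}(J_\infty G)$, as a $\g[[t]]$-module via $(J_\infty\mu_L)^*$, is the function ring on the ind-scheme $J_\infty G\cong G[[t]]$ with $\g[[t]]$ acting by left-invariant vector fields along the projection; concretely $\mc{O}(J_\infty G)\cong \varinjlig \mc{O}(G[t]/t^r G[t])$ and as a $\g[[t]]/t^r\g[[t]]$-module each term is (co)free because $G[t]/t^rG[t]$ is a torsor, so $\mc{O}(J_\infty G)$ is cofree over $U(t\g[t])$ (equivalently, its graded dual is free over $U(t\g[t])$, since $t\g[t]$ acts locally nilpotently). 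Then $\mc{O}(J_\infty G)\*_{\mc{O}(G)}N$ remains cofree over $U(t\g[t])$ because tensoring over $\mc{O}(G)$ with $N$ only affects the ``finite-dimensional" base direction on which $t\g[t]$ acts trivially. Finally the induction $U(\affg_\kappa)\*_{U(\g[[t]]\+\C\mathbf 1)}(-)$ is, as a $t\g[t]$-module, again an extension built from copies of $\mc{O}(J_\infty G)\*_{\mc{O}(G)}N$ indexed by a PBW basis of $U(t^{-1}\g[t^{-1}])$ (which commutes with nothing but is a free base), and cofreeness is preserved under such (locally finite, grading-compatible) extensions.

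I expect the main obstacle to be making precise and rigorous the claim that $\mc{O}(J_\infty G)$ is cofree over $U(t\g[t])$ and that this property survives the two operations $(-)\*_{\mc{O}(G)}N$ and $U(\affg_\kappa)\*_{U(\g[[t]]\+\C\mathbf 1)}(-)$; the subtlety is that $J_\infty G$ is an infinite-dimensional (pro-)scheme, so ``cofree" must be interpreted in the graded/topological sense (every homogeneous piece is finite-dimensional and is cofree over the appropriate finite-type truncation $t\g[t]/t^r\g[t]$), and one must check the inverse/direct limits are compatible with this. Once the limit bookkeeping is set up, each finite stage reduces to the elementary fact that the coordinate ring of a torsor under an algebraic group is (co)free over the enveloping algebra of its Lie algebra, and the rest is PBW. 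Alternatively, and perhaps more transparently, one can invoke Corollary~\ref{Co:free-cofree-modules}'s intended application: it suffices for Proposition~\ref{Pro:vanishing-relative}, so I would phrase the cofreeness in exactly the form that proposition consumes, namely that $H^q(t\g[t], M\*\Lambda)$ vanishes for $q>0$ for the relevant exterior powers $\Lambda$, which follows from the torsor structure of $J_\infty G$ over $J_\infty(\g[[t]])$-orbits.
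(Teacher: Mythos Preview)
The paper states this corollary without proof, so your reduction via the equivalence of categories to the explicit form
\[
M\cong U(\affg_{\kappa})\*_{U(\g[[t]]\+\C\mathbf 1)}\bigl(\mc{O}(J_\infty G)\*_{\mc{O}(G)}N\bigr)
\]
is exactly what is intended, and your PBW argument for freeness over $U(t^{-1}\g[t^{-1}])$ is correct and complete.

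For cofreeness, your identification of the key geometric input --- that $J_\infty G\to G$ is a torsor under the pro-unipotent group $J_\infty^0 G$ with Lie algebra $t\g[[t]]$, so $V_0=\mc{O}(J_\infty G)\*_{\mc{O}(G)}N\cong \mc{O}(J_\infty^0 G)\*_\C N$ is cofree over $U(t\g[t])$ --- is right. The weak step is the last one. Filtering by the PBW degree on $U(t^{-1}\g[t^{-1}])$ does give $t\g[t]$-stable pieces, but the successive quotients $S^p(t^{-1}\g[t^{-1}])\* V_0$ carry a \emph{nontrivial} $t\g[t]$-action on the first factor (since $[xt^n,yt^{-m}]=[x,y]t^{n-m}\in t^{-1}\g[t^{-1}]$ when $0<n<m$), so they are not simply copies of $V_0$; and cofreeness is not in general preserved under extensions, so the phrase ``cofreeness is preserved under such extensions'' is not a proof.

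A clean fix uses the group rather than the filtration: the $t\g[t]$-action on $U(t^{-1}\g[t^{-1}])$ (by bracket) is locally nilpotent, hence integrates to a $J_\infty^0 G$-action, and the standard untwisting $A\*\mc{O}(J_\infty^0 G)\cong A_{\mathrm{triv}}\*\mc{O}(J_\infty^0 G)$ for any $J_\infty^0 G$-module $A$ then gives $M\cong U(t^{-1}\g[t^{-1}])_{\mathrm{triv}}\*\mc{O}(J_\infty^0 G)\* N$ as $t\g[t]$-modules, which is manifestly cofree. This is the same mechanism underlying the Arkhipov--Gaitsgory isomorphism \eqref{eq:AG-iso}. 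Your alternative suggestion --- to bypass the word ``cofree'' and directly prove $H^{>0}(t\g[t],M)=0$ from the torsor structure --- also works and is essentially the Li-filtration spectral-sequence argument used in the proof of Proposition~\ref{Pro:g[t]t-invariant-part}.
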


Sine
${\kappa}_c^*=\kappa_c$,
we have  a vertex algebra homomorphism
\begin{align}
\pi_L\* \pi_R: V^{\kappa_c}(\g)\otimes V^{\kappa_c}(\g)\ra  \mc{D}_{G,\kappa_c}^{ch}.
\label{eq:bimodule}
\end{align}
Note that 
$$\on{Com}((\pi_L\* \pi_R)(V^{\kappa_c}(\g)\* V^{\kappa_c}(\g)),\Dch_{G,\kappa_c})=
(\Dch_{G,\kappa_c})^{\g[t]\times \g[t]}
.$$

\begin{Lem}\label{Lem:inversection-is-center}
We have
$$\pi_L(V^{\kappa_c}(\g))\cap \pi_R(V^{\kappa_c}(\g))=\pi_L(\mf{z}(\affg))=\pi_R(\mf{z}(\affg))=(\Dch_{G,\kappa_c})^{\g[t]\times \g[t]}.$$
In particular, both $\pi_L$ and $\pi_R$ give the isomorphism
$$\mf{z}(\affg)\isomap  (\Dch_{G,\kappa_c})^{\g[t]\times \g[t]}$$
of vertex algebras.
\end{Lem}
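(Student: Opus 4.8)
The plan is to identify all three objects in the chain with the same subspace of $\Dch_{G,\kappa_c}$, namely the bi-invariants $(\Dch_{G,\kappa_c})^{\g[t]\times \g[t]}$. The key input is the dual-pair statement \eqref{eq:piL}--\eqref{eq:piR}, which at the critical level reads $\pi_L(V^{\kappa_c}(\g))=(\Dch_{G,\kappa_c})^{\pi_R(\g[t])}$ and $\pi_R(V^{\kappa_c}(\g))=(\Dch_{G,\kappa_c})^{\pi_L(\g[t])}$, since $\kappa_c^*=\kappa_c$. First I would intersect these two equalities: an element lying in both $\pi_L(V^{\kappa_c}(\g))$ and $\pi_R(V^{\kappa_c}(\g))$ is annihilated by both $\pi_R(\g[t])$ and $\pi_L(\g[t])$, hence lies in $(\Dch_{G,\kappa_c})^{\g[t]\times\g[t]}$; conversely any such bi-invariant element lies in $(\Dch_{G,\kappa_c})^{\pi_R(\g[t])}=\pi_L(V^{\kappa_c}(\g))$ and likewise in $\pi_R(V^{\kappa_c}(\g))$. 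This already gives
\[
\pi_L(V^{\kappa_c}(\g))\cap \pi_R(V^{\kappa_c}(\g))=(\Dch_{G,\kappa_c})^{\g[t]\times\g[t]}.
\]

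\textbf{Identifying the intersection with the Feigin-Frenkel center.} Next I would show this common subspace equals $\pi_L(\mf{z}(\affg))$ (and by symmetry $\pi_R(\mf{z}(\affg))$). Since $\pi_L$ is injective, $\pi_L(V^{\kappa_c}(\g))\cap\pi_R(V^{\kappa_c}(\g))=\pi_L\bigl(\pi_L^{-1}(\pi_R(V^{\kappa_c}(\g)))\bigr)$, so it suffices to show that an element $v\in V^{\kappa_c}(\g)$ has $\pi_L(v)\in\pi_R(V^{\kappa_c}(\g))$ if and only if $v\in\mf{z}(\affg)$. By \eqref{eq:piR}, $\pi_R(V^{\kappa_c}(\g))=(\Dch_{G,\kappa_c})^{\pi_L(\g[t])}$, so $\pi_L(v)\in\pi_R(V^{\kappa_c}(\g))$ means $\pi_L(\g[t])$ annihilates $\pi_L(v)$, i.e. (using that $\pi_L$ is a vertex algebra embedding) $x_{(n)}v=0$ for all $x\in\g$, $n\geq 0$. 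But that is exactly the condition $v\in V^{\kappa_c}(\g)^{\g[t]}=\mf{z}(\affg)$ by the definition of the Feigin-Frenkel center recalled in Section~\ref{Sectioin:Feigin-Frenkel center}. This gives $\pi_L(\mf{z}(\affg))=\pi_L(V^{\kappa_c}(\g))\cap\pi_R(V^{\kappa_c}(\g))$, and the symmetric argument with the roles of $L$ and $R$ interchanged gives $\pi_R(\mf{z}(\affg))=\pi_L(V^{\kappa_c}(\g))\cap\pi_R(V^{\kappa_c}(\g))$ as well.

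\textbf{Conclusion.} Combining the two displays yields
\[
\pi_L(\mf{z}(\affg))=\pi_R(\mf{z}(\affg))=\pi_L(V^{\kappa_c}(\g))\cap\pi_R(V^{\kappa_c}(\g))=(\Dch_{G,\kappa_c})^{\g[t]\times\g[t]},
\]
which is the asserted equality. Since $\pi_L$ and $\pi_R$ are injective vertex algebra homomorphisms and their images on $\mf{z}(\affg)$ coincide with this subalgebra, each restricts to a vertex algebra isomorphism $\mf{z}(\affg)\isomap(\Dch_{G,\kappa_c})^{\g[t]\times\g[t]}$. I expect the only genuinely delicate point to be bookkeeping the equivalence ``$\pi_L(\g[t])$ annihilates $\pi_L(v)$'' $\Leftrightarrow$ ``$\g[t]$ annihilates $v$ in $V^{\kappa_c}(\g)$'': this uses that $\pi_L$ is an embedding of vertex algebras, so that the action of the modes $\pi_L(x)_{(n)}$ on $\pi_L(V^{\kappa_c}(\g))$ is intertwined with the action of $x_{(n)}$ on $V^{\kappa_c}(\g)$, together with the alternative description of $\on{Com}$ in terms of nonnegative modes recalled in Section~\ref{section:Vertex algebras and associated varieties}. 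Everything else is formal manipulation of the dual-pair identities \eqref{eq:piL}--\eqref{eq:piR}.
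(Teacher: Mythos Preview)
Your proof is correct and follows essentially the same route as the paper: both arguments reduce everything to the intersection $\pi_L(V^{\kappa_c}(\g))\cap\pi_R(V^{\kappa_c}(\g))$ via the dual-pair identities \eqref{eq:piL}--\eqref{eq:piR}, and then identify this intersection with $\pi_L(\mf{z}(\affg))$, $\pi_R(\mf{z}(\affg))$, and the bi-invariants. The paper's proof is more terse but your expanded bookkeeping (especially the point that $\pi_L(v)$ is $\pi_L(\g[t])$-invariant iff $v\in V^{\kappa_c}(\g)^{\g[t]}=\mf{z}(\affg)$) makes explicit exactly the step the paper leaves implicit.
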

\begin{proof}
By \eqref{eq:piL}
and \eqref{eq:piR},
$\pi_L(\mf{z}(\affg))=\pi_L(V^{\kappa_c}(\g))\cap  (\mc{D}_{G,\kappa_c}^{ch})^{\pi_L(\g[t])}
=\pi_L(V^{\kappa_c}(\g))\cap \pi_R(V^{\kappa_c}(\g))$.
Similarly,
$\pi_R(\mf{z}(\affg))=
\pi_L(V^{\kappa_c}(\g))\cap \pi_R(V^{\kappa_c}(\g))$.
Also,
$(\Dch_{G,\kappa_c})^{\g[t]\times \g[t]}=\pi_L(V^{\kappa_c}(\g))\cap \pi_R(V^{\kappa_c}(\g))$.
This completes the proof.
\end{proof}


Let
$S:\mc{O}(G)\ra \mc{O}(G)$
be the  antipode 
of the Hopf algebra
$\mc{O}(G)$.
The map $S$ induces the 
antipode 
 $J_{\infty}S:\mc{O}(J_\infty G)\ra \mc{O}(J_\infty G)$  of the Hopf algebra
$\mc{O}(J_\infty G)$.
Since $J_{\infty}S\circ \pi_L(x)=\pi_R(x)\circ J_{\infty}S$,
$J_{\infty}S\circ \pi_R(x)=\pi_L(x)\circ J_{\infty}S$ for $x\in J_{\infty}\g$, 
$J_{\infty}S$ extends to the vertex algebra isomorphism
\begin{align*}
\tilde \tau:\Dch_{G,\kappa_c}\isomap \Dch_{G,\kappa_c}
\end{align*}
such that $\tilde \tau(f)=S(f)$, $f\in \mc{O}(G)$,
$\tilde{\tau}\circ \pi_L=\pi_R\circ \tilde{\tau}$,
$\tilde{\tau}\circ \pi_R=\pi_L\circ \tilde{\tau}$.
Thus,
$\tilde{\tau}$ restricts to the involutive  automorphism $\tau$ of the vertex subalgebra
$\mf{z}(\affg)= (\Dch_{G,\kappa_c})^{\g[t]\times \g[t]}$,
and
 we have
\begin{align}
\tau (\pi_L(z))=\pi_R(z),\quad \tau (\pi_R(z))=\pi_L(z)
\label{eq:effect-of-tau-to-FF}
\end{align}
for $z\in \mf{z}(\affg)$.
It follows that
\eqref{eq:bimodule} factors through the vertex algebra homomrophism
\begin{align}
V^{\kappa_{c}}(\g)\otimes_{\mf{z}(\affg)} V^{\kappa_c}(\g)\ra \Dch_{G,\kappa_c},
\label{eq:hom-V-V}
\end{align}
where 
$\mf{z}(\affg)$ acts on the first factor  by the natural inclusion
$\mf{z}(\affg)\hookrightarrow V^{\kappa_{c}}(\g)$,
and on the second by
twisting  the action by $\tau$.

The following assertion can be regarded as an affine analogue
of \cite[Theorem 2]{Ner81}.

\begin{Pro}\label{Pro:cdosub}
The vertex algebra homomorphism
\eqref{eq:hom-V-V} is injective.
\end{Pro}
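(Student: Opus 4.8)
The plan is to prove injectivity of the map $\phi : V^{\kappa_c}(\g)\otimes_{\mf{z}(\affg)} V^{\kappa_c}(\g)\ra \Dch_{G,\kappa_c}$ by passing to associated graded objects with respect to the canonical (Li) filtration and reducing to a statement about arc spaces of affine varieties. First I would record that $\phi$ is a homomorphism of filtered vertex algebras, so it induces a homomorphism of Poisson vertex algebras on the associated graded level. The source $V^{\kappa_c}(\g)\otimes_{\mf{z}(\affg)} V^{\kappa_c}(\g)$ has a tensor-product filtration; since $V^{\kappa_c}(\g)$ is a strict chiral quantization of $\g^*$ and $\mf{z}(\affg)$ is a strict chiral quantization of $\g^*/\!/G$ (with $\gr\mf{z}(\affg)\cong\mc{O}(J_\infty(\g^*/\!/G))=\mc{O}(J_\infty\g^*)^{J_\infty G}$), the key point is that $\gr$ of the fiber product maps onto — ideally isomorphically onto — $\mc{O}(J_\infty(\g^*\times_{\g^*/\!/G}\g^*))$. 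Granting this, the claim becomes: the induced map
\begin{align*}
\mc{O}\bigl(J_\infty(\g^*\times_{\g^*/\!/G}\g^*)\bigr)\ra \gr\Dch_{G,\kappa_c}\cong \mc{O}(J_\infty T^*G)
\end{align*}
is injective, which would be implied by surjectivity (dominance) of the corresponding morphism of schemes $J_\infty T^*G \ra J_\infty(\g^*\times_{\g^*/\!/G}\g^*)$, and that in turn follows by applying the arc-space functor (which preserves dominance for reduced schemes) to the classical statement that $T^*G=G\times\g^*\ra \g^*\times_{\g^*/\!/G}\g^*$, $(g,x)\mapsto(\mu_L,\mu_R)=(x,g.x)$, is dominant — indeed surjective with reduced, irreducible image, essentially Nerersisyan's theorem \cite{Ner81} cited just above.

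Concretely the steps are: (1) identify $\gr$ of the amalgamated tensor product $V^{\kappa_c}(\g)\otimes_{\mf{z}(\affg)}V^{\kappa_c}(\g)$ with the coordinate ring of the arc space of $\g^*\times_{\g^*/\!/G}\g^*$ — or at least exhibit a surjection from the latter onto the former, which is all that is needed for the reduction — using \eqref{eq:surj-map-PVA} and the fact that the two embeddings $\mf{z}(\affg)\hookrightarrow V^{\kappa_c}(\g)$ (the second twisted by $\tau$) induce on $\gr$ the two pullbacks $\mc{O}(J_\infty\g^*)^{J_\infty G}\ra\mc{O}(J_\infty\g^*)$ coming from $\mu_L$ and $\mu_R$ (here \eqref{eq:effect-of-tau-to-FF} is used to see that $\tau$ acts by the identity on $\gr\mf{z}(\affg)$, since it is induced from the antipode $S$ of $G$, which fixes the adjoint-invariant functions $\mc{O}(\g^*)^G$); (2) observe that $\gr\phi$ is then the arc-space pullback of the classical moment-map pair $T^*G\ra\g^*\times_{\g^*/\!/G}\g^*$; (3) invoke \cite{Ner81} (in the semisimple, simply connected case) for surjectivity/dominance of this classical map, noting the target is reduced and irreducible; (4) conclude $J_\infty T^*G\ra J_\infty(\g^*\times_{\g^*/\!/G}\g^*)$ is dominant, so $\gr\phi$ is injective; (5) finally lift injectivity from $\gr$ back to $\phi$ itself, using that the source is separated (being an object of $\KL_{\kappa_c}$, or directly a positively graded vertex algebra) so that $\bigcap_p F^p=0$ and a nonzero kernel element would have nonzero symbol.

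The main obstacle I anticipate is step (1): controlling $\gr$ of the fiber coproduct $V^{\kappa_c}(\g)\otimes_{\mf{z}(\affg)}V^{\kappa_c}(\g)$. The Li filtration does not automatically commute with quotients, so there is a genuine subtlety in showing that the natural surjection $\mc{O}(J_\infty\g^*)\otimes_{\mc{O}(J_\infty(\g^*/\!/G))}\mc{O}(J_\infty\g^*)\twoheadrightarrow\gr\bigl(V^{\kappa_c}(\g)\otimes_{\mf{z}(\affg)}V^{\kappa_c}(\g)\bigr)$ is an isomorphism — or, more to the point, showing that it suffices to have the surjection (which is automatic) together with the reducedness/flatness facts needed to identify $\mc{O}(J_\infty\g^*)\otimes_{\mc{O}(J_\infty(\g^*/\!/G))}\mc{O}(J_\infty\g^*)$ with $\mc{O}(J_\infty(\g^*\times_{\g^*/\!/G}\g^*))$. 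For the latter identification one wants $\g^*\to\g^*/\!/G$ to be flat (true, by Kostant) so that forming the fiber product commutes with taking arc spaces, and one wants the fiber product to be reduced; in fact for the injectivity argument one only needs that $\gr\phi$ is injective on the image of the surjection, so it is enough that the classical morphism $T^*G\to\g^*\times_{\g^*/\!/G}\g^*$ have dense image in the reduced structure, which is exactly what \cite{Ner81} supplies. So the argument should go through even while side-stepping the full strength of (1), but pinning down the precise minimal hypothesis is where the care is needed.
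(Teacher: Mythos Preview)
Your proposal is correct and follows essentially the same strategy as the paper: pass to the associated graded via the Li filtration, identify $\gr\phi$ with the pullback along the arc-space morphism $J_\infty(G\times\g^*)\to J_\infty\g^*\times_{J_\infty(\g^*/\!/G)}J_\infty\g^*$, $(g,x)\mapsto(x,g.x)$, and argue that this morphism is dominant; your discussion of the subtlety in step~(1) (that only the surjection onto $\gr$ of the source is needed) is in fact more careful than the paper's, which simply asserts the identification.

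The one place where the paper proceeds differently is your step~(4). Rather than invoking a general principle that the arc functor preserves dominance---which, as you implicitly recognize in step~(3), requires the target $\g^*\times_{\g^*/\!/G}\g^*$ to be a reduced irreducible variety (so that Kolchin's theorem and generic smoothness apply), a fact that is true but takes some work---the paper restricts to the regular locus and cites \cite{Ric17} for the stronger statement that $G\times\g^*_{reg}\to\g^*_{reg}\times_{\g^*/\!/G}\g^*_{reg}$ is \emph{smooth} and surjective. Smooth surjections remain formally smooth and surjective after applying $J_\infty$ (cf.\ \cite{EinMus}), and since $J_\infty\g^*_{reg}$ is open dense in $J_\infty\g^*$, dominance of the full arc-space map follows immediately. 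This bypasses both the reducedness/irreducibility verification for the fiber product and the general dominance-preservation claim, trading the input from \cite{Ner81} for the sharper one from \cite{Ric17}.
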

\begin{proof}
The homomorphism
between the associated graded vertex Poisson algebras
induced by \eqref{eq:hom-V-V}
is dual to 
the morphism
\begin{align}
J_{\infty}G\times J_{\infty}\g_{}^*\ra  J_{\infty}\g_{}^*\times_{J_{\infty}(\g^*/\!/G)}J_{\infty}\g_{}^*,\quad (g,x)\mapsto (x,gx).
\label{eq:dominant?}
\end{align}
On the other hand,
 the morphism
\begin{align*}
G\times \g_{reg}^*\ra \g_{reg}^*\times_{\g^*/\!/G}\g_{reg}^*,\quad (g,x)\mapsto (x,gx).
\end{align*}
is smooth and surjective (\cite{Ric17}),
where $\g^*_{reg}$ is  the open dense subset of $\g^*$ consisting of regular elements.
Therefore,
the morphism
\begin{align*}
J_{\infty}G\times J_{\infty}\g_{reg}^*\ra  J_{\infty}\g_{reg}^*\times_{J_{\infty}(\g_{reg}^*/\!/G)}J_{\infty}\g_{reg}^*,\quad (g,x)\mapsto (x,gx),
\end{align*}
is formally smooth and surjective (\cite[Remark 2.10]{EinMus}).
Hence  \eqref{eq:dominant?}
is dominant.
\end{proof}

\begin{Pro}\label{Pro:cdo-is-good}
The cdo
$\Dch_{G,\kappa}$ is a direct sum of objects in $\KL^{ord}_{\kappa}$
(resp.\ $\KL^{ord}_{\kappa^*}$).
\end{Pro}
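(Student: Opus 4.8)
The plan is to handle the $\pi_L$-statement; the $\pi_R$-statement follows by the symmetric argument (interchanging the roles of $\mu_L$ and $\mu_R$, equivalently of $\pi_L$ and $\pi_R$). First I would check that $\Dch_{G,\kappa}$, viewed as a $V^\kappa(\g)$-module via $\pi_L$, is an object of $\KL_\kappa$, and then split it into a direct sum of objects of $\KL^{ord}_\kappa$ by decomposing with respect to the commuting $\g$-action furnished by the zero modes of $\pi_R$.

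For the first step, $\Dch_{G,\kappa}$ carries the conformal grading $\Dch_{G,\kappa}=\bigoplus_{\Delta\in\Z_{\geq 0}}(\Dch_{G,\kappa})_\Delta$, which exists for all $\kappa$ (including $\kappa_c$) and is the grading relevant to $\KL^{ord}_\kappa$. Since $\pi_L(x)$ is primary of conformal weight $1$, the mode $\pi_L(xt^n)$ lowers conformal weight by $n$, so $\g[[t]]t$ lowers it strictly and, the grading being bounded below, acts locally nilpotently. Local finiteness of the $\g$-action is read off from $\Dch_{G,\kappa}=\pi_L(U(t^{-1}\g[t^{-1}]))\cdot\mc{O}(J_\infty G)$: the zero modes $\pi_L(\g)$ act on the subalgebra $\mc{O}(J_\infty G)$ as the prolongation of a translation action of the proalgebraic group $J_\infty G$, hence locally finitely, and on the factor $U(t^{-1}\g[t^{-1}])$ by the adjoint action.

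For the splitting, the zero modes of $\pi_R$ span a copy of $\g$ acting locally finitely and, by the dual-pair property of $\pi_L$ and $\pi_R$ in $\Dch_{G,\kappa}$, commuting with $\pi_L(V^\kappa(\g))$; decomposing into isotypic components gives $\Dch_{G,\kappa}=\bigoplus_{\mu\in P_+}\Dch_{G,\kappa}^{(\mu)}$ with each $\Dch_{G,\kappa}^{(\mu)}$ a graded $V^\kappa(\g)$-submodule, hence again an object of $\KL_\kappa$. It then remains to see that each $\Dch_{G,\kappa}^{(\mu)}$ has finite-dimensional conformal-weight spaces. The key input is an identification of $\Dch_{G,\kappa}$, as a module over this $\g$, with $\mc{O}(G)\otimes E$ where $E$ is a positively graded vector space with finite-dimensional homogeneous pieces on which $\g$ acts trivially, $\g$ acting on $\mc{O}(G)$ by translation. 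This follows by combining the PBW isomorphism $\Dch_{G,\kappa}\cong U(t^{-1}\g[t^{-1}])\otimes\mc{O}(J_\infty G)$ of graded vector spaces with the splitting $J_\infty G\cong G\times\exp(t\g[[t]])$ of proschemes (so $\mc{O}(J_\infty G)\cong\mc{O}(G)\otimes\C[\text{jet coordinates}]$), together with the observation that, the zero modes of $\pi_R$ commuting with $\partial=L_{-1}$, this $\g$ acts only on the $\mc{O}(G)$-tensor-factor. Since $\mc{O}(G)=\bigoplus_{\lam\in P_+}V_\lam\otimes V_{\lam^*}$ has finite $\g$-multiplicities, so does $\mc{O}(G)\otimes E$ in each conformal weight, whence $\Dch_{G,\kappa}^{(\mu)}\in\KL^{ord}_\kappa$ and $\Dch_{G,\kappa}=\bigoplus_\mu\Dch_{G,\kappa}^{(\mu)}$ is the desired decomposition.

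The only real work here is the identification $\Dch_{G,\kappa}\cong\mc{O}(G)\otimes E$ of $\pi_R(\g)$-modules in the third paragraph, which is essentially a bookkeeping of gradings and of the exponential coordinates on $J_\infty G$; everything else is an unwinding of the definitions of $\KL_\kappa$, $\KL^{ord}_\kappa$ and the dual-pair property. An alternative would be to invoke the bimodule decomposition $\Dch_{G,\kappa}\cong\bigoplus_{\lam\in P_+}\mathbb{V}^\kappa_\lam\otimes\mathbb{V}^{\kappa^*}_{\lam^*}$, from which the statement is immediate since each $\mathbb{V}^\kappa_\lam[d]$ lies in $\KL^{ord}_\kappa$; but I would rather not rely on its validity at the critical level, so I prefer the direct route above.
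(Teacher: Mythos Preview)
Your approach is correct and takes a genuinely different route from the paper. The paper argues case by case: at $\kappa=\kappa_c$ it uses the block decomposition $\KL=\bigoplus_\lambda\KL^{[\lambda]}$ coming from the Feigin--Frenkel center, showing that each $(\Dch_{G,\kappa_c})_{[\lambda]}$ has the graded size of $\mathbb{V}_{\lambda^*}\otimes\mathbb{V}_\lambda$; for $\kappa\neq\kappa_c$ it uses the bi-grading by the two Sugawara Hamiltonians $L_0^L,L_0^R$ together with Zhu's Weyl-module filtration of $\Dch_{G,\kappa}$. Your argument treats all levels uniformly and is more elementary, needing only the PBW description of $\Dch_{G,\kappa}$ and the Peter--Weyl decomposition of $\mc{O}(G)$; the paper's finer $[\lambda]$-decomposition, on the other hand, is reused in several later proofs.

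One point to tighten: with the factorization $g(t)\mapsto(g(0),\,g(0)^{-1}g(t))$ the right $G$-action does \emph{not} act trivially on the second factor---it conjugates it---so ``commuting with $\partial$'' alone does not give the triviality you claim (indeed $\pi_R(x)_{(0)}(\partial f)=\partial(x_R f)\neq 0$ in general). The fix is immediate: either use the opposite factorization $g(t)=(g(t)g(0)^{-1})\cdot g(0)$, for which right multiplication by $G$ genuinely touches only the $\mc{O}(G)$-factor, or simply drop the triviality claim and observe that each $E_\Delta$ is a finite-dimensional $\g$-module while $\mc{O}(G)$ has finite multiplicities, which already forces $(\Dch_{G,\kappa}^{(\mu)})_\Delta$ to be finite-dimensional.
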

\begin{proof}
First, suppose that $\kappa=\kappa_c$.
Consider the decomposition 
\begin{align}
\Dch_{G,\kappa_c}=\bigoplus_{\lam\in P_+}(\Dch_{G,\kappa_c})_{[\lam]}
\label{eq:dec-cod-critical}
\end{align}
with respect to the action $\pi_R$ (see \eqref{eq:dec-KL-critical}).
By \eqref{eq:effect-of-tau-to-FF} each direct summand
$(\Dch_{G,\kappa_c})_{[\lam]}$ is closed under the action
of both $\pi_L$ and $\pi_R$ of $\affg_{\kappa_c}$.
Thus,
it is enough to show that 
$(\Dch_{G,\kappa_c})_{[\lam],\Delta}=(\Dch_{G,\kappa_c})_{[\lam]}\cap (\Dch_{G,\kappa_c})_{\Delta}$ 
is finite-dimensional 
for all $\lam$, $\Delta$.
By the definition \eqref{eq:cdo-def},
$\Dch_{G,\kappa_c}$ is cofree over
$U(t\g[t])$,
and we have
\begin{align}
(\Dch_{G,\kappa_c})^{t\g[t]}\cong U(\affg_{\kappa_c})\*_{U(\g[t]\+ \C \mathbf{1})}\mc{O}(G)\cong \bigoplus_{\lam\in P_+}\mathbb{V}_{\lam^*}\* V_{\lam}.
\label{eq:g[t]t-invariants-of-cdo}
\end{align}
Hence the direct summand
$(\Dch_{G,\kappa_c})^{[\lam]}$ is also cofree over $U(t\g[t])$
and we have
$(\Dch_{G,\kappa_c})_{[\lam]}^{t\g[t]}\cong \mathbb{V}_{\lam^*}\* V_{\lam}$.
It follows as
\begin{align}
(\Dch_{G,\kappa_c})_{[\lam]}\cong \mathbb{V}_{\lam^*}\* \mathbb{V}_{\lam}\quad\text{as 
graded vector spaces }
\label{ch-of-Dch}
\end{align}
 (not as $\affg_{\kappa_c}\+ \affg_{\kappa_c}$-modules),
and we are done.

Next suppose 
$\kappa\ne \kappa_c$.
Let $L^L(z)=\sum_{n\in \Z}L^L_nz^{-n-2}$
and 
 $L^R(z)=\sum_{n\in \Z}L^R_nz^{-n-2}$
 be the fields corresponding
 to the conformal vectors $\omega_L$ and $\omega_R$,
 respectively.
 Set
  $\Dch_{G,\kappa}[d_1,d_2]=\{v\in \Dch_{G,\kappa}\mid
 L_0^Lv=d_1v, \ L_0^Rv=d_2v\} $,
 so that
 $(\D_{G,\kappa})_{\Delta}=\bigoplus_{d_1+d_2=\Delta}\Dch_{G,\kappa}[d_1,d_2]$.
 We have
 \begin{align*}
\Dch_{G,\kappa}=\bigoplus_{d\in \C}\Dch_{G,\kappa}[\bullet,d]=\bigoplus_{d\in \C}\Dch_{G,\kappa}[d,\bullet],
\end{align*}
where
$\Dch_{G,\kappa}[\bullet,d]=\bigoplus_{d'\in \C}\Dch_{G,\kappa}[d',d]$,
$\Dch_{G,\kappa}[d,\bullet]=\bigoplus_{d'\in \C}\Dch_{G,\kappa}[d,d']$.
So
it is sufficient to show  that
$\Dch_{G,\kappa}[\bullet,d]$ and 
$\Dch_{G,\kappa}[d,\bullet]$ are objects of $\on{KL}_{\kappa}^{ord}$
and $\on{KL}_{\kappa^*}^{ord}$,
respectively.
This is equivalent to that 
 $\Dch_{G,\kappa}[d_1,d_2]$ is finite-dimensional for all $d_1,d_2\in \C$.
Since the argument is symmetric,
we may assume that
$(\kappa-\kappa_c)/\kappa_\g\not \in \Q_{\leq 0}$.
 By \cite{Zhu11},
there exists an increasing filtration
\begin{align}
0=M_1\subset M_1\subset \dots\dots ,\quad \ \Dch_{G,\kappa}=\bigcup_p M_p,
\label{eq:min-zhu-fil}
\end{align}
such that
\begin{align*}
\bigoplus_p M_p/M_{p-1}\cong \bigoplus_{\lam\in P_+}\mathbb{V}_{\lam}^\kappa\* D(\mathbb{V}_{\lam}^{\kappa^*}),
\end{align*}
where  $D(\mathbb{V}_{\lam}^{\kappa^*})$ is the contragredient dual of $\mathbb{V}_{\lam}^{\kappa^*}$.
On the other hand,
for a given $d_1$,
there exists only finitely many $\lam\in P_+$ such that 
$(\mathbb{V}_{\lam}^\kappa)_{d_1}\ne 0$,
where $(\mathbb{V}_{\lam}^\kappa)_{d_1}$
denotes the $L_0^L$-eigenspace
of $\mathbb{V}_{\lam}^\kappa$ with eigenvalue $d_1$.
It follows that
$\Dch_{G,\kappa}[d_1,d_2]$ is finite-dimensional for all $d_1,d_2\in \C$.
\end{proof}

By Proposition \ref{Pro:cdo-is-good},
$\Dch_{G,\kappa}$
is a conformal vertex algebra object in $\KL_{\kappa}$ (resp.\ $\KL_{\kappa^*}$)
with the chiral quantum moment map $\pi_L$ (resp.\ $\pi_R$).

Let $M\in \on{KL}_{\kappa'}$.
According to \cite[Theorem 6.5]{ArkGai02},
we have the canonical isomorphism
\begin{align}
\mc{D}_{G,\kappa}^{ch}\otimes M\cong \mc{D}_{G,\kappa-\kappa'}^{ch}\otimes  M
\label{eq:AG-iso}
\end{align}
of $\affg_{\kappa}\times \affg_{\kappa'-\kappa-\kappa_\g}$-modules.
Here,
on the left-hand side
$\affg_{\kappa}$ acts  on $\mc{D}_{G,\kappa}^{ch}$ via $\pi_L$
and $\affg_{\kappa'-\kappa-\kappa_0}$ diagonally  acts on  via $\pi_R$ and the $\affg_{\kappa'}$-action
on $M$.
On the right-hand side 
$\affg_{\kappa}$ diagonally  acts via $\pi_L$ and the $\affg_{\kappa'}$-action
on $M$,
and  $\affg_{\kappa'-\kappa-\kappa_0}$ acts on $\mc{D}_{G,\kappa}^{ch}$  via $\pi_R$.
Moreover,
it follows in the same manner as \cite[Proposition 3.4]{ACL17}
that
if $M$ is a vertex algebra object in $\KL_{\kappa'}$,
\eqref{eq:AG-iso} is a vertex algebra isomorphism.
Since
$H^{\semiinf+i}(\affg_{-\kappa_\g},\g,\Dch_{G,\kappa})\cong \delta_{i,0}\C$
by Theorem \ref{Th:vanishing-BRST-g-rel},
\eqref{eq:AG-iso}
induces 
 the isomorphism
\begin{align}
H^{\semiinf+i}(\affg_{-\kappa_\g},\g,\mc{D}_{G,\kappa}\* M)\cong \delta_{i,0}M
\end{align}
as $\affg_{\kappa}$-modules for any $M\in \on{KL}_{\kappa}$ (\cite{ArkGai02}).
This is an isomorphism of vertex algebras
 if $M$ is a vertex algebra object in $\on{KL}_{\kappa}$.
 In particular, in terms of the notation 
 \eqref{eq:MTproduct}
 we have
 \begin{align}
\Dch_{G,\kappa_c}\circ M\cong M\cong M\circ \Dch_{G,\kappa_c}
\label{eq:identify-cdo}
\end{align}
for $M\in \KL_{\kappa_c}$.

   \section{Quantum Drinfeld-Sokolov reduction and equivariant affine $W$-algebras}
 \label{section:Quantum Drinfeld-Sokolov reduction and equivariant affine $W$-algebras}
 Let 
$f$ be a nilpotent element of $\g$,
 $\{e,h,f\}$  an $\mf{sl}_2$-triple in $\g$ associated with
$f$,
$$\Slo_f=f+\g^e\subset \g\cong \g^*$$ the {\em Slodowy slice} at $f$,
where $\g^e$ is the centralizer of $e$ in $\g$.
It is known by \cite{GanGin02} that
$\Slo_f$ is a Poisson veriety,
where 
the Poisson structure of $\Slo_f$ is described as follows.
Set $$\fing_j=\{x\in \g\mid [h,x]=2jx\},$$
so that 
$\g=\bigoplus_{j\in \frac{1}{2}\Z}\fing_j$.
Choose a 	Lagrangian subspace $\mf{l}$ of 
$\g_{1/2}$ with respect to the symplectic form 
$\g_{1/2}\times \g_{1/2}\ra \C$,
$(x,y)\mapsto (f|[x,y])$.
Then $\mf{m}:=\mf{l}\+\bigoplus_{j\geq 1}\fing_j$ 
is a nilpotent Lie subalgebra of $\g$.
Let $M$ be the unipotent subgroup of $G$ corresponding to $\mf{m}$.
The projection
$\mu:\g^*\ra \mf{m}^*$ 
is the moment map for the $M$-action.
Let  $\chi:\mf{m}\ra \C$ be the character defined by
$\chi(x)= (x|f)$.
Then
  $\chi$ is a regular value of $\mu$.
Moreover,
we have the isomorphism of the affine varieties
$$M\times \Slo_f \isomap \mu^{-1}(\chi)=
\chi+\mf{m}^{\bot},\quad (g,s)\mapsto g.s.$$
Hence,
\begin{align*}
\Slo_f\cong \mu^{-1}(\chi)/M=\g^*/\!/\!/_{\chi}M
\end{align*}
has the structure of a reduced Poisson variety.

According to \cite{Gin08},
this construction can be generalized as follows.
Let  $R$ be a Poisson algebra object in $\on{QCoh}^{G}(\g^*)$,
$X=\Spec R$.
Then 
the moment map $\mu:X\overset{\mu_X}{\ra} \g^*\ra \mf{m}^*$ for the $M$-action is flat,
where the last map is the projection.
We get the reduced Poisson scheme $$ X/\!/\!/_{\chi}M=\mu^{-1}(\chi)/M=\Slo_f\times_{\g^*}X.$$
By applying this construction to $X=T^*G=G\times \g^*$
for the moment map $\mu_L$,
we obtain  the 
 {\em equivariant Slodowy slice}  \cite{Los07}
\begin{align*}
\mathbf{S}_{G,f}=G\times_M (f+\mf{m}^\bot)
\end{align*}
   at $f$. 
The Poisson variety $\mathbf{S}_{G,f}$ is an example of a
twisted contangent bundle  over $G/M$ (\cite[1.5]{ChrGin97}).
In particular, $ \mathbf{S}_{G,f}$ is a smooth symplectic variety.   
We have $\mathbf{S}_{G,f}^{op}\cong \mathbf{S}_{G,f}$
since $(T^*G)^{op}\cong T^*G$.
We identify 
$ \mathbf{S}_{G,f}$ with $ G\times \Slo_f$
via
the $G$-equivariant isomorphism
   \begin{align}
\nu:\mathbf{S}_{G,f}=G\times_{M}(\chi+\mf{m}^{\bot})\isomap G\times \Slo_f,
\quad (g,(n,s))\mapsto (gn,s),
\label{eq:Geqi}
\end{align}
($g\in G$, $n\in M$, $s\in \Slo_f$).
The $G$-action on  $ \mathbf{S}_{G,f}$ 
is Hamiltonian with the moment map
\begin{align}
\mu: \mathbf{S}_{G,f}=G\times\Slo_f\ra \g^*,\quad (g,s)\mapsto g.s
\label{eq:moment-map-for-equov-Slo},
\end{align}
which is flat 
 (\cite{Slo80}).
Hence, by \eqref{eq:BRST-cl}, \eqref{eq:Geqi},
we have
\begin{align}
H^{\semiinf+\bullet}(\g,\mc{O}(\mathbf{S}_{G,f})\* R)
\cong \mc{O}(\Slo_f\times_{\g^*}\Spec R)\* H^{\bullet}(\g,\C)
\end{align}
for a Poisson algebra object $R$ in $\on{QCoh}^G(\g^*)$.

The same argument as in \cite{Slo80}
shows that
\begin{align*}
J_n\mu: J_n\mathbf{S}_{G,f}=J_nG\times J_n\Slo_f\ra J_n\g^*
\end{align*}
is flat for  all $n\in \Z_{\geq 0}$,
and hence the same is true for $n={\infty}$.
Thus,
by \eqref{eq:cintg},
we have
\begin{align}
H^{\semiinf+\bullet}(\affg^{cl},\mc{O}(J_{\infty}\mathbf{S}_{G,f})\* V)
\cong \mc{O}(J_{\infty}\Slo_f\times_{\g^*} SS(V))\* H^{\bullet}(\g,\C)
\end{align}
for any Poisson vertex algebra object $V$ in $\on{Coh}^{J_\infty G}(J_{\infty}\g^*)$.
\medskip

For $M\in \on{KL}_{\kappa}$,
let $H_{DS,f}^{\bullet}(M)$ be the 
BRST cohomology of the
quantized Drinfeld-Sokolov reduction  \cite{FF90,KacRoaWak03}
associated with $f$ (and the Dynkin grading) with coefficients in $M$,
which is defined as follows.
Set $\g_{\geq 1}=\bigoplus_{j\geq 1}\g_j\subset \g_{>0}=
\bigoplus_{j>0}\g_j$.
Then 
 $\tilde\chi:\g_{\geq 1}[t,t^{-1}]\ra \C$,
$xt^n\mapsto \delta_{n,-1}(f|x)$,
defines a character.
Let $F_{\tilde\chi}=U(\g_{>0}[t,t^{-1}])\otimes_{U(\g_{>0}[t]+\g_{\geq 1}[t,t^{-1}])}\C_{\tilde\chi}$,
where $\C_{\tilde\chi}$ is the one-dimensional representation of 
$\g_{>0}[t]+\g_{\geq 1}[t,t^{-1}]$
on which $\g_{\geq 1}[t,t^{-1}]$ acts by  the character $\tilde\chi$
and  $\g_{>0}[t]$ acts trivially.
The space $F_{\chi}$ 
is isomorphic to the $\beta\gamma$-system associated with the symplectic vector space
$\g_{1/2}$.
In particular,
$F_{\chi}$  has the structure of a vertex algebra.
By definition,
\begin{align*}
H^\bullet_{DS,f}(M)=H^{\frac{\infty}{2}+\bullet}(\g_{>0}[t,t^{-1}],M\otimes  F_{\chi}),
\end{align*}
where 
$\g_{>0}[t,t^{-1}]$ acts diagonally on $M\otimes  F_{\chi}$
and
$H^{\frac{\infty}{2}+\bullet}(\g_{>0}[t,t^{-1}],N)$ is the semi-infinite  $\g_{>0}[t,t^{-1}]$-cohomology
\cite{Feu84}
with coefficients in a $\g_{>0}[t,t^{-1}]$-module $N$.

The (affine) {\em $W$-algebra} associated with $(\g,f)$ at level $\kappa$
is  
the vertex algebra
\begin{align*}
\W^\kappa(\g,f)=H_{DS,f}^0(V^{\kappa}(\g))
\end{align*}
(\cite{FF90,KacRoaWak03}).
 The $W$-algebra
 $\W^\kappa(\g,f)$ is a  strict chiral quantization of 
 the Slodowy slice $\Slo_f$ (\cite{De-Kac06,Ara09b}).
 
More generally,
if $V$ is a vertex algebra object  in $\KL_{\kappa}$,
then
$H_{DS,f}^0(V)$ 
is a vertex algebra,
and $\mu_V:V^\kappa(\g)\ra V$ induces a vertex algebra homomorphism
$\W^\kappa(\g,f)\ra H_{DS,f}^0(V)$.
If $V$ is a conformal vertex algebra object  in $\KL_{\kappa}$  with central charge $c_V$,
 it follows from  \cite[\S 2.2]{KacRoaWak03} that $H_{DS,f}^0(V)$ 
 is
conformal with central charge 
\begin{align}
c_V-\dim \mathbb{O}_f-\frac{3}{2}\dim \g_{1/2}+12(\rho|h)-3(k+h^{\vee})|h|^2,
\label{eq:cc-of-reduction}
\end{align}
where 
$\mathbb{O}_f=G.f$ 
and $k=2h^{\vee}\kappa/\kappa_{\g}$.
Note that $\dim \mathbb{O}_f=\dim \g-\dim \g_0- \dim \g_{1/2}$.

\begin{Th}\label{Th:IMRN2016}
\begin{enumerate}
\item \rm{(}\cite{FreGai07,Ara09b}\rm{)}
For $M\in \on{KL}_{\kappa}$
we have 
$H_{DS,f}^i(M)=0$ for $i\ne 0$.
Therefore 
the functor $\on{KL}_{\kappa}\ra \W^\kappa(\g,f)\on{-Mod}$,
$M\mapsto H_{DS,f}^0(M)$, is exact.
\item \rm{(}\cite{Ara09b}\rm{)}
For  $M\in \on{KL}_{\kappa}^{ord}$,
we have
$$\gr H_{DS,f}^0(M)\cong \mc{O}(J_{\infty}\Slo_f)\*_{\mc{O}(J_\infty\g^*)}\gr M.$$
Hence, if 
 $V$ is a vertex algebra object in $\KL_\kappa$,
then
the vertex algebra 
$ H_{DS,f}^0(V)$ is separated, and
$\gr H_{DS,f}^0(V)\cong \mc{O}(J_{\infty}\mc{S}_f)\otimes_{\mc{O}(J_{\infty}\g^*)}(\gr V)$.
In particular,
$$\tilde{X}_{H_{DS,f}^0(V)}\cong \tilde{X}_V\times_{\g^*} \Slo_f.$$
\item
\rm{(}\cite{A2012Dec}\rm{)}
For any vertex algebra object $V$ in $\KL_{\kappa}$,
we have
$$\on{Zhu}(H_{DS,f}^0(V))\cong H_{DS,f}^0(\on{Zhu}(V)),$$
where in the right-hand side $H_{DS,f}^0(?)$ is the finite-dimensional analogue of the quantized 
Drinfeld-Sokolov reduction as described in \cite{A2012Dec}.

\end{enumerate}

\end{Th}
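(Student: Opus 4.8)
The assertion collects three facts of increasing formal character; I would establish (1), deduce (2) from it together with the flatness of the arc-space moment map, and then obtain (3) by a filtration comparison. For (1), write the Drinfeld--Sokolov complex as $C_{DS,f}^{\bullet}(M)=M\otimes F_{\chi}\otimes\bw{\bullet}(\g_{>0}[t,t^{-1}])$ with differential $d=Q_{(0)}$, and split $d=d_{0}+d_{\chi}$, where $d_{0}$ is the part independent of $\chi$ (the standard semi-infinite differential of $\g_{>0}[t,t^{-1}]$ acting on $M\otimes F_{\chi}$) and $d_{\chi}$ is the perturbation coming from the character $\tilde\chi$. For $M\in\on{KL}_{\kappa}^{ord}$ the Kazhdan grading (the conformal grading shifted by the internal $\mf{sl}_{2}$-grading) turns the complex into a direct sum of finite-dimensional subcomplexes, so one may run the spectral sequence of the splitting $d=d_{0}+d_{\chi}$. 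The decisive point is that the $\g_{\geq 1}[t,t^{-1}]$-part of the complex, together with the $\beta\gamma$-system $F_{\chi}$ built on the symplectic space $\g_{1/2}$, is acyclic away from cohomological degree $0$; this is the vertex-algebraic shadow of the fact that $\chi$ is a \emph{regular} value of $\mu\colon\g^{*}\to\mf{m}^{*}$. Hence the first page is concentrated in cohomological degree $0$, the sequence degenerates, $H_{DS,f}^{i}(M)=0$ for $i\neq 0$, and the general case of $\on{KL}_{\kappa}$ follows because $H_{DS,f}^{\bullet}$ commutes with filtered colimits and every object of $\on{KL}_{\kappa}$ is such a colimit of objects of $\on{KL}_{\kappa}^{ord}$. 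Exactness of $M\mapsto H_{DS,f}^{0}(M)$ is then immediate; this is essentially the argument of \cite{FreGai07}.

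For (2), equip $C_{DS,f}^{\bullet}(M)$ with Li's canonical filtration (suitably adapted as in \cite{Ara09b}). For $M\in\on{KL}_{\kappa}^{ord}$ this filtration is separated, and $\gr C_{DS,f}^{\bullet}(M)$ is the classical Drinfeld--Sokolov complex, whose cohomology is the arc-space Hamiltonian reduction of $\Spec\gr M$ along the moment map to $J_{\infty}\mf{m}^{*}$ at $J_{\infty}\chi$. Since the relevant map of arc spaces is flat --- by the same argument that proves flatness of $J_{\infty}\mu\colon J_{\infty}\mathbf{S}_{G,f}\to J_{\infty}\g^{*}$ above, after the base change identifying $\Slo_{f}$ with $\g^{*}/\!/\!/_{\chi}M$ --- this classical cohomology is concentrated in cohomological degree $0$ and equals $\mc{O}(J_{\infty}\Slo_{f})\otimes_{\mc{O}(J_{\infty}\g^{*})}\gr M$. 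Feeding this into the spectral sequence $\gr C_{DS,f}^{\bullet}(M)\Rightarrow\gr H_{DS,f}^{\bullet}(M)$ of the Li filtration: the $E_{1}$-page already vanishes outside cohomological degree $0$, so $d_{r}=0$ for all $r\geq 1$ (exactly as in the proof of Theorem~\ref{Th:vanishing-BRST-g-rel}), the sequence degenerates, and $\gr H_{DS,f}^{0}(M)\cong\mc{O}(J_{\infty}\Slo_{f})\otimes_{\mc{O}(J_{\infty}\g^{*})}\gr M$; this also reproves (1) for $M\in\on{KL}_{\kappa}^{ord}$. For a vertex algebra object $V$ in $\on{KL}_{\kappa}$ this shows $H_{DS,f}^{0}(V)$ is separated (it is bounded below for the Kazhdan grading), and restricting the displayed isomorphism of Poisson vertex algebras to the bottom of the Li grading, i.e.\ to the $C_{2}$-quotient, gives $R_{H_{DS,f}^{0}(V)}\cong\mc{O}(\Slo_{f})\otimes_{\mc{O}(\g^{*})}R_{V}$, whence $\tilde{X}_{H_{DS,f}^{0}(V)}\cong\tilde{X}_{V}\times_{\g^{*}}\Slo_{f}$.

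For (3), the Kazhdan-shifted conformal grading makes the reduction functor compatible with Zhu's functor: the BRST differential carries the span of the ``$\circ$''-relations of $V$, tensored with $F_{\chi}\otimes\bw{\bullet}(\g_{>0}[t,t^{-1}])$, into the analogous span, so on cohomology it induces a natural algebra homomorphism $H_{DS,f}^{0}(\on{Zhu}(V))\to\on{Zhu}(H_{DS,f}^{0}(V))$, where on the left $H_{DS,f}^{0}$ denotes the finite-dimensional Drinfeld--Sokolov reduction of the associative algebra $\on{Zhu}(V)$. Surjectivity is built into the construction; for injectivity I would filter both sides by their standard PBW-type filtrations and compare associated graded, using part (2), the surjection $R_{V}\twoheadrightarrow\gr\on{Zhu}(V)$, and the exactness from part (1) to identify the two graded objects. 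This is carried out in \cite{A2012Dec}.

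The main obstacle is (1): the acyclicity, away from cohomological degree $0$, of the ``Koszul/$\beta\gamma$'' factor of the semi-infinite complex for $\g_{>0}[t,t^{-1}]$ and its stability under the perturbation $d_{\chi}$ --- equivalently, that the arc-space moment map into $J_{\infty}\mf{m}^{*}$ remains flat. Once this is in hand, parts (2) and (3) follow from the Li-filtration machinery and the flatness statements already established in this section, by the same spectral-sequence bookkeeping used for Theorems~\ref{Th:vanishing-BRST-g} and~\ref{Th:vanishing-BRST-g-rel}.
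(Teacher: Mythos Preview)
The paper does not prove this theorem: it is stated as a collection of results imported from \cite{FreGai07}, \cite{Ara09b}, and \cite{A2012Dec}, with no \texttt{proof} environment attached. Your write-up is therefore not to be compared against an argument in this paper but against the original sources, and as a sketch of those it is accurate in outline: the Kazhdan-grading plus decomposition of the BRST differential for (1), the Li-filtration spectral sequence reducing to the classical (arc-space) reduction for (2), and the compatibility of the Zhu functor with the BRST complex for (3) are indeed the mechanisms used in \cite{Ara09b} and \cite{A2012Dec}. One small correction: in (1) the decisive collapse in \cite{Ara09b} is organized slightly differently---the complex is shown to be quasi-isomorphic to a subcomplex concentrated in nonpositive (and, dually, nonnegative) degrees rather than by directly splitting $d=d_0+d_\chi$---but your formulation captures the same content and would go through.
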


\begin{Co}\label{Co:DS}
For a    vertex algebra object $V$ in $\KL_\kappa$,
 the vertex algebra $ H_{DS,f}^0(V)$ is  a chiral quantization of 
$X_V\times_{\g^*} \Slo_f$.
If in addition $V$ is a strict chiral quantization of  
$X=\tilde{X}_V$,
then 
$H_{DS,f}^0(V)$ is a strict chiralization of 
$X\times_{\g^*}\Slo_f$.
\end{Co}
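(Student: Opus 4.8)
The plan is to deduce both assertions as formal consequences of Theorem~\ref{Th:IMRN2016}(2) and Definition~\ref{def:cq}, the only extra input being that the arc space functor preserves fibre products, so that $J_\infty(\tilde{X}_V\times_{\g^*}\Slo_f)\cong J_\infty\tilde{X}_V\times_{J_\infty\g^*}J_\infty\Slo_f$, equivalently $\mc{O}(J_\infty(\tilde{X}_V\times_{\g^*}\Slo_f))\cong\mc{O}(J_\infty\tilde{X}_V)\otimes_{\mc{O}(J_\infty\g^*)}\mc{O}(J_\infty\Slo_f)$. For the first assertion there is essentially nothing left to prove: Theorem~\ref{Th:IMRN2016}(2) already gives that $H_{DS,f}^0(V)$ is separated and that $\tilde{X}_{H_{DS,f}^0(V)}\cong\tilde{X}_V\times_{\g^*}\Slo_f$ as Poisson schemes, where $\tilde{X}_V\to\g^*$ is the moment map $\mu_{R_V}^*$ and $\Slo_f\hookrightarrow\g^*$. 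Passing to maximal spectra on both sides yields $X_{H_{DS,f}^0(V)}\cong X_V\times_{\g^*}\Slo_f$ as Poisson varieties, which is exactly the assertion that $H_{DS,f}^0(V)$ is a chiral quantization of $X_V\times_{\g^*}\Slo_f$ in the sense of Definition~\ref{def:cq}.

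For the second assertion I would assume $V$ to be a strict chiral quantization of $X=\tilde{X}_V$, that is, that the canonical surjection \eqref{eq:surj-map-PVA} for $V$ is an isomorphism $\mc{O}(J_\infty X)\isomap\gr V$, and then verify the two conditions of Definition~\ref{def:cq} for $H_{DS,f}^0(V)$. The scheme-level condition holds by the ``in particular'' clause of Theorem~\ref{Th:IMRN2016}(2): $\tilde{X}_{H_{DS,f}^0(V)}\cong\tilde{X}_V\times_{\g^*}\Slo_f=X\times_{\g^*}\Slo_f$. For the condition on the singular support I would compute, invoking successively Theorem~\ref{Th:IMRN2016}(2), strictness of $V$, and compatibility of $J_\infty$ with fibre products,
\begin{align*}
\gr H_{DS,f}^0(V)&\cong\mc{O}(J_\infty\Slo_f)\otimes_{\mc{O}(J_\infty\g^*)}\gr V\cong\mc{O}(J_\infty\Slo_f)\otimes_{\mc{O}(J_\infty\g^*)}\mc{O}(J_\infty X)\\
&\cong\mc{O}\bigl(J_\infty(X\times_{\g^*}\Slo_f)\bigr)=\mc{O}\bigl(J_\infty\tilde{X}_{H_{DS,f}^0(V)}\bigr),
\end{align*}
and then recognize this chain of isomorphisms as the canonical surjection \eqref{eq:surj-map-PVA} for $H_{DS,f}^0(V)$; once this is granted, \eqref{eq:surj-map-PVA} is an isomorphism, so that $SS(H_{DS,f}^0(V))\cong J_\infty(X\times_{\g^*}\Slo_f)$ and $H_{DS,f}^0(V)$ is a strict chiral quantization of $X\times_{\g^*}\Slo_f$.

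The step I expect to require genuine care is this last one: showing that the displayed chain really is the canonical map \eqref{eq:surj-map-PVA}, rather than merely an abstract identification of two Poisson vertex algebras between which the canonical surjection could a priori fail to be bijective. I would settle this by naturality: $\gr$ intertwines the quantized Drinfeld--Sokolov reduction $H_{DS,f}^0$ with the classical reduction, which by Theorem~\ref{Th:IMRN2016}(2) is the right-exact base-change functor $\mc{O}(J_\infty\Slo_f)\otimes_{\mc{O}(J_\infty\g^*)}(-)$; hence \eqref{eq:surj-map-PVA} for $H_{DS,f}^0(V)$ is the image under this functor of \eqref{eq:surj-map-PVA} for $V$, and is therefore an isomorphism as soon as the latter is. Concretely this reduces to the fact, established in the proof of Theorem~\ref{Th:IMRN2016}(2) in \cite{Ara09b}, that Li's canonical filtration on $H_{DS,f}^0(V)$ coincides with the filtration induced from the canonical filtration of $V$ together with the standard filtrations on the $\beta\gamma$-system $F_{\chi}$ and on the Clifford ghosts. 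As an alternative, whenever the grading on $H_{DS,f}^0(V)$ has finite-dimensional homogeneous pieces, surjectivity of \eqref{eq:surj-map-PVA} combined with the equality of graded characters furnished by the displayed isomorphism already forces it to be bijective.
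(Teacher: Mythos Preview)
Your proposal is correct and follows precisely the route the paper intends: the corollary is stated without proof, as an immediate consequence of Theorem~\ref{Th:IMRN2016}(2), and your argument simply unpacks that implication. Your explicit attention to why the chain of isomorphisms coincides with the canonical surjection \eqref{eq:surj-map-PVA} is more care than the paper spells out, but the naturality argument you give (that Li's filtration on $H_{DS,f}^0(V)$ is induced from that on $V$, as in \cite{Ara09b}) is exactly the right justification.
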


Define
the {\em equivariant affine $W$-algebra $\eqW^\kappa_{G,f}$ associated with $(G,f)$ }at level $\kappa$
by
\begin{align}
\eqW^\kappa_{G,f}:=H_{DS,f}^0(\D_{G,\kappa}^{ch}).
\end{align}
By Corollary \ref{Co:DS},
$\eqW^\kappa_{G,f}$ is a   strict chiralization of
\begin{align}
\tilde{X}_{\eqW^\kappa_{G,f}}=X_{\Dch_{G,\kappa}}\times_{\g^*} \Slo_f\cong \mathbf{S}_{G,f}.
\end{align}
Since $\mathbf{S}_{G,f}$ is 
 a smooth symplectic variety,
$\eqW^\kappa_{G,f}$ is 
simple 
by Theorem \ref{Th:AMoreau}.
Because $\Dch_{G,\kappa}$ is conformal with central charge $2\dim \g$,
$\eqW^\kappa_{G,f}$ is conformal
with central charge
\begin{align}
\dim \g+\dim \g_0-\frac{1}{2}\dim \g_{1/2}+12(\rho|h)-3(k+h^{\vee})|h|^2
\label{eq:cc-of-eq-W}
\end{align}
(see \eqref{eq:cc-of-reduction}).

The equivariant affine $W$-algebra
is $\frac{1}{2}\Z$-graded:
$\eqW_{G,f}^\kappa=\bigoplus_{\Delta\in \frac{1}{2}\Z}
(\eqW_{G,f}^\kappa)_{\Delta}$. 
Note that the conformal weight of $\eqW_{G,f}^\kappa$ is not bounded from the below.
(See the formula \eqref{eq:conformal-dim-eq-W} below.)

We have 
$\eqW_{G,f}^\kappa\cong (\eqW_{G,f}^\kappa)^{op}$
since 
$\Dch_{G,\kappa}\cong (\Dch_{G,\kappa})^{op}$,
and so we do not 
distinguish
between them.

For $G=SL_2$ and a generic $\kappa$,
the equivariant affine $W$-algebra $\eqW^\kappa_{G,f}$
was  studied earlier 
by Frenkel and  Styrkas in \cite{FreSty06}.

Let $ \mathbf{U}_{G,f}$
denote the {\em equivariant finite  $W$-algebra} introduced by Losev \cite{Los07}.
The vertex algebra
$\eqW_{G,f}^\kappa$ should be regarded as an affine analogue of
$ \mathbf{U}_{G,f}$ is the following sense.
\begin{Th}\label{Th:Zhu-eq-W}
 We have $\on{Zhu}(\eqW^\kappa_{G,f} ))\cong \mathbf{U}_{G,f}$
 for any $\kappa$.
\end{Th}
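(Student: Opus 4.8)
The plan is to reduce the statement $\on{Zhu}(\eqW^\kappa_{G,f})\cong \mathbf{U}_{G,f}$ to the already-established general machinery. By definition $\eqW^\kappa_{G,f}=H^0_{DS,f}(\Dch_{G,\kappa})$, and by Theorem~\ref{Th:IMRN2016}(iii) applied to the vertex algebra object $V=\Dch_{G,\kappa}$ in $\KL_\kappa$ we obtain
\begin{align*}
\on{Zhu}(\eqW^\kappa_{G,f})=\on{Zhu}(H^0_{DS,f}(\Dch_{G,\kappa}))\cong H^0_{DS,f}(\on{Zhu}(\Dch_{G,\kappa})),
\end{align*}
where the right-hand $H^0_{DS,f}$ is the finite-dimensional (Kostant-style) Drinfeld-Sokolov reduction functor of \cite{A2012Dec}. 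By the canonical isomorphism \eqref{eq:cdo-zhu} we have $\on{Zhu}(\Dch_{G,\kappa})\cong\D_G$, the algebra of global differential operators on $G$, compatibly with filtrations and with the two embeddings $U(\g)\hookrightarrow\D_G$ coming from $\pi_L$ and $\pi_R$. So the task becomes identifying $H^0_{DS,f}(\D_G)$ — the finite $W$-algebra-type reduction of $\D_G$ taken with respect to the $\pi_L$-copy of $\g$ — with Losev's equivariant finite $W$-algebra $\mathbf{U}_{G,f}$.

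The second and main step is therefore to show $H^0_{DS,f}(\D_G)\cong\mathbf{U}_{G,f}$. Here I would appeal directly to Losev's definition of the equivariant finite $W$-algebra: $\mathbf{U}_{G,f}$ is constructed precisely as a quantum Hamiltonian reduction of $\D_G$ (equivalently of $\D(G)$, viewed as a quantization of $T^*G$) with respect to the left translation action twisted by the character $\chi$ on $\mathfrak{m}$ — this is the quantum counterpart of the classical reduction $\mathbf{S}_{G,f}=T^*G/\!/\!/_\chi M$ recorded in \eqref{eq:Geqi}. The finite Drinfeld-Sokolov functor $H^0_{DS,f}(?)$ of \cite{A2012Dec} is by construction exactly this BRST/quantum reduction with respect to $\mathfrak{m}$, so the two definitions should coincide on the nose once one matches conventions (the Lagrangian $\mathfrak{l}\subset\g_{1/2}$, the character $\chi(x)=(f|x)$, and the good grading used). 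Concretely I would write down both reductions as cohomology of the Chevalley-Eilenberg-type complex $\D_G\otimes\bigwedge^\bullet(\mathfrak{m}^*)$ with the twisted differential, and observe they are literally the same complex; the vanishing in nonzero degree is the finite analogue of Theorem~\ref{Th:IMRN2016}(i) and is part of the cited results.

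As a consistency check one can verify the associated graded: $\gr\on{Zhu}(\eqW^\kappa_{G,f})$ is a quotient of $R_{\eqW^\kappa_{G,f}}=\mc{O}(\mathbf{S}_{G,f})$ via \eqref{eq:surj-RV}, and $\gr\mathbf{U}_{G,f}\cong\mc{O}(\mathbf{S}_{G,f})$ by Losev, so the identification is at least compatible at the classical level; one then upgrades to an honest algebra isomorphism using that $\Dch_{G,\kappa}$ admits a PBW basis, which makes \eqref{eq:surj-RV} an isomorphism, combined with the same property for $\eqW^\kappa_{G,f}=H^0_{DS,f}(\Dch_{G,\kappa})$ inherited through the reduction. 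The independence of $\kappa$ is automatic: $\on{Zhu}(\Dch_{G,\kappa})\cong\D_G$ canonically for every $\kappa$ (including $\kappa_c$), so the whole computation is $\kappa$-insensitive.

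The hard part will be the bookkeeping in matching the two definitions of the finite equivariant $W$-algebra — reconciling Losev's Fedosov-quantization / Hamiltonian-reduction description of $\mathbf{U}_{G,f}$ with the explicit BRST complex used to define $H^0_{DS,f}$ in \cite{A2012Dec}, in particular checking that the $G$-equivariant structure and the residual $\D_G$-bimodule-type data correspond. I expect no genuinely new input is needed — everything is a careful comparison of known constructions — but getting the conventions (choice of $\mathfrak{sl}_2$-triple, good grading, Lagrangian, sign of $\chi$) to line up so that the isomorphism is canonical rather than merely abstract is where the real work lies.
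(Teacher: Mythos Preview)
Your proposal is correct and follows essentially the same route as the paper's proof: apply Theorem~\ref{Th:IMRN2016}(iii) to $V=\Dch_{G,\kappa}$, use $\on{Zhu}(\Dch_{G,\kappa})\cong\D_G$ from \eqref{eq:cdo-zhu}, and then identify the finite Drinfeld--Sokolov reduction $H^0_{DS,f}(\D_G)$ with Losev's $\mathbf{U}_{G,f}$. The paper compresses your second step into a single citation of \cite[Remark~3.1.4]{Los07} together with the description of the finite reduction in \cite{A2012Dec}, whereas you spell out the BRST-complex comparison and the associated-graded check; but the content is the same.
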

\begin{proof}
The statement  follows  from
\cite[Remark 3.1.4]{Los07},
 Theorem \ref{Th:IMRN2016} (3)
and the description of the  finite-dimensional analogue of the quantized 
Drinfeld-Sokolov reduction given in \cite{A2012Dec}.
\end{proof}

By Theorem \ref{Th:IMRN2016},
the vertex algebra homomorphism
$\pi_L:V^\kappa(\g) \hookrightarrow \Dch_{G,\kappa}$
induces the embedding
\begin{align}
\W^\kappa(\g,f)\hookrightarrow \eqW_{G,f}^\kappa,
\end{align}
which we  denote also by $\pi_L$.
On the other hand,
the vertex algebra homomorphism
$\pi_R:V^{\kappa^*}(\g) \hookrightarrow \Dch_{G,\kappa}$
induces the vertex algebra homomorphism
\begin{align}
V^{\kappa^*}(\g)\ra  \eqW_{G,f}^\kappa,
\end{align}
which is denoted also by $\pi_R$.
The induced actions of 
$\W^\kappa(\g,f)$ and $\affg_{\kappa^*}$ on $ \eqW_{G,f}^\kappa$ obviously commute,
and hence 
we have a vertex algebra homomorphism
\begin{align}
\W^\kappa(\g,f)\* V^{\kappa^*}(\g)\overset{\pi_L\* \pi_R}{\ra}\eqW_{G,f}^\kappa.
\end{align}

The map
$\gr \pi_L: \gr\W^\kappa(\g,f)=\mc{O}(J_{\infty}\Slo_f)\ra
\gr \eqW_{G,f}^\kappa=\mc{O}(J_{\infty} \mathbf{S}_{G,f})$
is induced by  the projection
$J_{\infty}\mathbf{S}_{G,f}\cong J_\infty (G\times \Slo_f)\cong J_{\infty}G\times J_{\infty}\Slo_f\ra J_{\infty}\Slo_f$,
while 
the 
map
$\gr \pi_R: \gr V^{\kappa^*}(\g)=\mc{O}(J_{\infty}\g^*)\ra  \gr \eqW_{G,f}^\kappa=\mc{O}(J_{\infty}\mathbf{S}_{G,f})$
is induced by 
the chiral moment map 
\begin{align}
J_{\infty}\mu: J_{\infty}\mathbf{S}_{G,f}=J_{\infty}G\times J_\infty \Slo_f\ra J_{\infty} \g^*,\quad (g,f)\mapsto gf.
\label{eq:ch:moment-map-for-equov-Slo}
\end{align}

\begin{Pro}\label{Pro:free}
The $\affg_{\kappa^*}$-module
$\eqW_{G,f}^\kappa$ is free over
$U(\g[t^{-1}]t^{-1})$.
Therefore,
there exists a 
filtration
$0=M_0\subset M_1\subset M_2\subset \dots $,
$\eqW_{G,f}^\kappa=\bigcup_p M_p$,
as a  $\affg_{\kappa^*}$-module
such that
each successive  quotient is isomorphic to $\mathbb{V}_{\lam}^{\kappa^*}$ for some $\lam\in P_+$.
\end{Pro}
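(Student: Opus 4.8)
The plan is to deduce the freeness statement from the analogous statement for the chiral differential operators $\Dch_{G,\kappa}$ together with the exactness of the Drinfeld–Sokolov reduction functor. Recall from Corollary \ref{Co:free-cofree-modules} that every object of $\Dch_{G,\kappa}\Mod$, and in particular $\Dch_{G,\kappa}$ itself, is free over $U(t^{-1}\g[t^{-1}])$ with respect to the $\pi_R$-action; this is where the freeness ultimately comes from. The $W$-algebra side is obtained by applying $H_{DS,f}^0(?)$, which by Theorem \ref{Th:IMRN2016}(1) is exact on $\on{KL}_\kappa$ and kills higher cohomology. The key point is that the $\pi_R$-action of $\affg_{\kappa^*}$ on $\Dch_{G,\kappa}$ commutes with the $\pi_L$-action of $\affg_\kappa$ through which the reduction is taken, so the reduction functor is compatible with the $\affg_{\kappa^*}$-module structure and passes the $U(\g[t^{-1}]t^{-1})$-freeness down to $\eqW_{G,f}^\kappa = H_{DS,f}^0(\Dch_{G,\kappa})$.

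First I would make precise the module-theoretic picture: the $\g_{>0}[t,t^{-1}]$-action defining $H_{DS,f}^0$ factors through $\pi_L$, hence commutes with $\pi_R(\g[t^{-1}]t^{-1})$. Therefore the BRST complex $C^{\bullet} = \Dch_{G,\kappa}\otimes F_\chi \otimes \bigwedge^{\frac{\infty}{2}+\bullet}(\g_{>0}[t,t^{-1}])$ is a complex of $U(\g[t^{-1}]t^{-1})$-modules, and each term is $U(\g[t^{-1}]t^{-1})$-free because $\Dch_{G,\kappa}$ is (here $F_\chi$ and the semi-infinite exterior algebra carry no $\pi_R$-action, so tensoring preserves freeness). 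Since $U(\g[t^{-1}]t^{-1})$ has global dimension — more relevantly, since over this enveloping algebra every module admits a free resolution and a bounded-below complex of frees computing a concentrated-degree cohomology has that cohomology free once the complex is split — one concludes from the concentration of cohomology in degree $0$ (Theorem \ref{Th:IMRN2016}(1)) that $\eqW_{G,f}^\kappa$ is $U(\g[t^{-1}]t^{-1})$-free. More concretely, I would argue via the canonical filtration: $\gr \eqW_{G,f}^\kappa \cong \mc{O}(J_\infty \Slo_f)\otimes_{\mc{O}(J_\infty\g^*)}\gr\Dch_{G,\kappa}$ by Theorem \ref{Th:IMRN2016}(2), and since $\gr \Dch_{G,\kappa} = \mc{O}(J_\infty T^*G)$ is free (indeed faithfully flat) over $\mc{O}(J_\infty\g^*)$ via $\gr\pi_R = J_\infty\mu_R$, the associated graded is free over the associated graded of $U(\g[t^{-1}]t^{-1})$; a standard filtered-to-graded lifting argument then gives freeness of $\eqW_{G,f}^\kappa$ itself.

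For the second assertion, once $\eqW_{G,f}^\kappa$ is known to be free over $U(\g[t^{-1}]t^{-1})$ I would use the $L_0$-grading (equivalently the $L_0^R$-grading coming from $\pi_R$, which is bounded below on each $\pi_R$-generated piece after the argument in Proposition \ref{Pro:cdo-is-good}) to choose a homogeneous free basis, and organize the generators so that the $\affg_{\kappa^*}$-submodule generated by an initial segment is closed under $\g[t]$. Each successive quotient is then a highest-weight $\affg_{\kappa^*}$-module which is free over $U(\g[t^{-1}]t^{-1})$ and on which $\g[t]t$ acts locally nilpotently with a finite-dimensional $\g$-module on top, hence is a Weyl module $\mathbb{V}_\lambda^{\kappa^*}$ for some $\lambda\in P_+$; this uses the universal property of $\mathbb{V}_\lambda^{\kappa^*}$ from \eqref{eq:Weyl-module} together with freeness over $U(\g[t^{-1}]t^{-1})$ to rule out proper quotients. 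The existence of the filtration with locally finite terms is guaranteed because $\eqW_{G,f}^\kappa$ is a direct sum of objects in $\on{KL}_{\kappa^*}^{ord}$ by Proposition \ref{Pro:cdo-is-good} and Theorem \ref{Th:IMRN2016}.

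The main obstacle I anticipate is the filtered-to-graded lifting step: passing from the freeness of $\gr \eqW_{G,f}^\kappa$ over $\gr U(\g[t^{-1}]t^{-1}) = \mc{O}(J_\infty\g^*_{<0})$ back to freeness of $\eqW_{G,f}^\kappa$ over $U(\g[t^{-1}]t^{-1})$ requires care, since the relevant filtrations are not finite and one must check that a homogeneous lift of a graded basis is genuinely a basis — this is where one needs the grading to be suitably bounded and each graded piece finite-dimensional, which is exactly what Proposition \ref{Pro:cdo-is-good} provides. An alternative, perhaps cleaner, route avoiding this issue is to run the argument entirely at the level of the BRST complex: exhibit $\Dch_{G,\kappa}\otimes F_\chi$ as an honest free $U(\g[t^{-1}]t^{-1})$-module, note that the Chevalley–Eilenberg differential for $\g_{>0}[t,t^{-1}]$ is $U(\g[t^{-1}]t^{-1})$-linear, and invoke the vanishing of higher reduction cohomology to split off the degree-zero part as a free summand.
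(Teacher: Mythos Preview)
Your overall strategy---pass to the associated graded, establish freeness/flatness there, and lift---is exactly the paper's approach, and your identification of the filtered-to-graded step as the delicate point is well taken. The paper sidesteps your lifting worry neatly: rather than lifting freeness directly, it uses the homological criterion that an object of $\KL_{\kappa^*}$ is free over $U(\g[t^{-1}]t^{-1})$ once $H_i(\g[t^{-1}]t^{-1},-)=0$ for $i>0$, and then runs the spectral sequence for this Lie algebra homology associated with the Li filtration. The $E_1$-term is the Koszul homology of $\gr\eqW_{G,f}^\kappa\cong\mc{O}(J_\infty\mathbf{S}_{G,f})$ over $\mc{O}(J_\infty\g^*)$, and one needs this to vanish in positive degrees.

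There is, however, a genuine gap in your justification of that vanishing. You write that since $\gr\Dch_{G,\kappa}=\mc{O}(J_\infty T^*G)$ is free over $\mc{O}(J_\infty\g^*)$ via $\gr\pi_R$, the associated graded $\gr\eqW_{G,f}^\kappa=\mc{O}(J_\infty\Slo_f)\otimes_{\mc{O}(J_\infty\g^*),\pi_L}\gr\Dch_{G,\kappa}$ is still free via $\pi_R$. This inference is not valid: tensoring over the $\pi_L$-structure does not preserve freeness for the $\pi_R$-structure, because after restricting the $\pi_L$-variables to $J_\infty\Slo_f$ the map $\gr\pi_R$ becomes the chiral moment map $J_\infty\mu:J_\infty G\times J_\infty\Slo_f\to J_\infty\g^*$, $(g,s)\mapsto g.s$, which is a genuinely different and nontrivial map. (A toy model: take $R=k[x,y]$ with $\pi_L\colon t\mapsto x$ and $\pi_R\colon t\mapsto xy$; $R$ is free over $k[t]$ via $\pi_R$, but $R/(\pi_L(t))=k[y]$ with $\pi_R(t)=0$ is not.) What you actually need is the flatness of $J_\infty\mu:J_\infty\mathbf{S}_{G,f}\to J_\infty\g^*$ itself. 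The paper establishes this just before the proposition, by the Slodowy-type argument that each finite-level map $J_n\mu:J_n\mathbf{S}_{G,f}\to J_n\g^*$ is flat and then passing to the limit; this is the input that makes the Koszul homology vanish.

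Your alternative route via the BRST complex of $\pi_R$-free modules also does not close the gap as stated: the semi-infinite complex is unbounded in both directions, so concentration of cohomology in degree~$0$ does not by itself force the cohomology to be a free summand. Once you replace your freeness claim at the graded level by the flatness of $J_\infty\mu$, your argument and the paper's coincide.
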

\begin{proof}
Since $\eqW_{G,f}^\kappa$
is an object of $\KL_{\kappa^*}$,
it is sufficient to show that
we have
$H_i(\g[t^{-1}]t^{-1},\eqW_{G,f}^\kappa)=0$ for $i>0$.
We consider the spectral sequence
$E_r\Rightarrow H^\bullet(t^{-1}\g[t^{-1}],\eqW_{G,f}^\kappa)$
such that 
the $E_1$-term is isomorphic to 
$H_\bullet(t^{-1}\g[t^{-1}],\gr \eqW_{G,f}^\kappa)
\cong H_{\bullet}(t^{-1}\g[t^{-1}], \mc{O}(J_{\infty}\mathbf{S}_{G,f}))$,
that is,
 the Koszul homology of $\mc{O}(J_{\infty}\mathbf{S}_{G,f})$ as a $\mc{O}(J_{\infty}\g^*)$-module.
 Since $J_{\infty}\mu$ is a flat,
 we have that $ H_{i}(\g[t^{-1}]t^{-1}, \mc{O}(J_{\infty}\mathbf{S}_{G,f}))=0$ for $i>0$.
Hence the spectral sequence collapses at $E_1=E_{\infty}$,
and we get that 
$H_i(\g[t^{-1}]t^{-1},\eqW_{G,f}^\kappa)=0$ for $i>0$ as required.
\end{proof}
By Proposition \ref{Pro:free},
it follows that 
the vertex algebra homomorphism
$\pi_R:V^{\kappa^*}(\g)\ra  \eqW_{G,f}^\kappa$ is injective.

\begin{Pro}\label{Pro:g[t]t-invariant-part}
The $\affg_{\kappa^*}$-module
$\eqW_{G,f}^\kappa$ is cofree over $U(t\g[t])$
and
we have
$$(\eqW^{\kappa}_{G,f})^{t\g[t]}\cong \bigoplus_{\lam\in P_+}H_{DS,f}^0(\mathbb{V}_{\lam}^\kappa)\otimes V_{\lam^*}$$
as $\W^\kappa(\g,f)\otimes U(\g)$-module.
In particular,
$$ \W^\kappa(\g,f)\cong \on{Com}(V^{\kappa^*}(\g),
 \eqW^{\kappa}_{G,f})= (\eqW_{G,f}^\kappa)^{\g[t]}$$
as vertex algebras.
\end{Pro}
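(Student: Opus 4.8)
The plan is to deduce the statement from the corresponding --- already known --- facts about the chiral differential operators $\Dch_{G,\kappa}$, of which $\eqW_{G,f}^\kappa=H_{DS,f}^0(\Dch_{G,\kappa})$ is the $(\g,f)$-Drinfeld--Sokolov reduction. The structural point I would use first is that the right action $\pi_R$ of $\affg_{\kappa^*}$ on the Drinfeld--Sokolov complex $C_{DS}^\bullet$ computing $H_{DS,f}^\bullet(\Dch_{G,\kappa})$ takes place through the $\Dch_{G,\kappa}$-argument alone, so it commutes with the differential $d_{DS}$ (which is built only from $\pi_L$ and the auxiliary $\beta\gamma$- and fermionic factors); hence the $\g[[t]]$-action via $\pi_R$ and the $\W^\kappa(\g,f)$-action via $\pi_L$ descend to $\eqW_{G,f}^\kappa$ and commute there. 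Second, I would invoke the cdo input that $\Dch_{G,\kappa}$ is cofree over $U(t\g[t])$ for $\pi_R$, with
\begin{align*}
(\Dch_{G,\kappa})^{\pi_R(t\g[t])}\cong\bigoplus_{\lam\in P_+}\mathbb{V}_\lam^\kappa\otimes V_{\lam^*}
\end{align*}
as a $V^\kappa(\g)\otimes U(\g)$-module ($V^\kappa(\g)$ acting via $\pi_L$, $U(\g)$ via $\pi_R$); this is \eqref{eq:g[t]t-invariants-of-cdo}, proved there at $\kappa=\kappa_c$, but the mechanism --- that $J_\infty G\to G$ is a torsor under the congruence subgroup $\ker(G[[t]]\to G)$, making $\mc{O}(J_\infty G)$ $\pi_R$-cofree over $U(t\g[t])$ with invariants $\mc{O}(G)$ --- is insensitive to $\kappa$ (alternatively one reduces to $\kappa_c$ via \eqref{eq:AG-iso}). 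In particular $(\Dch_{G,\kappa})^{\pi_R(t\g[t])}$ is a direct sum of objects of $\on{KL}_\kappa^{ord}$.

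The heart of the proof is then a computation on the double complex $C_{DS}^\bullet\otimes\bw{\bullet}(t\g[t])^*$ carrying $d_{DS}$ and the Chevalley--Eilenberg differential of $t\g[t]$. Taking Chevalley--Eilenberg cohomology first: each term of $C_{DS}^\bullet$ is $\Dch_{G,\kappa}$ tensored with a trivial $t\g[t]$-module, hence still cofree over $U(t\g[t])$, so this page collapses in cohomological degree $0$ onto $C_{DS}^\bullet\bigl((\Dch_{G,\kappa})^{\pi_R(t\g[t])}\bigr)$, whose $d_{DS}$-cohomology --- being $H_{DS,f}^\bullet$ of an object of $\on{KL}_\kappa$ --- is concentrated in degree $0$ by Theorem~\ref{Th:IMRN2016}(1). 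Taking $d_{DS}$-cohomology first, again by Theorem~\ref{Th:IMRN2016}(1) (now applied to $\Dch_{G,\kappa}$), leaves $\eqW_{G,f}^\kappa\otimes\bw{\bullet}(t\g[t])^*$ in degree $0$, with further cohomology $H^\bullet(t\g[t],\eqW_{G,f}^\kappa)$. Comparing the two computations of the total cohomology yields simultaneously that $\eqW_{G,f}^\kappa$ is cofree over $U(t\g[t])$ (that is, $H^i(t\g[t],\eqW_{G,f}^\kappa)=0$ for $i\neq 0$) and
\begin{align*}
(\eqW_{G,f}^\kappa)^{\pi_R(t\g[t])}\cong H_{DS,f}^0\bigl((\Dch_{G,\kappa})^{\pi_R(t\g[t])}\bigr)\cong\bigoplus_{\lam\in P_+}H_{DS,f}^0(\mathbb{V}_\lam^\kappa)\otimes V_{\lam^*},
\end{align*}
the last step because $H_{DS,f}^0$ is exact, commutes with direct sums, and leaves the finite-dimensional multiplicity spaces $V_{\lam^*}$ untouched; this is an isomorphism of $\W^\kappa(\g,f)\otimes U(\g)$-modules since $H_{DS,f}^0$ carries $\pi_L$ to the $\W^\kappa(\g,f)$-action and commutes with $\pi_R(\g)$.

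For the final assertion I would then take $\pi_R(\g)$-invariants --- an exact operation, since $\g$ is reductive and the modules in sight are locally $\g$-finite --- obtaining
\begin{align*}
\on{Com}(V^{\kappa^*}(\g),\eqW_{G,f}^\kappa)=(\eqW_{G,f}^\kappa)^{\pi_R(\g[[t]])}=\bigl((\eqW_{G,f}^\kappa)^{\pi_R(t\g[t])}\bigr)^{\pi_R(\g)}\cong\bigoplus_{\lam\in P_+}H_{DS,f}^0(\mathbb{V}_\lam^\kappa)\otimes(V_{\lam^*})^\g,
\end{align*}
and since $(V_{\lam^*})^\g$ equals $\C$ for $\lam=0$ and $0$ otherwise, this is $H_{DS,f}^0(V^\kappa(\g))=\W^\kappa(\g,f)$; equivalently one may rerun the double-complex argument with $t\g[t]$ replaced by $\g[[t]]$ in the last step and use \eqref{eq:piL}. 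As $\pi_L\colon\W^\kappa(\g,f)\hookrightarrow\eqW_{G,f}^\kappa$ is an injective vertex algebra homomorphism --- injectivity following from the exactness of $H_{DS,f}^0$ applied to $0\to V^\kappa(\g)\to\Dch_{G,\kappa}\to\Dch_{G,\kappa}/\pi_L(V^\kappa(\g))\to 0$ --- and its image is precisely the commutant just computed, it is an isomorphism of vertex algebras onto $\on{Com}(V^{\kappa^*}(\g),\eqW_{G,f}^\kappa)=(\eqW_{G,f}^\kappa)^{\pi_R(\g[[t]])}$. The one genuinely delicate step, I expect, is the interchange of the Drinfeld--Sokolov functor with the $t\g[t]$-invariants, i.e.\ the degeneration of both spectral sequences; this works precisely because $\Dch_{G,\kappa}$ is $\pi_R(t\g[t])$-cofree (so $C_{DS}^\bullet$ is acyclic for $t\g[t]$ in positive degrees) and because $(\Dch_{G,\kappa})^{\pi_R(t\g[t])}$ already lies in $\on{KL}_\kappa$ (so Theorem~\ref{Th:IMRN2016}(1) applies to it), everything else being formal.
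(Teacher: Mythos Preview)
Your argument is correct and takes a genuinely different route from the paper. The paper proceeds via the Li filtration on $\eqW_{G,f}^\kappa$: since $\gr\eqW_{G,f}^\kappa\cong\mc{O}(J_\infty\mathbf{S}_{G,f})\cong\mc{O}(J_\infty G)\otimes\mc{O}(J_\infty\Slo_f)$ with $\pi_R(t\g[t])$ acting only on the first factor (which is cofree over $U(t\g[t])$), the associated spectral sequence collapses to give $\gr H^i(t\g[t],\eqW_{G,f}^\kappa)\cong\delta_{i,0}\,\mc{O}(G)\otimes\mc{O}(J_\infty\Slo_f)$, whence cofreeness. The paper then constructs the same embedding $\bigoplus_\lam H_{DS,f}^0(\mathbb{V}_\lam^\kappa)\otimes V_{\lam^*}\hookrightarrow(\eqW_{G,f}^\kappa)^{t\g[t]}$ that arises in your approach, and shows it is an isomorphism by matching the associated gradeds, invoking Theorem~\ref{Th:IMRN2016}(2) to identify $\gr H_{DS,f}^0(\mathbb{V}_\lam^\kappa)\cong\mc{O}(J_\infty\Slo_f)\otimes V_\lam$. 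Your double-complex argument instead interchanges the functors $H_{DS,f}^0$ and $(-)^{t\g[t]}$ directly, using only Theorem~\ref{Th:IMRN2016}(1) and the $\pi_R$-cofreeness of $\Dch_{G,\kappa}$; this is tidier for the bare statement and avoids the associated-graded comparison, while the paper's route yields the description of $\gr(\eqW_{G,f}^\kappa)^{t\g[t]}$ as a byproduct, in keeping with its running emphasis on strict chiral quantization. The derivation of the ``In particular'' clause by taking further $\g$-invariants is the natural one and is left implicit in the paper.
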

\begin{proof}
Consider the spectral sequence
$E_r\Rightarrow H^\bullet(t\g[t],\eqW_{G,f}^\kappa)$
such that 
the $E_1$-term is 
$H^\bullet(t\g[t],\gr \eqW_{G,f}^\kappa)$.
Since $\gr \eqW_{G,f}^\kappa\cong \mc{O}(\mathbf{S}_{G,f})$,
we have
$H^i(t\g[t],\gr \eqW_{G,f}^\kappa)\cong \delta_{i,0}\mc{O}(J_{\infty}\Slo_f)\* \mc{O}(G)$.
It follows that the spectral sequence
 collapses at $E_1=E_{\infty}$,
 and we get that
\begin{align}
\gr  H^i(t\g[t],\eqW_{G,f}^\kappa)\cong \delta_{i,0}\mc{O}(J_{\infty}\Slo_f)\* \mc{O}(G).
\label{eq:2018-10-17-15-27}
\end{align}
In particular,
$\eqW_{G,f}^\kappa$ is cofree over  $U(t\g[t])$.

On the other hand, 
by Theorem \ref{Th:IMRN2016},
the embedding 
$$(\Dch_{G,\kappa})^{\pi_R(t\g[t])}\cong \bigoplus_{\lam\in P_+}\mathbb{V}_{\lam}^\kappa\* V_{\lam^*}\hookrightarrow
\Dch_{G,\kappa}$$
(see \eqref{eq:g[t]t-invariants-of-cdo})
induces the embedding
\begin{align*}
H_{DS,f}^0((\Dch_{G,\kappa})^{\pi_R(t\g[t])})\cong 
\bigoplus_{\lam\in P_+}H_{DS,f}^0(\mathbb{V}_{\lam}^\kappa)\* V_{\lam^*}\hookrightarrow
\eqW_{G,f}^{\kappa},
\end{align*}
and the image 
is contained in 
$(\eqW_{G,f}^\kappa)^{t\g[t]}$.
Since 
$\gr H_{DS,f}^0(\mathbb{V}_{\lam}^\kappa)\cong \mc{O}(J_{\infty}\Slo_f)\* V_{\lam}$
(see \cite{Ara09b}),
we have
$\gr \left(\bigoplus_{\lam\in P_+}H_{DS,f}^0(\mathbb{V}_{\lam}^\kappa)\* V_{\lam^*}\right)
\cong \mc{O}(J_{\infty}\Slo_f)\* \mc{O}(G)$.
By comparing with \eqref{eq:2018-10-17-15-27},
we get that 
$(\eqW_{G,f}^\kappa)^{t\g[t]}\cong \bigoplus_{\lam\in P_+}H_{DS,f}^0(\mathbb{V}_{\lam}^\kappa)\* V_{\lam^*}$.
\end{proof}


\begin{Pro}\label{Pro:eqW-finite}
The $\affg_{\kappa^*}$-module
$\eqW_{G,f}^\kappa$ is a direct sum of objects in $\on{KL}^{ord}_{\kappa^*}$.
\end{Pro}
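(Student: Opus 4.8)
Since $\eqW_{G,f}^\kappa=H_{DS,f}^0(\Dch_{G,\kappa})$ and the Drinfeld--Sokolov reduction is taken with respect to $\pi_L$, which commutes with $\pi_R(\affg_{\kappa^*})$, the $\affg_{\kappa^*}$-action descends to the cohomology; and since $\Dch_{G,\kappa}\in\KL_{\kappa^*}$ by Proposition~\ref{Pro:cdo-is-good}, so is $\eqW_{G,f}^\kappa$, because local nilpotence of $\g[[t]]t$ and local finiteness of $\g$ pass to subquotients. So the task is to produce a direct sum decomposition $\eqW_{G,f}^\kappa=\bigoplus_i N_i$ of $\affg_{\kappa^*}$-modules with each $N_i$ positively graded and with finite-dimensional graded pieces. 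The extra structure I would use is the grading operator $L_0^{\W}$ of $\W^\kappa(\g,f)\subset\eqW_{G,f}^\kappa$: it acts semisimply (being induced by the internal grading of the BRST complex that the differential respects) and commutes with $\pi_R(\affg_{\kappa^*})$, since $\W^\kappa(\g,f)=\on{Com}(V^{\kappa^*}(\g),\eqW_{G,f}^\kappa)$ by Proposition~\ref{Pro:g[t]t-invariant-part}.

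The plan is to take $N_d=\ker(L_0^{\W}-d)$ when $\kappa\ne\kappa_c$, and, when $\kappa=\kappa_c=\kappa^*$, to refine further using the central-character decomposition \eqref{eq:dec-KL-critical} of $\KL_{\kappa_c}$ for the $\pi_R$-structure, $N_d=\bigoplus_{\lambda}(N_d)_{[\lambda]}$. Each summand is a $\affg_{\kappa^*}$-submodule and a direct summand of $\eqW_{G,f}^\kappa$, hence cofree over $U(t\g[t])$ by Proposition~\ref{Pro:g[t]t-invariant-part}; it is therefore the cofree $U(t\g[t])$-module coinduced from its space of $\g[t]t$-invariants, and it will lie in $\KL_{\kappa^*}^{ord}$ as soon as that space is bounded below and finite-dimensional in each affine degree. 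By Proposition~\ref{Pro:g[t]t-invariant-part}, $(\eqW_{G,f}^\kappa)^{t\g[t]}\cong\bigoplus_{\mu\in P_+}H_{DS,f}^0(\mathbb{V}_\mu^\kappa)\otimes V_{\mu^*}$, where $L_0^{\W}$ acts through the first factor --- which by Theorem~\ref{Th:IMRN2016}, giving $\gr H_{DS,f}^0(\mathbb{V}_\mu^\kappa)\cong\mc{O}(J_{\infty}\Slo_f)\otimes V_\mu$, is positively graded with finite-dimensional graded pieces and has lowest weight differing from the Sugawara conformal weight of $\mathbb{V}_\mu^\kappa$ only by a term linear in $\mu$ --- while the $\pi_R$-affine degree is constant on the $\mu$-summand, equal to minus the Sugawara conformal weight of $\mathbb{V}_\mu^\kappa$ when $\kappa^*\ne\kappa_c$, and (when $\kappa^*=\kappa_c$) forced to the single value $\mu=\lambda^*$ once the block $[\lambda]$ is fixed. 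Since the Sugawara conformal weight of $\mathbb{V}_\mu^\kappa$ is a nonzero multiple of $(\mu|\mu+2\rho)$, whose level sets on $P_+$ are finite, fixing $L_0^{\W}$ together with the affine degree --- or, at the critical level, with the block --- leaves only finitely many $\mu$, each contributing a finite-dimensional space, and the occurring affine degrees are bounded below. This would give $N_d\in\KL_{\kappa^*}^{ord}$ (resp.\ $(N_d)_{[\lambda]}\in\KL_{\kappa_c}^{ord}$), hence the assertion.

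The point I expect to be the main obstacle is the boundedness from below: the conformal weight of $\eqW_{G,f}^\kappa$ is itself unbounded below, so one cannot slice by it, and a single eigenspace of $L_0^{\W}$ (or of the affine $L_0$) is infinite-dimensional in every degree of the complementary grading --- indeed $\eqW_{G,f}^\kappa$ involves infinitely many Weyl modules $\mathbb{V}_{\mu^*}^{\kappa^*}$. It is the interplay of the $\W$-grading with the affine grading, combined with the quadratic growth of $(\mu|\mu+2\rho)$ against the merely linear Drinfeld--Sokolov weight shift, that separates these into genuinely ordinary summands; at the critical level this quadratic growth is not available on the $\pi_R$-side, which is exactly why one must invoke the central-character decomposition and the argument breaks into the two cases above. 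A subsidiary technical point is the identification of a direct summand of a cofree $U(t\g[t])$-module with the module coinduced from its $\g[t]t$-invariants, which follows from Proposition~\ref{Pro:g[t]t-invariant-part} together with the fact that a direct summand of a cofree (i.e.\ injective) module is again cofree.
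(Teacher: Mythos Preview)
Your overall strategy matches the paper's: treat the critical and non-critical levels separately, and in the latter slice by the eigenvalues of $L_0^{\W}$. At $\kappa=\kappa_c$ the paper uses only the block decomposition \eqref{eq:dec-KL-critical} for $\pi_R$ and shows directly, via cofreeness and Proposition~\ref{Pro:g[t]t-invariant-part}, that $(\eqW_{G,f}^{\kappa_c})_{[\lambda]}\cong H_{DS,f}^0(\mathbb{V}_{\lambda^*})\otimes\mathbb{V}_{\lambda}$ as graded vector spaces, hence lies in $\KL^{ord}_{\kappa_c}$. Your extra slicing by $L_0^{\W}$ at the critical level is not well-founded: $\W^{\kappa_c}(\g,f)$ need not carry a conformal vector (for principal $f$ it is the commutative $\mf{z}(\affg)$), and the BRST-complex grading you invoke produces the \emph{total} Hamiltonian $L_0^{\eqW}$ on $\eqW_{G,f}^{\kappa_c}$, not a separate $\W$-piece. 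This is harmless, since the block decomposition you also use already suffices and is exactly what the paper does.

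The genuine divergence is at non-critical level. The paper does not argue via cofreeness there; it instead applies $H_{DS,f}^0$ to Zhu's Weyl-module filtration \eqref{eq:min-zhu-fil} of $\Dch_{G,\kappa}$ (increasing when $(\kappa-\kappa_c)/\kappa_\g\notin\Q_{\leq 0}$, decreasing otherwise) to obtain a filtration of $\eqW_{G,f}^\kappa$ with subquotients $H_{DS,f}^0(\mathbb{V}_\lambda^\kappa)\otimes D(\mathbb{V}_\lambda^{\kappa^*})$, and reads off finite-dimensionality of each $\eqW_{G,f}^\kappa[d_1,d_2]$ from the explicit formula \eqref{eq:conformal-dim-eq-W} for $h_\lambda$. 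Your route through Proposition~\ref{Pro:g[t]t-invariant-part} is more self-contained --- no appeal to \cite{Zhu11} --- but it trades this for the lemma ``$M\in\KL_{\kappa^*}$ cofree over $U(t\g[t])$, with $M^{t\g[t]}$ bounded below and locally finite in $L_0^{\affg}$-degree, implies $M\in\KL^{ord}_{\kappa^*}$'', which you flag but do not prove; it does hold, by inducting on the degree using exactness of the Chevalley complex in positive cohomological degree. Both routes ultimately rest on the same quadratic-versus-linear growth comparison in $\lambda$ that you correctly identify.
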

\begin{proof}
First, 
suppose that 
$\kappa=\kappa_c$.
Consider the decomposition
 \begin{align}
\eqW_{G,f}^{\kappa_c}=\bigoplus_{\lam\in P_+}(\eqW_{G,f}^{\kappa_c})_{[\lam]}
\label{eq:dec-eqW-critical}
\end{align}
(see \eqref{eq:dec-KL-critical}).
We have
$(\eqW_{G,f}^{\kappa_c})_{[\lam]}=H_{DS,f}^0((\Dch_{G,\kappa_c})_{[\lam]}$.
In the same way  as in the proof of Proposition \ref{Pro:cdo-is-good},
we find using  Proposition \ref{Pro:g[t]t-invariant-part} that
$(\eqW_{G,c})_{[\lam]}\cong H_{DS,f}^0(\mathbb{V}_{\lam^*}^\kappa)\* \mathbb{V}_{\lam}^{\kappa^*}$
as graded vector spaces
and we are done.

So suppose that
 $\kappa\ne \kappa_c$.
Let $\omega$ be the conformal vector of $\eqW_{G,f}^\kappa$.
Then $\omega=\omega_{\W}+\omega_{\affg}$,
where 
$\omega_W$ and $\omega_{\affg}$
are conformal vectors of $\W^k(\g,f)$ and $V^{\kappa^*}(\g)$,
respectively.
Let
 $L^{\W}(z)=\sum_{n\in \Z}L^{\W}_nz^{-n-2}$
and 
 $L^{\affg}(z)=\sum_{n\in \Z}L^{\affg}_nz^{-n-2}$
 be the fields corresponding
 to $\omega_{\W}$ and $\omega_{\affg}$,
 respectively.
 Set
  $\eqW_{G,f}^{\kappa}[d_1,d_2]=\{v\in \eqW_{G,f}^{\kappa}\mid
 L_0^{\W}v=d_1v, \ L_0^{\affg}v=d_2v\} $,
 so that
 $(\eqW_{G,f}^{\kappa})_{\Delta}=\bigoplus_{d_1+d_2=\Delta}\eqW_{G,f}^{\kappa}[d_1,d_2]$.
 We have
 \begin{align*}
\eqW_{G,f}^\kappa=\bigoplus_{d\in \C}\eqW_{G,f}[d,\bullet]
\end{align*}
as a $\affg_{\kappa^*}$-module,
where
$\eqW_{G,f}^\kappa[d,\bullet]=\bigoplus_{d'\in \C}\eqW_{G,f}^\kappa[d,d']$.
So
it is sufficient to show  that
$\eqW_{G,f}^\kappa[d,\bullet]$ 
is  an  object
of $\on{KL}_{\kappa^*}^{ord}$,
or equivalently,
 $\eqW_{G,f}^\kappa[d_1,d_2]$ is finite-dimensional for all $d_1,d_2\in \C$.
If
$(\kappa-\kappa_c)/\kappa_\g\not \in \Q_{\leq 0}$,
set $N_p=H_{DS}^0(M_p)$,
where $M_p$
is as in \eqref{eq:min-zhu-fil}.
By Theorem \ref{Th:IMRN2016},
$N_{\bullet}$ defines a filtration of $\eqW_{G,f}^\kappa$
such that
\begin{align*}
\bigoplus_o N_p/N_{p-1}\cong \bigoplus_{\lam\in P_+}H_{DS,f}^0(\mathbb{V}_{\lam}^\kappa)\* D(\mathbb{V}_{\lam}^{\kappa^*}).
\end{align*}
We have 
$H_{DS,f}^0(\mathbb{V}_{\lam}^\kappa)=\bigoplus_{d\in \Z_{\geq 0}}H_{DS,f}^0(\mathbb{V}_{\lam}^\kappa)_{d+h_{\lam}}$,
and $\dim H_{DS,f}^0(\mathbb{V}_{\lam}^\kappa)_d<\infty $ for all $d$,
where $H_{DS,f}^0(\mathbb{V}_{\lam}^\kappa)_{d}$
denotes the $L_0^{\W}$-eigenspace
of $H_{DS,f}^0(\mathbb{V}_{\lam}^\kappa)$ with eigenvalue $d$
and
\begin{align}
h_{\lam}=\frac{(\lam+2\rho\mid \lam)}{2(k+h^{\vee})}-\frac{1}{2}\lam(h)
=\frac{|\lam-\frac{k+h^{\vee}}{2}h+\rho|^2-|\rho|^2}{2(k+h^{\vee})}+\frac{1}{2}(\rho|h)-\frac{(k+h^{\vee})}{8}|h|^2 .
\label{eq:conformal-dim-eq-W}
\end{align}
It follows that,
 for a given $d_1$,
there exists only finitely many $\lam\in P_+$ such that 
$H_{DS,f}^0(\mathbb{V}_{\lam}^\kappa)_{d_1}\ne 0$,
and therefore 
$\eqW_{G,f}^\kappa[d_1,d_2]$ is finite-dimensional for all $d_1,d_2\in \C$.
If
$(\kappa-\kappa_c)/\kappa_\g\not \in \Q_{\geq 0}$,
there exists \cite{Zhu11} a decreasing filtration
\begin{align*}
\Dch_{G,\kappa}=M_1\supset M_1\supset \dots\dots ,\quad \ \bigcap_p M_p=0,
\end{align*}
such that
\begin{align*}
\bigoplus_p M_p/M_{p+1}\cong \bigoplus_{\lam\in P_+}\mathbb{V}_{\lam}^\kappa\* D(\mathbb{V}_{\lam}^{\kappa^*}).
\end{align*}
Setting
 $N_p=H_{DS}^0(M_p)$,
we get a filtration of $\eqW_{G,f}^\kappa$
such that
\begin{align*}
\bigoplus_o N_p/N_{p+1}\cong \bigoplus_{\lam\in P_+}H_{DS,f}^0(\mathbb{V}_{\lam}^\kappa)\* D(\mathbb{V}_{\lam}^{\kappa^*}).
\end{align*}
Since
there exist only finitely many $\lam\in P_+$ such that 
$D(\mathbb{V}_{\lam}^\kappa)_{d_1}\ne 0$
for a given $d_1$,
it follows that
$\eqW_{G,f}^\kappa[d_1,d_2]$ is finite-dimensional for all $d_1,d_2\in \C$.
This completes the proof.
\end{proof}

By Proposition \ref{Pro:eqW-finite},
$\eqW_{G,f}^\kappa$ is a conformal vertex algebra object in $\on{KL}^{ord}_{\kappa^*}$
with the chiral quantum moment map $\pi_R$.

By Theorem \ref{Th:vanishing-BRST-g-rel},
we have the following assertion.
\begin{Th}\label{Th:secDS}
Let $M\in \on{KL}_\kappa$.
Then
$H^{\frac{\infty}{2}+i}(\affg_{-\kappa_\g},\g, \eqW_{G,f}^{\kappa}\* M)=0$
for $i\ne 0$,
where 
the BRST cohomology is taken with respect to the action of
$\affg_{-\kappa_\g}$ on
$ \eqW_{G,f}^{\kappa}\* M$
given by  $x(w\*m)=\pi_R(x)w\* m+w\* xm$.
If $M\in \on{KL}^{ord}$
we have
\begin{align*}
\gr H^{\frac{\infty}{2}+0}(\affg_{-\kappa_\g},\g, \eqW_{G,f}^{\kappa}\* M)\cong 
\mc{O}(J_{\infty}\Slo_f)\*_{\mc{O}(J_{\infty}\g^*)}\gr M.
\end{align*}
\end{Th}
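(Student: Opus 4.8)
The plan is to deduce Theorem~\ref{Th:secDS} directly from Theorem~\ref{Th:vanishing-BRST-g-rel}, applied with the equivariant affine $W$-algebra $\eqW_{G,f}^{\kappa}$ playing the role of $V$. By Proposition~\ref{Pro:eqW-finite}, $\eqW_{G,f}^{\kappa}$ is a conformal vertex algebra object in $\KL_{\kappa^*}$ with chiral quantum moment map $\pi_R$, and by Corollary~\ref{Co:DS} it is a strict chiral quantization of $\tilde X_{\eqW_{G,f}^{\kappa}}\cong\mathbf{S}_{G,f}$. Since $(\kappa^*)^*=\kappa$ by \eqref{eq:dual-kappa}, Theorem~\ref{Th:vanishing-BRST-g-rel} applied with ``$\kappa$'' there taken to be $\kappa^*$ accepts coefficient modules $M$ in $\KL_{\kappa}$, exactly as in our hypothesis; and because $\eqW_{G,f}^{\kappa}\cong(\eqW_{G,f}^{\kappa})^{op}$, the complex $C(\affg_{-\kappa_\g},(\eqW_{G,f}^{\kappa})^{op}\*M)$ appearing in that theorem is identified with the complex $C(\affg_{-\kappa_\g},\eqW_{G,f}^{\kappa}\*M)$ computing the cohomology in the present statement, the $\affg_{-\kappa_\g}$-action being the diagonal one with $\eqW_{G,f}^{\kappa}$ acted on through $\pi_R$.

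It then remains to verify the two hypotheses of Theorem~\ref{Th:vanishing-BRST-g} for $V=\eqW_{G,f}^{\kappa}$, $X=\mathbf{S}_{G,f}$. For~(1) I would take $S=\Slo_f$: the $G$-equivariant isomorphism $\nu$ of \eqref{eq:Geqi} identifies $\mathbf{S}_{G,f}$ with $G\times\Slo_f$ in such a way that the action map $G\times\Slo_f\ra\mathbf{S}_{G,f}$ is an isomorphism of $G$-schemes. For~(2) I must check that the chiral moment map $\gr\pi_R^*\colon J_\infty\mathbf{S}_{G,f}\ra J_\infty\g^*$ is flat; but this map is precisely $J_\infty\mu$ for $\mu$ as in \eqref{eq:ch:moment-map-for-equov-Slo}, and the flatness of $J_n\mu$ for all $n$, hence of $J_\infty\mu$, has already been recorded above via the argument of \cite{Slo80}. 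With both hypotheses in force, Theorem~\ref{Th:vanishing-BRST-g-rel} gives, for $M\in\KL^{ord}$,
\begin{align*}
\gr H^{\frac{\infty}{2}+i}(\affg_{-\kappa_\g},\g,\eqW_{G,f}^{\kappa}\*M)\cong
\begin{cases}
\mc{O}(J_{\infty}\Slo_f)\*_{\mc{O}(J_{\infty}\g^*)}\gr M & i=0,\\
0 & i\ne 0,
\end{cases}
\end{align*}
which yields both the vanishing and the graded identification when $M\in\KL^{ord}$.

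Finally, to remove the restriction $M\in\KL^{ord}$ from the vanishing assertion, I would write an arbitrary $M\in\KL_{\kappa}$ as an inductive limit $M=\varinjlim_j M_j$ with $M_j\in\KL^{ord}_{\kappa}$, which is possible since every object of $\KL_{\kappa}$ is such a limit. Tensoring with $\eqW_{G,f}^{\kappa}$, forming the relative BRST subcomplex, and taking cohomology all commute with filtered colimits, so $H^{\frac{\infty}{2}+i}(\affg_{-\kappa_\g},\g,\eqW_{G,f}^{\kappa}\*M)\cong\varinjlim_j H^{\frac{\infty}{2}+i}(\affg_{-\kappa_\g},\g,\eqW_{G,f}^{\kappa}\*M_j)=0$ for $i\ne 0$. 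The only genuinely geometric input, as opposed to bookkeeping, is the flatness of the arc-space moment map $J_\infty\mu$ needed for hypothesis~(2), and this is already available; so I do not anticipate any real obstacle, the main point of care being to match the level-duality ($\kappa\leftrightarrow\kappa^*$) and the $op$-conventions so that Theorem~\ref{Th:vanishing-BRST-g-rel} applies verbatim.
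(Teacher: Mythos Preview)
Your proposal is correct and follows exactly the paper's approach: the paper states Theorem~\ref{Th:secDS} as an immediate consequence of Theorem~\ref{Th:vanishing-BRST-g-rel}, and you have supplied precisely the verification of hypotheses (the identification $\mathbf{S}_{G,f}\cong G\times\Slo_f$ via \eqref{eq:Geqi} and the flatness of $J_\infty\mu$) together with the routine inductive-limit extension to all of $\KL_\kappa$.
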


\begin{Th}\label{Th:DS-realization}
For $M\in \on{KL}_{\kappa}$,
we have the  isomorphism
\begin{align*}
H_{DS,f}^\bullet(M)\cong H^{\frac{\infty}{2}+\bullet}(\affg_{-\kappa_\g},\g, \eqW_{G,f}^{\kappa}\* M)
\end{align*}
as $\W^\kappa(\g,f)$-modules,
where
 $\W^\kappa(\g,f)$ acts on the first factor $\eqW_{G,f}^\kappa$ on the right-hand side.
 If $M$ is a vertex algebra object in $\KL_{\kappa}$,
 this is an isomorphism of vertex algebras.
\end{Th}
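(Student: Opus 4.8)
The plan is to exhibit both sides as the cohomology of a single double complex. Fix $M\in\on{KL}_\kappa^{ord}$; the general case $M\in\on{KL}_\kappa$ will follow by passing to filtered colimits, since every functor involved commutes with them. Regard $\Dch_{G,\kappa}\otimes M$ as an object of (a completion of) $\on{KL}_\kappa$ via $\pi_L$ on the first factor, and perform on it two BRST constructions in independent directions: the complex $C_{DS}^\bullet$ computing $H_{DS,f}^\bullet(\Dch_{G,\kappa}\otimes M)=H^{\semiinf+\bullet}(\g_{>0}[t,t^{-1}],\Dch_{G,\kappa}\otimes M\otimes F_{\tilde\chi})$, with differential $d_{DS}$ built from $\pi_L$, the $\beta\gamma$-system $F_{\tilde\chi}$ and the $\g_{>0}[t,t^{-1}]$-ghosts; and the relative complex $C(\affg_{-\kappa_\g},\g,-)$, with differential $d_\g=Q_{(0)}$ built from $\pi_R$ on $\Dch_{G,\kappa}$, the given $\affg_\kappa$-action on $M$, and the $\g$-ghosts $\bw{\semiinf+\bullet}(\g)$. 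Since $\pi_L$ and $\pi_R$ commute by the dual-pair property \eqref{eq:piL}--\eqref{eq:piR}, since $d_{DS}$ acts trivially on $M$, on the $\g$-ghosts and on $d_\g$'s matter, and since $d_\g$ acts trivially on $F_{\tilde\chi}$ and on the $\g_{>0}[t,t^{-1}]$-ghosts, the two differentials assemble into a double complex $K^{\bullet,\bullet}$, and the $\g$-relative restriction (kernel of operators lying in the $\g$-ghost factor) commutes with $d_{DS}$. Refining the conformal grading by the central-character decomposition \eqref{eq:dec-cod-critical} when $\kappa=\kappa_c$, and by the two Sugawara gradings otherwise (exactly as in the proofs of Propositions~\ref{Pro:cdo-is-good} and \ref{Pro:eqW-finite}), makes every homogeneous piece of $K^{\bullet,\bullet}$ finite-dimensional, so both spectral sequences of $K^{\bullet,\bullet}$ converge to $H^\bullet(\mathrm{Tot}\,K)$.

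Next I run the two spectral sequences. Computing $d_{DS}$-cohomology first: the factors $M$ and $\bw{\semiinf+\bullet}(\g)$ and the $\g$-relative restriction are inert under $d_{DS}$, while Theorem~\ref{Th:IMRN2016}(1), applied to the direct sum of $\on{KL}_\kappa^{ord}$-objects $\Dch_{G,\kappa}$, gives $H_{DS,f}^i(\Dch_{G,\kappa})=\delta_{i,0}\eqW_{G,f}^\kappa$; hence this spectral sequence is concentrated in $d_{DS}$-degree $0$, it degenerates, and $H^\bullet(\mathrm{Tot}\,K)\cong H^{\semiinf+\bullet}(\affg_{-\kappa_\g},\g,\eqW_{G,f}^\kappa\otimes M)$ with $\eqW_{G,f}^\kappa$ carrying the $\pi_R$-action. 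Computing $d_\g$-cohomology first: now $F_{\tilde\chi}$ and the $\g_{>0}[t,t^{-1}]$-ghosts are inert, while the isomorphism $H^{\semiinf+i}(\affg_{-\kappa_\g},\g,\Dch_{G,\kappa}\otimes M)\cong\delta_{i,0}M$ of $\affg_\kappa$-modules established immediately above (a consequence of the Arkhipov--Gaitsgory equivalence \eqref{eq:AG-iso}) shows this spectral sequence is concentrated in $d_\g$-degree $0$; it degenerates, and $H^\bullet(\mathrm{Tot}\,K)\cong H_{DS,f}^\bullet(M)$, where the surviving $\affg_\kappa$-action on $M$ is exactly the original one, appearing as the residue of the $\pi_L$-action under that isomorphism. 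Combining the two computations yields $H_{DS,f}^\bullet(M)\cong H^{\semiinf+\bullet}(\affg_{-\kappa_\g},\g,\eqW_{G,f}^\kappa\otimes M)$. Since in each spectral sequence the $E_1$-page lies in a single row (resp.\ column) in the relevant direction, there are no extension problems and these are genuine isomorphisms of cohomology, not merely of associated gradeds.

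Finally I match the extra structure. By Proposition~\ref{Pro:g[t]t-invariant-part}, $\W^\kappa(\g,f)=(\eqW_{G,f}^\kappa)^{\g[t]}=\on{Com}(V^{\kappa^*}(\g),\eqW_{G,f}^\kappa)$ commutes with the $\pi_R$-action, so it acts on $C(\affg_{-\kappa_\g},\g,\eqW_{G,f}^\kappa\otimes M)$ commuting with $d_\g$, hence on the left-hand side; tracing it through $K^{\bullet,\bullet}$ identifies this action with the tautological $\W^\kappa(\g,f)$-module structure on $H_{DS,f}^\bullet(M)$, both being induced through $\pi_L$ from $\W^\kappa(\g,f)=H_{DS,f}^0(V^\kappa(\g))\to H_{DS,f}^0(\Dch_{G,\kappa})$. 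When $M$ is a vertex algebra object of $\on{KL}_\kappa$, then $\eqW_{G,f}^\kappa\otimes M$ is one, $K^{\bullet,\bullet}$ is a differential graded vertex superalgebra, $H^\bullet(\mathrm{Tot}\,K)$ is a vertex superalgebra, and --- again because both spectral sequences degenerate with $E_1$ supported in a single row/column, so there is no graded-piece ambiguity --- the two identifications of $H^\bullet(\mathrm{Tot}\,K)$ are isomorphisms of vertex superalgebras.

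The main obstacle I anticipate is bookkeeping rather than conceptual: one must check carefully that $d_{DS}$ and $d_\g$ supercommute on the $\g$-relative complex and --- most delicately --- that the $\affg_\kappa$-module structure surviving the $d_\g$-cohomology is precisely the original one on $M$, so that the residual $d_{DS}$ really computes $H_{DS,f}^\bullet(M)$ and not a twist of it; this is exactly what the Arkhipov--Gaitsgory identification supplies, but it has to be applied with the $\pi_L$- and $\pi_R$-actions tracked in tandem. Establishing convergence of the spectral sequences, and commuting the two BRST reductions past the passage to the $\g$-relative subcomplex, are routine once one works in $\on{KL}_\kappa^{ord}$ with the refined grading and then takes colimits.
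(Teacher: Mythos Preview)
Your proposal is correct and follows essentially the same strategy as the paper: both set up the double complex on $\Dch_{G,\kappa}\otimes M$ with the Drinfeld--Sokolov differential built from $\pi_L$ and the semi-infinite $\affg$-differential built from $\pi_R$ plus the $M$-action, and compare the two spectral sequences using the Arkhipov--Gaitsgory identification. The paper differs only in execution: it works with the absolute rather than the relative $\affg$-cohomology (hence carries an $H^\bullet(\g,\C)$ factor and invokes Proposition~\ref{Pro:relative-coh}), and for the second direction it transforms the complex via \eqref{eq:AG-iso} into $C'$ and exhibits an explicit map $H_{DS,f}^0(M)\to H^0(C')$, checking it is an isomorphism by comparing associated gradeds via \eqref{eq:2018:10:17}, rather than running the second spectral sequence directly on the original complex and invoking \eqref{eq:identify-cdo} as you do. Your version is slightly more streamlined; the delicate point you flag---that the $\pi_L$-action surviving the $d_\g$-cohomology is exactly the given $\affg_\kappa$-action on $M$---is precisely the content of \eqref{eq:identify-cdo} read as an isomorphism of $\affg_\kappa$-modules, and the paper handles the same issue by making the AG move explicit in the complex $C'$ so that $Q'_{DS}$ visibly involves $\pi_M$.
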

\begin{proof}
First, note that
we already know that
the cohomology of both 
sides vanishes 
for nonzero cohomological degrees
(Theorem \ref{Th:IMRN2016}
 and Theorem \ref{Th:secDS}).

We may assume that $M\in \KL_{\kappa}^{ord}$
since the cohomology functor commutes with injective limits.
We may also assume that $M$ is $\Z_{\geq 0}$-graded:
$M=\bigoplus_{d\in \Z_{\geq 0}}M_d$:
$(xt^n)M_d\subset M_{d-n}$ for $x\in \g$, $n\in \Z$.
Note that
\begin{align}
\gr H_{DS,f}^\bullet(M)\cong \gr H^{\frac{\infty}{2}+\bullet}(\affg_{-\kappa_\g},\g, \eqW_{G,f}^{\kappa}\* M)
\label{eq:2018:10:17}
\end{align}
by Theorem \ref{Th:IMRN2016}
 and Theorem \ref{Th:secDS}.

Let
\begin{align*}
C=\bigoplus_{i\in \Z}C^i,\quad
&C^i=\bigoplus_{p.q\atop p+q=i}C^{p,q}, \\
&C^{p,q}=M\* \Dch_{G,\kappa}\* \bw{\semiinf+p}(\g)\*  F_{\chi}\*\bw{\semiinf+q}(\g_{>0}),
\end{align*}
where $F_{\chi}$ is the vertex algebra
generated by fields $\Psi_{\alpha}(z)$,
$\alpha\in \Delta_{1/2}$,
with the OPEs $\Psi_{\alpha}(z)\Psi_{\alpha}(w)\sim (f|[x_{\alpha},x_{\beta}])/(z-w)$,
and $\bw{\semiinf+q}(\g_{>0})$
is the vertex superalgebra
generated by odd fields 
$\psi_{\alpha}(z)$,
$\psi_{\alpha}^*(z)$,
$\alpha\in \Delta_{>0}$,
with the OPEs
$\psi_{\alpha}(z)\psi_{\beta}^*(w)\sim \delta_{\alpha,\beta}/(z-w)$.
Here, $x_{\alpha}$ is the root vector of $\g$ corresponding to the root $\alpha$,
$\Delta_j=\{\alpha\in \Delta\mid x_{\alpha}\in \g_j\}$,
$\Delta_{>0}=\bigsqcup_{j>0}\Delta_j$,
and we have taken the Cartan subalgebra $\h$ in $\g_0$
and $\Delta$ is the corresponding set of roots of $\g$.

Set
 $Q_{(0)}=(Q_{\affg})_{(0)}+(Q_{DS})_{(0)}$,
where 
\begin{align*}
&Q_{\affg}(z)=\sum_{i=1}^{\dim \g} (\pi_M(x_i(z))+\pi_R(x_{i}(z)))\psi_{i}^*(z)-\frac{1}{2}
\sum_{i,j,k}c_{i,j}^k :\psi_{i}^*(z)\psi_{j}^*(z)\psi_{k}(z):,\\
&Q_{DS}(z)=\sum_{\alpha\in \Delta_{>0}} (\pi_L(x_{\alpha}(z))+\Phi_{\alpha}(z))\psi_{\alpha}^*(z)-\frac{1}{2}
\sum_{\alpha,\beta,\gamma\in \Delta_{>0}}c_{\alpha,\beta}^\gamma :\psi_{\alpha}^*(z)\psi_{\beta}^*(z)\psi_{\gamma}(z):,
\end{align*}
where
$\pi_M$ is the action of $\affg_{\kappa}$ on $M$,
$\{x_i\}$ is a basis of $\g$,
 $c_{ij}^k$ is the corresponding structure constant,
$c_{\alpha,\beta}^\gamma$ is the structure constant of $\g_{>0}$ with respect to the basis
$\{x_{\alpha}\}$,
and
we have set $\Phi_{\alpha}(z)=(f|x_{\alpha})$ for $\alpha\in \Delta_j$, $j\geq 1$,
and 
omitted the tensor product symbol.
Then
$(Q^{\affg}_{(0)})^2=(Q^{DS}_{(0)})^2=0$,
$\{Q^{\affg}_{(0)},Q^{DS}_{(0)}\}=0$,
and $C$ has the structure of a double complex.

Let 
$(E_r,d_r)$ be
the spectral 
sequence for the total cohomology $H^\bullet(C,Q_{(0)})$
associated with the decreasing filtration $\{\sum_{i\geq p}C^{i,\bullet}\}$ of $C$.
By definition the zeroth differential $d_0$ is
$(Q_{DS})_{(0)}$ and the first differential 
$d_1$ is $(Q_{\affg})_{(0)}$.
We claim that this spectral sequence converges to $H^\bullet(C,Q_{(0)})$. 
To see this,
first
suppose that 
$\kappa=\kappa_c$.
We have the direct sum decomposition
$(\Dch_{G,\kappa_c})_{[\lam]}=\bigoplus_{\lam\in P_+}(\Dch_{G,\kappa_c})_{[\lam]}$
as a $\affg_{\kappa_c}^{\oplus 2}$-modules,
see \eqref{eq:dec-cod-critical}.
By \eqref{ch-of-Dch},
each $(\Dch_{G,\kappa_c})_{[\lam]}$ admits
a bi-grading
$(\Dch_{G,\kappa_c})_{[\lam]}=\bigoplus\limits_{d_1,d_2\in \Z_{\geq 0}}(\Dch_{G,\kappa_c})_{[\lam]}[d_1,d_2]$
such that 
$\pi_L(xt^n)(\Dch_{G,\kappa_c})_{[\lam]}[d_1,d_2]\subset (\Dch_{G,\kappa_c})_{[\lam]}[d_1-n,d_2]$,
$\pi_R(xt^n)(\Dch_{G,\kappa_c})_{[\lam]}[d_1,d_2]\subset (\Dch_{G,\kappa_c})_{[\lam]}[d_1,d_2-n]$,
for $x\in \g$, $n\in \Z$.
It follows that
$C$ is a direct sum of subcomplexes
\begin{align*}
C_{\lam,D}=\bigoplus_{d\in \Z_{\geq 0}\atop D=d+\Delta}
M\* (\Dch_{G,\kappa})_{[\lam]}[\bullet,d]\* \bw{\semiinf+p}(\g)_{\Delta}\*  F_{\chi}\*\bw{\semiinf+q}(\mf{n}),
\end{align*}
$\lam\in P_+$,
$D\in \Z_{\geq 0}$,
and
we have
$C_{\lam,D}\cap C^{p,\bullet}=0$
for a sufficiently large $p$.
Hence the restriction of the filtration 
on each subcomplex is regular, and
this gives the convergency of  the  spectral sequence.
Next consider the case that $\kappa\ne \kappa_c$.
The increasing
filtration
$M_{\bullet}$ of $\Dch_{G,\kappa}$ in 
\eqref{eq:min-zhu-fil}
induces a filtration 
$C_{\bullet}$ of the complex $C$.
As above, we find that the spectral sequence converges on each $C_p$.
Since the cohomology functor commutes with the injective limits,
we get a vector space isomorphism
$H^{\bullet}(C)\cong \lim\limits_{\rightarrow}H^{\bullet}(C_p)\cong \lim\limits_{\rightarrow}E^{(p)}_{\infty}\cong E_{\infty}$,
where
$E_r^{(p)}$ is the spectral sequence for $H^\bullet(C_p)$.

By definition, 
we have
\begin{align*}
E_1^{p,q}=M\* H^q_{DS,f}(\Dch_{G,\kappa})\* \bw{\semiinf+p}(\g)
\cong \delta_{q,0}M\* \eqW_{G,f}^\kappa\* \bw{\semiinf+p}(\g),
\end{align*}
and
\begin{align*}
E_2^{p,q}\cong \delta_{q,0}
H^{\frac{\infty}{2}+0}(\affg_{-\kappa_\g},\g, \eqW_{G,f}^{\kappa}\* M)\* H^p(\g,\C).
\end{align*}
Since
elements of 
$H^{\frac{\infty}{2}+0}(\affg_{-\kappa_\g},\g, \eqW_{G,f}^{\kappa}\* M)$ are $\g$-invariant,
it follows that $d_r=0$ for all $r\geq 2$.
Therefore,
the spectral sequence collapses at $E_2=E_{\infty}$,
and we obtain the isomorphism
\begin{align}
H^i(C, Q_{(0)})\cong H^{\frac{\infty}{2}+0}(\affg_{-\kappa_\g},\g, \eqW_{G,f}^{\kappa}\* M)\* H^i(\g,\C).
\label{eq:iso-11}
\end{align}

On the other hand, by  \eqref{eq:AG-iso},
we have the canonical isomorphism
\begin{align*}
H^\bullet(C, Q_{(0)})\cong H^\bullet(C', Q'_{(0)}),
\end{align*}
where 
$$C'=\bigoplus_{p.q}(C')^{p,q}, 
\quad (C')^{p,q}=M\* \Dch_{G,0}\* \bw{\semiinf+p}(\g)\*  F_{\chi}\*\bw{\semiinf+q}(\mf{n}),
$$
\begin{align*}
Q'(z)=Q_{\affg}'(z)+Q_{DS}'(z),
\end{align*}
\begin{align*}
&Q_{\affg}'(z)=\sum_{i=1}^{\dim \g} \pi_R(x_{i})(z)\psi_{i}^*(z)-\frac{1}{2}
\sum_{i,j,k}c_{i,j}^k :\psi_{i}^*(z)\psi_{j}^*(z)\psi_{k}(z):,\\
&Q_{DS}'(z)=\sum_{\alpha\in \Delta_{>0}} (\pi_M(x_{\alpha})(z)+\pi_L(x_{\alpha}(z))+\Phi_{\alpha}(z))\psi_{\alpha}^*(z)\\
&\quad \quad \qquad \qquad \qquad \qquad -\frac{1}{2}
\sum_{\alpha,\beta,\gamma\in \Delta_{>0}}c_{\alpha,\beta}^\gamma :\psi_{\alpha}^*(z)\psi_{\beta}^*(z)\psi_{\gamma}(z):.\\
\end{align*}
Consider the  map
\begin{align}
H_{DS,f}^0(M)\ra H^0(C', Q'_{(0)}),\quad
[c]\mapsto [c\* \mathbf{1}_{\Dch_{G,\kappa}}\*  \mathbf{1}_{\bw{\semiinf+\bullet}(\g)}],
\label{eq:2018:10:17:01}
\end{align}
where 
$\mathbf{1}_{\Dch_{G,\kappa}}$ and $\mathbf{1}_{\bw{\semiinf+\bullet}(\g)}$
denotes the vacuum vectors of 
$\Dch_{G,\kappa}$ and $\bw{\semiinf+\bullet}(\g)$,
respectively.
We claim that this is an isomorphism.
Indeed,
this respects the filtration on each side,
and gives rise to a homomorphism
\begin{align*}
\gr H_{DS,f}^0(M)\ra \gr H^0(C', Q'_{(0)})\cong \gr H^0(C, Q_{(0)})\cong \gr H^{\frac{\infty}{2}+0}(\affg_{-\kappa_\g},\g, \eqW_{G,f}^{\kappa}\* M),
\end{align*}
which
 is an isomorphism
by \eqref{eq:2018:10:17}.
Therefore \eqref{eq:2018:10:17:01}
is an isomorphism as well.
We conclude that
$$
H^{\frac{\infty}{2}+0}(\affg_{-\kappa_\g},\g, \eqW_{G,f}^{\kappa}\* M) \cong H^0(C, Q_{(0)})
\cong H^0(C', Q'_{(0)})\cong H_{DS,f}^0(M).$$
If $M$ is a vertex algebra object in $\KL_\kappa$
this is vertex algebra isomorphism
since all maps are vertex algebra isomorphisms.
\end{proof}


%

In terms of the notation \eqref{eq:MTproduct},
we have
\begin{align}
\eqW_{G,f}^{\kappa_c}\circ M\cong  H_{DS,f}^0(M)\cong M\circ \eqW_{G,f}^{\kappa_c}
\label{eq:eqW-vs-DS}
\end{align}
for $M\in \KL_{\kappa_c}$.

\section{Chiral universal centralizer}
\label{sec:ch-uv}
Let $f,f'$ be nilpotent elements of $\g$.
Define
\begin{align*}
\mathbf{I}_{G,f,f'}^\kappa:=H_{DS,f'}^0(\eqW_{G,f}^\kappa),
\end{align*}
where the Drinfeld-Sokolov reduction 
of $\eqW_{G,f}^\kappa$
is taken with respect to the action $\pi_R$.
By Corollary \ref{Co:DS},
$\mathbf{Z}_{G,f,f'}^\kappa$ is a conformal vertex algebra
that is a strict chiralization of 
$$\mathbf{S}_{G,f}\times_{\g^*}\Slo_f=(G\times \Slo_f)\times_{\g^*}\Slo_{f'},$$
where
$\mathbf{S}_{G,f}\ra \g^*$ is given by the moment map \eqref{eq:moment-map-for-equov-Slo}.
If $f=f'$,
the central charge of 
$\mathbf{I}_{G,f,f}$
is independent of $\kappa$ and is given by
\begin{align*}
2 \dim  \g_0-\dim \g_{1/2}+24(\rho|h).
\end{align*}

Set
\begin{align*}
\mathbf{I}_{G}^\kappa:=\mathbf{I}_{G,f_{prin},f_{prin}}^\kappa,
\end{align*}
where $f_{prin}$ is a principal nilpotent element of $\g$.
Then 
$\mathbf{I}_{G}^\kappa$ is a strict chiral quantization of 
the {\em universal centralizer}
\begin{align}
\mathbf{I}_G^{cl}=   (G\times \Slo_{f_{prin}})\times_{\g^*}\Slo_{f_{prin}}.
\end{align}
The central charge 
of $\mathbf{I}_{G}^\kappa$ 
is given by
\begin{align*}
2\on{rk}\g+48 (\rho|\rho^{\vee}).
\end{align*}

By \cite{BezFinMir05,BezFin08},
Theorem \ref{Th:IMRN2016} (3),
and the description of the  finite-dimensional analogue of the quantized 
Drinfeld-Sokolov reduction given in \cite{A2012Dec},
we have the isomorphisms
\begin{align}
R_{\mathbf{I}_{G}^\kappa}\cong H^{\check{G}[[t]]}_\bullet(\on{Gr}_{\check{G}}),
\quad \on{Zhu}(\mathbf{I}_{G}^\kappa)\cong 
H^{\check{G}[[t]]\ltimes \C^*}_\bullet(\on{Gr}_{\check{G}}),
\end{align}
where $\check{G}$ is the 
Langlands dual group of $G$
and $\on{Gr}_{\check{G}}$ is the affine Grassmannian 
for $\check{G}$ as in Introduction.

By \cite{Gin18},
$\on{Zhu}(\mathbf{I}_{G}^\kappa)$ is also isomorphic to
the {\em spherical subalgebra of the nil-Hecke algebra}.

The vertex algebra $\mathbf{I}_{G}^\kappa$
is called the {\em chiral universal centralizer}
associated with $G$
at level $\kappa$.

For $G=SL_2$ and a generic $\kappa$,
the chiral universal centralizer
$\mathbf{I}_{G}^\kappa$ 
was introduced earlier 
by Frenkel and Styrkas \cite{FreSty06}
as the 
{\em modified regular representation} of the Virasoro algebra
(see also  \cite{FreZhu12}).

\section{Drinfeld-Sokolov reduction at the Critical level}
\label{section:Drinfeld-Sokolov reduction at the Critical level}

For the rest of this article we restrict to ourselves the case that $\kappa=\kappa_c$.
Set $$\KL=\KL_{\kappa_c},
\quad \Dch_{G}=\Dch_{G,\kappa_c}.
$$

Let $\lam\in P_+$.
We have
\begin{align}
\ch \mathbb{V}_{\lam}=\frac{\sum_{w\in W}\epsilon(w)e^{w\circ \lam}}{\prod_{\alpha\in \hat{\Delta}^{re}_+}(1-e^{-\alpha})\prod_{j=1}^{\infty}
(1-q^j)^{\on{rk}\g}},
\label{eq:ch-weyl}
\end{align}
where 
$W$ is the Weyl group of $\g$,
$\widehat{\Delta}_+^{re}$ is the set of positive real roots of $\affg_{\kappa_c}$.
Also, we have
\begin{align}
\ch \mathbb{L}_{\lam}=\frac{\sum_{w\in W}\epsilon(w)e^{w\circ \lam}}{
\prod_{\alpha\in \Delta_+}
(1-q^{\bra \lam+\rho.\alpha^{\vee}\ket})
\prod_{\alpha\in \hat{\Delta}^{re}_+}(1-e^{-\alpha})}
\label{eq:ch-simple}
\end{align}
(\cite{Ara07-3,AraMal}).

Fix a principal nilpotent element $f=f_{prin}$ of $\g$,
and set
$H_{DS}^\bullet(?)=H_{DS,f}^{\bullet}(?)$.
By \cite{FeiFre92},
we have the isomorphism
\begin{align}
\mf{z}(\affg)\isomap \W^{\kappa_c}(\g,f)
\label{eq:FF-iso}
\end{align}
of vertex algebras.
Thus, for $M\in \KL$,
$H_{DS}^0(M)$ is naturally a module over $\LZ$
(see \eqref{eq:LZ}).

By Theorem \ref{Th:IMRN2016},
we have the exact functor
$$\KL^{[\lam]}\ra \LZ\Mod^{[\lam]},
\quad M\mapsto H_{DS}^0(M).$$

For $\lam\in P_+$,
define the $\LZ$-module
\begin{align}
\mf{z}_{\lam}:=H_{DS}^0(\mathbb{V}_{\lam}).
\end{align}
Note that $\LZ_{>0}$  trivially acts on $\mf{z}_{\lam}$.
The grading of $\mathbb{V}_\lam$ induces a grading
on
 $\mf{z}_{\lam}$ such that
\begin{align*}
\mf{z}_{\lam}=\bigoplus_{d\in \Z_{\geq 0}}(\mf{z}_{\lam})_{-\lam(\rho^\vee)+d},
\quad (\mf{z}_{\lam})_{-\lam(\rho^\vee)}=\C,
\end{align*}
see \cite[4.6]{Ara07} for the details.
By \eqref{eq:ch-weyl}
and the Euler-Poincar\'{e} principle,
we have
\begin{align}\label{eq:ch-zlam}
\ch \mf{z}_{\lam}=\frac{q^{-\lam(\rho^{\vee})}\prod_{\alpha\in \Delta_+}(1-q^{\bra \lam+\rho,\alpha^{\vee}\ket})}{\prod_{j=1}^{\infty}(1-q^j)^{\on{rk}\g}},
\end{align}
that is,
\begin{align}
\ch \mathbb{V}_{\lam}=\ch \mf{z}_{\lam}. \ch \mathbb{L}_{\lam}.
\label{eq:ch-is-product}
\end{align}

Consider Li's canonical filtration $F^{\bullet}\mf{z}_{\lam}$ of $\mf{z}_{\lam}$.
Since $\mf{z}(\affg)$ is commutative,
$P_{i,(n)}F^{p}\mf{z}_{\lam}\subset F^{p}\mf{z}_{\lam}$ for all $i$, $p$ and $n\geq 0$.
Hence,
each graded subspace
$F^{p}\mf{z}_{\lam}/F^{p+1}\mf{z}_{\lam}$ of $\gr{\mf{z}_{\lam}}$ 
is naturally a module over $\LZ$.
In particular,
\begin{align*}
\overline {\mf{z}_{\lam}}:=\mf{z}_{\lam}/F^1 \mf{z}_{\lam}=\mf{z}_{\lam}/\LZ_{(<0)}\mf{z}_{\lam}
\end{align*}
is a  $\LZ$-module.

Since $\chi=(f|?)\in \g^*$ is regular,
there is a surjective algebra homomorphism
\begin{align}
\LZ\twoheadrightarrow \mc{A}_{\chi}\subset U(\g),
\label{eq:Rybnikov}
\end{align}
constructed by Rybnikov \cite{Ryb06},
where $\mc{A}_{\chi}$ denotes the {\em Mishchenko-Fomenko
subalgebra} of $U(\g)$ associated with $\chi$.

The following assertion was essentially proved in 
\cite{FeiFreRyb09,FeiFreTol10}.
\begin{Pro}\label{Pro:cyclic}
Let $\lam\in P_+$.
\begin{enumerate}
\item 
$\mf{z}_{\lam}$ is a free $\LZ_{(<0)}$-module.
\item There is an isomorphism
$\overline {\mf{z}_{\lam}}\cong V_{\lam}$ of $\LZ$-modules,
where $\LZ$ acts on $V_{\lam}$ via the map \eqref{eq:Rybnikov}.
\item The $\LZ$-module $\mf{z}_{\lam}$ is  cyclic,
and is generated by the image  of the highest weight vector of $\mathbb{V}_{\lam}$.
\item
The $\LZ$-module $\overline{\mf{z}}_{\lam}$ has a unique socle spanned by a homogeneous vector 
of maximal degree $\lam(\rho^{\vee})$.
\end{enumerate}
\end{Pro}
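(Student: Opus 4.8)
The plan is to deduce (i) directly from Theorem~\ref{Th:IMRN2016} together with the Feigin--Frenkel isomorphism~\eqref{eq:FF-iso}, to reduce (iii) to a cyclicity statement for the finite-dimensional quotient $\overline{\mf{z}_\lambda}$, and then to obtain (ii), the remainder of (iii), and (iv) by matching $\overline{\mf{z}_\lambda}$ with the $\mc{A}_\chi$-module $V_\lambda$, which is the substance of \cite{FeiFreRyb09, FeiFreTol10}. Throughout write $v_\lambda$ for the highest weight vector of $\mathbb{V}_\lambda$ and $\bar v_\lambda$ for the class it defines in $\mf{z}_\lambda=H_{DS}^0(\mathbb{V}_\lambda)$.

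For (i): since $\mathbb{V}_\lambda$ has a PBW basis, one checks by a character computation that $R_{\mathbb{V}_\lambda}\cong \mc{O}(\g^*)\otimes V_\lambda$ is free over $\mc{O}(\g^*)$ and that the canonical surjection $\mc{O}(J_\infty\g^*)\otimes_{\mc{O}(\g^*)}R_{\mathbb{V}_\lambda}\twoheadrightarrow\gr\mathbb{V}_\lambda$ is an isomorphism; hence $\gr\mathbb{V}_\lambda\cong \mc{O}(J_\infty\g^*)\otimes V_\lambda$ is free of rank $\dim V_\lambda$ over $\mc{O}(J_\infty\g^*)$. By Theorem~\ref{Th:IMRN2016}(2) this gives $\gr\mf{z}_\lambda\cong \mc{O}(J_\infty\Slo_f)\otimes_{\mc{O}(J_\infty\g^*)}\gr\mathbb{V}_\lambda\cong \mc{O}(J_\infty\Slo_f)\otimes V_\lambda$, which is free over $\gr\mf{z}(\affg)\cong \mc{O}(J_\infty\Slo_f)$ (using that~\eqref{eq:FF-iso} is filtered and intertwines the $\mf{z}(\affg)$-actions). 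As $\mf{z}_\lambda$ is a bounded-below graded $\mf{z}(\affg)$-module with separated canonical filtration and $\mf{z}(\affg)$ is a non-negatively graded polynomial algebra with $\mf{z}(\affg)_0=\C$, a homogeneous lift of a homogeneous $\mc{O}(J_\infty\Slo_f)$-basis of $\gr\mf{z}_\lambda$ is a free $\mf{z}(\affg)=\LZ_{(<0)}$-basis of $\mf{z}_\lambda$, proving (i); and combining this with~\eqref{eq:ch-zlam} and the standard identity $\prod_{\alpha\in\Delta_+}(1-q^{\langle\rho,\alpha^\vee\rangle})=\prod_i\prod_{j=1}^{d_i}(1-q^j)$ gives $\dim\overline{\mf{z}_\lambda}=\dim V_\lambda$, with $\overline{\mf{z}_\lambda}$ concentrated in degrees $-\lambda(\rho^\vee),\dots,\lambda(\rho^\vee)$ and one-dimensional in each extreme degree.

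For (iii) and (ii): since $\mf{z}_\lambda$ is free over $\LZ_{(<0)}$ and the augmentation ideal $\LZ_{(<0)}^{+}$ strictly raises the conformal grading, a graded Nakayama argument shows that $\mf{z}_\lambda$ is generated over $\LZ$ by any lift of a generating set of $\overline{\mf{z}_\lambda}=\mf{z}_\lambda\otimes_{\mf{z}(\affg)}\C$ over $\overline{\LZ}:=\LZ/\LZ_{(<0)}^{+}\LZ$; so (iii) reduces to showing $\overline{\mf{z}_\lambda}$ is cyclic over $\overline{\LZ}$ on the image of $\bar v_\lambda$. The crucial point is then (ii): that $\overline{\mf{z}_\lambda}$ is isomorphic to $V_\lambda$ with $\LZ$ acting through Rybnikov's surjection~\eqref{eq:Rybnikov} onto $\mc{A}_\chi$, $\chi=(f|\cdot)$. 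I would establish it by computing the $\overline{\LZ}$-action ``from the top'': using that $\mathbb{V}_\lambda=\on{Ind}_{U(\g)}^{V^{\kappa_c}(\g)}V_\lambda$, the compatibility of $H_{DS}^0$ with induction at the level of Zhu algebras (where $\Zhu(\W^{\kappa_c}(\g,f))\cong\mc{Z}(\g)$ is the finite $W$-algebra for $f_{prin}$), and the explicit form of~\eqref{eq:FF-iso}, one sees that the underlying space is $V_\lambda$, that $\bar v_\lambda$ is the highest weight vector, and that the operators $P_{i,(n)}$ ($n\ge 0$) act precisely through the homomorphism $U(\affg_{\kappa_c})\to U(\g)$ by which \cite{Ryb06} defines $\mc{A}_\chi$ --- this being the translation of \cite{FeiFreRyb09, FeiFreTol10} into the present formalism. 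Granting (ii), (iii) follows because $V_\lambda$ is cyclic over $\mc{A}_\chi$ on the highest weight vector \cite{FeiFreRyb09}, and (iv) follows from the description of $V_\lambda$ as an $\mc{A}_\chi$-module in \cite{FeiFreRyb09, FeiFreTol10}: it is cocyclic, hence has a one-dimensional socle, which under the grading of the previous paragraph is the extreme line, i.e.\ the degree-$\lambda(\rho^\vee)$ subspace of $\overline{\mf{z}_\lambda}$.

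The hard part will be the identification (ii): transporting the Rybnikov/Gaudin-algebra description of \cite{FeiFreRyb09, FeiFreTol10} --- phrased there in terms of spaces of opers with a regular singularity --- through the Drinfeld--Sokolov reduction functor with consistent normalizations, so as to pin down the $\LZ$-module structure on $\overline{\mf{z}_\lambda}$. Everything else, namely the proof of (i) and the reduction of (iii), is essentially formal once Theorem~\ref{Th:IMRN2016} is available.
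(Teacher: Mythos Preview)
Your proposal is correct and follows essentially the same route as the paper: use Theorem~\ref{Th:IMRN2016}(2) together with $\gr\mathbb{V}_\lambda\cong\mc{O}(J_\infty\g^*)\otimes V_\lambda$ to get $\gr\mf{z}_\lambda\cong\mc{O}(J_\infty\Slo_f)\otimes V_\lambda$ and hence freeness, identify $\overline{\mf{z}_\lambda}$ with $V_\lambda$ as an $\mc{A}_\chi$-module by invoking \cite{FeiFreRyb09}, and then quote \cite{FeiFreRyb10,FeiFreTol10} for cyclicity and the unique socle. The only cosmetic difference is that where you sketch a Zhu-algebra computation to justify (ii), the paper simply points to \cite[\S2.1]{FeiFreRyb09}; and for (iv) the paper notes the socle sits in maximal degree because $\overline{\mf{z}_\lambda}$ is cyclic, whereas you invoke cocyclicity directly --- both amount to the same thing.
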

\begin{proof}
We have
$\gr \mathbb{V}_{\lam}=\mc{O}(J_{\infty}\g^*)\otimes V_{\lam} $,
and
 by  \cite{Ara09b},
\begin{align*}
\gr \mf{z}_{\lam}\cong (\mc{O}(J_{\infty}(\chi+\mf{n}^{\bot}))\otimes V_{\lam})^{J_{\infty}N},
\end{align*}
where 
$N$ is the unipotent subgroup of $G$ corresponding to the maximal nilpotent
subalgebra $\mf{n}=\bigoplus_{j>0}\g_{j}$ of $\g$.
(Here we have taken the $\mf{sl}_2$-tripe in Section \ref{section:Quantum Drinfeld-Sokolov reduction and equivariant affine $W$-algebras}
as the one associated with $f_{prin}$.)
Since
the action map gives the isomorphism
$J_{\infty}N\times J_{\infty}\mc{S}_f\isomap J_{\infty}(\chi+\mf{n}^{\bot})$,
we have
\begin{align*}
\gr \mf{z}_{\lam}\cong (\mc{O}(J_{\infty}(\chi+\mf{n}^{\bot}))^{J_{\infty}N}\otimes V_{\lam}
\cong \mc{O}(J_{\infty}\mc{S}_f)\otimes V_{\lam}
\end{align*}
as $\mc{O}(J_{\infty}\mc{S}_f)$-modules.
Hence, 
$\gr \mf{z}_{\lam}$ is free over $\mc{O}(J_{\infty}\mc{S}_f)$,
and thus,
$\mf{z}_{\lam}$ is free over 
$\LZ_{(<0)}$.
Also,
we get that
\begin{align}
\overline {\mf{z}_{\lam}}\cong \gr \mf{z}_{\lam}/F^1 \gr \mf{z}_{\lam}\cong V_{\lam} 
\label{eq:iso-fdr}
\end{align}
 as vector spaces.
Therefore,
$\LZ$ acts on $V_{\lam}$ 
via the identification \eqref{eq:iso-fdr},
 and one finds that this action is identical to
  the action
  defined by
  \eqref{eq:Rybnikov},
  see
 \cite[\S 2.1]{FeiFreRyb09}.
Since 
$V_{\lam}$ is a cyclic $\mc{A}_{\chi}$-module generated by the highest weight vector (\cite{FeiFreRyb10}),
it follows that  $\mf{z}_{\lam}$ is  a cyclic $\LZ_{<0}$-module generated by the image of the highest weight vector of $\mathbb{V}_{\lam}$.
According to \cite{FeiFreTol10},
the $\mc{A}_{\chi}$-module $V_{\lam}$ has a unique socle,
and hence, so does $\overline{\mf{z}_{\lam}}$.
As $\overline{\mf{z}_{\lam}}$ is cyclic,
the socle must be
spanned by a homogeneous vector 
of maximal degree,
which is
$-(w_0 \lam)(\rho^{\vee})=\lam(\rho^{\vee})$.
\end{proof}
For a $\mc{Z}$-module $M$, we set
\begin{align*}
M^*:=\on{Hom}_{\LZ_{(<0)}}(M,\LZ_{(<0)})
\end{align*}
(see \eqref{eq:Z(<0)}),
and consider $M^*$ as a $\mc{Z}$-module by the action
$(zf)(m)=f(zm)$.
Also,
let $\tau^* M$ denote the $\LZ$-module 
obtained by twisting the action of $\LZ$ on $M$
as $z.m=\tau(z)m$.

\begin{Pro}\label{Pro:dual-Z}
For $\lam\in P_+$,
we have the following:
\begin{enumerate}
\item $\mf{z}_{\lam}^*\cong \mf{z}_{\lam}$,
\item $\tau^*(\mf{z}_\lam)\cong \mf{z}_{\lam^*}$.
\end{enumerate}
\end{Pro}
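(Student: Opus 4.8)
The plan is to reduce both parts to elementary statements about cyclic modules over the polynomial ring $\LZ$, using Proposition~\ref{Pro:cyclic}. Recall from there that $\mf{z}_\lam$ is a free $\LZ_{(<0)}$-module, cyclic over $\LZ$, generated by the image $v_\lam$ of the highest weight vector of $\mathbb{V}_\lam$, which lies in degree $-\lam(\rho^\vee)$, and that $\overline{\mf{z}_\lam}$ has a one-dimensional socle concentrated in the top degree $\lam(\rho^\vee)$. Since a cyclic module over a commutative ring is its quotient by the annihilator, $\mf{z}_\lam\cong\LZ/\Ann_\LZ(\mf{z}_\lam)$ as graded modules, up to the shift placing $1$ in degree $-\lam(\rho^\vee)$; consequently two such cyclic modules agree once they have the same annihilator and the same generator degree. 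I would verify these conditions in each case.

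For (i): since $\mf{z}_\lam$ is $\LZ_{(<0)}$-free, the evaluation pairing $\mf{z}_\lam\times\mf{z}_\lam^*\to\LZ_{(<0)}$ is perfect, whence $\Ann_\LZ(\mf{z}_\lam^*)=\Ann_\LZ(\mf{z}_\lam)$. Next, as $\mf{z}_\lam$ is finitely generated and free over $\LZ_{(<0)}$ (of rank $\dim V_\lam$ by Proposition~\ref{Pro:cyclic}(ii)), we have $\mf{z}_\lam^*\otimes_{\LZ_{(<0)}}\C\cong\Hom_\C(\overline{\mf{z}_\lam},\C)$; tensoring once more down to $\C$ over the polynomial residue ring $\LZ_{<0}\otimes_{\LZ_{(<0)}}\C$ yields the graded dual of the socle of $\overline{\mf{z}_\lam}$, which by Proposition~\ref{Pro:cyclic}(iv) is one-dimensional and placed in degree $-\lam(\rho^\vee)$. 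By graded Nakayama (over the polynomial subring $\LZ_{<0}$, through which $\LZ$ acts on $\mf{z}_\lam$), $\mf{z}_\lam^*$ is a cyclic $\LZ$-module with generator in degree $-\lam(\rho^\vee)$; combining with the first paragraph gives $\mf{z}_\lam^*\cong\mf{z}_\lam$.

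For (ii): first, $\tau$ preserves the grading of $\mf{z}(\affg)$, being the restriction of $\tilde\tau\colon\Dch_G\to\Dch_G$, which fixes the conformal vector $\omega_{\Dch_G}$ (it interchanges its two contributions $\omega_L,\omega_R$). Hence $\tau^*\mf{z}_\lam$ is cyclic with generator still in degree $-\lam(\rho^\vee)$, while $\mf{z}_{\lam^*}$ is cyclic with generator in degree $-\lam^*(\rho^\vee)=-\lam(\rho^\vee)$, since $w_0\rho^\vee=-\rho^\vee$. So, by the first paragraph, it remains to establish
\[
\tau\bigl(\Ann_\LZ(\mf{z}_\lam)\bigr)=\Ann_\LZ(\mf{z}_{\lam^*}).
\]
For this I would use the realization of $\mf{z}_\lam$ inside the equivariant affine $W$-algebra. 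By Proposition~\ref{Pro:g[t]t-invariant-part} together with \eqref{eq:g[t]t-invariants-of-cdo}, the summand $\mf{z}_\lam\otimes V_{\lam^*}$ sits inside $(\eqW_{G,f_{prin}}^{\kappa_c})^{t\g[t]}$, with $\mf{z}(\affg)$ acting (through $\pi_L$ and the Feigin--Frenkel isomorphism) on $\mf{z}_\lam$ by its natural structure, and similarly $\mf{z}_{\lam^*}$ from the $\pi_R$-picture. Now $\tilde\tau$ exchanges $\pi_L$ and $\pi_R$, restricts to $\tau$ on $\mf{z}(\affg)=(\Dch_G)^{\g[t]\times\g[t]}$ by \eqref{eq:effect-of-tau-to-FF}, and---since on $\mc{O}(G)$ it is the antipode, which sends the matrix-coefficient block of $V_\lam$ to that of $V_{\lam^*}$---carries the Weyl-module isotypic component of type $\mathbb{V}_\lam$ over to that of type $\mathbb{V}_{\lam^*}$. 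Applying $H_{DS}^0$, $\tilde\tau$ descends to an isomorphism identifying $\tau^*\mf{z}_\lam$ with $\mf{z}_{\lam^*}$, which gives the displayed equality of annihilators, hence (ii).

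I expect the last step to be the main obstacle: one must track the $\LZ$-module structures carefully through the chain---the involution $\tilde\tau$, the exchange $\pi_L\leftrightarrow\pi_R$, the functor $H_{DS}^0$, and the Feigin--Frenkel identification $\mf{z}(\affg)\cong\W^{\kappa_c}(\g,f_{prin})$---and confirm that the two resulting structures on the underlying space of $\mf{z}_\lam$ differ by precisely the twist by $\tau$ and the relabeling $\lam\leftrightarrow\lam^*$, with no extra grading shift. As an alternative, (ii) should also follow from Proposition~\ref{Pro:cyclic}(ii): under the identification $\overline{\mf{z}_\lam}\cong V_\lam$ the automorphism $\tau$ corresponds, via Rybnikov's map \eqref{eq:Rybnikov}, to the antipode of the Mishchenko--Fomenko subalgebra $\mc{A}_\chi$, which takes $V_\lam$ to $V_{\lam^*}$---provided one knows that $\mf{z}_\lam$ is determined by $\overline{\mf{z}_\lam}$.
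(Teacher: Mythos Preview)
Your argument for (i) is correct and matches the paper's proof essentially step for step: same annihilator, $\overline{\mf{z}_\lam^*}\cong\Hom_\C(\overline{\mf{z}_\lam},\C)$, cyclicity from the unique socle of $\overline{\mf{z}_\lam}$, and a rank comparison to finish.

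For (ii) you have the right core idea---exploit the involution $\tilde\tau$ on $\Dch_G$---but you route it through $\eqW$ and $H_{DS}^0$, which creates exactly the bookkeeping problem you flag at the end (tracking which $\pi_L/\pi_R$ action survives, and how $\tilde\tau$ intertwines the two Drinfeld--Sokolov reductions). The paper sidesteps this entirely by working directly in $\Dch_G$. It uses the identification $I_\lam=\Ann_{\LZ_{<0}}\mathbb{V}_\lam$ (from $\mf{z}_\lam\cong\End_{\affg_{\kappa_c}}(\mathbb{V}_\lam)$, see \eqref{eq:FG-theorem}) and then observes that the $\affg_{\kappa_c}$-submodule of $\Dch_G$ generated from $M:=V_\lam\otimes V_{\lam^*}\subset\mc{O}(G)$ via $\pi_L$ is $\mathbb{V}_\lam\otimes V_{\lam^*}$, while the one generated via $\pi_R$ is $V_\lam\otimes\mathbb{V}_{\lam^*}$. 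Hence $I_\lam=\{z\in\LZ_{<0}\mid\pi_L(z)M=0\}$ and $I_{\lam^*}=\{z\in\LZ_{<0}\mid\pi_R(z)M=0\}$, and \eqref{eq:effect-of-tau-to-FF} immediately gives $\tau(I_\lam)=I_{\lam^*}$. This is precisely the ``equality of annihilators'' you wanted, obtained without ever applying $H_{DS}^0$ or descending $\tilde\tau$ through it. Your alternative via the Mishchenko--Fomenko subalgebra is plausible but, as you note, requires knowing that $\mf{z}_\lam$ is determined by $\overline{\mf{z}_\lam}$, which is not established in the paper.
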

\begin{proof}
(i) Clearly,
$I_{\lam}$ annihilates $\mf{z}_{\lam}^*$.
Since $\mf{z}_{\lam}$ is free over 
$\LZ_{(<0)}$,
$\mf{z}_{\lam}^*$ is free over $\LZ_{(<0)}$ as well.
We have
\begin{align*}
\overline{\mf{z}_{\lam}^*}\cong \Hom_{\C}(\overline{\mf{z}_{\lam}},\C)
\end{align*}
as $\LZ$-modules,
where $\overline {\mf{z}_{\lam}^*}=\mf{z}_{\lam}^*/\LZ_{(<0)}\mf{z}_{\lam}^*$.
Since $\overline{\mf{z}_{\lam}}$ has a unique socle by Proposition \ref{Pro:cyclic},
$\overline{\mf{z}_{\lam}^*}$ is cyclic.
Therefore,
$\mf{z}_{\lam}^*$ is  cyclic,
and 
we have a surjection
$\mf{z}_{\lam}= \LZ_{<0}/I_{\lam}\ra \mf{z}_{\lam}^*$.
The assertion follows since  $\mf{z}_{\lam}^*$  has the same rank
as $\mf{z}_{\lam}$ as a  $\LZ_{(<0)}$-module.
(ii) By \eqref{eq:FG-theorem},
$I_{\lam}=\on{Ann}_{\LZ_{<0}}\mathbb{V}_{\lam}$.
Consider the subspace 
$M=V_{\lam}\* V_{\lam^*}\subset \mc{O}(G)\subset \Dch_{G}$
The $\affg_{\kappa_c}$-submodule
of $\Dch_{G}$
generated by $M$
by the action $\pi_L$ (resp.\ $\pi_R$)
is isomorphic
to $\mathbb{V}_{\lam}\* V_{\lam^*}$
(resp.\ $V_{\lam}\* \mathbb{V}_{\lam^*}$).
It follows that 
$I_{\lam}=\{z\in \LZ_{<0}\mid \pi_L(z)M=0\}$
and 
$I_{\lam^*}=\{z\in \LZ_{<0}\mid \pi_R(z)M=0\}$.
Hence
$\tau(I_{\lam})=I_{\lam^*}$
by \eqref{eq:effect-of-tau-to-FF}.
\end{proof}

Let $v_{\lam}\in \mf{z}_{\lam}$
be the image of the highest weight vector of $\mathbb{V}_{\lam}$.
By Proposition \ref{Pro:cyclic},
the map
$\LZ_{<0}\ra \mf{z}_{\lam}$, $z\mapsto zv_{\lam}$, is surjective,
and thus,
\begin{align}
\mf{z}_{\lam}\cong \LZ_{<0}/I_{\lam},
\end{align}
where $I_{\lam}$ is the kernel of the above map.
In particular,
$\mf{z}_{\lam}$ has the structure of a unital commutative algebra.
In \cite{FreGai04,FeiFreRyb09} it was shown that
\begin{align}
\mf{z}_{\lam}\cong \End_{\affg_{\kappa_c}}(\mathbb{V}_{\lam}),
\quad \mathbb{L}_{\lam}= \mathbb{V}_{\lam}/(\mf{z}_{\lam})_+ \mathbb{V}_{\lam},
\label{eq:FG-theorem}
\end{align}
where $(\mf{z}_{\lam})_+$ is the argumentation ideal of $\mf{z}_{\lam}$.

By \eqref{eq:ch-zlam} and Proposition \ref{Pro:cyclic},
we have
\begin{align}
\ch \overline{\mf{z}}_{\lam}=\frac{q^{-\lam(\rho^{\vee})}\prod_{\alpha\in \Delta_+}(1-q^{\bra \lam+\rho,\alpha^{\vee}\ket})}{
\prod_{i=1}^{\on{rk}\g}\prod_{j=1}^{d_i}(1-q^j)}=q^{-\lam(\rho^{\vee})}\prod_{\alpha\in \Delta_+}
\frac{(1-q^{\bra \lam+\rho,\alpha^{\vee}\ket})}{
(1-q^{\bra\rho,\alpha^{\vee}\ket})}.
\label{eq:chbarz}
\end{align}

Let 
$\C_{[\lam]}$
be the 
unique simple graded quotient of $\mf{z}_{\lam}$,
that is,
  the one-dimensional graded $\LZ$-module corresponding to $\chi_{\lam}$,
which is concentrated in degree $-\lam(\rho^{\vee})$.

\begin{Pro}\label{Pro:simple-go-simple}
For $\lam\in P_+$, we have
$H_{DS}^0(\mathbb{L}_{\lam})\cong \C_{[\lam]}$.
 \end{Pro}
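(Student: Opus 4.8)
I want to compute $H_{DS}^0(\mathbb{L}_\lam)$ and show it is the one-dimensional module $\C_{[\lam]}$. The natural tool is the short exact sequence relating the Weyl module, the simple quotient, and the kernel: by \eqref{eq:FG-theorem} we have $\mathbb{L}_\lam = \mathbb{V}_\lam / (\mf{z}_\lam)_+ \mathbb{V}_\lam$, so there is an exact sequence
\begin{align*}
0 \to (\mf{z}_\lam)_+ \mathbb{V}_\lam \to \mathbb{V}_\lam \to \mathbb{L}_\lam \to 0
\end{align*}
in $\KL^{[\lam]}$. Since $H_{DS}^0$ is exact on $\KL$ (Theorem \ref{Th:IMRN2016}(1)), applying it gives an exact sequence
\begin{align*}
0 \to H_{DS}^0((\mf{z}_\lam)_+ \mathbb{V}_\lam) \to \mf{z}_\lam \to H_{DS}^0(\mathbb{L}_\lam) \to 0,
\end{align*}
using $H_{DS}^0(\mathbb{V}_\lam) = \mf{z}_\lam$. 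So it suffices to identify the image of $H_{DS}^0((\mf{z}_\lam)_+ \mathbb{V}_\lam) \to \mf{z}_\lam$ with the augmentation ideal $(\mf{z}_\lam)_+$; then the quotient is exactly the one-dimensional algebra $\mf{z}_\lam/(\mf{z}_\lam)_+ = \C_{[\lam]}$.

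First I would recall that $\mf{z}_\lam \cong \End_{\affg_{\kappa_c}}(\mathbb{V}_\lam)$ acts on $\mathbb{V}_\lam$, and that the Drinfeld-Sokolov reduction functor is $\mf{z}(\affg)$-linear (indeed $\W^{\kappa_c}$-linear via \eqref{eq:FF-iso}), so the map $H_{DS}^0(\mathbb{V}_\lam) \to H_{DS}^0(\mathbb{V}_\lam)$ induced by multiplication by $z \in \mf{z}_\lam$ is again multiplication by $z$ under the identification $H_{DS}^0(\mathbb{V}_\lam) = \mf{z}_\lam$. Hence the submodule $(\mf{z}_\lam)_+ \mathbb{V}_\lam \subset \mathbb{V}_\lam$, which is $\sum_{z \in (\mf{z}_\lam)_+} z \mathbb{V}_\lam$, maps under $H_{DS}^0$ onto $\sum_{z}z \cdot \mf{z}_\lam = (\mf{z}_\lam)_+ \subset \mf{z}_\lam$. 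By exactness the map $H_{DS}^0((\mf{z}_\lam)_+\mathbb{V}_\lam) \to \mf{z}_\lam$ is injective with image exactly $(\mf{z}_\lam)_+$, so $H_{DS}^0(\mathbb{L}_\lam) \cong \mf{z}_\lam/(\mf{z}_\lam)_+ \cong \C_{[\lam]}$.

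The main subtlety I anticipate is the compatibility claim that $H_{DS}^0$ intertwines the $\mf{z}_\lam$-module structure on $\mathbb{V}_\lam$ with the algebra structure of $\mf{z}_\lam = H_{DS}^0(\mathbb{V}_\lam)$ — i.e. that the endomorphism of $H_{DS}^0(\mathbb{V}_\lam)$ induced by $z \in \End_{\affg_{\kappa_c}}(\mathbb{V}_\lam)$ coincides with the multiplication operator $z$ in the commutative algebra $H_{DS}^0(\mathbb{V}_\lam) = \W^{\kappa_c}(\g,f)\text{-module} = \mf{z}_\lam$. This follows because $H_{DS}^0$ is a functor, so it sends the $\mf{z}(\affg)$-action on $\mathbb{V}_\lam$ to a $\W^{\kappa_c}(\g,f) \cong \mf{z}(\affg)$-action on $\mf{z}_\lam$ commuting with everything, and on the cyclic generator $v_\lam$ (Proposition \ref{Pro:cyclic}(iii)) it must match the regular action since $\mf{z}_\lam = \LZ_{<0}/I_\lam$ is generated by the image of $v_\lam$; a short check using that the top component of $H_{DS}^0(\mathbb{V}_\lam)$ is one-dimensional and that $H_{DS}^0$ of the quotient map $\mathbb{V}_\lam \to \mathbb{V}_\lam/(\mf z_\lam)_+\mathbb{V}_\lam$ is surjective pins it down. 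Alternatively, one can argue at the level of characters: by \eqref{eq:ch-is-product}, $\ch \mathbb{V}_\lam = \ch \mf{z}_\lam \cdot \ch \mathbb{L}_\lam$, and by the Euler-Poincaré principle applied to the exact sequence above, $\ch H_{DS}^0(\mathbb{L}_\lam) = \ch \mf{z}_\lam - \ch H_{DS}^0((\mf{z}_\lam)_+\mathbb{V}_\lam)$; combined with exactness of $H_{DS}^0$ and $\ch \mathbb{L}_\lam = \ch \mathbb{V}_\lam/\ch\mf{z}_\lam$, this forces $\ch H_{DS}^0(\mathbb{L}_\lam)$ to be concentrated in degree $-\lam(\rho^\vee)$ with dimension one, which together with the evident surjection $\mf{z}_\lam \twoheadrightarrow H_{DS}^0(\mathbb{L}_\lam)$ factoring through $\C_{[\lam]}$ gives the result.
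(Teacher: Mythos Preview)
Your main argument is correct but more elaborate than necessary. You compute $H_{DS}^0$ of the kernel $(\mf{z}_\lam)_+\mathbb{V}_\lam$ and identify its image in $\mf{z}_\lam$ with $(\mf{z}_\lam)_+$, which requires the functoriality/compatibility verification you flag as the main subtlety. The paper bypasses this entirely: from the surjection $\mathbb{V}_\lam\twoheadrightarrow\mathbb{L}_\lam$ and exactness one gets a surjection $\mf{z}_\lam\twoheadrightarrow H_{DS}^0(\mathbb{L}_\lam)$, so the target is cyclic over $\LZ$; since $\mathbb{L}_\lam$ is simple, Schur's lemma forces every $P_{i,n}$ (which commutes with $\affg_{\kappa_c}$) to act by a scalar, necessarily $\chi_\lam(P_{i,n})$, so $\ker\chi_\lam$ already annihilates $\mathbb{L}_\lam$ and hence $H_{DS}^0(\mathbb{L}_\lam)$. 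A cyclic $\LZ$-module killed by $\ker\chi_\lam$ is at most one-dimensional, and nonvanishing follows from the Euler--Poincar\'e character computation (as in \eqref{eq:ch-zlam} applied to \eqref{eq:ch-simple}). The paper's route avoids tracking the kernel through the functor and needs no compatibility check between the two $\mf{z}(\affg)$-actions on $\mf{z}_\lam$.

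Your alternative character argument at the end is the right idea but incomplete as written: the identity $\ch H_{DS}^0(\mathbb{L}_\lam)=\ch\mf{z}_\lam-\ch H_{DS}^0((\mf{z}_\lam)_+\mathbb{V}_\lam)$ by itself determines nothing, since you have not computed the second term independently. What actually works is a direct Euler--Poincar\'e computation of $\ch H_{DS}^0(\mathbb{L}_\lam)$ from $\ch\mathbb{L}_\lam$ using \eqref{eq:ch-simple}, exactly parallel to how \eqref{eq:ch-zlam} is obtained from \eqref{eq:ch-weyl}; this gives $q^{-\lam(\rho^\vee)}$ directly.
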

\begin{proof}
The surjection $\mathbb{V}_{\lam}\ra \mathbb{L}_{\lam}$ 
induces a surjection $\mf{z}_\lam\ra H_{DS}^0(\mathbb{L}_{\lam})$.
Hence $H_{DS}^0(\mathbb{L}_{\lam})$ is cyclic 
by Proposition \ref{Pro:cyclic}.
Since 
$\ker \chi_{\lam}$ annihilates
it,
$H_{DS}^0(\mathbb{L}_{\lam})$ must be one-dimensional.
\end{proof} 
\begin{Pro}\label{Pro:ch-DS}
For $\lam\in P_+$,
the functor
$\on{KL}^{[\lam]}\ra \LZ\Mod^{[\lam]}$ is faithfull.
For $M\in \on{KL}_{[\lam]}$,
we have
\begin{align*}
\ch M=q^{\lam(\rho^{\vee})} \ch \mathbb{L}_\lam . \ch H_{DS}^0(M).
\end{align*}
\end{Pro}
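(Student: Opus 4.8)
\textbf{Proof proposal for Proposition \ref{Pro:ch-DS}.}
The plan is to deduce both assertions from the exactness of $H_{DS}^0(?)$ on $\KL^{[\lam]}$ (Theorem \ref{Th:IMRN2016}~(1)), the character factorization \eqref{eq:ch-is-product}, and the determination of $H_{DS}^0$ on the simple objects given in Proposition \ref{Pro:simple-go-simple}. First I would recall that every object of $\KL^{[\lam]}$ has a (possibly infinite, but locally finite in each graded piece) composition series whose subquotients are all of the form $\mathbb{L}_\lam[d]$, $d\in \C$, since $\mathbb{L}_\lam$ is the unique simple object of $\KL^{[\lam]}$ up to grading shift by the discussion around \eqref{eq:dec-KL-critical}. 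Because characters are additive on short exact sequences and our gradings are bounded below with finite-dimensional homogeneous components on both sides (using $M\in \KL^{[\lam]}_{\mathrm{ord}}$; the general statement follows by passing to inductive limits), and because $H_{DS}^0$ is exact, it suffices to check the character identity on a single $\mathbb{L}_\lam$. There $H_{DS}^0(\mathbb{L}_\lam)\cong \C_{[\lam]}$ is concentrated in degree $-\lam(\rho^\vee)$ by Proposition \ref{Pro:simple-go-simple}, so $q^{\lam(\rho^\vee)}\ch \mathbb{L}_\lam\cdot \ch H_{DS}^0(\mathbb{L}_\lam)=\ch \mathbb{L}_\lam=\ch M$ with $M=\mathbb{L}_\lam$, which is exactly the claimed formula in that case. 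Additivity then propagates it to all of $\KL^{[\lam]}$.

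For the faithfulness assertion, I would argue as follows. Let $\phi:M\to N$ be a morphism in $\KL^{[\lam]}$ with $H_{DS}^0(\phi)=0$. By exactness, $H_{DS}^0(\im \phi)$ is a subobject of $H_{DS}^0(N)$ and a quotient of $H_{DS}^0(M)$, and the induced map $H_{DS}^0(M)\to H_{DS}^0(\im\phi)$ is the zero map; since it is also surjective, $H_{DS}^0(\im \phi)=0$. But then the character formula just established forces $q^{\lam(\rho^\vee)}\ch \mathbb{L}_\lam\cdot \ch H_{DS}^0(\im\phi)=\ch \im\phi$, i.e. $\ch \im\phi=0$, hence $\im\phi=0$ and $\phi=0$. (Equivalently: if $H_{DS}^0(L)=0$ for some nonzero $L\in\KL^{[\lam]}$, pick a simple subquotient $\mathbb{L}_\lam[d]$ of $L$; exactness gives a nonzero subquotient $H_{DS}^0(\mathbb{L}_\lam[d])=\C_{[\lam]}[d]\ne 0$ of $H_{DS}^0(L)$, a contradiction. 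So $H_{DS}^0$ kills no nonzero object, which is precisely faithfulness on this abelian category.)

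The one point demanding care — the main obstacle — is the convergence/finiteness bookkeeping needed to legitimately equate $\ch M$ with the (a priori infinite) alternating or plain sum of $\ch$ of composition factors, and to run the additivity argument for $M$ not necessarily in $\KL^{[\lam]}_{\mathrm{ord}}$. Here I would invoke that the $L_0^{\W}$-grading (equivalently the internal grading coming from $h_{\lam}$ in \eqref{eq:conformal-dim-eq-W}, combined with the lower bound $(\mf{z}_\lam)_{-\lam(\rho^\vee)}=\C$ from Proposition \ref{Pro:cyclic}) makes each graded component of both $M$ and $H_{DS}^0(M)$ a finite-dimensional vector space that only receives contributions from finitely many composition factors $\mathbb{L}_\lam[d]$ with $d$ in a bounded range; this reduces the identity in each fixed degree to a finite sum, where additivity of $\ch$ under short exact sequences applies verbatim. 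For general $M\in \KL^{[\lam]}$ one writes $M$ as an inductive limit of objects in $\KL^{[\lam]}_{\mathrm{ord}}$ and uses that both $\ch$ and $H_{DS}^0$ commute with such limits. Once this is set up the rest is the short formal argument above.
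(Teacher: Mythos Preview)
Your proposal is correct and follows essentially the same route as the paper: both arguments rest on the fact that every object of $\KL^{[\lam]}$ has only $\mathbb{L}_\lam[d]$ as composition factors, the exactness of $H_{DS}^0$ (Theorem \ref{Th:IMRN2016}), and the computation $H_{DS}^0(\mathbb{L}_\lam)\cong \C_{[\lam]}$ (Proposition \ref{Pro:simple-go-simple}), from which both faithfulness and the character identity follow by additivity. Your parenthetical ``Equivalently'' argument for faithfulness is exactly the paper's one-line justification, and your extra care about finiteness in each graded piece just makes explicit what the paper leaves implicit.
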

\begin{proof}
The first statement follows from the exactness of the functor 
and Proposition~\ref{Pro:simple-go-simple}.
To see the second assertion,
note that we have
\begin{align*}
\ch M=\sum_{d\in \C}[M:\mathbb{L}_{\lam}[-d]]q^d\ch \mathbb{L}_{\lam},
\end{align*}
where
$[M:\mathbb{L}_{\lam}[-d]]$ is the multiplicity of $\mathbb{L}_{\lam}[-d]$ 
as a graded representation.
Since $H_{DS}^0(?)$ is exact,
we have
\begin{align*}
\ch H_{DS}^0( M)=\sum_{d\in \C}[M:\mathbb{L}_{\lam}[-d]]q^d
\ch H_{DS}^0(\mathbb{L}_{\lam})=q^{-\lam(\rho^{\vee})}\sum_{d\in \C}[M:\mathbb{L}_{\lam}[-d]]q^d.
\end{align*}
This completes the proof.
\end{proof}

Let $\Gamma_-^p \mathbb{V}_{\lam} = (\LZ_{<0}^*)^p \mathbb{V}_{\lam}$,
where $\LZ_{<0}^*$ is the  augmentation ideal of $\LZ_{<0}$.
Then $\Gamma_-^\bullet  \mathbb{V}_{\lam}$ defines a decreasing filtration of $\mathbb{V}_{\lam}$ as
$\affg_{\kappa_c}$-modules.
By \eqref{eq:FG-theorem},
we have 
\begin{align}
\gr^{\Gamma_-}  \mathbb{V}_{\lam}\cong \mf{z}_\lam\*  \mathbb{L}_{\lam}
\end{align}
as $\LZ\* U(\affg_{\kappa_c})$-modules. 
Dually, for $p<0$ set 
$\Gamma_+^p D(\mathbb{V}_{\lam})=\{v\in D(\mathbb{V}_{\lam}) \mid (\LZ_{>0}^*)^{-p}v=0\}$,
where 
$\LZ_{>0}^*$ is the  augmentation ideal of $\LZ_{>0}$.
Then $\Gamma_+^\bullet  D(\mathbb{V}_{\lam})$ defines a decreasing filtration of $D(\mathbb{V}_{\lam})$ as
$\affg_{\kappa_c}$-modules,
and we have
\begin{align}
\gr^{\Gamma_+}  D(\mathbb{V}_{\lam})\cong D(\mf{z}_\lam)\*  \mathbb{L}_{\lam},
\end{align}
where 
$D(\mf{z}_\lam)$ is the contragredient dual of $\mf{z}_\lam$.

\begin{Pro}\label{Pro:reduction-of-the-dual}
For $\lam\in P_+$ we have
$H_{DS}^0(D(\mathbb{V}_{\lam}))\cong D(\mf{z}_\lam)$.
\end{Pro}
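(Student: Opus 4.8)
The plan is to use the filtration $\Gamma_+^\bullet D(\mathbb{V}_\lam)$ introduced just above, together with the exactness of $H_{DS}^0(?)$ on $\KL$ (Theorem~\ref{Th:IMRN2016}(1)), to compute the reduction in the associated graded and then lift the answer back. First I would recall that $\gr^{\Gamma_+}D(\mathbb{V}_\lam)\cong D(\mf{z}_\lam)\* \mathbb{L}_\lam$ as $\LZ\* U(\affg_{\kappa_c})$-modules, so that applying $H_{DS}^0(?)$ componentwise and using Proposition~\ref{Pro:simple-go-simple} ($H_{DS}^0(\mathbb{L}_\lam)\cong \C_{[\lam]}$) yields $\gr^{\Gamma_+}H_{DS}^0(D(\mathbb{V}_\lam))\cong D(\mf{z}_\lam)\* \C_{[\lam]}$. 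Here I would need to know the filtration $\Gamma_+^\bullet$ is compatible with $H_{DS}^0$ in the appropriate sense (each step has finite length, reduction commutes with the relevant inverse/direct limits), which follows because on each graded piece $H_{DS}^0$ is exact and the pieces $D(\mf{z}_\lam)_d\* \mathbb{L}_\lam$ lie in $\KL^{ord}$.

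Next I would identify the graded dimension: by the Euler--Poincar\'e principle and \eqref{eq:ch-zlam}, $\ch H_{DS}^0(D(\mathbb{V}_\lam))=\ch D(\mf{z}_\lam)$ (up to the degree normalization bookkeeping, the contragredient dual reverses the grading of $\mf{z}_\lam$, so this is consistent with $\C_{[\lam]}$ being concentrated in degree $-\lam(\rho^\vee)$ while $D(\mf{z}_\lam)$ has top degree $\lam(\rho^\vee)$). This pins down $H_{DS}^0(D(\mathbb{V}_\lam))$ as a graded vector space; it remains to promote the abstract isomorphism $\gr^{\Gamma_+}H_{DS}^0(D(\mathbb{V}_\lam))\cong D(\mf{z}_\lam)\*\C_{[\lam]}$ to an $\LZ$-module isomorphism $H_{DS}^0(D(\mathbb{V}_\lam))\cong D(\mf{z}_\lam)$. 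For this I would argue dually to Proposition~\ref{Pro:cyclic}: since $\mf{z}_\lam$ is cyclic over $\LZ_{<0}$, $D(\mf{z}_\lam)$ is cocyclic (has simple socle $\C_{[\lam]}$); applying $H_{DS}^0$ to the injection $\mathbb{L}_\lam\hookrightarrow D(\mathbb{V}_\lam)$ (dual to $\mathbb{V}_\lam\twoheadrightarrow\mathbb{L}_\lam$) exhibits $\C_{[\lam]}\hookrightarrow H_{DS}^0(D(\mathbb{V}_\lam))$ as a sub, and by exactness the cokernel $H_{DS}^0(D(\mathbb{V}_\lam)/\mathbb{L}_\lam)$ has the filtration structure of $(D(\mf{z}_\lam)/\C_{[\lam]})$; inductively this forces $H_{DS}^0(D(\mathbb{V}_\lam))$ to be an essential extension of $\C_{[\lam]}$ of the right dimension, hence $\cong D(\mf{z}_\lam)$.

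Alternatively, and perhaps more cleanly, I would deduce the statement from Proposition~\ref{Pro:dual-Z} and a duality property of $H_{DS}^0$ itself: reduction intertwines contragredient duality of $\affg_{\kappa_c}$-modules in $\KL^{ord}$ with contragredient duality of $\LZ$-modules (up to the twist $\tau$ and a degree shift), so $H_{DS}^0(D(\mathbb{V}_\lam))\cong D(H_{DS}^0(\mathbb{V}_\lam))=D(\mf{z}_\lam)$ directly. I expect the main obstacle to be making this last duality statement precise: one must check that the BRST complex computing $H_{DS}^0$ is self-dual under the relevant pairing, including the $\beta\gamma$-system $F_\chi$ and the semi-infinite fermions, and that no correction term or degree shift beyond the expected one intervenes. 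Once that compatibility is established the result is immediate; without it, the filtration argument of the previous paragraph is the safe fallback.
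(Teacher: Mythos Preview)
Your first approach is exactly the paper's: it runs the spectral sequence for $H_{DS}^\bullet(D(\mathbb{V}_\lam))$ associated to the filtration $\Gamma_+^\bullet D(\mathbb{V}_\lam)$, observes $E_1^{\bullet,q}\cong D(\mf{z}_\lam)\otimes H_{DS}^q(\mathbb{L}_\lam)\cong \delta_{q,0}\,D(\mf{z}_\lam)$ via Proposition~\ref{Pro:simple-go-simple}, and concludes from the collapse $E_1=E_\infty$. The paper is terser and does not spell out the passage from the associated-graded isomorphism to the $\LZ$-module isomorphism that you worry about; your cocyclicity/essential-extension justification and the contragredient-duality alternative are both more than the paper supplies.
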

\begin{proof}
Consider the spectral sequence $E_r\Rightarrow H_{DS}^0(D(\mathbb{V}_{\lam}))$ associated with the filtration 
$\Gamma_+^p D(\mathbb{V}_{\lam})$.
We have
$E_1^{\bullet,q}=D(\mf{z}_\lam)\* H_{DS}^q(\mathbb{L}_{\lam})\cong \delta_{q,0}D(\mf{z}_\lam)$
by Proposition \ref{Pro:simple-go-simple}.
The spectral sequence collapses at $E_1=E_\infty$,
and we get that $H_{DS}^0(D(\mathbb{V}_{\lam}))\cong D(\mf{z}_\lam)$.
\end{proof}

For $b\geq 1$,
define
the vertex algebra
\begin{align}
\Vb:=\overbrace{V^{\kappa_c}(\g)\otimes _{\mf{z}(\affg)}V^{\kappa_c}(\g)\otimes _{\mf{z}(\affg)} \dots \otimes _{\mf{z}(\affg)}V^{\kappa_c}(\g)}^{b\text{ times}},
\label{eq;b-copy}
\end{align}
where
the tensor product $\otimes _{\mf{z}(\affg)}$
is taken with respect to the action
$z\* 1-1\* \tau(z)$ for $z\in \mf{z}(\affg)$.
Let 
\begin{align}
\iota_i: V^{\kappa_c}(\g)\hookrightarrow \Vb
\label{eq:iota}
\end{align}
 be the 
vertex algebra embedding that sends $u\in V^{\kappa_c}(\g)$ to the $i$-th factor of 
$\Vb$.
This induces a Lie algebra homomorphism
$\affg_{\kappa_c}\ra \End_{\C}(M)$
for any $\Vb$-module $M$,
which we denote also by $\iota_i$.

Let $\KL_b$ 
be the category of $\Vb$-modules 
consisting of objects 
$M$ that belongs to $\KL$
as a $\iota_i(\affg_{\kappa_c})$-module for all 
 $i=1,\dots, b$.

 By Proposition \ref{Pro:cdosub},
 we have a vertex algebra embedding
 $V^{\kappa_c}(\g)^{\*_{\mf{z}(\g)} b=2}\hookrightarrow \Dch_G$.
 In particular,
 $\Dch_G$ is an object of  $\KL_{b=2}$.
 
Set
\begin{align*}
&\mathbb{V}_{\lam,b}:=
\overbrace{\mathbb{V}_{\lam}\otimes _{\LZ}\mathbb{V}_{\lam^*}\otimes _{\LZ} \dots \otimes _{\LZ}\mathbb{V}_{\lam'}}^{b\text{ times}}
=\overbrace{\mathbb{V}_{\lam}\otimes _{\mf{z}_{\lam}}\mathbb{V}_{\lam^*}\otimes _{\mf{z}_{\lam^*}} \dots \otimes _{\mf{z}_{\lam'}}\mathbb{V}_{\lam'}}^{b\text{ times}}\in \KL_b,
\end{align*}
where  $\lam'=\lam$ if $b$ is odd and $\lam'=\lam^*$ of $b$ is even.

\begin{Pro}\label{Pro:r-weyl}
Let  $b\geq 2$.
\begin{enumerate}
\item We have
$H_{DS}^0(\mathbb{V}_{\lam,b})\cong \mathbb{V}_{\lam^*,b-1}$ 
(resp.\ $\cong \mathbb{V}_{\lam,b-1}$),
where the Drinfeld-Sokolov reduction is taken with respect to the action
$\iota_1$ (resp.\ $\iota_r$) of $\affg_{\kappa_c}$.
\item
We have
$H_{DS}^0(D(\mathbb{V}_{\lam,b}))\cong D(\mathbb{V}_{\lam^*,b-1})$ 
(resp.\ $\cong \mathbb{V}_{\lam,b-1}$),
where $D(\mathbb{V}_{\lam,b})$
is the contragradient dual of $\mathbb{V}_{\lam,b}$
and
 the Drinfeld-Sokolov reduction is taken with respect to the action
$\iota_1$ (resp.\ $\iota_r$) of $\affg_{\kappa_c}$.
\end{enumerate}
\end{Pro}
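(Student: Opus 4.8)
The plan is to reduce both statements to the single-copy facts already established, namely that $H_{DS}^0(\mathbb{V}_\lambda)\cong \mf{z}_\lambda$ (as a module over $\LZ\cong\W^{\kappa_c}(\g,f)$) and $H_{DS}^0(D(\mathbb{V}_\lambda))\cong D(\mf{z}_\lambda)$ (Proposition~\ref{Pro:reduction-of-the-dual}), together with the exactness of $H_{DS}^0(?)$ on $\KL$ (Theorem~\ref{Th:IMRN2016}(1)). The key observation is that the Drinfeld-Sokolov reduction with respect to $\iota_1$ only sees the first tensor factor, and the functor $H_{DS}^0$ is ``$\LZ$-linear'' in the sense that it intertwines the $\LZ$-action coming from $\iota_1$ on the source with the $\LZ$-action on $\W^{\kappa_c}(\g,f)$ via the Feigin-Frenkel isomorphism \eqref{eq:FF-iso}. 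Concretely, I would first record that, as a module over $\affg_{\kappa_c}$ acting through $\iota_1$ tensored with the remaining structure, one has a $\g[t]t$-acyclicity/freeness statement letting the BRST complex computing $H_{DS,f}^0$ of $\mathbb{V}_{\lambda,b}$ factor as the BRST complex of $\mathbb{V}_\lambda$ (in the first slot) tensored over $\LZ$ with $\mathbb{V}_{\lambda^*}\otimes_\LZ\cdots\otimes_\LZ\mathbb{V}_{\lambda'}$. Since $\mf{z}_\lambda=H_{DS}^0(\mathbb{V}_\lambda)$ is free over $\LZ_{(<0)}$ (Proposition~\ref{Pro:cyclic}(i)) and $\mathbb{V}_\lambda$ is free over $\LZ_{(<0)}$ with $\gr^{\Gamma_-}\mathbb{V}_\lambda\cong\mf{z}_\lambda\otimes\mathbb{L}_\lambda$, the higher $\operatorname{Tor}$'s over $\LZ$ vanish and the tensor product commutes with taking cohomology.

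For part (i), reduction along $\iota_1$: I would compute $H_{DS,f}^0(\mathbb{V}_{\lambda,b})$ using the filtration $\Gamma_-^\bullet\mathbb{V}_\lambda$ on the first factor, whose associated graded is $\mf{z}_\lambda\otimes\mathbb{L}_\lambda$. By Proposition~\ref{Pro:simple-go-simple}, $H_{DS}^0(\mathbb{L}_\lambda)\cong\C_{[\lambda]}$, so the $E_1$-page of the resulting spectral sequence is $\mf{z}_\lambda\otimes_{\LZ}\C_{[\lambda]}\otimes_\LZ\bigl(\mathbb{V}_{\lambda^*}\otimes_\LZ\cdots\otimes_\LZ\mathbb{V}_{\lambda'}\bigr)$, concentrated in cohomological degree zero (using exactness to kill higher DS-cohomology of $\mathbb{L}_\lambda$). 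The $\LZ$ acting on the reduced first factor is identified, via $\tau$ (cf.\ \eqref{eq:effect-of-tau-to-FF} and the definition of $\otimes_{\mf{z}(\affg)}$ in \eqref{eq;b-copy}), with the $\LZ$ acting on the second slot $\mathbb{V}_{\lambda^*}$; so the whole thing collapses to $\mathbb{V}_{\lambda^*}\otimes_\LZ\cdots\otimes_\LZ\mathbb{V}_{\lambda'}=\mathbb{V}_{\lambda^*,b-1}$. The spectral sequence degenerates at $E_1$ for degree reasons (everything in cohomological degree zero), giving the claimed isomorphism. The reduction along $\iota_b$ is identical after noting that the $b$-th and first slots play symmetric roles and that $\mathbb{V}_{\lambda,b}$ is built so that reducing the last factor returns $\mathbb{V}_{\lambda,b-1}$ with the un-starred highest weight (the $\tau$-twist accounting for the $\lambda\leftrightarrow\lambda^*$ bookkeeping and the parity of $b$).

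For part (ii) I would argue dually, using the filtration $\Gamma_+^\bullet D(\mathbb{V}_\lambda)$ with $\gr^{\Gamma_+}D(\mathbb{V}_\lambda)\cong D(\mf{z}_\lambda)\otimes\mathbb{L}_\lambda$, together with Proposition~\ref{Pro:reduction-of-the-dual}; the contragredient dual of a tensor product over $\LZ$ of the relevant Weyl-type modules is again of the same shape, so the same spectral-sequence collapse yields $H_{DS}^0(D(\mathbb{V}_{\lambda,b}))\cong D(\mathbb{V}_{\lambda^*,b-1})$ along $\iota_1$, and $\cong\mathbb{V}_{\lambda,b-1}$ along $\iota_b$ (here the asymmetry between the two reductions in the statement reflects that one is reducing the ``dualized end'' and the other the ``un-dualized end''). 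I expect the main obstacle to be bookkeeping: carefully tracking how the two $\LZ$-actions on an interior tensor factor $V^{\kappa_c}(\g)\otimes_{\mf{z}(\affg)}V^{\kappa_c}(\g)$ are related by $\tau$, and verifying that the tensor-over-$\LZ$ operation is exact on the modules at hand so that $E_1$ really is the naive tensor product with no correction terms — this rests on Proposition~\ref{Pro:cyclic}(i) (freeness of $\mf{z}_\lambda$ over $\LZ_{(<0)}$) and the freeness of $\mathbb{V}_\lambda$, which together guarantee the needed flatness.
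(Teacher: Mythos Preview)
Your first paragraph already contains the paper's proof of (i): one writes $\mathbb{V}_{\lam,b}=\mathbb{V}_{\lam}\otimes_{\mf{z}_{\lam}}\mathbb{V}_{\lam^*,b-1}$, observes that the BRST complex for $H_{DS}^\bullet$ on the first slot is a complex of $\mf{z}_\lam$-modules with cohomology $H_{DS}^\bullet(\mathbb{V}_\lam)\cong\mf{z}_\lam$ concentrated in degree~$0$, and since $\mf{z}_\lam$ is (trivially) free over $\mf{z}_\lam$, K\"unneth gives $H_{DS}^0(\mathbb{V}_{\lam,b})\cong\mf{z}_\lam\otimes_{\mf{z}_\lam}\mathbb{V}_{\lam^*,b-1}\cong\mathbb{V}_{\lam^*,b-1}$. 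That is the whole argument. Your detour through the $\Gamma_-$-filtration in the second paragraph is unnecessary and, as written, the $E_1$-page $\mf{z}_\lam\otimes_{\LZ}\C_{[\lam]}\otimes_{\LZ}(\cdots)$ does not parse: the $\mf{z}_\lam$ appearing in $\gr^{\Gamma_-}\mathbb{V}_\lam$ is the filtration index, not a surviving tensor factor over~$\LZ$, and the tensor $\otimes_{\mf{z}_\lam}\mathbb{V}_{\lam^*,b-1}$ does not pass to the associated graded in the way you suggest without further flatness input. Also, ``free over $\LZ_{(<0)}$'' does not give Tor-vanishing over~$\LZ$; the paper sidesteps this by working directly over $\mf{z}_\lam$.

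For (ii) the paper takes a genuinely different route from your $\Gamma_+$ spectral sequence. It uses that $D(\mathbb{V}_{\lam,b})$ sits inside $D(\mathbb{V}_\lam)\otimes D(\mathbb{V}_{\lam^*,b-1})$ as the subspace where $z\otimes 1=1\otimes\tau(z)$; exactness of $H_{DS}^0$ (Theorem~\ref{Th:IMRN2016}(1)) carries this inclusion to an inclusion $H_{DS}^0(D(\mathbb{V}_{\lam,b}))\hookrightarrow D(\mf{z}_\lam)\otimes D(\mathbb{V}_{\lam^*,b-1})$ by Proposition~\ref{Pro:reduction-of-the-dual}, with image contained in $D(\mathbb{V}_{\lam^*,b-1})$; equality then follows by a character comparison with part~(i). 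This avoids precisely the step you flag as the ``main obstacle'': you never need to analyze how contragredient duality interacts with $\otimes_{\LZ}$, because you only ever use an inclusion and then count dimensions. Your spectral-sequence sketch for (ii) would need that analysis to identify the $E_1$-page, and the claim that ``the contragredient dual of a tensor product over $\LZ$ \dots\ is again of the same shape'' is exactly the nontrivial point you have not addressed.
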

\begin{proof}
We only show the assertion for the action $\iota_1$.
(i)
By definition,
we have $\mathbb{V}_{\lam,b}=\mathbb{V}_{\lam}\*_{\mf{z}_{\lam}}\mathbb{V}_{\lam^*,b-1}$.
Since $H_{DS}^\bullet(\mathbb{V}_{\lam})\cong \mf{z}_{\lam}$ is obviously free over $\mf{z}_{\lam}$,
$H_{DS}^0(\mathbb{V}_{\lam,b})\cong H_{DS}^0(\mathbb{V}_{\lam})\*_{\mf{z}_{\lam}}\mathbb{V}_{\lam^*,b-1}
\cong \mf{z}_{\lam}\*_{\mf{z}_{\lam}}\mathbb{V}_{\lam^*,b-1}\cong \mathbb{V}_{\lam^*,b-1}$
by  K\"{u}nneth's theorem.
(ii) We have $D(\mathbb{V}_{\lam,b})=\{v\in D(\mathbb{V}_{\lam})\* D(\mathbb{V}_{\lam^*,b-1})\mid
(z\* 1)v=(1\* \tau(z))v\}$.
By Proposition \ref{Pro:reduction-of-the-dual},
the embedding $D(\mathbb{V}_{\lam,b})\ra D(\mathbb{V}_{\lam})\* D(\mathbb{V}_{\lam^*,b-1})$
induces the embedding 
$$H_{DS}^0(D(\mathbb{V}_{\lam,b}))\hookrightarrow D(\mf{z}_{\lam})\* D(\mathbb{V}_{\lam^*,b-1}),$$
and the image is contained in 
$\{v\in D(\mf{z}_{\lam})\* D(\mathbb{V}_{\lam^*,b-1})\mid
(z\* 1)v=(1\* \tau(z))v\}$,
which is contragredient dual of  
$\mf{z}_{\lam}\*_{\mf{z}_{\lam}}\mathbb{V}_{\lam^*,b-1}\cong \mathbb{V}_{\lam^*,b-1}$.
Since  $H_{DS}^0(D(\mathbb{V}_{\lam,b}))$
has the same
 character as  $H_{DS}^0(\mathbb{V}_{\lam,b})=\mathbb{V}_{\lam^*,b-1}$,
 we get that $H_{DS}^0(D(\mathbb{V}_{\lam,b}))=D(\mathbb{V}_{\lam^*,b-1})$.
\end{proof}

Let $\KL_b^{\Delta}$ be the full subcategory of $\KL_b$ consisting of objects $M$ that admits an increasing
 filtration
 $0=M_0\subset M_1\subset \dots $,
 $M=\bigcup_p M_p$,
 such that each successive quotient $M_p/M_{p-1}$ is isomorphic to $\mathbb{V}_{\lam,b}$ for some $\lam\in P_+$.
 Dually,
 let  $\KL_b^{\nabla}$
 be the full subcategory of $\KL_b$ consisting of objects $M$ that admit a decreasing
 filtration
 $M=M_0\supset M_1\supset \dots $,
 $\bigcap_p M_p=0$,
 such that each successive quotient $M_p/M_{p+1}$ is isomorphic to the 
 contragredient dual $D(\mathbb{V}_{\lam,b})$ of $\mathbb{V}_{\lam,b}$ for some $\lam\in P_+$.
 Let $\KL_r^{\Delta}\cap \KL_r^{\nabla}$
 be the full subcategory of $\KL_r$ consisting of objects $M$ that belong to both $\KL_r^{\Delta}$  and $\KL_r^{\nabla}$.


\section{The BRST cohomology $H^{\semiinf+\bullet}(\LZ,?)$}
\label{Sec:main-construction}
Let $\bigwedge^{\semiinf+\bullet }({\mf{z}(\affg)})$
be the charged free Fermions generated by odd fields
\begin{align*}
\psi_1(z),\dots, \psi_{\on{rk}\g}(z),\psi^*_1(z),\dots, \psi^*_{\on{rk}\g}(z)
\end{align*}
with OPEs
\begin{align*}
\psi_i(z)\psi_j^*(z)\sim \frac{\delta_{ij}}{z-w},\quad
\psi_i(z)\psi_j(z)\sim\psi_i^*(z)\psi_j^*(z)\sim0.
\end{align*}
Define
\begin{align*}
T^{gh}(z)=\sum_{n\in \Z}L^{gh}_nz^{-n-2}:=\sum_{i=1}^{\on{rk}\g} d_i:\psi_i^*(z)\partial_z \psi_i(z):-\sum_{i=1}^{\on{rk}\g} (d_i+1):\psi_i(z)\partial \psi_i^*(z):\\
=\sum_{i=1}^{\on{rk}\g} :(\partial \psi_i(z))\psi_i^*(z):-\sum_{i=1}^{\on{rk}\g} (d_i+1):\partial(\psi_i(z)\psi_i^*(z)):.
\end{align*}
Then $T^{gh}(z)$ defines a conformal vector of $\bigwedge^{\semiinf+\bullet }({\mf{z}(\affg)})$
with central charge $$-2(\on{rk}\g+24(\rho|\rho^{\vee})).$$
The conformal weight 
of $\psi_i$ and $\psi_i^*$ are
$d_i+1$
and $-d_i$,
respectively.
The conformal weight of $\bigwedge^{\semiinf+\bullet }({\mf{z}(\affg)})$ is bounded from below 
 and each homogeneous space $\bigwedge^{\semiinf+\bullet }({\mf{z}(\affg)})_{\Delta}$ is finite-dimensional.

Define the odd field
\begin{align}
\QZ(z)=\sum_{n\in \Z}\QZ_{(n)}z^{-n-1}:=\sum_{i=1}^{\on{rk}\g}P_i(z)\otimes \psi_i^*(z)
\label{eq:QZ}
\end{align}
on the vertex algebra $\mf{z}(\affg)\otimes \bigwedge^{\semiinf+\bullet }({\mf{z}(\affg)})$.
So
\begin{align}
\QZ_{(0)}=\sum_{i=1}^{\on{rk}\g}\sum_{n\in \Z}(p_i)_{(-n)}\*(\psi_i^*)_{(n-1)}.
\end{align}
Since $\mf{z}(\affg)$ is commutative, we have $\QZ(z)\QZ(w)\sim 0$, and thus,
$(\QZ_{(0)})^2=0$.
Hence, for $M\in \LZ\Mod$,
$(M\otimes \bigwedge^{\semiinf+\bullet }({\mf{z}(\affg)}), \QZ_{(0)})$  is a cochain complex,
where the cohomological degree is defined by
$\deg (\psi_i)_{(n)}=-1$, $\deg (\psi_i)^*_{(n)}=1$,
$\deg m\* |0\rangle=0$ for $m\in M$.
Denote by
$H^{\semiinf+\bullet}(\LZ,M)$ the corresponding cohomology.
If $V$   is a vertex algebra object in $\KL$
and $M$ is a $V$-module,
then $H^{\semiinf+\bullet}(\LZ,V)$ is naturally a vertex algebra
and $H^{\semiinf+\bullet}(\LZ,M)$  is a $H^{\semiinf+\bullet}(\LZ,V)$-module.

\begin{Lem}\label{Lem:vanihing-negative}
Let $M\in\LZ\Mod$
and suppose that 
$M$ is free as a $\LZ_{(<0)}$-module.
Then
$H^{\semiinf+i}(\LZ, M)=0$
for $i<0$.
\end{Lem}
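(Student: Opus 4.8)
The plan is to exploit the fact that $\QZ_{(0)}$ is modelled on a Koszul-type differential, so that freeness over $\LZ_{(<0)}$ forces vanishing in negative cohomological degrees. Concretely, $\LZ\cong \C[P_{i,n}\mid i,n\in\Z]$, $\LZ_{(<0)}=\C[P_{i,(n)}\mid n<0]$, and the complex $(M\otimes \bigwedge^{\semiinf+\bullet}(\mf{z}(\affg)),\QZ_{(0)})$ decomposes according to the action of the ``positive half'' of the Clifford algebra. The negative-degree part of the complex is governed entirely by the modes $(\psi_i^*)_{(n-1)}$ with $n\le 0$ paired against $(P_i)_{(-n)}$ with $-n\ge 0$, i.e. against the generators of $\LZ_{(<0)}$. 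So the first step is to identify, via a PBW-type filtration on $\bigwedge^{\semiinf+\bullet}(\mf{z}(\affg))$, a subcomplex (or associated graded) that is literally the Koszul complex of $M$ over $\LZ_{(<0)}$ with respect to the regular sequence $\{(P_i)_{(n)}\mid n<0\}$ tensored with a trivial complex in the remaining ``positive'' fermionic variables.

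The key steps, in order: (1) Set up gradings. Decompose everything by conformal weight and by the eigenvalues of the number operators $N_i^{>}=\sum_{n>0}\dots$ counting positive fermionic modes; each graded piece of the complex is finite-dimensional (using that $M\in\LZ\Mod$ is positive-energy and $\bigwedge^{\semiinf+\bullet}(\mf{z}(\affg))$ has finite-dimensional homogeneous components), so cohomology commutes with the relevant limits and filtrations. (2) Filter $\QZ_{(0)}$ as $\QZ_{(0)}=d_{\mathrm{neg}}+d_{\mathrm{rest}}$, where $d_{\mathrm{neg}}=\sum_i\sum_{n\le 0}(P_i)_{(-n)}\otimes(\psi_i^*)_{(n-1)}$ is the part involving only the generators of $\LZ_{(<0)}$ acting on $M$. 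Choose a filtration (by, say, the total positive fermionic number) for which the associated graded differential is exactly $d_{\mathrm{neg}}$. (3) Identify $(M\otimes\bigwedge^{\bullet},d_{\mathrm{neg}})$ with the Koszul complex $M\otimes_{\LZ_{(<0)}}\mathrm{Kos}(\LZ_{(<0)})\otimes(\text{trivial factor})$; since $\{(P_i)_{(n)}\}_{n<0}$ is a regular sequence in $\LZ_{(<0)}$ and $M$ is free (hence flat) over $\LZ_{(<0)}$, this Koszul homology is concentrated in cohomological degree $0$, in particular vanishes in negative degree. (4) Run the spectral sequence: since the $E_1$-page already vanishes in negative cohomological degree, so does the abutment $H^{\semiinf+i}(\LZ,M)$ for $i<0$.

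The main obstacle I anticipate is step (2)–(3): making precise that after passing to the associated graded the differential becomes \emph{exactly} the Koszul differential for a regular sequence, uniformly across all the conformal-weight/fermionic-number graded pieces, and that the ``positive'' fermionic and bosonic modes genuinely contribute only an inert tensor factor rather than additional differential. One has to be careful that the modes $(P_i)_{(-n)}$ with $n\le 0$ — which include $n=0$, i.e. $(P_i)_{(0)}=P_{i,0}+\dots$ up to lower-conformal-weight corrections — still act through the subalgebra $\LZ_{(<0)}$ after the filtration is applied; this is where the commutativity of $\mf{z}(\affg)$ (so that $\QZ(z)\QZ(w)\sim 0$ and all the $(P_i)_{(n)}$ commute) and the identification $\LZ_{(<0)}\cong\mf{z}(\affg)$ do the essential work, ensuring no correction terms spoil regularity. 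Once the associated graded is correctly pinned down to a Koszul complex of a polynomial ring modulo a regular sequence acting on a free module, the vanishing is immediate from standard homological algebra and the spectral-sequence comparison closes the argument.
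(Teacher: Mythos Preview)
Your approach is essentially the same as the paper's: the paper simply cites Voronov's Hochschild--Serre spectral sequence \cite[Theorem~2.3]{Vor93} for the subalgebra $\LZ_{(<0)}\subset\LZ$, which is exactly the filtration/Koszul argument you are spelling out. Since the Lie algebra here is abelian, the $E_1$-term is literally the Koszul homology of $M$ over $\LZ_{(<0)}$ (tensored with the remaining fermionic variables), and freeness kills all negative-degree cohomology.

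That said, your indexing is reversed. In $\QZ_{(0)}=\sum_{i,n}(P_i)_{(-n)}\otimes(\psi_i^*)_{(n-1)}$, the piece involving $\LZ_{(<0)}$ is the sum over $n>0$ (so that $-n<0$ and $(P_i)_{(-n)}$ is a generator of $\LZ_{(<0)}$), not over $n\le 0$ as you wrote. For $n>0$ the mode $(\psi_i^*)_{(n-1)}$ is an annihilation operator contracting the creation mode $(\psi_i)_{(-n)}$; this is the Koszul-differential behavior you want. The $n\le 0$ part pairs $\LZ_{(\ge 0)}$ with $\psi^*$-creation operators and gives the Chevalley--Eilenberg cochain piece. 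In particular your concern about $(P_i)_{(0)}$ lying in $\LZ_{(<0)}$ is misplaced---it lies in $\LZ_{(\ge 0)}$---and there are no ``lower-order corrections'' since everything commutes. With this correction the argument goes through as you outlined.
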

\begin{proof}
Immediate from \cite[Theorem 2.3]{Vor93}
(cf.\ the proof of Lemma \ref{Lem:BRST-cohomology-and-ext}
below).
\end{proof}

In the followings, for  $M,N\in \LZ\Mod$ we consider $M\*N$ as a $\LZ$-module by the action
\begin{align}
P_i(z)\mapsto P_i(z)\*1-1\*\tau(P_i)(z)
\end{align}
unless otherwise stated.

\begin{Lem}\label{Lem:BRST-cohomology-and-ext}
Let $M,N\in \LZ\Mod$  
and suppose that $M$ is free of finite rank as a $\LZ_{(<0)}$-module.
Then
$H^{\semiinf+i}(\LZ,M\*N)=0$ for $i<0$
and
\begin{align*}
H^{\semiinf+0}(\LZ,M\*N)\cong \Hom_{\LZ}( M^*,\tau^* N).
\end{align*}
\end{Lem}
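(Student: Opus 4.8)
The plan is to reduce the statement to a computation of $\mathrm{Ext}$-groups over the commutative algebra $\LZ_{(<0)}\cong \mf{z}(\affg)$, using that the BRST differential $\QZ_{(0)}$ is a Koszul-type differential built out of the generators $P_{i,(n)}$, $n\ge 0$. Concretely, I would first observe that the complex $(M\*N)\otimes\bigwedge^{\semiinf+\bullet}(\mf{z}(\affg))$ with differential $\QZ_{(0)}$ is, after choosing the natural decreasing filtration by the number of $\psi^*$-modes (or equivalently by the $\LZ_{>0}$-adic data), a standard semi-infinite Koszul complex for the free polynomial algebra $\LZ$ acting on $M\*N$. The positive modes $P_{i,(n)}$, $n>0$, act locally nilpotently while the negative modes act freely on $M$ (by hypothesis, $M$ is free of finite rank over $\LZ_{(<0)}$), so the relevant cohomology is controlled by the Chevalley--Eilenberg/Koszul resolution of $M\*N$ as a module over the abelian Lie algebra spanned by the $P_{i,(n)}$.

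The key steps, in order: (1) Identify $\QZ_{(0)}$ as the semi-infinite differential for the commutative (abelian) ``Lie algebra'' with basis $\{P_{i,(n)}\}$, exactly in the set-up of \cite[\S2]{Vor93}, with $\bigwedge^{\semiinf+\bullet}(\mf{z}(\affg))$ playing the role of the semi-infinite exterior algebra; the vanishing $H^{\semiinf+i}(\LZ,M\*N)=0$ for $i<0$ then follows from \cite[Theorem 2.3]{Vor93} once we check that $M\*N$ is free over $\LZ_{(<0)}$, which it is because $M$ is free over $\LZ_{(<0)}$ and hence so is $M\*N\cong M\otimes_{\C}N$ as a $\LZ_{(<0)}$-module (using the coproduct-type action $P_i(z)\mapsto P_i(z)\*1-1\*\tau(P_i)(z)$ together with freeness of the first tensor factor). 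This is the same mechanism as in Lemma~\ref{Lem:vanihing-negative}, now applied to the product module. (2) For the degree-zero term, use the Hochschild--Serre-type argument: split the generators into the positive part $\LZ_{>0}$ and the negative part $\LZ_{(<0)}$. Since $\LZ_{>0}$ acts locally nilpotently and $M$ is $\LZ_{(<0)}$-free of finite rank, the cohomology in degree $0$ computes $\Hom$ over $\LZ$ from the ``vacuum'' cogenerator to $M\*N$; dualizing the free finite-rank module $M$ over $\LZ_{(<0)}$ converts this into $\Hom_{\LZ}(M^{*},\,?)$. (3) Finally, untwist: because the $\LZ$-action on the second factor $N$ inside $M\*N$ is twisted by $\tau$, the resulting $\Hom$ is $\Hom_{\LZ}(M^{*},\tau^{*}N)$, matching the assertion. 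Here $M^{*}=\Hom_{\LZ_{(<0)}}(M,\LZ_{(<0)})$ and $\tau^{*}N$ are exactly the modules introduced just before Proposition~\ref{Pro:dual-Z}.

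Concretely I would run a spectral sequence: filter $(M\*N)\otimes\bigwedge^{\semiinf+\bullet}(\mf{z}(\affg))$ so that the associated graded differential is the ``positive-mode'' piece $\sum_{i}\sum_{n>0}(P_{i,(n)})\*(\psi^*_{i})_{(-n)}$ (a Koszul differential for the locally nilpotent action of $\LZ_{>0}$), compute that cohomology — which collapses onto the $\LZ_{>0}$-invariants tensored with the remaining exterior generators indexed by $n\le 0$ — and then identify the next differential with the Koszul differential for the free action of $\LZ_{(<0)}$. Freeness of $M$ over $\LZ_{(<0)}$ makes the latter Koszul complex a resolution, so only $H^{0}$ survives and equals $\Hom_{\LZ_{(<0)}}(M,\,\tau^{*}N)^{\LZ_{>0}}=\Hom_{\LZ}(M^{*},\tau^{*}N)$, using that $M$ is finitely generated free to pass between $\Hom_{\LZ_{(<0)}}(M,-)$ and $M^{*}\otimes_{\LZ_{(<0)}}(-)$, and then imposing $\LZ_{>0}$-equivariance to get $\Hom$ over all of $\LZ$. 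The main obstacle I anticipate is purely bookkeeping: making the semi-infinite grading conventions for $\bigwedge^{\semiinf+\bullet}(\mf{z}(\affg))$ precise enough that the filtration is exhaustive and the spectral sequence converges — since the conformal weight is bounded below and each weight space is finite-dimensional, this convergence is automatic, but one must phrase Voronov's theorem in the present normalization. Everything else is a standard Koszul-duality computation.
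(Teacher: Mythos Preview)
Your overall strategy is the same as the paper's: run the Hochschild--Serre spectral sequence of \cite[Theorem 2.3]{Vor93} for the splitting of the abelian Lie algebra into the negative modes $\LZ_{(<0)}$ and the non-negative modes $\LZ_{(\geq 0)}$, use freeness of $M$ (hence of $M\*N$) over $\LZ_{(<0)}$ to collapse one page, and then identify the degree-zero piece with $\Hom_{\LZ}(M^*,\tau^*N)$ via the finite-rank freeness of $M$.

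There is one genuine slip in your ``concretely'' paragraph: you run the filtration in the wrong order. You take the associated-graded differential to be the \emph{positive}-mode piece and claim its cohomology ``collapses onto the $\LZ_{>0}$-invariants tensored with the remaining exterior generators.'' That is false in general: the positive modes pair with \emph{creation} operators $(\psi_i^*)_{(-m-1)}$ ($m\geq 0$), so this piece computes Lie-algebra \emph{cohomology} for the abelian algebra $\LZ_{(\geq 0)}$, and for a locally nilpotent action the higher cohomology does not vanish (already for $\C[x]/(x^2)$ over $\C x$ one has $H^1\neq 0$). So your $E_1$-page does not collapse as claimed.

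The fix is exactly what the paper does: take the \emph{negative}-mode piece first. Those modes pair with annihilation operators $(\psi_i^*)_{(-m-1)}$ ($m<0$), giving Koszul \emph{homology} for $\LZ_{(<0)}$; freeness of $M\*N$ over $\LZ_{(<0)}$ kills all nonzero degrees and leaves $M\*_{\LZ_{(<0)}}N$. Then $E_2^{p,q}=\delta_{q,0}\,H^p(\LZ_{(\geq 0)},\,M\*_{\LZ_{(<0)}}N)$, the spectral sequence collapses, and $H^{\semiinf+0}$ is the $\LZ_{(\geq 0)}$-invariants in $M\*_{\LZ_{(<0)}}N\cong\Hom_{\LZ_{(<0)}}(M^*,\tau^*N)$, i.e.\ $\Hom_{\LZ}(M^*,\tau^*N)$. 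Also, be careful with the indexing: the complementary splitting is $\LZ_{(<0)}$ versus $\LZ_{(\geq 0)}$, not $\LZ_{>0}$; the latter misses the generators $P_{i,(n)}$ with $0\le n\le d_i$.
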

\begin{proof}
Consider the  Hochschild-Serre spectral sequence
$E_r\Rightarrow H^{\semiinf+\bullet}(\LZ,M\*N)$
 for the subalgebra
$\LZ_{(<0)}\subset \LZ$ as in \cite[Theorem 2.3]{Vor93}.
By definition, the $E_1$-term is 
the opposite Koszul homology
$H_\bullet^{op}(\LZ_{(<0)},M\*N)$
of the $\LZ_{(<0)}$-module $M\* N$.
Since $M$ is free over $\LZ_{(<0)}$,
$M$ is also free over $\LZ_{(<0)}$,
and so is $M\* N$.
Hence 
$H_i^{op}(\LZ_{(<0)},M\*N)=0$ for $i\ne  0$
and $H_0^{op}(\LZ_{(<0)},M \*N)\cong M\*_{\LZ_{(<0)}} N$.
It follows that we have
$E_2^{p,q}=\delta_{q,0}H^p(\LZ_{(\geq 0)}, M\*_{\LZ_{(<0)}} N)$,
the Lie algebra cohomology 
of $M$ as a module over the commutative Lie algebra 
$\bigoplus_{i=1}^{\on{rk}\g}\bigoplus_{n\geq 0}\C P_{i,(n)}$
with respect to the action $(P_i)_{(n)}\mapsto (P_i)_{(n)}\*1-1\* \tau(P_i)_{(n)}$.
Hence the spectral sequence collapses at $E_2=E_{\infty}$, and we have
\begin{align*}
H^{\semiinf+i}(\LZ,M\*N)\cong \begin{cases}H^i(\LZ_{(\geq 0)}, M\*_{\LZ_{(<0)}} N)&\text{for }i\geq 0\\
0&\text{for }i<0.
\end{cases}
\end{align*}
Since $M$ is free $\LZ_{(<0)}$-module of finite rank,
we have
\begin{align*}
 M\*_{\LZ_{(<0)}}N
 \cong 
 \Hom_{\LZ_{(<0)}}(M^*,\tau^* N).
\end{align*}
It follows from the definition that 
$$H^{\semiinf+0}(\LZ,M \*N)\cong 
\Hom_{\LZ}(M^*,\tau^* N)\subset \Hom_{\LZ_{(<0)}}(M^*,\tau^* N).$$
\end{proof}

For $M,N \in \LZ\Mod$,
the complex
$C(\LZ,M\* N)=M\* N\*
\bigwedge^{\semiinf+\bullet }({\mf{z}(\affg)})$
is a direct sum of subcompelxes
$C(\LZ,M\* N)_d=\bigoplus\limits_{d_1+d_2+\Delta=d}
M_{d_1}\* N_{d_2}\*
\bigwedge^{\semiinf+\bullet }({\mf{z}(\affg)})_{\Delta}$,
$d\in \C$,
and so $H^{\semiinf+\bullet}(\LZ,M\*N)$ is graded:
$$H^{\semiinf+\bullet}(\LZ,M\*N)=
\bigoplus_{d\in \C}
H^{\semiinf+\bullet}(\LZ,M\*N)_d.$$
Set
$\ch_q H^{\semiinf+\bullet}(\LZ,M\*N)
=\sum_{d\in \C}q^d \dim H^{\semiinf+\bullet}(\LZ,M\*N)_d $
when it is well-defined.

\begin{Pro}\label{Pro:vanishing-1}
 Let $\lam \in P_+$,
 $M\in \LZ\on{-Mod}$.
 We have
 $$H^{\frac{\infty}{2}+0}(\LZ,\mf{z}_{\lam}\otimes M)\cong
 \Hom_{\LZ}(\mf{z}_{\lam},\tau^* M).$$
 If $\tau^* M$ is a quotient of $\mf{z}_{\lam}$,
 we have
$$\ch H^{\frac{\infty}{2}+0}(\LZ,\mf{z}_\lam\otimes M)\cong 
q^{\lam(\rho^{\vee})}\ch M.
$$
\end{Pro}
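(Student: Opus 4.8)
The plan is to obtain the first isomorphism as a direct application of Lemma~\ref{Lem:BRST-cohomology-and-ext}. By Proposition~\ref{Pro:cyclic}(i) the module $\mf{z}_{\lam}$ is free of finite rank over $\LZ_{(<0)}$, so, taking $\mf{z}_{\lam}$ for the first argument and $M$ for the second, Lemma~\ref{Lem:BRST-cohomology-and-ext} yields $H^{\semiinf+i}(\LZ,\mf{z}_{\lam}\otimes M)=0$ for $i<0$ together with
\begin{align*}
H^{\semiinf+0}(\LZ,\mf{z}_{\lam}\otimes M)\cong \Hom_{\LZ}(\mf{z}_{\lam}^{*},\tau^{*}M).
\end{align*}
Proposition~\ref{Pro:dual-Z}(i) identifies $\mf{z}_{\lam}^{*}$ with $\mf{z}_{\lam}$ as $\LZ$-modules, and substituting this into the right-hand side gives the claimed isomorphism $H^{\semiinf+0}(\LZ,\mf{z}_{\lam}\otimes M)\cong \Hom_{\LZ}(\mf{z}_{\lam},\tau^{*}M)$.

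For the character identity I would analyse $\Hom_{\LZ}(\mf{z}_{\lam},\tau^{*}M)$ directly. By Proposition~\ref{Pro:cyclic}(iii) the $\LZ$-module $\mf{z}_{\lam}$ is cyclic, generated by the image $v_{\lam}$ of the highest weight vector of $\mathbb{V}_{\lam}$, which is homogeneous of degree $-\lam(\rho^{\vee})$; moreover, via the identification $\mf{z}_{\lam}\cong \LZ_{<0}/I_{\lam}$ of commutative algebras together with the facts that $\LZ_{>0}$ acts trivially and $\LZ_{0}$ acts by $\chi_{\lam}$, the $\LZ$-action on $\mf{z}_{\lam}$ factors through a surjection of commutative rings $\LZ\twoheadrightarrow\mf{z}_{\lam}$. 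Hence, when $\tau^{*}M$ is a quotient of $\mf{z}_{\lam}$ as an $\LZ$-module, its $\LZ$-action also factors through this surjection, so that $I_{\lam}$ annihilates $\tau^{*}M$ and any element of $\tau^{*}M$ can occur as the image of $v_{\lam}$ under an $\LZ$-module map. Therefore evaluation at $v_{\lam}$ identifies $\Hom_{\LZ}(\mf{z}_{\lam},\tau^{*}M)$ with $\tau^{*}M$ up to a shift of the conformal grading by $\lam(\rho^{\vee})$ coming from $\deg v_{\lam}=-\lam(\rho^{\vee})$, so that $\ch\Hom_{\LZ}(\mf{z}_{\lam},\tau^{*}M)=q^{\lam(\rho^{\vee})}\ch\tau^{*}M$. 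Since $\ch\tau^{*}M=\ch M$ and the isomorphism of the first part respects the conformal grading, this gives $\ch H^{\semiinf+0}(\LZ,\mf{z}_{\lam}\otimes M)=q^{\lam(\rho^{\vee})}\ch M$.

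Every substantive ingredient is already available: the freeness, cyclicity and algebra structure of $\mf{z}_{\lam}$ over $\LZ_{(<0)}$ (Proposition~\ref{Pro:cyclic}), the self-duality $\mf{z}_{\lam}^{*}\cong\mf{z}_{\lam}$ (Proposition~\ref{Pro:dual-Z}(i)), and the reduction of the BRST cohomology to a $\Hom$-space (Lemma~\ref{Lem:BRST-cohomology-and-ext}); the remaining work is bookkeeping. The step I expect to require the most care is the logical role of the hypothesis that $\tau^{*}M$ be a \emph{quotient} of $\mf{z}_{\lam}$: this is exactly what forces $I_{\lam}$ to kill $\tau^{*}M$ and makes the $\LZ_{(\geq 0)}$-equivariance of a map out of $\mf{z}_{\lam}$ automatic, so that $\Hom_{\LZ}(\mf{z}_{\lam},\tau^{*}M)$, as opposed to the a priori larger $\Hom_{\LZ_{(<0)}}(\mf{z}_{\lam},\tau^{*}M)$, is exactly $\tau^{*}M$; in parallel, one must track the degree shift $-\lam(\rho^{\vee})$ carried by $v_{\lam}$ so that it emerges precisely as the prefactor $q^{\lam(\rho^{\vee})}$.
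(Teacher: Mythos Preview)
Your proof is correct and follows the same approach as the paper's: apply Lemma~\ref{Lem:BRST-cohomology-and-ext} (using Proposition~\ref{Pro:cyclic}(i) for freeness) to get $\Hom_{\LZ}(\mf{z}_{\lam}^{*},\tau^{*}M)$, invoke Proposition~\ref{Pro:dual-Z}(i) to replace $\mf{z}_{\lam}^{*}$ by $\mf{z}_{\lam}$, and then read off the character via evaluation at the cyclic generator. The paper phrases the grading shift by saying that under $\mf{z}_{\lam}^{*}\cong\mf{z}_{\lam}$ the generator $v_{\lam}$ corresponds to the dual of the socle element of weight $\lam(\rho^{\vee})$, whereas you track the shift directly via $\deg v_{\lam}=-\lam(\rho^{\vee})$; these are two descriptions of the same computation.
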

\begin{proof}
By Lemma \ref{Lem:BRST-cohomology-and-ext} and Proposition \ref{Pro:dual-Z},
$H^{\frac{\infty}{2}+0}(\LZ,\mf{z}_\lam\otimes M)\cong
\Hom_{\LZ}(\mf{z}_{\lam},\tau^* M)$.
The latter is isomorphic to $\tau^* M$ if $\tau^* M$ is quotient 
of $\mf{z}_{\lam}$ as a $\LZ$-module.
Since 
under the isomorphism
$\mf{z}_\lam^*\cong \mf{z}_{\lam}$ in Lemma \ref{Lem:BRST-cohomology-and-ext},
the generator  $v_{\lam}$ of $\mf{z}_{\lam}$ corresponds to the dual element
of $\mf{z}_{\lam}$ of conformal weight $\lam(\rho^{\vee})$,
we get the assertion.
\end{proof}

Set
$$
\eqW=\eqW_G=\eqW^{\kappa_c}_{G,f_{prin}},\quad
  \mathbf{S}=\mathbf{S}_G=\mathbf{S}_{G,f_{prin}},\quad \Slo=\Slo_{f_{prin}}.$$

 By \eqref{eq:cc-of-eq-W},
 $\eqW$ is conformal with central charge
 \begin{align}
\dim \g+\on{rk}\g+24(\rho|\rho^{\vee}).
\end{align}
Let $\omega_{\eqW}$ be the 
 conformal vector of $\eqW$,
 $L^{\eqW}(z)=\sum_{n\in \Z}L_n^{\eqW}z^{-n-2}$
 be the corresponding field.
 \begin{Lem}\label{Lem:action-of-conformal-v}
The subspace $\mf{z}(\affg)\subset \eqW$
is preserved by the action of 
$L_n^{\eqW}$ with $n\geq -1$.
\end{Lem}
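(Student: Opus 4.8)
The plan is to realize the subspace $\mf{z}(\affg)\subset\eqW$ as a space of invariants on which the relevant positive-mode Virasoro operators act by the primary-field commutation relation. Concretely, by the Feigin--Frenkel isomorphism \eqref{eq:FF-iso} and Proposition \ref{Pro:g[t]t-invariant-part}, the copy of $\mf{z}(\affg)$ inside $\eqW=\eqW^{\kappa_c}_{G,f_{prin}}$ is precisely $\W^{\kappa_c}(\g,f_{prin})=(\eqW)^{\pi_R(\g[t])}=\{v\in\eqW\mid \pi_R(xt^m)v=0\ \text{for all } x\in\g,\ m\geq 0\}$. I would first record this identification.

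Next I would invoke Proposition \ref{Pro:eqW-finite} and the remark immediately following it: $\eqW$ is a conformal vertex algebra object in $\KL_{\kappa_c}$ with conformal vector $\omega_{\eqW}$ and chiral quantum moment map $\pi_R$. By the definition of a conformal vertex algebra object, $\pi_R(x)$ is therefore primary of conformal weight $1$ with respect to $\omega_{\eqW}$ for every $x\in\g$, which yields the commutation relation $[L_n^{\eqW},\pi_R(xt^m)]=-m\,\pi_R(xt^{n+m})$ for all $m,n\in\Z$ (this is just the standard primary-field bracket $[L_n,a_{(m)}]=-m\,a_{(n+m)}$ for $\Delta_a=1$, after matching mode conventions with $\pi_R(x)_{(m)}=\pi_R(xt^m)$).

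The computation is then immediate: for $v\in\mf{z}(\affg)$, $x\in\g$ and $m\geq 0$,
\begin{align*}
\pi_R(xt^m)\bigl(L_n^{\eqW}v\bigr)=-[L_n^{\eqW},\pi_R(xt^m)]v+L_n^{\eqW}\bigl(\pi_R(xt^m)v\bigr)=m\,\pi_R(xt^{n+m})v,
\end{align*}
since $\pi_R(xt^m)v=0$. If $m=0$ this vanishes because of the factor $m$; if $m\geq 1$ and $n\geq -1$ then $n+m\geq 0$, so $\pi_R(xt^{n+m})v=0$ by $\pi_R(\g[t])$-invariance. In all cases $L_n^{\eqW}v$ is annihilated by every $\pi_R(xt^m)$ with $m\geq 0$, hence $L_n^{\eqW}v\in(\eqW)^{\pi_R(\g[t])}=\mf{z}(\affg)$. (For $n=-1$ one may alternatively note $L_{-1}^{\eqW}=\partial$, which preserves any vertex subalgebra.)

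I do not expect a real obstacle here; the statement follows formally once the two inputs above are in place. The only point worth flagging is that the argument breaks down exactly for $n\leq -2$, where $n+m$ can equal $-1$ and $\pi_R(xt^{-1})v$ need not vanish — this is why the assertion is restricted to $n\geq -1$, reflecting the fact that $\mf{z}(\affg)$ is a module over the non-negative-plus-translation part of the Virasoro action on $\eqW$.
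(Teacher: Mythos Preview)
Your proof is correct and follows essentially the same approach as the paper: both identify $\mf{z}(\affg)$ with $(\eqW)^{\pi_R(\g[t])}$ via Proposition~\ref{Pro:g[t]t-invariant-part} and \eqref{eq:FF-iso}, and then use the commutation relation between $L_n^{\eqW}$ and the modes $\pi_R(xt^m)$ to show $\g[t]$-invariance is preserved. The paper treats $n=-1$ separately via $L_{-1}^{\eqW}=\partial$ and handles $n\geq 0$ by the commutator, whereas you treat $n\geq -1$ uniformly; your primary-field commutator $[L_n^{\eqW},\pi_R(xt^m)]=-m\,\pi_R(xt^{n+m})$ is in fact the standard one (the paper writes $(m-n)$, which appears to be a slip of convention, but the vanishing argument goes through either way since $n+m\geq 0$).
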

\begin{proof}
Clearly,
$\mf{z}(\affg)$ is preserved by the action of the translation operator $L_{-1}^{\eqW}$.
Let $z\in \mf{z}(\affg)=\eqW^{\g[t]}$
(see Proposition \ref{Pro:g[t]t-invariant-part} and \eqref{eq:FF-iso}).
Since $[L_m^{\eqW},x_n]=(m-n)x_{m+n}$ for $x\in \g$,
we have $x_m L_n^{\eqW} z=-(n-m)x_{m+n}z=0$
for $m,n\geq 0$.
Hence $L_n^{\eqW} z\in \mf{z}(\affg)$ for $n\geq 0$.
\end{proof}
 
  For $N\in \LZ\on{-Mod}_{\on{reg}}$,
 $H^{\frac{\infty}{2}+0}(\LZ,\eqW\otimes  N)$
 is a graded $V^{\kappa_c}(\g)$-module,
 where $V^{\kappa_c}(\g)$ acts on the first factor $\eqW$.
 Hence we have a functor
 \begin{align}
 \LZ\on{-Mod}
\ra \on{KL},\quad
N\mapsto H^{\frac{\infty}{2}+0}(\LZ,\eqW\otimes  N).
\label{eq:the-main-functor}
\end{align}
\begin{Lem}\label{Lem:vnd}
We have
$H^{\frac{\infty}{2}+i}(\LZ,\eqW\otimes  N)=0$ for $i<0$,
$N\in \LZ\Mod$.
Therefore,
the functor \eqref{eq:the-main-functor}
is left exact.
\end{Lem}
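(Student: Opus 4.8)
The plan is to reduce the vanishing to a freeness statement and then to run the argument already used for Lemma~\ref{Lem:vanihing-negative}. Recall that on $\eqW\otimes N$ the algebra $\LZ$ acts through the twisted‑diagonal coaction $P_i(z)\mapsto \pi_L(P_i)(z)\otimes 1-1\otimes\tau(P_i)(z)$, where $\pi_L\colon \mf{z}(\affg)\cong\W^{\kappa_c}(\g,f)\hookrightarrow\eqW$ is the embedding of \eqref{eq:FF-iso}. If one shows that $\eqW\otimes N$ is free as a module over the polynomial subalgebra $\LZ_{(<0)}\cong\mf{z}(\affg)$, then the Hochschild--Serre spectral sequence for $\LZ_{(<0)}\subset\LZ$ (as in \cite[Theorem~2.3]{Vor93}) has $E_1$‑term given by the opposite Koszul homology of a free $\LZ_{(<0)}$‑module, hence concentrated in degree $0$, so it degenerates onto the quadrant $i\ge 0$ and yields $H^{\semiinf+i}(\LZ,\eqW\otimes N)=0$ for $i<0$; for the complex to be well defined and bounded below one passes to the central‑character summands $(\eqW)_{[\lam]}\otimes N$, on each of which the relevant grading is bounded below. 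Thus the whole content is the freeness of $\eqW\otimes N$ over $\LZ_{(<0)}$, which I would establish in two steps.

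First I would prove that $\eqW$ is free over $\LZ_{(<0)}$ with respect to $\pi_L$. At the level of associated graded Poisson vertex algebras, $\gr\eqW\cong\mc{O}(J_\infty\mathbf{S})$ with $\mathbf{S}\cong G\times\Slo$ (Corollary~\ref{Co:DS} and Theorem~\ref{Th:IMRN2016}), and $\gr\pi_L$ is dual to the projection $J_\infty\mathbf{S}\cong J_\infty G\times J_\infty\Slo\to J_\infty\Slo$. Since $f=f_{prin}$, the Kostant section identifies $\Slo\cong\g^*/\!/G$, so $\gr\LZ_{(<0)}=\mc{O}(J_\infty\Slo)$ and $\gr\eqW\cong\mc{O}(J_\infty\Slo)\otimes_\C\mc{O}(J_\infty G)$ is manifestly free over it. Because the (separated) Li filtration on $\eqW$ restricts to a finite filtration on each conformal‑weight space, a homogeneous $\gr\LZ_{(<0)}$‑basis of $\gr\eqW$ lifts to an $\LZ_{(<0)}$‑basis of $\eqW$; alternatively one transports a suitable filtration of $\Dch_G$ (coming from the isotypic decomposition of $\mc{O}(G)$) through the exact functor $H_{DS}^0$, using $\eqW=H_{DS}^0(\Dch_G)$, to produce a filtration of $\eqW$ whose subquotients are built from the modules $\mf{z}_\lam$, each of which is free over $\LZ_{(<0)}$ by Proposition~\ref{Pro:cyclic}(i).

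Second, I would deduce freeness of $\eqW\otimes N$ for an arbitrary $N\in\LZ\Mod$. Writing $\eqW\cong\LZ_{(<0)}\otimes_\C W$ as a $\pi_L$‑module, the vector space $W$ is a spectator, so it suffices to see that $\LZ_{(<0)}\otimes_\C N$ with the action $P_{i,(n)}\mapsto P_{i,(n)}\otimes 1-1\otimes(\tau(P_i))_{(n)}$ is free over $\LZ_{(<0)}$. As $\tau\in\Aut(\mf{z}(\affg))$ is an algebra automorphism this is the untwisted situation, and for the polynomial ring $\LZ_{(<0)}=\C[P_{i,(n)};\,i,\ n<0]$ the $\LZ_{(<0)}$‑linear ``shearing'' map $\LZ_{(<0)}\otimes_\C N\to\LZ_{(<0)}\otimes_\C N$ (first‑factor action on the source, twisted‑diagonal action on the target) which is the identity on $1\otimes N$ is unitriangular for the polynomial‑degree filtration, hence bijective. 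Therefore $\eqW\otimes N$ is free over $\LZ_{(<0)}$, and the argument of Lemma~\ref{Lem:vanihing-negative} gives $H^{\semiinf+i}(\LZ,\eqW\otimes N)=0$ for $i<0$.

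For left exactness, given a short exact sequence $0\to N'\to N\to N''\to 0$ in $\LZ\Mod$, applying $\eqW\otimes_\C(-)\otimes_\C\bigwedge^{\semiinf+\bullet}(\mf{z}(\affg))$ yields a short exact sequence of complexes, whose long exact cohomology sequence contains $H^{\semiinf-1}(\LZ,\eqW\otimes N'')\to H^{\semiinf+0}(\LZ,\eqW\otimes N')\to H^{\semiinf+0}(\LZ,\eqW\otimes N)\to H^{\semiinf+0}(\LZ,\eqW\otimes N'')$; the left‑hand term vanishes by the first part, so the functor \eqref{eq:the-main-functor} is left exact. The main obstacle is the first freeness step: Propositions~\ref{Pro:free} and \ref{Pro:g[t]t-invariant-part} are phrased for the $\pi_R$‑action and for the half‑enveloping algebras of $\g[t,t^{-1}]$ rather than for $\pi_L$ and $\LZ_{(<0)}$, so one genuinely has to carry out the associated‑graded (Kostant‑type) computation together with the filtered‑to‑graded lifting, and also to be careful about the boundedness needed to make $H^{\semiinf+\bullet}(\LZ,\eqW\otimes N)$ a legitimate object of $\KL$.
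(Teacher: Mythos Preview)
Your proposal is correct and follows essentially the same approach as the paper: the paper observes that $SS(\eqW)\cong J_\infty\Slo\times J_\infty G$, so $\gr\eqW$ is free over $\mc{O}(J_\infty\Slo)$, hence $\eqW$ is free over $\LZ_{(<0)}$, hence so is $\eqW\otimes N$, and then invokes Lemma~\ref{Lem:vanihing-negative}. You have written out the same argument with more of the routine details filled in (the lifting from $\gr$ to the filtered level, the shearing isomorphism for the twisted diagonal action, the long exact sequence for left exactness); your worry about $\pi_L$ versus $\pi_R$ is unwarranted, since by \eqref{eq:effect-of-tau-to-FF} the two center actions on $\eqW$ agree up to $\tau$, which is exactly what the twisted diagonal in the definition of the $\LZ$-action on $M\otimes N$ accounts for.
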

\begin{proof}
Since $SS(\eqW)\cong J_{\infty}\mathbf{S}\cong J_{\infty}\Slo\times J_{\infty}G$,
$\gr \eqW$ is   free over $\mc{O}(J_{\infty}\Slo)$.
Hence,
$\eqW$ is free as
a $\LZ_{(<0)}$-module,
 and so is $\eqW\* N$.
The assertion follows from Lemma \ref{Lem:vanihing-negative}.
\end{proof}

\begin{Pro}\label{Pro:gt-inv-part}
For $N\in \LZ\Mod_{reg}$,
we have
$$H^{\frac{\infty}{2}+0}(\LZ,\eqW\otimes N)^{t\g[t]}\cong 
H^{\frac{\infty}{2}+0}(\LZ,\eqW^{t\g[t]}\otimes N)\\
\cong \bigoplus_{\lam\in P_+}V_{\lam}\* 
\Hom_{\LZ}(\mf{z}_{\lam},N)
$$
as $\g$-modules.
\end{Pro}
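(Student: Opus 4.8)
The plan is to compute the $\g[t]$-invariants (i.e.\ the $t\g[t]$-invariants, since the statement is about the lower-truncated invariants as a $\g$-module) by exploiting the known cofreeness of $\eqW$ over $U(t\g[t])$ from Proposition~\ref{Pro:g[t]t-invariant-part}, together with the fact that $H^{\semiinf+0}(\LZ,-)$ is built from a tensor product with the charged fermions $\bigwedge^{\semiinf+\bullet}(\mf{z}(\affg))$ on which $t\g[t]$ acts trivially. First I would observe that taking $t\g[t]$-invariants commutes with the functor $N\mapsto H^{\semiinf+0}(\LZ,\eqW\otimes N)$: the action of $t\g[t]$ comes entirely through $\pi_R$ on the first factor $\eqW$ (it acts trivially on $N$ and on the fermions), and the differential $\QZ_{(0)}$ is built from the Feigin-Frenkel generators $P_i$, which lie in $\mf{z}(\affg)=\eqW^{\g[t]}$, hence commute with the $t\g[t]$-action. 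Since $t\g[t]$ acts by a pro-unipotent (pro-nilpotent) Lie algebra and $\eqW\otimes N\otimes\bigwedge^{\semiinf+\bullet}$ decomposes as a direct sum of finite-dimensional $L_0$-graded pieces along which the action is locally finite, taking $t\g[t]$-invariants is exact; therefore it commutes with cohomology. This gives the first isomorphism
\begin{align*}
H^{\semiinf+0}(\LZ,\eqW\otimes N)^{t\g[t]}\cong H^{\semiinf+0}(\LZ,\eqW^{t\g[t]}\otimes N).
\end{align*}

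Next I would invoke Proposition~\ref{Pro:g[t]t-invariant-part} to substitute $\eqW^{t\g[t]}\cong\bigoplus_{\lam\in P_+}H_{DS}^0(\mathbb{V}_{\lam})\otimes V_{\lam^*}=\bigoplus_{\lam\in P_+}\mf{z}_{\lam}\otimes V_{\lam^*}$ as a $\W^{\kappa_c}(\g,f)\otimes U(\g)$-module, i.e.\ as a $\LZ\otimes U(\g)$-module via the Feigin-Frenkel isomorphism \eqref{eq:FF-iso}. The $\LZ$-action relevant to the BRST differential is the $\pi_L=\pi_R$-compatible one on the $\mf{z}_\lam$ factor, while the residual $\g$-action (the one surviving in the invariants, coming from the first $\pi_L$-copy of $\affg_{\kappa_c}$) sits on the $V_{\lam^*}$ factor — but note $V_{\lam^*}$ as a $\g$-module is just the dual of $V_\lam$, and one should be careful about whether the statement's $V_\lam$ is this or its dual; in any case one gets $\bigoplus_\lam V_\lam$ up to relabeling by $\lam\mapsto\lam^*$, which is a bijection of $P_+$. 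Pulling the finite-dimensional $\g$-module factor $V_{\lam^*}$ out of the cohomology (it is inert under $\QZ_{(0)}$ and under $\LZ$), I get
\begin{align*}
H^{\semiinf+0}(\LZ,\eqW^{t\g[t]}\otimes N)\cong\bigoplus_{\lam\in P_+}V_{\lam^*}\otimes H^{\semiinf+0}(\LZ,\mf{z}_{\lam}\otimes N).
\end{align*}
Then Proposition~\ref{Pro:vanishing-1} identifies $H^{\semiinf+0}(\LZ,\mf{z}_\lam\otimes N)\cong\Hom_{\LZ}(\mf{z}_\lam,\tau^*N)$. Since $N\in\LZ\Mod_{reg}$ and $\mf{z}_\lam\in\KL^{[\lam]}$, re-indexing by $\lam\leftrightarrow\lam^*$ and using $\tau^*(\mf{z}_\lam)\cong\mf{z}_{\lam^*}$ from Proposition~\ref{Pro:dual-Z}(ii) (which lets us absorb the twist $\tau$ into the relabeling, since $\Hom_{\LZ}(\mf{z}_\lam,\tau^*N)\cong\Hom_{\LZ}(\tau^*\mf{z}_\lam,N)=\Hom_{\LZ}(\mf{z}_{\lam^*},N)$) yields the stated formula $\bigoplus_{\lam\in P_+}V_\lam\otimes\Hom_{\LZ}(\mf{z}_\lam,N)$.

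The main obstacle I anticipate is \emph{justifying the exactness of the $t\g[t]$-invariants functor and its commutation with the cohomology} in the non-finitely-generated, non-bounded-below setting: $\eqW$ is not bounded below in conformal weight, so one must argue carefully that $\eqW\otimes N\otimes\bigwedge^{\semiinf+\bullet}(\mf{z}(\affg))$ still decomposes into a direct sum (or at least an inverse/direct limit) of pieces on which $t\g[t]$ acts locally finitely and along which the complex is finite-dimensional in each cohomological degree, so that no $\varprojlim^1$ obstruction appears and invariants commute with taking cohomology. Here I would lean on Proposition~\ref{Pro:eqW-finite}, which says $\eqW$ is a direct sum of objects in $\KL^{ord}_{\kappa^*}$, and on the grading argument already used in the proof of Theorem~\ref{Th:DS-realization}: the bigrading by $(L_0^{\W},L_0^{\affg})$-eigenvalues splits everything into finite-dimensional blocks, reducing the claim to elementary linear algebra. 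A secondary bookkeeping point is tracking the $\lam\mapsto\lam^*$ dualizations and the $\tau$-twist consistently so that the final index set and the appearance of $V_\lam$ (rather than $V_{\lam^*}$) match the statement; this is routine given Propositions~\ref{Pro:dual-Z} and~\ref{Pro:vanishing-1} but needs care.
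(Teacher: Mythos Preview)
Your overall strategy matches the paper's: pass to $\eqW^{t\g[t]}$ via Proposition~\ref{Pro:g[t]t-invariant-part}, then invoke Proposition~\ref{Pro:vanishing-1} together with the $\lam\leftrightarrow\lam^*$ bookkeeping from Proposition~\ref{Pro:dual-Z}. The second half of your argument, once the first isomorphism is granted, is correct and essentially identical to what the paper does.

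However, your justification for the first isomorphism has a genuine gap. The claim that ``$t\g[t]$ is pro-unipotent and the action is locally finite on finite-dimensional graded pieces, hence $(-)^{t\g[t]}$ is exact'' is false: already for a single nilpotent operator on $\C^2$ the invariants functor fails right-exactness. Cofreeness of $\eqW$ over $U(t\g[t])$ does give $H^i(t\g[t],C^n)=0$ for $i>0$ on each \emph{term} $C^n$ of the $\LZ$-complex, but this does not propagate to the cycles, boundaries, or cohomology of that complex, so one cannot directly commute invariants with cohomology by an exactness argument.

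The paper resolves this with a double complex. Set
\[
C^{p,q}=\eqW\otimes N\otimes\bw{\semiinf+p}(\mf{z}(\affg))\otimes\bw{q}(t^{-1}\g^*[t^{-1}])
\]
with commuting differentials $Q_{\mf{z}}$ and the Chevalley differential $Q_{t\g[t]}$. Taking $d_0=Q_{t\g[t]}$, cofreeness of $\eqW$ makes $E_1$ concentrated in $q=0$, and one reads off $H^0_{tot}(C)\cong H^{\semiinf+0}(\LZ,\eqW^{t\g[t]}\otimes N)\cong\bigoplus_{\lam}V_\lam\otimes\Hom_{\LZ}(\mf{z}_\lam,N)$ after the computations you outlined. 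Taking instead $d_0=Q_{\mf{z}}$, one gets $(E_2')^{p,q}=H^p(t\g[t],H^{\semiinf+q}(\LZ,\eqW\otimes N))$, which is first-quadrant by Lemma~\ref{Lem:vnd}; hence $H^0_{tot}(C)\cong(E_2')^{0,0}=H^{\semiinf+0}(\LZ,\eqW\otimes N)^{t\g[t]}$. Comparing the two computations of $H^0_{tot}$ gives the first isomorphism. The spectral-sequence comparison is the missing step; the pro-unipotent heuristic is not a substitute for it.
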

\begin{proof}
Set
\begin{align*}
C=\bigoplus_{i\in \Z}C^i,\quad
C^i=\bigoplus_{p+q=i}C^{p,q},\quad C^{p,q}=\eqW\otimes N\*  \bigwedge\nolimits^{\semiinf+p }({\mf{z}(\affg)})\otimes  \bigwedge\nolimits^{q }(t^{-1}\g^*[t^{-1}]).
\end{align*}
Let $Q_{t\g[t]}$ be the differential of the Chevalley complex  
$\eqW\* \bigwedge\nolimits^{\bullet }(t^{-1}\g^*[t^{-1}])$
for the computation of the Lie algebra cohomology
 $H^{\bullet}(t\g[t],\eqW)$.
We extend $Q_{t\g[t]}$ 
to a differential of $C$  by letting it  act trivially  on the  factor $N\*  \bigwedge\nolimits^{\semiinf+i }({\mf{z}(\affg)})$.
Let
$Q_{\mf{z}}$
be the 
 differential of the complex 
$\eqW\* N\* \bigwedge\nolimits^{\semiinf+i }({\mf{z}(\affg)})$
which defines $H^{\semiinf+\bullet}(\LZ,\eqW\* N)$.
We extend $Q_{\mf{z}}$ to a differential 
of $C$ by letting it  act trivially  on the factor $\bigwedge\nolimits^{\bullet }((t\g[t])^*)$.
Since $\{Q_{t\g[t]},Q_{\mf{z}}\}=0$,
 $C$ is equipped with the structure of a double complex.

Consider the spectral sequence
 $E_r\Rightarrow H^\bullet_{tot}( C)$  
 such that $d_0=Q_{t\g[t]}$ and $d_1=Q_\mf{z}$.
 Since $\eqW$ is cofree over $U(t\g[t])$,
 we have
 \begin{align*}
E_1^{p,q}&\cong  H^q(t\g[t],\eqW)\* N\* \bigwedge\nolimits^{\semiinf+p }({\mf{z}(\affg)})
\cong \delta_{q,0}\eqW^{t\g[t]}\* N\* \bigwedge\nolimits^{\semiinf+p }({\mf{z}(\affg)})\\
&\cong \delta_{q,0}\bigoplus_{\lam\in P_+}(\mf{z}_{\lam^*}\* V_{\lam})\* N\* \bigwedge\nolimits^{\semiinf+p }({\mf{z}(\affg)})
\end{align*}
by Proposition \ref{Pro:g[t]t-invariant-part}.
It follows  that
\begin{align*}
E_2^{p,q}& \cong \delta_{q,0}\bigoplus_{\lam\in P_+}V_{\lam}\*H^{\semiinf+p}(\LZ, \mf{z}_{\lam^*}\* N).
\end{align*}
Therefore, the spectral sequence collapses at $E_2=E_{\infty}$,
and we obtain the  isomorphism
$H_{tot}^i(C)\cong \bigoplus_{\lam\in P_+}V_{\lam}\*H^{\semiinf+i}(\LZ, \mf{z}_{\lam^*}\* N)$.
In particular,
by Proposition \ref{Pro:vanishing-1},
we have
\begin{align}
H_{tot}^0 (C)\cong  \bigoplus_{\lam\in P_+}V_{\lam}\*  \Hom_{\LZ}(\mf{z}_{\lam}, N).
\label{eq:iso22}
\end{align}

 Next,
let $E'_r\Rightarrow H^\bullet_{tot}( C)$ 
 be the spectral sequence 
 such that $d_0=Q_\mf{z}$ and $d_1=Q_{t\g[t]}$.
We have
\begin{align}
&(E_1')^{p,q}\cong H^{\semiinf+q}(\LZ,\eqW\* N)\* \bigwedge\nolimits^{p}(t^{-1}\g^*[t^{-1}]),\\
&(E_2')^{p,q}\cong  H^p(t\g[t], H^{\semiinf+q}(\LZ,\eqW\* N)).
\label{eq:second-E2}
\end{align}
By Lemma \ref{Lem:vnd},
$(E_2')^{p,q}=$ for $p<0$ or $q<0$.
It follows that
$H^0_{tot}( C)\cong H^{\semiinf+0}(\LZ,\eqW\* N)^{t\g[t]}$.
Comparing this with  \eqref{eq:iso22},
we obtain that
$$H^{\semiinf+0}(\LZ,\eqW\* N)^{t\g[t]}\cong  \bigoplus_{\lam\in P_+}V_{\lam}\* \Hom(\mf{z}_{\lam}, N).$$
\end{proof}
 
 \begin{Pro}\label{Pro:image-of-simple}
For $\lam\in P_+$,
we have
$$H^{\frac{\infty}{2}+0}(\LZ,\eqW\otimes \C_{[\lam]})\cong 
\mathbb{L}_{\lam}$$
as graded $\affg_{\kappa_c}$-modules.
\end{Pro}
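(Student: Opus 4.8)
The plan is to compute $H^{\frac{\infty}{2}+0}(\LZ,\eqW\otimes \C_{[\lam]})$ by exploiting what we already know about its $t\g[t]$-invariant part together with the structural information about $\eqW$ as a $V^{\kappa_c}(\g)$-module. First I would invoke Proposition~\ref{Pro:gt-inv-part} with $N=\C_{[\lam]}$: since $\C_{[\lam]}$ is the one-dimensional $\LZ$-module corresponding to $\chi_{\lam}$, and by Proposition~\ref{Pro:cyclic} the module $\mf{z}_{\mu}$ is cyclic with unique simple graded quotient $\C_{[\mu]}$, the space $\Hom_{\LZ}(\mf{z}_{\mu},\C_{[\lam]})$ is one-dimensional when $\mu=\lam$ and zero otherwise. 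Hence
\begin{align*}
H^{\frac{\infty}{2}+0}(\LZ,\eqW\otimes \C_{[\lam]})^{t\g[t]}\cong V_{\lam}
\end{align*}
as $\g$-modules, concentrated (by the grading bookkeeping in Proposition~\ref{Pro:vanishing-1} and the degree shift $\lam(\rho^\vee)$) in the appropriate single conformal degree.

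Next I would identify the full $\affg_{\kappa_c}$-module structure. By Lemma~\ref{Lem:vnd} the functor \eqref{eq:the-main-functor} is left exact, and applying it to the surjection $\mf{z}_{\lam}\twoheadrightarrow \C_{[\lam]}$ (equivalently, using the filtration $\Gamma_-^\bullet$ on $\mathbb{V}_{\lam}$ with $\gr^{\Gamma_-}\mathbb{V}_{\lam}\cong \mf{z}_\lam\otimes \mathbb{L}_{\lam}$ from \eqref{eq:FG-theorem}) should let me relate $H^{\frac{\infty}{2}+0}(\LZ,\eqW\otimes \C_{[\lam]})$ to $H_{DS}^0$ applied in reverse. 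Concretely, Theorem~\ref{Th:it-is-an-inverse-to-DS} (referenced in the introduction) says $M\cong H^{\frac{\infty}{2}+0}(\LZ,\eqW\otimes H_{DS}^0(M))$ for $M\in\KL$; combined with Proposition~\ref{Pro:simple-go-simple}, which gives $H_{DS}^0(\mathbb{L}_{\lam})\cong \C_{[\lam]}$, this immediately yields $H^{\frac{\infty}{2}+0}(\LZ,\eqW\otimes \C_{[\lam]})\cong \mathbb{L}_{\lam}$. If one prefers a self-contained argument not citing the inverse-functor theorem, I would instead show directly that the $\affg_{\kappa_c}$-module $H^{\frac{\infty}{2}+0}(\LZ,\eqW\otimes \C_{[\lam]})$ is generated by its top component $V_{\lam}$ (using that $\eqW$ is generated over $\affg_{\kappa_c}$ together with $\W^{\kappa_c}(\g,f)=\mf{z}(\affg)$ which acts on $\C_{[\lam]}$ through $\chi_\lam$), hence is a quotient of $\mathbb{V}_{\lam}$; then use the character computation below to pin it down.

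For the character bound I would compare with the known characters: by Proposition~\ref{Pro:ch-DS} and the fact that $\ch H_{DS}^0(\mathbb{L}_\lam)=q^{-\lam(\rho^\vee)}$, one has $\ch \mathbb{L}_{\lam}=q^{\lam(\rho^\vee)}\ch\mathbb{L}_{\lam}\cdot q^{-\lam(\rho^\vee)}$, consistent; more usefully, running the spectral sequence of Proposition~\ref{Pro:gt-inv-part} but now keeping track of the full $\affg_{\kappa_c}\oplus$-action rather than just invariants, the $E_2$-page is $\bigoplus_{\mu}\,(\text{something})\otimes H^{\frac{\infty}{2}+p}(\LZ,\mf{z}_{\mu^*}\otimes \C_{[\lam]})$, which by the $\Hom$ computation collapses to a single summand $\mu=\lam^*$ and forces $\ch H^{\frac{\infty}{2}+0}(\LZ,\eqW\otimes\C_{[\lam]})=\ch\mathbb{L}_{\lam}$ via \eqref{eq:ch-simple} and \eqref{eq:ch-is-product}. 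Together with the surjection $\mathbb{V}_{\lam}\twoheadrightarrow H^{\frac{\infty}{2}+0}(\LZ,\eqW\otimes\C_{[\lam]})$ and the equality of characters with $\ch\mathbb{L}_\lam<\ch\mathbb{V}_\lam$, the module must coincide with the unique simple graded quotient $\mathbb{L}_{\lam}$.

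The main obstacle I anticipate is the second step: upgrading the $\g$-module statement $(\,\cdot\,)^{t\g[t]}\cong V_{\lam}$ to the full $\affg_{\kappa_c}$-module identification, i.e. proving the cohomology is \emph{exactly} $\mathbb{L}_\lam$ and not some larger module with the same $t\g[t]$-invariants. Cleanly, this is handled by citing Theorem~\ref{Th:it-is-an-inverse-to-DS} and Proposition~\ref{Pro:simple-go-simple}; but if that theorem is proved \emph{after} this proposition, I would need the character argument above to be airtight, in particular verifying that the spectral sequence in Proposition~\ref{Pro:gt-inv-part} degenerates and that the Euler–Poincaré computation of $\ch H^{\frac{\infty}{2}+0}$ really gives $\ch\mathbb{L}_\lam$ rather than an alternating sum — this requires the vanishing $H^{\frac{\infty}{2}+i}(\LZ,\eqW\otimes\C_{[\lam]})=0$ for $i>0$, which I would get from $\eqW$ being cofree over $U(t\g[t])$ combined with Lemma~\ref{Lem:BRST-cohomology-and-ext} and a degree/boundedness argument.
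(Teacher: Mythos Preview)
Your first step---using Proposition~\ref{Pro:gt-inv-part} to obtain $H^{\frac{\infty}{2}+0}(\LZ,\eqW\otimes \C_{[\lam]})^{t\g[t]}\cong V_{\lam}$---matches the paper exactly. The gap is in your second step.

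The approach via Theorem~\ref{Th:it-is-an-inverse-to-DS} is circular: that theorem relies on Theorem~\ref{Th:cdo}, whose proof uses Proposition~\ref{Pro:ch-from-above}, and the proof of Proposition~\ref{Pro:ch-from-above} explicitly invokes Proposition~\ref{Pro:image-of-simple}. So you cannot cite it here. Your fallback arguments are also not complete. The claim that the cohomology is generated over $\affg_{\kappa_c}$ by its top piece $V_\lam$ does not follow from the fact that $\eqW$ is generated by certain fields: the cohomology is a subquotient of the complex, and there is no a priori reason cocycles must lie in the $\affg_{\kappa_c}$-submodule generated by the minimal-degree part. Your character argument would require $H^{\frac{\infty}{2}+i}(\LZ,\eqW\otimes \C_{[\lam]})=0$ for $i>0$, which you note but do not establish; cofreeness of $\eqW$ over $U(t\g[t])$ controls $t\g[t]$-cohomology, not the semi-infinite $\LZ$-cohomology in positive degrees.

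The paper's argument supplies precisely the missing structural input. By construction, $\LZ$ acts on $H^{\frac{\infty}{2}+0}(\LZ,\eqW\otimes \C_{[\lam]})$ through the character $\chi_\lam$. A result of Frenkel--Gaitsgory \cite{FreGai04} asserts that any object of $\KL$ on which the Feigin--Frenkel center acts via $\chi_\lam$ is \emph{semisimple}, i.e.\ a direct sum of copies of $\mathbb{L}_\lam$. Granting this, the $t\g[t]$-invariants computation (one copy of $V_\lam$, in a single degree) immediately forces the module to be a single $\mathbb{L}_\lam$. This semisimplicity theorem is exactly what bridges the obstacle you identified, and it is independent of the later results in the paper.
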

\begin{proof}
Note that
$z\in \LZ$ acts as the constant $\chi_{\lam}(z)$ on  $H^{\frac{\infty}{2}+0}(\LZ,\eqW\otimes \C_{[\lam]})$.
Therefore,
$H^{\frac{\infty}{2}+0}(\LZ,\eqW\otimes \C_{[\lam]})$ is a direct sum of $\mathbb{L}_{\lam}$ 
as $\affg_{\kappa_c}$-modules
(\cite{FreGai04}).
 On the other hand,
 by Proposition \ref{Pro:gt-inv-part},
 $H^{\frac{\infty}{2}+0}(\LZ,\eqW\otimes \C_{[\lam]})^{t\g[t]}\cong V_{\lam}$,
 which is concentrated in degree $0$,
and hence the assertion.
 \end{proof}
 
 \begin{Pro}\label{Pro:ch-from-above}
For $N\in \LZ\Mod_{[\lam]}$,
\begin{align*}
\on{ch}H^{\frac{\infty}{2}+0}(\LZ,\eqW\otimes  N) 
\leq \on{ch}N\on{ch}\mathbb{L}_{\lam},
\end{align*}
that is,
 the dimension of each weight space of
 $\on{ch}H^{\frac{\infty}{2}+0}(\LZ,\eqW\otimes  N) $
 is equal to or smaller than that of $N\* \mathbb{L}_{\lam}$.
\end{Pro}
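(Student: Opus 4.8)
The plan is to bound $H^{\semiinf+0}(\LZ,\eqW\otimes N)$ from above by reducing, through a filtration of $\eqW$, to the modules $\mf{z}_\mu$ and $\mathbb{L}_\mu$, whose $\LZ$-cohomology is already understood, and then to extract the stated estimate from the way the resulting pieces fit together.

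First I would use that the functor $N\mapsto H^{\semiinf+0}(\LZ,\eqW\otimes N)$ is left exact (Lemma \ref{Lem:vnd}) and commutes with direct limits. Hence, for any exhaustive increasing filtration $0=\eqW_0\subset\eqW_1\subset\cdots$, $\eqW=\bigcup_p\eqW_p$, by graded $\affg_{\kappa_c}$-submodules (for the action through which $H^{\semiinf+0}(\LZ,\eqW\otimes N)$ is a $V^{\kappa_c}(\g)$-module) that are stable under the $\LZ$-action on $\eqW$, one gets
\begin{align*}
\on{ch}H^{\semiinf+0}(\LZ,\eqW\otimes N)\le\sum_p\on{ch}H^{\semiinf+0}(\LZ,(\eqW_p/\eqW_{p-1})\otimes N),
\end{align*}
the sum being locally finite because the conformal weight spaces of $\eqW$ are finite dimensional. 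Combining Proposition \ref{Pro:free} (freeness of $\eqW$ over $U(\g[t^{-1}]t^{-1})$, whose PBW filtration has successive quotients isomorphic to Weyl modules $\mathbb{V}_\mu$) with Proposition \ref{Pro:g[t]t-invariant-part} (cofreeness over $U(t\g[t])$, with $\eqW^{t\g[t]}\cong\bigoplus_{\mu\in P_+}\mf{z}_\mu\otimes V_{\mu^*}$) produces such a filtration whose successive quotients are degree shifts of the $\mathbb{V}_\mu$, the $\LZ$-action on each factoring through $\End_{\affg_{\kappa_c}}(\mathbb{V}_\mu)\cong\mf{z}_\mu$ (see \eqref{eq:FG-theorem}). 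Refining each quotient further by the $\Gamma_-$-filtration of $\mathbb{V}_\mu$, whose associated graded is $\mf{z}_\mu\otimes\mathbb{L}_\mu$ as an $\LZ\otimes U(\affg_{\kappa_c})$-module, one reaches a filtration of $\eqW$ all of whose successive quotients are degree shifts of the simple modules $\mathbb{L}_\mu$ on which $\LZ$ acts through the character $\chi_\mu$.

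Next I would evaluate the terms on the right. For a shifted copy of $\mathbb{L}_\mu$ the $\LZ$-action on $\mathbb{L}_\mu\otimes N$ has $P_{i,0}$-eigenvalue $\chi_\mu(P_{i,0})-\chi_\lam(\tau(P_{i,0}))$, which by \eqref{eq:effect-of-tau-to-FF} is nonzero unless $\mu=\lam^*$; hence only the finitely many shifted copies of $\mathbb{L}_{\lam^*}$ contribute. For those, Lemma \ref{Lem:BRST-cohomology-and-ext} (applicable because $\mf{z}_\lam$ is free of finite rank over $\mf{z}(\affg)$, by Proposition \ref{Pro:cyclic}) together with Proposition \ref{Pro:dual-Z} and Proposition \ref{Pro:vanishing-1} identifies the relevant cohomology with a $\Hom$-space of the form $\Hom_{\LZ}(\mf{z}_\lam,N)$, whose character is at most $q^{\lam(\rho^\vee)}\on{ch}N$. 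The total multiplicity with which the shifted $\mathbb{L}_{\lam^*}$ occur in $\eqW$ is governed by the gradings of the multiplicity spaces $\mf{z}_{\mu^*}$ appearing in $\eqW^{t\g[t]}$; combining this with the identity $q^{\lam(\rho^\vee)}\on{ch}\mf{z}_\lam\cdot\on{ch}\mathbb{L}_\lam=\on{ch}\mathbb{V}_\lam$ (compare \eqref{eq:ch-weyl}, \eqref{eq:ch-zlam}, \eqref{eq:ch-is-product}) and with $\on{ch}\mathbb{L}_{\lam^*}=\on{ch}\mathbb{L}_\lam$ up to the action of $w_0$ on weights yields the claimed bound $\on{ch}H^{\semiinf+0}(\LZ,\eqW\otimes N)\le\on{ch}N\cdot\on{ch}\mathbb{L}_\lam$.

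The main obstacle is this last assembly. Taken individually, the estimates on the simple subquotients of $\eqW$ only sum to something of the shape $\on{ch}\mathbb{V}_\lam\cdot\on{ch}\Hom_{\LZ}(\mf{z}_\lam,N)$, which is strictly larger than $\on{ch}N\cdot\on{ch}\mathbb{L}_\lam$; the excess factor of $\on{ch}\mf{z}_\lam$ must disappear through genuine cancellation. This cancellation is already forced by the known case $N=\C_{[\lam]}$, where the answer is $\mathbb{L}_\lam$ and not $\mathbb{V}_\lam$ (Propositions \ref{Pro:image-of-simple} and \ref{Pro:simple-go-simple}), and tracing how it occurs requires making the filtration of $\eqW$ simultaneously compatible with the $\affg_{\kappa_c}$-module structure, the $\LZ$-module structure, and the conformal grading, and then carefully bookkeeping the degree shifts of the Weyl-module subquotients of $\eqW$ against the degree shifts occurring inside the $\mf{z}_\mu$. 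This is the technical heart of the proof; once it is in place the inequality follows, and—anticipating a matching lower bound—one expects it to be an equality refining Proposition \ref{Pro:gt-inv-part}.
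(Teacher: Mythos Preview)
Your approach filters the wrong factor. You filter $\eqW$ by $\affg_{\kappa_c}$-submodules with simple subquotients $\mathbb{L}_\mu$, bound each contribution separately, and then hope for cancellation in the resulting overestimate. But once you pass to the associated graded and bound term by term, the inequality $\on{ch}H^{\semiinf+0}\le\sum_p\on{ch}(\text{subquotient cohomology})$ is all you have; the excess factor of $\on{ch}\mf{z}_\lam$ cannot be removed by ``bookkeeping the degree shifts''. You yourself identify this as the technical heart and leave it undone, but it is not a matter of care: the information needed for cancellation has been discarded by the very step of passing to subquotients.

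The paper's proof instead filters $N$. Since $N\in\LZ\Mod_{[\lam]}$, one can take a (decreasing) filtration of $N$ whose successive quotients are copies of $\C_{[\lam]}$. The associated spectral sequence has $E_1$ built out of $H^{\semiinf+\bullet}(\LZ,\eqW\otimes\C_{[\lam]})$, and Proposition~\ref{Pro:image-of-simple} computes this \emph{exactly} as $\mathbb{L}_\lam$ in degree zero. Combined with the vanishing in negative degrees (Lemma~\ref{Lem:vnd}), this gives $\on{ch}H^{\semiinf+0}(\LZ,\eqW\otimes N)\le\on{ch}N\cdot\on{ch}\mathbb{L}_\lam$ directly, with no overcount and no cancellation to arrange. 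For general $N$ one first filters so that each successive quotient is a quotient of some $\mf{z}_\lam$, reducing to the cyclic case. The moral: Proposition~\ref{Pro:image-of-simple} already encodes the cancellation you are trying to recover, so use it as a black box by filtering $N$ rather than reopening it by filtering $\eqW$.
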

\begin{proof}
First, consider the case that
$N$ is a quotient of $\mf{z}_{\lam}$.
There exists a decreasing filtration
$N=N_0\supset N_1\supset N_2\supset \dots $,
$\bigcap_p N_p=0$,
of $\LZ$-modules such that
each successive quotient is a direct sum of $\C_{[\lam]}$.
By considering the induced filtration of the complex 
we get a spectral sequence 
$E_r\Rightarrow H^{\frac{\infty}{2}+\bullet}(\LZ,\eqW\otimes  N)$.
As the $E_1$-term is 
$\bigoplus_i H^{\frac{\infty}{2}+\bullet}(\LZ,\eqW\otimes  N_i/N_{i+1})
\cong N\* H^{\frac{\infty}{2}+\bullet}(\LZ,\eqW\otimes \C_{[\lam]})
$,
Lemma \ref{Lem:vnd}
gives that 
$E_1^{p,q}=0$ for $p+q< 0$.
As a result,
as graded vector spaces,
$H^{\frac{\infty}{2}+0}(\LZ,\eqW\otimes N)$ is 
a quotient of  $N\* H^{\frac{\infty}{2}+0}(\LZ,\eqW\otimes \C_{[\lam]})\cong
N\* \mathbb{L}_{\lam}$
by Proposition \ref{Pro:image-of-simple}.
Hence the assertion holds.

Next, let $N$ be an arbitrary  object in  $\LZ\Mod_{\on{reg}}$.
There exists a filtration
$0=N_0\subset N_1\subset N_2\subset \dots $,
$N=\bigcup_i N$,
such that each successive quotient is a quotient of $\mf{z}_{\lam}$ for some $\lam\in P_+$.
By Lemma \ref{Lem:vnd},
we get that
$\ch H^{\frac{\infty}{2}+\bullet}(\LZ,\eqW\otimes  N)\leq 
\sum_i \ch H^{\frac{\infty}{2}+\bullet}(\LZ,\eqW\otimes  N_i/N_{i+1})
\leq 
\ch N.\ch \mathbb{L}_{\lam}$.
\end{proof}

Define the vertex algebra $\V_{G,2}$ by
\begin{align}
\V_{G,2}:=H^{\semiinf+0}(\LZ,\eqW\* \eqW).
\label{eq:const-cdo}
\end{align}

\begin{Th}\label{Th:cdo}
We have
$$\V_{G,2}\cong \mc{D}_G^{ch}$$
as vertex algebras.
\end{Th}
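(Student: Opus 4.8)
The plan is to exhibit a vertex algebra homomorphism from $\V_{G,2}=H^{\semiinf+0}(\LZ,\eqW\otimes \eqW)$ to $\Dch_G$ and then to show it is an isomorphism by a character/associated-variety comparison. First I would use the presentation of the equivariant affine $W$-algebra. By Theorem~\ref{Th:DS-realization} applied with $M=\eqW$ (regarded as an object of $\KL_{\kappa_c}$ via $\pi_R$), there is an isomorphism
\begin{align*}
H^{\semiinf+\bullet}(\affg_{-\kappa_\g},\g,\eqW\otimes \eqW)\cong H_{DS}^\bullet(\eqW)=\eqW_{G,f_{prin}}^{\kappa_c}\circ \eqW,
\end{align*}
but that is a different reduction; the relevant input here is rather that $\eqW=H_{DS}^0(\Dch_G)$ and $\eqW^{t\g[t]}\cong \bigoplus_{\lam}H_{DS}^0(\mathbb{V}_\lam)\otimes V_{\lam^*}=\bigoplus_\lam \mf{z}_\lam\otimes V_{\lam^*}$ from Proposition~\ref{Pro:g[t]t-invariant-part}. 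The natural candidate map is built from the inclusions $\pi_L,\pi_R\colon V^{\kappa_c}(\g)\hookrightarrow \Dch_G$: since $\eqW=H_{DS}^0(\Dch_G)$ carries a $V^{\kappa_c}(\g)$-action by $\pi_R$ and a $\W^{\kappa_c}(\g,f_{prin})=\mf{z}(\affg)$-action by $\pi_L$, one gets two commuting copies of $V^{\kappa_c}(\g)$ inside $\Dch_G$ whose intersection is $\mf{z}(\affg)$ by Lemma~\ref{Lem:inversection-is-center}; this should produce a vertex algebra map $V^{\kappa_c}(\g)\otimes_{\mf{z}(\affg)}V^{\kappa_c}(\g)\to \Dch_G$, which by Proposition~\ref{Pro:cdosub} is injective, i.e.\ $\Dch_G\in \KL_{b=2}$.

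Next I would construct the comparison map directly as a map of BRST complexes. The point is that $\Dch_G$ itself can be written as a $H^{\semiinf+0}(\LZ,-)$-type cohomology: one assembles a double complex computing $H^{\semiinf+0}(\LZ,\eqW\otimes\eqW)$ and compares it, via the chiral Drinfeld--Sokolov realization of $\eqW$ (two copies of the $F_\chi$ and $\bw{\semiinf+\bullet}(\g_{>0})$ factors, one for each $\eqW$), with the BRST complex that recovers $\Dch_G$ from $\Dch_G$ itself (using $H^{\semiinf+i}(\affg_{-\kappa_\g},\g,\Dch_{G}\otimes M)\cong\delta_{i,0}M$ from \eqref{eq:AG-iso}). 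Concretely I expect to spectral-sequence out first the two Drinfeld--Sokolov differentials $Q_{DS}$, turning each $\Dch_G$ into $\eqW$, leaving the $\LZ$-differential $Q^{\LZ}$; then reorganize and collapse using the Arkhipov--Gaitsgory isomorphism and Lemma~\ref{Lem:inversection-is-center} to identify the outcome with $\Dch_G$. This is exactly the "letting $b$ copies talk through the Feigin--Frenkel center" mechanism advertised in the introduction, specialized to $b=2$.

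To finish I would verify the map is an isomorphism by a rank count, using the grading by $P_+$. From Lemma~\ref{Lem:vnd}, $H^{\semiinf+i}(\LZ,\eqW\otimes\eqW)=0$ for $i<0$, so $\V_{G,2}=H^{\semiinf+0}$; and by Proposition~\ref{Pro:gt-inv-part},
\begin{align*}
\V_{G,2}^{t\g[t]}\cong \bigoplus_{\lam\in P_+}V_\lam\otimes \Hom_{\LZ}(\mf{z}_{\lam},\eqW),
\end{align*}
where I would further decompose $\eqW=\bigoplus_\mu (\eqW)_{[\mu]}$ and use $\Hom_{\LZ}(\mf{z}_\lam,(\eqW)_{[\mu]})$ together with $(\eqW_{G,f_{prin}}^{\kappa_c})_{[\mu]}^{t\g[t]}\cong \mf{z}_\mu\otimes V_{\mu^*}$ (Proposition~\ref{Pro:eqW-finite}'s proof) and the duality $\mf{z}_\lam^*\cong\mf{z}_\lam$, $\tau^*\mf{z}_\lam\cong\mf{z}_{\lam^*}$ (Proposition~\ref{Pro:dual-Z}) to get $\V_{G,2}^{t\g[t]}\cong\bigoplus_\lam \mathbb{V}_{\lam^*}\otimes V_\lam$, matching $(\Dch_G)^{t\g[t]}\cong\bigoplus_\lam \mathbb{V}_{\lam^*}\otimes V_\lam$ from \eqref{eq:g[t]t-invariants-of-cdo}. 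Since both sides lie in $\KL_{b=2}$ and are cofree over $U(t\g[t])$, equality of the $t\g[t]$-invariants upgrades to an isomorphism of the full modules, hence of vertex algebras once the map is shown to respect all structure.

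\textbf{Main obstacle.} The hard part will be constructing the comparison map at the \emph{cochain} level and proving the double/triple complex collapses as claimed — i.e.\ carefully setting up the combined $Q^{\LZ}+Q_{DS}^{(1)}+Q_{DS}^{(2)}$ differential on $\Dch_G\otimes\Dch_G\otimes(\text{ghosts})$ (or on $\eqW\otimes\eqW\otimes\bw{\semiinf+\bullet}(\mf{z}(\affg))$), checking the anticommutation of the pieces, establishing convergence of the spectral sequences (which will require the filtrations from \eqref{eq:min-zhu-fil} and the $P_+$-decomposition, exactly as in the proof of Theorem~\ref{Th:DS-realization}), and matching the surviving $E_\infty$ page with $\Dch_G$ using \eqref{eq:AG-iso} and Lemma~\ref{Lem:inversection-is-center}. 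The vanishing and the rank count are comparatively routine given the machinery already assembled; it is the honest identification of the cohomology with $\Dch_G$ as a \emph{vertex algebra}, rather than merely as a graded vector space of the right size, that carries the real content.
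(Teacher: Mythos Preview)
Your proposal has a genuine gap: neither route actually produces a vertex algebra map. For the spectral sequence route, the triple complex on $\Dch_G\otimes\Dch_G$ with differentials $Q_{DS}^{(1)}+Q_{DS}^{(2)}+Q^{\LZ}$ does compute $\V_{G,2}$ when you run the two $Q_{DS}$ first, but you give no argument that any other ordering yields $\Dch_G$; the Arkhipov--Gaitsgory isomorphism \eqref{eq:AG-iso} concerns $H^{\semiinf+\bullet}(\affg_{-\kappa_\g},\g,\Dch_G\otimes M)$, and there is no $\affg$-differential in your complex, so it simply does not apply. (The double complex in the proof of Theorem~\ref{Th:it-is-an-inverse-to-DS}, which does invoke AG, \emph{presupposes} $\V_{G,2}\cong\Dch_G$, so borrowing that argument here would be circular.) For the rank count, you have not shown that $\V_{G,2}$ is cofree over $U(t\g[t])$; and even granting that, matching $t\g[t]$-invariants only gives an abstract isomorphism of $\affg$-modules, not a vertex algebra map --- which is precisely what you still need.

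The paper argues in the \emph{opposite} direction. One first obtains the character bound $\ch(\V_{G,2})_{[\lam]}\leq \ch(\Dch_G)_{[\lam]}$ from Proposition~\ref{Pro:ch-from-above}, and then constructs a nonzero vertex algebra homomorphism $\Dch_G\to\V_{G,2}$; since $\Dch_G$ is simple (Theorem~\ref{Th:AMoreau}) this map is injective, and the character bound forces it to be an isomorphism. The construction of the map is the substantive step: applying Proposition~\ref{Pro:gt-inv-part} twice and using the bound, one shows $(\V_{G,2})_0\cong\bigoplus_\lam V_\lam\otimes V_{\lam^*}$ as $\g\times\g$-modules. Since $\V_{G,2}$ is $\Z_{\geq 0}$-graded, $(\V_{G,2})_0$ is a commutative $G\times G$-algebra, and a separate recognition lemma (Lemma~\ref{Lem;:function-on-G}) shows that any such algebra with this Peter--Weyl decomposition and an augmentation to $\C$ must be $\mc{O}(G)$. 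The OPEs $x(z)f(w)\sim (xf)(w)/(z-w)$ for $x\in\g$, $f\in(\V_{G,2})_0\cong\mc{O}(G)$ then furnish the required map out of $\Dch_G$ by its defining presentation \eqref{eq:cdo-def}. This is what you were missing: the map is built not from a comparison of BRST complexes but from identifying the degree-zero piece with $\mc{O}(G)$ as a commutative algebra.
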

\begin{proof}
The vertex algebra embedding $V^{\kappa_c}(\g)\hookrightarrow \eqW$
to the first factor (resp.\ the second factor) of $\eqW\* \eqW$  induces the vertex algebra homomorphism
\begin{align}
\pi_L:V^{\kappa_c}(\g)\rightarrow \V_{G,2}\quad \text{(resp.\ }\pi_R:V^{\kappa_c}(\g)\rightarrow \V_{G,2}\text{).}
\end{align}

Consider the decomposition
$\V_{G,2}=\bigoplus_{\lam\in P_+}(\V_{G,2})_{[\lam]}$
with respect to the action of $\affg_{\kappa_c}$ on the first factor of $\eqW\* \eqW$.
Clearly,
$(\V_{G,2})_{[\lam]}=H^{\semiinf+0}(\LZ,\eqW_{[\lam]}\* \eqW_{[\lam^*]})$.
By Proposition \ref{Pro:ch-from-above},
we have
\begin{align}
\ch (\V_{G,2})_{[\lam]}\leq \ch \eqW_{[\lam^*]} \ch \mathbb{L}_{\lam}\leq \ch (\Dch_C)_{[\lam]}
\label{eq:gcheneq}
\end{align}
for $\lam\in P_+$.
Therefore,
it is sufficient  to show that there is a non-trivial vertex algebra homomorphism
$\Dch_G\ra \V_{G,2}$
since $\Dch_G$ is simple.
Note that 
$\Dch_G$ is $\Z_{\geq 0}$-graded,
and
thus,
so is $\V_{G,2}$ by \eqref{eq:gcheneq}.

By using Proposition \ref{Pro:gt-inv-part} twice,
we obtain that
\begin{align}
(\V_{G,2})^{t\g[t]\+ t\g[t]}
\cong H^{\semiinf+0}(\LZ,\eqW^{t\g[t]}\* \eqW^{t\g[t]})
\cong \bigoplus_{\lam\in P_+}\on{End}_{\LZ}(\mf{z}_{\lam})\otimes V_{\lam}\*V_{\lam^*}\nonumber \\
\cong \bigoplus_{\lam\in P_+}\mf{z}_{\lam}\otimes V_{\lam}\*V_{\lam^*}.
\label{eq:13:38*}
\end{align}
In particular,
\begin{align}
(\V_{G,2})_0^{t\g[t]\times t\g[t]}\cong \bigoplus_{\lam}V_{\lam}\*V_{\lam^*}.
\label{eq:atleast}
\end{align}
It follows from \eqref{eq:gcheneq} that 
\begin{align}
(\V_{G,2})_0=(\V_{G,2})_0^{t\g[t]\times t\g[t]}\cong  \mc{O}(G)
\label{eq:isoV2}
\end{align}
as $\g\times \g$-modules.

Since $\V_{G,2}$ is $\Z_{\geq 0}$-graded,
$(\V_{G,2})_0$ generates a commutative vertex subalgebra 
of $\V_{G,2}$.
In particular, $(\V_{G,2})_0$ is a commutative associative algebra
by the multiplication $a.b=a_{(-1)}b$.

We may assume that
$P_{1,0}$ acts on 
$(\V_{G,2})_0$
as the quadratic Casimir element of $U(\g)$,
so that
it acts on 
$V_{\lam}\*V_{\lam^*}\subset (\V_{G,2})_0$
by the multiplication by $(\lam+2\rho|\rho)$.
This gives a $\Q_{\geq 0}$-grading
\begin{align*}
(\V_{G,2})_0=\bigoplus_{d\in \Q_{\geq 0}}(\V_{G,2})_0(d),
\quad (\V_{G,2})_0(d)=\{a\in (\V_{G,2})_0\mid P_{1,0}a=d a\}
\end{align*}
of the commutative algebra $(\V_{G,2})_0$
such that $(\V_{G,2})_0(0)=V_0\* V_0=\C$.
It follows that
the projection
$(\V_{G,2})_0\ra (\V_{G,2})_0(0)=\C$
is an algebra homomorphism.
\begin{Lem}\label{Lem;:function-on-G}
Let $A$ be a unital commutative $G\times G$-algebra,
that is,
unital commutative $\C$-algebra
equipped with an  action of $G\times G$ on $A$
such that the multiplication map $A\* A\ra A$ is a $G\times G$-module homomorphism,
where $G\times G$ diagonally  acts on $A\* A$.
Suppose that 
$A\cong \bigoplus_{\lam\in P_+}V_{\lam}\* V_{\lam^*}$ as  $G\times G$-modules,
and the projection $A\ra V_0\* V_0=\C$  is an algebra homomorphism.
Then $A\cong \mc{O}(G)$ 
as commutative $G\times G$-algebras.
\end{Lem}
\begin{proof}
For a $G$-module $M$,
let $\mu_M:M\ra \mc{O}(G)\* M$
be the comodule map.
Thus,
$\mu_M(m)=\sum_i f_i\* m_i$ if 
$g.m=\sum_i f_i(g)m_i$ for all $g\in G$.
Then $\tilde{\mu}_M: \mc{O}(G)\* M\ra 
\mc{O}(G)\* M$,
$f\* m\mapsto f\mu(m)$,
gives a linear isomorphism
such that $\tilde{\mu}_M\circ (g\* g)=(g\* 1)\circ \tilde{\mu}_M$ for all $g\in G$.
Set $\tilde{\nu}_M=(\tilde{\mu}_M)^{-1}$,
so that $\tilde{\nu}_M\circ (g\* 1)\circ =(g\*g)\circ \tilde{\nu}_M$.
Define
$\nu_M:M\ra \mc{O}(G)\* M$ by $\nu_M(m)=\tilde{\nu}_M(1\* m)$.
More concretely,
we have
 $\nu_M (m)=\sum_i f_i'\* m_i$ if $g^{-1}.m=\sum_i f'_i(g)m_i$ for all $g\in G$.
We have the linear isomorphism
\begin{align*}
M\isomap (\mc{O}(G)\* M)^{\Delta(G)},\quad m\mapsto \nu_M(m).
\end{align*}
We claim that if $M$ is a commutative $G$-algebra $R$,
this gives an isomorphism of algebras.
Indeed,
consider the algebra homomorphism $\epsilon\* 1:\mc{O}(G)\* R\ra \C\* R=R$,
where $\epsilon$ is a counit.
We have $(\epsilon\* 1)\nu_R(r)=r$.
Thus, 
$(\epsilon\* 1)(\nu_R(r)\nu_R(r')-\nu_R(rr'))
=(\epsilon\* 1)(\nu_R(r))(\epsilon\* 1)(\nu_R(r'))-(\epsilon\* 1)(\nu_R(rr'))=r r'-rr'=0$. 
Because $\nu_R(r)\nu_R(r')-\nu_R(r r')\in (\mc{O}(G)\* R)^{\Delta(G)}$
and the restriction of $(\epsilon\* 1)$ to $ (\mc{O}(G)\* R)^{\Delta(G)}$ is an injection,
it follows that  that $\nu_R(r)\nu_R(r')=\nu_R(r r')$.

Let $\Phi:A\isomap \mc{O}(G)$ be an isomorphism as $G\times G$-modules.
For a $G$-module $M$,
we have 
the linear isomorphism
\begin{align*}
M\isomap (A\* M)^{\Delta(G)},\quad m\mapsto (\Phi\* 1)(\nu_M^A(m)).
\end{align*}
If $M$ is a commutative $G$-algebra,
this is an isomorphism of algebras by the same argument as above.

We conclude that there are isomorphisms
\begin{align*}
A\cong (\mc{O}(G)\* A)^{\Delta(G)}\cong (A\* \mc{O}(G))^{\Delta(G)}\cong \mc{O}(G).
\end{align*}
This completes the proof.
\end{proof}
By Lemma \ref{Lem;:function-on-G},
we can identify
$(\V_{G,2})_0$ with $\mc{O}(G)$ as commutative $G\times G$-algebras.
Since
$x(z)f(w)\sim\frac{1}{z-w}(xf)(w)$
for $x\in \g$ and $f\in (\V_{G,2})_0$,
we have a non-trivial vertex algebra homomorphism
$\Dch_G\ra\V_{G,2}$ as required.
\end{proof}
\begin{Th}\label{Th:it-is-an-inverse-to-DS}
Let $V$ be a vertex algebra object in $\KL$.
\begin{enumerate}
\item We have $V\cong H^{\frac{\infty}{2}+0}(\LZ, \eqW\otimes H_{DS}^0(V))$ as vertex algebras.
\item 
Let $\on{KL}(V)$ be the category of $V$-module objects in $\KL$.
Then 
 $$M\mapsto H^{\frac{\infty}{2}+0}(\LZ, \eqW\otimes H_{DS}^0(M))$$ 
 gives  an identity functor in $\on{KL}(V)$. 
 \end{enumerate}
\end{Th}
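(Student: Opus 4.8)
The plan is to establish (ii) and obtain (i) as the case $M=V$, keeping track of the vertex algebra structures along the way. The conceptual content is an associativity between the two BRST constructions occurring in the statement. By Theorem \ref{Th:DS-realization} one has $H_{DS}^0(M)\cong H^{\semiinf+0}(\affg_{-\kappa_\g},\g,\eqW\otimes M)$, and by Theorem \ref{Th:cdo} one has $\V_{G,2}=H^{\semiinf+0}(\LZ,\eqW\otimes\eqW)\cong\Dch_G$; together with the Arkhipov--Gaitsgory identity \eqref{eq:identify-cdo} these suggest
\[
H^{\semiinf+0}(\LZ,\eqW\otimes H_{DS}^0(M))\;\cong\;\V_{G,2}\circ M\;\cong\;\Dch_G\circ M\;\cong\;M
\]
(the product $\circ$ being that of \eqref{eq:MTproduct}). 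So the whole theorem reduces to making this chain precise, which I would do by realizing its two ends as the two ways of computing the cohomology of one double complex.

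Concretely, I would form $D=C(\LZ,\eqW\otimes C(\affg_{-\kappa_\g},\g,\eqW\otimes M))$, where $C(\affg_{-\kappa_\g},\g,-)$ is the relative semi-infinite cochain complex and $C(\LZ,-)$ is the Feigin--Frenkel cochain complex, with the outer copy of $\eqW$ carrying the central subalgebra $\mf{z}(\affg)\cong\W^{\kappa_c}(\g,f)$ and the inner copy carrying both $\pi_R\colon V^{\kappa_c}(\g)\to\eqW$ (contributing, together with the $\affg_{\kappa_c}$-action on $M$, to the differential of the $\affg$-reduction, exactly as in Theorem \ref{Th:secDS}) and the central copy of $\mf{z}(\affg)$ (with the $\tau$-twist, as in the definitions of $\V_{G,2}$ and of $\otimes_{\mf{z}(\affg)}$). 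Then $D$ is a double complex: the two differentials anticommute because $\pi_R(V^{\kappa_c}(\g))$ commutes with $\mf{z}(\affg)=(\eqW)^{\g[t]}$ inside $\eqW$ (Proposition \ref{Pro:g[t]t-invariant-part}); moreover $D$ is a differential graded vertex superalgebra when $M=V$, a module over that one in general, and it retains a $V^{\kappa_c}(\g)$-action through $\pi_R$ on the outer copy of $\eqW$.

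Now I would compute $H^\bullet_{\mathrm{tot}}(D)$ in two ways. Taking the $\affg_{-\kappa_\g}$-cohomology first, Theorem \ref{Th:secDS} collapses that direction into degree $0$, where it leaves the complex $C(\LZ,\eqW\otimes H_{DS}^0(M))$; hence the total cohomology is $H^{\semiinf+\bullet}(\LZ,\eqW\otimes H_{DS}^0(M))$, which is concentrated in degrees $\ge 0$ by Lemma \ref{Lem:vnd}. Taking the $\LZ$-cohomology first, one needs $H^{\semiinf+\bullet}(\LZ,\eqW\otimes\eqW)$ to be concentrated in degree $0$, where by Theorem \ref{Th:cdo} it equals $\V_{G,2}\cong\Dch_G$; the remaining complex then computes $H^{\semiinf+\bullet}(\affg_{-\kappa_\g},\g,\Dch_G\otimes M)$, which by \eqref{eq:identify-cdo} is $M$ in degree $0$ and $0$ otherwise, and a vertex algebra isomorphism when $M=V$. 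Comparing the two computations gives $H^{\semiinf+0}(\LZ,\eqW\otimes H_{DS}^0(M))\cong M$ naturally in $M$ (and, incidentally, $H^{\semiinf+i}(\LZ,\eqW\otimes H_{DS}^0(M))=0$ for $i\ne 0$), which is (ii); specializing to $M=V$ and observing that every step above is an isomorphism of differential graded vertex (super)algebras yields (i).

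I expect the main obstacle to be twofold: the convergence of the two spectral sequences, which at the critical level is not automatic since the cohomological degree is a priori unbounded below, and the input that $H^{\semiinf+i}(\LZ,\eqW\otimes\eqW)=0$ for $i>0$. For convergence I would split all objects along the blocks $(\Dch_G)_{[\lam]}$ and refine by bidegree, exactly as in the proof of Theorem \ref{Th:cdo}, so that the relevant filtrations become regular on each summand. For the vanishing I would combine the degree-$0$ vanishing of Lemma \ref{Lem:vanihing-negative} (available because $\eqW$, hence $\eqW\otimes\eqW$, is free over $\LZ_{(<0)}$) with the character bound $\ch H^{\semiinf+0}(\LZ,\eqW\otimes\eqW)\le\ch\Dch_G$ of Proposition \ref{Pro:ch-from-above} and the Euler--Poincar\'e principle to force the higher terms to vanish; if a purely homological argument is preferred, one can filter $H_{DS}^0(M)$ by copies of $\C_{[\lam]}$ and use $H^{\semiinf+0}(\LZ,\eqW\otimes\C_{[\lam]})\cong\mathbb{L}_\lam$ from Proposition \ref{Pro:image-of-simple} to match multiplicities with $M$ block by block.
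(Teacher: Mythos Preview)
Your double-complex strategy is essentially the paper's, and the first half of your argument (take the $\affg$-reduction first) matches the paper's second spectral sequence. The only cosmetic difference is that you work with the relative complex $C(\affg_{-\kappa_\g},\g,-)$ directly, whereas the paper uses the absolute complex and invokes Proposition~\ref{Pro:relative-coh} to split off the factor $H^\bullet(\g,\C)$; your version is in fact slightly tidier here. Your convergence argument via the block decomposition $\bigoplus_{\lam}$ and the bidegree is also what the paper does.

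The genuine gap is in the other spectral sequence. You assert that you need $H^{\semiinf+i}(\LZ,\eqW\otimes\eqW)=0$ for $i>0$, and propose to deduce this from the Euler--Poincar\'e principle together with $\ch H^{\semiinf+0}\le\ch\Dch_G$. But once Theorem~\ref{Th:cdo} upgrades this to an equality $H^{\semiinf+0}\cong\Dch_G$, the identity $\sum_i(-1)^i\ch H^{\semiinf+i}=\ch\Dch_G$ only forces the characters of the odd and even higher cohomologies to \emph{cancel}, not to vanish; nothing rules out, say, $H^{\semiinf+1}\cong H^{\semiinf+2}\ne 0$. Your alternative suggestion of filtering $H_{DS}^0(M)$ by $\C_{[\lam]}$'s does not touch this either, since the issue is about $\eqW\otimes\eqW$, not about $\eqW\otimes H_{DS}^0(M)$.

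The paper sidesteps the problem entirely, and this is the key observation you are missing: each $H^{\semiinf+q}(\LZ,\eqW\otimes\eqW)$ is a module over the vertex algebra $H^{\semiinf+0}(\LZ,\eqW\otimes\eqW)\cong\Dch_G$, and hence, by Corollary~\ref{Co:free-cofree-modules}, is free over $U(t^{-1}\g[t^{-1}])$ and cofree over $U(t\g[t])$. Therefore $E_2^{p,q}=H^{\semiinf+p}(\affg_{-\kappa_\g},H^{\semiinf+q}(\LZ,\eqW\otimes\eqW)\otimes V)$ vanishes whenever $p<0$; together with the vanishing for $q<0$ (Lemma~\ref{Lem:vnd}) this gives a first-quadrant spectral sequence, and the corner argument yields $H^0_{\mathrm{tot}}(C)=E_2^{0,0}\cong V$ \emph{without ever knowing} whether $H^{\semiinf+i}(\LZ,\eqW\otimes\eqW)$ vanishes for $i>0$. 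The same observation (using both free and cofree via Proposition~\ref{Pro:vanishing-relative}) would patch your relative version as well.
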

\begin{proof}
(i)
Set
\begin{align*}
C=\bigoplus_{i\in \Z}C^i,\quad
C^i=\bigoplus_{p+q=i}C^{p,q},\quad C^{p,q}=\eqW\otimes \eqW \otimes V\*  \bigwedge\nolimits^{\semiinf+p }({\mf{z}(\affg)})\otimes  \bigwedge\nolimits^{\semiinf+q }(\affg).
\end{align*}
Let
$d_{\mf{z}}$
denote the 
 differential of the complex 
$\eqW\* \eqW\* \bigwedge\nolimits^{\semiinf+\bullet }({\mf{z}(\affg)})$
which defines $H^{\semiinf+\bullet}(\LZ,\eqW\* \eqW)$.
We regard $d_{\mf{z}}$ as a differential 
of $C$ which   acts trivially on the factors $V$ and $\bigwedge\nolimits^{\semiinf+\bullet }(\affg)$.
Let $d_{\affg}$ be the differential of the  complex  
$\eqW\* V\* \bigwedge\nolimits^{\semiinf+\bullet}(\affg)$
which defines $H^{\semiinf+\bullet}(\affg_{-\kappa_\g}, \eqW\* V)$.
We consider $d_{\affg}$ 
as a differential of $C$ which  trivially acts on the first factor $\eqW$ and 
$ \bigwedge\nolimits^{\semiinf+i }({\mf{z}(\affg)})$.
We have $\{d_{\mf{z}},d_{\affg}\}=0$,
and thus
 $C$ is equipped with the structure of a double complex.

 Let $H_{tot}^\bullet(C)$ be the total cohomology of the double complex $C$.
 Note that $C$ is a direct sum of finite-dimensional subcomplexes.
 In fact, 
 we have
 $C=\bigoplus_{\lam,\mu,\nu\in P_+}C_{\lam,\mu,\nu}$ as complexes,
 where 
 $C_{\lam,\mu,\nu}=\eqW_{[\lam]}\* \eqW_{[\mu]}\* V_{[\nu]}\* \bigwedge\nolimits^{\semiinf+\bullet }({\mf{z}(\affg)})\otimes  \bigwedge\nolimits^{\semiinf+\bullet }(\affg)
$,
and each 
$C_{\lam,\mu,\nu}$ further decomposes into a direct sum of finite-dimensional 
subcomplexes consisting of the
eigenspaces of the 
total action of the Hamiltonian.

Let $E_r\Rightarrow H^\bullet_{tot}( C)$ 
 be the spectral sequence 
 such that $d_0=d_\mf{z}$ and $d_1=d_{\affg}$.
 By definition,
\begin{align*}
&E_1^{p,q}\cong H^{\semiinf+q}(\LZ,\eqW\*\eqW)\* V\*  \bigwedge\nolimits^{\semiinf+p }(\affg),\\
&E_2^{p,q}\cong  H^{\semiinf+p}(\affg_{-\kappa_\g}, H^{\semiinf+q}(\LZ,\eqW\*\eqW)\*V).
\end{align*}
By Theorem \ref{Th:cdo},
$ H^{\semiinf+q}(\LZ,\eqW\*\eqW)$ is 
a  module over 
$H^{\semiinf+}(\LZ,\eqW\*\eqW)\cong \Dch_G$.
Hence 
$ H^{\semiinf+q}(\LZ,\eqW\*\eqW)$ is
free over $U(\g[t^{-1}]t^{-1})$ by 
Corollary \ref{Co:free-cofree-modules},
and hence 
$E_2^{p,q}=0$ if $p<0$ or $q<0$.
It follows that
\begin{align}
H_{tot}^0(C)\cong
E_2^{0,0}=  H^{\semiinf+0}(\affg_{-\kappa_\g}, H^{\semiinf+0}(\LZ,\eqW\*\eqW)\*V)
\cong H^{\semiinf+0}(\affg_{-\kappa_\g}, \Dch_G\* V)\cong V\label{eq:iso1}
\end{align}
as vertex algebras.

Next, consider the spectral sequence
 $E_r'\Rightarrow H^\bullet_{tot}( C)$ 
 such that $d_0=d_{\affg}$ and $d_0=d_\mf{z}$.
 By Proposition \ref{Pro:relative-coh}
 and Theorem \ref{Th:DS-realization},
 we have
 \begin{align*}
&(E_1')^{p,q}\cong  \eqW\* H_{DS}^0(V)\* \bigwedge\nolimits^{\semiinf+p }({\mf{z}(\affg)})\* H^q(\g,\C),\\
&(E_2')^{p,q} \cong H^{\semiinf+p}(\LZ, \eqW\* H_{DS}^0(V))\* H^q(\g,\C).
\end{align*}
Since $(E_2')^{p,q}=0$ for $p<0$ or $q<0$,
we obtain the  vertex algebra isomorphism
\begin{align}
H_{tot}^0(C)\cong  H^{\semiinf+0}(\LZ, \eqW\* H_{DS}^0(V)).\label{eq:iso2}
\end{align}
The assertion now immediately follows from  \eqref{eq:iso1} and \eqref{eq:iso2}.
(ii)
 The same proof as (1) applies.

\end{proof}

%
%
Let $\on{KL}_0$  be the 
full subcategory of $\LZ\Mod_{reg}$
consisting of objects $M$ that is isomorphic to $H_{DS}^0(\tilde{M})$ for some $\tilde{M}\in \on{KL}$.
\begin{Pro}\label{Lem:can-go-back}
For $M\in \KL_0$,
we have
$$M\cong  H_{DS}^0(H^{\semiinf+0}(\LZ,\eqW\*M)).$$
%
%
\end{Pro}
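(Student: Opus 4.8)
The plan is to reduce everything to Theorem~\ref{Th:it-is-an-inverse-to-DS}, which asserts that the composite $V\mapsto H^{\semiinf+0}(\LZ,\eqW\* H_{DS}^0(V))$ is (naturally isomorphic to) the identity functor on $\KL$. The present statement is the identity of the composite in the opposite order, restricted to the subcategory $\KL_0$, which is by construction the essential image of the functor $H_{DS}^0\colon\KL\to\LZ\Mod_{reg}$; once this is recognised, the proposition becomes a formal consequence.

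Concretely, I would first unwind the definition of $\KL_0$: for $M\in\KL_0$ there exist $\tilde M\in\KL$ and an isomorphism $M\cong H_{DS}^0(\tilde M)$ in $\LZ\Mod_{reg}$. Applying the functor $N\mapsto H^{\semiinf+0}(\LZ,\eqW\* N)$ of \eqref{eq:the-main-functor} to this isomorphism yields $H^{\semiinf+0}(\LZ,\eqW\* M)\cong H^{\semiinf+0}(\LZ,\eqW\* H_{DS}^0(\tilde M))$ in $\KL$. Now I invoke Theorem~\ref{Th:it-is-an-inverse-to-DS}(ii) with $V=V^{\kappa_c}(\g)$: since a $V^{\kappa_c}(\g)$-module is the same thing as a smooth $\affg_{\kappa_c}$-module, the category of $V^{\kappa_c}(\g)$-module objects in $\KL$ is all of $\KL$, so in particular $\tilde M$ lies in it and the theorem gives $\tilde M\cong H^{\semiinf+0}(\LZ,\eqW\* H_{DS}^0(\tilde M))$. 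Combining the two displayed isomorphisms, $H^{\semiinf+0}(\LZ,\eqW\* M)\cong\tilde M$ in $\KL$.

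Finally I would apply the exact functor $H_{DS}^0\colon\KL\to\LZ\Mod$ to this last isomorphism; by functoriality this produces $H_{DS}^0(H^{\semiinf+0}(\LZ,\eqW\* M))\cong H_{DS}^0(\tilde M)\cong M$ in $\LZ\Mod$, which is exactly the claim.

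The whole argument is bookkeeping; the closest thing to an obstacle, and it is routine, is to check that Theorem~\ref{Th:it-is-an-inverse-to-DS}(ii) genuinely applies with $V=V^{\kappa_c}(\g)$ so that every object of $\KL$ is covered, and that all the isomorphisms above are compatible with the $\LZ$-module structures in sight (they are, since those structures arise functorially: via $\mf{z}(\affg)\cong\W^{\kappa_c}(\g,f)$ on the $H_{DS}^0$ side, and via the action of $\LZ$ on the tensor factor $N$ in $\eqW\* N$ on the other side). All the substantive input has already been supplied by Theorems~\ref{Th:cdo} and~\ref{Th:it-is-an-inverse-to-DS}.
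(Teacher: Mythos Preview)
Your proof is correct and follows essentially the same argument as the paper's: pick $\tilde M\in\KL$ with $M\cong H_{DS}^0(\tilde M)$, use Theorem~\ref{Th:it-is-an-inverse-to-DS} to identify $H^{\semiinf+0}(\LZ,\eqW\*M)$ with $\tilde M$, and then apply $H_{DS}^0$. The paper's version is terser (it simply cites Theorem~\ref{Th:it-is-an-inverse-to-DS} without specifying that the module-level part (ii) with $V=V^{\kappa_c}(\g)$ is what is needed), but the logic is identical.
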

\begin{proof}
Let $\tilde{M}\in \on{KL}$ 
and set
$M\cong H_{DS}^0(\tilde{M})\in \KL_0$.
By Theorem \ref{Th:it-is-an-inverse-to-DS},
$\tilde{M}\cong H^{\semiinf+0}(\LZ,\eqW\*M)$.
Hence
$M\cong H_{DS}^0(H^{\semiinf+0}(\LZ,\eqW\*M))
$.
%
%
\end{proof}

\begin{Pro}\label{Pro:KL0}
Let 
\begin{align}
0\ra M_1\ra M_2\ra M_3\ra 0
\label{eq:ess}
\end{align}
be an exact sequence in $\LZ\Mod_{reg}$.
\begin{enumerate}
\item Suppose that $M_1, M_2\in \on{KL}_0$. Then $M_3\in \on{KL}_0$ and
\eqref{eq:ess} induces the exact 
 sequence
$$0\ra H^{\semiinf+0}(\LZ,\eqW\* M_1)\ra H^{\semiinf+0}(\LZ,\eqW\*M_2)\ra H^{\semiinf+0}(\LZ,\eqW\*M_3)\ra 0.$$
\item Suppose that $M_2, M_3\in \on{KL}_0$. Then $M_1\in \on{KL}_0$ and
\eqref{eq:ess} induces the exact 
 sequence
$$0\ra H^{\semiinf+0}(\LZ,\eqW\*M_1)\ra H^{\semiinf+0}(\LZ,\eqW\*M_2)\ra H^{\semiinf+0}(\LZ,\eqW\*M_3)\ra 0.$$
\end{enumerate}
\end{Pro}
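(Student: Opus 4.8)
My plan is to reduce both assertions to the corresponding (trivial) statements in $\on{KL}$ by transporting the exact sequence \eqref{eq:ess} through the two functors
\[
\Phi\colon \LZ\Mod_{reg}\longrightarrow \on{KL},\quad N\longmapsto H^{\semiinf+0}(\LZ,\eqW\otimes N)
\qquad\text{and}\qquad
H_{DS}^0\colon \on{KL}\longrightarrow \LZ\Mod_{reg},
\]
the first of which is \eqref{eq:the-main-functor}. The argument rests on four facts established earlier: (a) $\on{KL}$ is abelian, so kernels and cokernels exist there and $H_{DS}^0$ lands in $\LZ\Mod_{reg}$; (b) $H_{DS}^0$ is exact by Theorem~\ref{Th:IMRN2016} and faithful by Proposition~\ref{Pro:ch-DS}, hence reflects the zero object; (c) $\Phi$ is left exact, by the vanishing $H^{\semiinf+i}(\LZ,\eqW\otimes N)=0$ for $i<0$ of Lemma~\ref{Lem:vnd}; (d) $\Phi$ and $H_{DS}^0$ are mutually quasi-inverse on the relevant subcategories, namely $\Phi\circ H_{DS}^0\cong\on{id}_{\on{KL}}$ by Theorem~\ref{Th:it-is-an-inverse-to-DS} (applied with $V=V^{\kappa_c}(\g)$) and $H_{DS}^0\circ\Phi\cong\on{id}_{\KL_0}$ by Proposition~\ref{Lem:can-go-back}.

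For part (i) I would set $\wt M_i:=\Phi(M_i)\in\on{KL}$ for $i=1,2$, so that $H_{DS}^0(\wt M_i)\cong M_i$ by (d); apply the left-exact functor $\Phi$ to the inclusion $M_1\hookrightarrow M_2$ to obtain an inclusion $\wt M_1\hookrightarrow\wt M_2$ in $\on{KL}$; form its cokernel $\wt M_3\in\on{KL}$; and then apply the exact functor $H_{DS}^0$ to the short exact sequence $0\to\wt M_1\to\wt M_2\to\wt M_3\to 0$. Using the natural isomorphisms of (d) to identify $H_{DS}^0(\wt M_i)$ with $M_i$ compatibly with the maps, this yields a short exact sequence $0\to M_1\to M_2\to H_{DS}^0(\wt M_3)\to 0$ with the correct first map, whence $M_3\cong H_{DS}^0(\wt M_3)\in\KL_0$. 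Finally, applying $\Phi$ to \eqref{eq:ess} and using $\Phi(M_3)\cong\Phi(H_{DS}^0(\wt M_3))\cong\wt M_3$ gives the asserted short exact sequence $0\to\Phi(M_1)\to\Phi(M_2)\to\Phi(M_3)\to 0$.

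For part (ii), with $\wt M_2:=\Phi(M_2)$ and $\wt M_3:=\Phi(M_3)$ in $\on{KL}$, I would form $\wt M_1':=\Ker(\wt M_2\to\wt M_3)$ and $\wt M_3':=\on{coker}(\wt M_2\to\wt M_3)$ in $\on{KL}$, the map being $\Phi$ applied to $M_2\twoheadrightarrow M_3$, so as to get a four-term exact sequence in $\on{KL}$. Applying the exact functor $H_{DS}^0$ and identifying its values with $M_2$ and $M_3$ via (d) so that the middle map becomes the original surjection, I would read off $H_{DS}^0(\wt M_3')=0$ — hence $\wt M_3'=0$ by faithfulness of $H_{DS}^0$, so that $\wt M_2\twoheadrightarrow\wt M_3$ — together with $H_{DS}^0(\wt M_1')\cong\Ker(M_2\to M_3)\cong M_1$, so that $M_1\in\KL_0$. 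Applying $\Phi$ to \eqref{eq:ess} and identifying $\Phi(M_1)\cong\Phi(H_{DS}^0(\wt M_1'))\cong\wt M_1'$ then produces the exact sequence $0\to\Phi(M_1)\to\Phi(M_2)\to\Phi(M_3)\to 0$.

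The only part requiring genuine care is the bookkeeping hidden in (d): one must verify that the quasi-inverse isomorphisms $\Phi\circ H_{DS}^0\cong\on{id}$ and $H_{DS}^0\circ\Phi\cong\on{id}$ are natural enough that the short exact sequences manufactured inside $\on{KL}$ are precisely the images under $\Phi$, resp.\ $H_{DS}^0$, of the given morphisms in \eqref{eq:ess} — so that the identification of $M_3$ (resp.\ $M_1$) as an object of $\KL_0$ is induced by \eqref{eq:ess} itself and the displayed $\Phi$-sequences are exactly those obtained by applying $\Phi$ to \eqref{eq:ess}. Everything else — exactness and faithfulness of $H_{DS}^0$, left exactness of $\Phi$, existence of the needed kernels and cokernels in $\on{KL}$, and $H_{DS}^0$ taking values in $\LZ\Mod_{reg}$ — is quoted directly or immediate.
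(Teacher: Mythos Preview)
Your proposal is correct and follows essentially the same approach as the paper. The paper's proof likewise begins with the left-exact sequence $0\to\Phi(M_1)\to\Phi(M_2)\overset{\psi}{\to}\Phi(M_3)$ from Lemma~\ref{Lem:vnd}, then for (i) takes $N=\im\psi$ (which, by left exactness, coincides with your $\wt M_3=\coker(\Phi(M_1)\to\Phi(M_2))$), applies the exact $H_{DS}^0$ and Proposition~\ref{Lem:can-go-back} to identify $H_{DS}^0(N)\cong M_3$, and then invokes Theorem~\ref{Th:it-is-an-inverse-to-DS} to recover $N\cong\Phi(M_3)$; for (ii) it proceeds exactly as you do, killing $\coker\psi$ via $H_{DS}^0(\coker\psi)=0$ and faithfulness (Proposition~\ref{Pro:ch-DS}), and reading off $M_1\cong H_{DS}^0(\Phi(M_1))$. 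Your naturality caveat is exactly the point one has to check, and the paper handles it implicitly by working with the induced maps throughout.
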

\begin{proof}
By Lemma \ref{Lem:vnd},
\eqref{eq:ess}
induces  the exact sequence 
$0\ra H^{\semiinf+0}(\LZ,\eqW\*M_1)\ra H^{\semiinf+0}(\LZ,\eqW\*M_2)\overset{\psi}{\ra} H^{\semiinf+0}(\LZ,\eqW\*M_3)$.

(i) 
Let $N=\im \psi$. Applying the exact functor $H_{DS}^0(?)$ to the exact sequence
$0\ra H^{\semiinf+0}(\LZ,\eqW\*M_1) \ra H^{\semiinf+0}(\LZ,\eqW\*M_1) \ra N\ra 0$,
we obtain the exact sequence
$0\ra M_1\ra M_2\ra H_{DS}^0(N)\ra 0$ by Lemma \ref{Lem:can-go-back}.
Thus,
$H_{DS}^0(N)\cong M_2/M_1\cong M_3$,
and so $M_3\in \on{KL}_0$.
Moreover,
$N\cong H^{\semiinf+0}(\LZ,\eqW\* M_3)$ by Theorem \ref{Th:it-is-an-inverse-to-DS},
whence the second assertion.

(ii)
Let $N$ denote the cokernel of $\psi$. 
Applying  $H_{DS}^0(?)$ to the exact sequence $H^{\semiinf+0}(\LZ,\eqW\*M_2)\ra H^{\semiinf+0}(\LZ,\eqW\*M_3)\ra N\ra 0$,
we obtain the exact sequence
$M_2\ra M_3\ra H_{DS}^0(N)\ra 0$.
Since the first map is surjective, $H_{DS}^0(N)=0$,
and thus $N=0$ 
by Proposition \ref{Pro:ch-DS}.
The exact sequence
$0\ra H^{\semiinf+0}(\LZ,\eqW\*M_1)\ra H^{\semiinf+0}(\LZ,\eqW\*M_2)\overset{\psi}{\ra} H^{\semiinf+0}(\LZ,\eqW\*M_3)\ra 0$
induces
 an exact sequence
$0\ra H_{DS}^0(H(\LZ,\eqW\* M_1))\ra M_2\ra M_3\ra 0$,
and thus,
$N_1\cong H_{DS}^0(H(\LZ,\eqW\* M_1))$.
\end{proof}

\begin{Pro}\label{Pro:filtration}
Let $M\in \on{KL}$.
Suppose that $N=H_{DS}^0(M)$
admits a filtration
$0=N_0\subset N_1\subset N_2\subset \dots $,
$N=\bigcup_p N_p$ such that
each successive quotient 
is an object of $\KL_0$.
Then
$M_i=H^{\semiinf+0}(\LZ,\eqW\* N_i)$ defines an increasing filtration
of $M$ such that 
each successive quotient 
$M_i/M_{i-1}$ is isomorphic to $H^{\semiinf+0}(\LZ,\eqW\* N_i/N_{i-1})$.
\end{Pro}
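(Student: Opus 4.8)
Write $H(-):=H^{\semiinf+0}(\LZ,\eqW\*-)$ for brevity. The plan is to deduce everything from three facts already available: the left exactness of $H(-)$ (Lemma~\ref{Lem:vnd}), the isomorphism $M\cong H(H_{DS}^0(M))$ of Theorem~\ref{Th:it-is-an-inverse-to-DS}, and the behaviour of $H(-)$ on short exact sequences inside $\KL_0$ recorded in Proposition~\ref{Pro:KL0}. Since $N=\bigcup_p N_p$ is a filtered direct limit and the relevant cohomology functor commutes with such limits, $M\cong H(N)=\varinjlim_p H(N_p)=\varinjlim_p M_p$. Applying $H(-)$ to the inclusions $N_{p-1}\hookrightarrow N_p$ and $N_p\hookrightarrow N$ and using Lemma~\ref{Lem:vnd}, all the transition maps $M_{p-1}\to M_p$ and all the maps $M_p\to M$ are injective; identifying each $M_p$ with its image in $M$ we obtain $0=M_0\subset M_1\subset M_2\subset\cdots$ with $\bigcup_p M_p=M$. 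So $\{M_p\}$ is an increasing filtration of $M$, and only the successive quotients remain to be identified.

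For those it is enough to know that $N_p\in\KL_0$ for every $p$. Granting this, Proposition~\ref{Pro:KL0}(i) applied to $0\to N_{p-1}\to N_p\to N_p/N_{p-1}\to 0$ --- whose first two terms now lie in $\KL_0$ --- produces the short exact sequence $0\to M_{p-1}\to M_p\to H(N_p/N_{p-1})\to 0$, i.e.\ $M_p/M_{p-1}\cong H^{\semiinf+0}(\LZ,\eqW\* N_p/N_{p-1})$, which is exactly the assertion. Note that $N=H_{DS}^0(M)$ belongs to $\KL_0$ by definition.

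To establish $N_p\in\KL_0$ I would bootstrap in two passes. First, by induction on $p$, that $N/N_p\in\KL_0$: the case $p=0$ is $N\in\KL_0$, and the inductive step applies Proposition~\ref{Pro:KL0}(i) to $0\to N_p/N_{p-1}\to N/N_{p-1}\to N/N_p\to 0$, whose first two terms lie in $\KL_0$ by hypothesis and by the inductive assumption. Second, for each fixed $p$, apply Proposition~\ref{Pro:KL0}(ii) to $0\to N_p\to N\to N/N_p\to 0$, whose last two terms are in $\KL_0$ by the first pass; this yields $N_p\in\KL_0$ together with the exact sequence $0\to M_p\to M\to H(N/N_p)\to 0$. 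All of the sequences involved are sequences in $\LZ\Mod_{reg}$, so Proposition~\ref{Pro:KL0} applies in each case.

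I expect the one delicate point to be precisely this bootstrapping order. Proposition~\ref{Pro:KL0} transports membership in $\KL_0$ across a short exact sequence only when two of the three terms are already known to lie there, whereas a priori we are given only the subquotients $N_p/N_{p-1}$; pushing $\KL_0$-membership first \emph{down} the chain of quotients $N/N_p$ (seeded by $N=H_{DS}^0(M)\in\KL_0$) and only then pulling it back onto the $N_p$ via $0\to N_p\to N\to N/N_p\to 0$ is what lets the induction close. Everything else is a formal consequence of Lemma~\ref{Lem:vnd}, Theorem~\ref{Th:it-is-an-inverse-to-DS}, and the exactness of $H_{DS}^0(-)$.
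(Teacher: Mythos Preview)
Your proof is correct and follows essentially the same approach as the paper's. The only organizational difference is that you separate the bootstrap into two sequential passes (first prove all $N/N_p\in\KL_0$, then all $N_p\in\KL_0$), whereas the paper interleaves these into a single induction: from $N_i\in\KL_0$ it deduces $N/N_i\in\KL_0$ via Proposition~\ref{Pro:KL0}(i) applied to $0\to N_i\to N\to N/N_i\to 0$, then $N/N_{i+1}\in\KL_0$ from $0\to N_{i+1}/N_i\to N/N_i\to N/N_{i+1}\to 0$, then $N_{i+1}\in\KL_0$ via Proposition~\ref{Pro:KL0}(ii). Your version is arguably cleaner, and you are more explicit than the paper in verifying that the $M_p$ actually exhaust $M$ (via commutation of the cohomology with filtered colimits) and that the transition maps are injective.
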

\begin{proof}
First, we show by induction on $i\geq 0$
that $N_i\in \KL_0$.
There is nothing to show for $i=0$.
So let $i>0$.
By the induction hypothesis and 
Proposition~\ref{Pro:KL0},
$N/N_{i}\in \KL_0$.
Hence $N/N_{i+1}\cong (N/N_i)/(N_{i+1}/N_i)\in \KL_0$.
Thus again by Proposition \ref{Pro:KL0},
$N_{i+1}\in \KL_0$.
Since $N_i\in \KL_0$ for all $i\geq 0$,
we have $M_{i}/M_{i-1}\cong H^{\semiinf+0}(\LZ,
\eqW\* (N_i/N_{i-1})$. This completes the proof.
\end{proof}

\section{Genus zero chiral algebras of class $\mc{S}$}
\label{section:chiral-algebras-of-classs-S}

For $b\geq 2$,
define the vertex algebra
\begin{align}
C_b=C(\bigoplus_{i=1}^{b-1}\LZ^{i,i+1},\eqW^{\otimes b}):=\eqW^{\otimes b}\otimes \left(\bw{\semiinf+\bullet}(\mf{z}(\affg))\right)^{\otimes b-1}
\label{eq:complex-Cb}
\end{align}
and the field 
\begin{align}
Q^{(b)}(z)=\sum_{i=1}^{b-1}Q^{(i,i+1)}(z)
\label{eq:diff-qr}
\end{align}
on $C_b$.
Here, 
\begin{align*}
Q^{(i,i+1)}(z)=\sum_{j=1}^{\on{rk}\g}(\rho_i(P_j(z))-\rho_{i+1}(\tau(P_j)(z)))\rho_{gh,i}(\psi_j^*(z)),
\end{align*}
where
$\rho_i$ denotes the action on the 
$i$-th factor of 
$\eqW^{\* r}$ and 
$\rho_{gh,i}$ denotes the action on the $i$-th factor of $\left(\bw{\semiinf+\bullet}(\mf{z}(\affg))\right)^{\otimes b-1}$.
Clearly
$Q^{(b)}(z)Q^{(b)}(w)\sim 0$,
 $(C_b,Q^{(b)}_{(0)})$ is the differential graded vertex algebra.
The corresponding cohomology is denoted by $H^{\frac{\infty}{2}+\bullet}(\bigoplus_{i=1}^{b-1}\LZ^{i,i+1},\eqW^{\otimes b})$.
The space 
$H^{\frac{\infty}{2}+\bullet}(\bigoplus_{i=1}^{b-1}\LZ^{i,i+1},\eqW^{\otimes b})$
is naturally a  vertex algebra,
which 
is $\frac{1}{2}\Z$-graded by the Hamiltonian
\begin{align}
H=\sum_{i=1}^{b}\rho_i(L^{\eqW}_0)+\sum_{i=1}^{b-1}\rho_{gh,i}(L^{gh}_0).
\label{eq:Hamiltonian-VGS}
\end{align}

Let
\begin{align}
\V_{G,1}=\eqW.
\label{eq;initial}
\end{align}
and define
\begin{align}
\V_{G,b}:=H^{\frac{\infty}{2}+0}(\bigoplus_{i=1}^{b-1}\LZ^{i,i+1},\eqW^{\otimes b})
\label{eq:main-def}
\end{align}
for $b\geq 2$.
For $b=2$,
this agree with the definition \eqref{eq:const-cdo}
and by Theorem \ref{Th:cdo},
we have
\begin{align}
\V_{G,2}\cong \mc{D}_{G}^{ch}.
\label{eq:cylinder}
\end{align}

The embedding
$V^{\kappa_c}(\g)\hookrightarrow \eqW_G$ induces
 the vertex algebra homomorphism
 \begin{align}
V^{\kappa_c}(\g)^{\* b}\ra \V_{G,b}\label{eq:homVVr}
\end{align}
By definition, 
\begin{align*}
\rho_i(z)\equiv \rho_{i+1}(\tau(z))
\end{align*}
for $z\in \LZ$, $1\leq i< b$
on $H^{\frac{\infty}{2}+0}(\bigoplus_{i=1}^{b-1}\LZ^{i,i+1},\eqW^{\otimes b})$.
Therefore,
\eqref{eq:homVVr}
factors through 
 the vertex algebra homomorphism
\begin{align*}
\Vb\ra \V_{G,b}
\end{align*}
(see \eqref{eq;b-copy}).
In particular,
$\V_{G,b}$ belongs to $\KL_b$.

\begin{Lem}\label{Lem:inductive-construction-of-Vr}
For each
$b\geq 2$,
we have the following.
\begin{enumerate}
\item $H^{\frac{\infty}{2}+n}(\bigoplus_{i=1}^{b-1}\LZ^{i,i+1},\eqW^{\otimes b})=0$ for $n< 0$.
\item  $\V_{G,b}\cong H^{\semiinf+0}(\LZ,\eqW\* \V_{G,b-1})\cong H^{\semiinf+0}(\LZ,\V_{G,b-1}\*\eqW)$.
\end{enumerate}
\end{Lem}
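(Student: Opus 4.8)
\textbf{Proof proposal for Lemma \ref{Lem:inductive-construction-of-Vr}.}

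The plan is to prove the two assertions together by an analysis of the iterated BRST complex $C_b$, reducing everything to the single‑step complex of Section \ref{Sec:main-construction} via a suitable spectral sequence. First I would set up the obvious decomposition of the differential: writing $Q^{(b)}_{(0)} = Q^{(1,2)}_{(0)} + \sum_{i=2}^{b-1}Q^{(i,i+1)}_{(0)}$, the first summand only couples the first factor of $\eqW^{\otimes b}$ with the first ghost factor, and it anticommutes with the rest. Thus $C_b$ is a double complex, with $d_0 = Q^{(1,2)}_{(0)}$ acting on $\eqW^{(1)}\otimes\eqW^{(2)}\otimes \bw{\semiinf+\bullet}(\mf{z}(\affg))^{(1)}$ and $d_1 = \sum_{i\ge 2}Q^{(i,i+1)}_{(0)}$ acting on the remaining factors $\eqW^{(2)}\otimes\cdots\otimes\eqW^{(b)}\otimes \bw{\semiinf+\bullet}(\mf{z}(\affg))^{(2)}\otimes\cdots$. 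By the same direct‑sum‑of‑finite‑dimensional‑subcomplexes argument used in the proof of Theorem \ref{Th:it-is-an-inverse-to-DS} (decomposing $\eqW^{\otimes b}$ into $[\lam_1]$‑isotypic pieces and then into Hamiltonian eigenspaces via \eqref{eq:Hamiltonian-VGS}), the filtration giving this double‑complex spectral sequence is regular on each subcomplex, so it converges.

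Next I would run this spectral sequence with $d_0 = Q^{(1,2)}_{(0)}$ taken first. Here $\eqW^{(2)}$ is simultaneously a module for the left $\affg_{\kappa_c}$‑action feeding $Q^{(1,2)}$ and, via $\pi_R$ on its \emph{other} $\affg_{\kappa_c}$‑factor, an object of $\KL$; so the $d_0$‑cohomology is computed factorwise and, by the very definition \eqref{eq:const-cdo}–\eqref{eq:main-def} applied with $N = \eqW^{(2)}$ viewed in $\LZ\Mod$, gives $E_1 = H^{\semiinf+\bullet}(\LZ,\eqW\otimes\eqW^{(2)})\otimes\eqW^{(3)}\otimes\cdots = \Dch_G\otimes\eqW^{\otimes(b-2)}\otimes\bw{\semiinf+\bullet}(\mf{z}(\affg))^{\otimes(b-2)}$ concentrated in cohomological degree $0$ by Theorem \ref{Th:cdo} together with the vanishing in Lemma \ref{Lem:vnd} (applied to $M = \eqW^{(2)}\in\LZ\Mod$, which is free over $\LZ_{(<0)}$ since $SS(\eqW)\cong J_\infty\Slo\times J_\infty G$). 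Since $E_1$ is concentrated in $d_0$‑degree $0$, the spectral sequence degenerates after $d_1$, and what remains is exactly the complex computing $H^{\semiinf+0}(\LZ,\Dch_G\otimes\V_{G,b-1}^{\flat})$ where the relevant inductive object appears — but rather than track this bookkeeping by hand, I would instead run the spectral sequence in the \emph{other} order, taking $d_1$ first: then $E_1 = \eqW^{(1)}\otimes H^{\semiinf+\bullet}(\bigoplus_{i=1}^{b-2}\LZ^{i,i+1},\eqW^{\otimes(b-1)})\otimes\bw{\semiinf+\bullet}(\mf{z}(\affg))^{(1)} = \eqW\otimes\V_{G,b-1}\otimes\bw{\semiinf+\bullet}(\mf{z}(\affg))$ by the inductive hypothesis for part (i), concentrated again in nonnegative degree, so that $E_2 = H^{\semiinf+\bullet}(\LZ,\eqW\otimes\V_{G,b-1})$. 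Comparing the two computations, and noting $\eqW$ is free over $U(t^{-1}\g[t^{-1}])$ and cofree over $U(t\g[t])$ (Propositions \ref{Pro:free}, \ref{Pro:g[t]t-invariant-part}) while $\Dch_G$‑modules are free over $U(t^{-1}\g[t^{-1}])$ (Corollary \ref{Co:free-cofree-modules}), both $E_2$‑pages are concentrated in cohomological degree $0$; this forces the total cohomology to vanish in negative degrees, giving (i), and identifies $\V_{G,b} = H^{\semiinf+0}(\bigoplus\LZ^{i,i+1},\eqW^{\otimes b}) \cong H^{\semiinf+0}(\LZ,\eqW\otimes\V_{G,b-1})$, which is (ii). The symmetric statement $\cong H^{\semiinf+0}(\LZ,\V_{G,b-1}\otimes\eqW)$ follows by peeling off the last tensor factor instead of the first, or by invoking the involution $\tau$ together with $\eqW\cong\eqW^{op}$.

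The main obstacle, I expect, will be the careful verification that the double‑complex spectral sequences genuinely converge — i.e. exhibiting $C_b$ as a direct sum of finite‑dimensional subcomplexes compatible with \emph{both} filtrations simultaneously. This requires combining the $[\lam]$‑decomposition \eqref{eq:dec-eqW-critical} of each $\eqW$ factor (so that the total $\affg_{\kappa_c}$‑actions on adjacent factors each preserve a bigrading, as in \eqref{ch-of-Dch} and the proof of Theorem \ref{Th:DS-realization}) with the finite‑dimensionality of each Hamiltonian eigenspace, which itself relies on $\eqW^\kappa_{G,f}$ being a direct sum of objects of $\KL^{ord}$ (Proposition \ref{Pro:eqW-finite}). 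Once that structural point is in place, the rest is the formal spectral‑sequence comparison sketched above, using only vanishing results already established in Lemma \ref{Lem:vnd}, Theorem \ref{Th:cdo}, Theorem \ref{Th:it-is-an-inverse-to-DS}, and Corollary \ref{Co:free-cofree-modules}.
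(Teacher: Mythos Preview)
Your ``second'' spectral sequence (taking $d_0=\sum_{i\ge 2}Q^{(i,i+1)}_{(0)}$ first) is exactly the paper's argument, and by itself it already finishes the proof: one gets
\[
E_2^{p,q}=H^{\semiinf+p}\bigl(\LZ,\ \eqW\otimes H^{\semiinf+q}(\textstyle\bigoplus_{i=1}^{b-2}\LZ^{i,i+1},\eqW^{\otimes(b-1)})\bigr),
\]
which vanishes for $q<0$ by the inductive hypothesis and for $p<0$ by Lemma~\ref{Lem:vnd}. First-quadrant support of $E_2$ is all that is needed: it forces the total cohomology to vanish in negative total degree (giving (i)) and identifies $H^{\semiinf+0}$ with $E_2^{0,0}=H^{\semiinf+0}(\LZ,\eqW\otimes\V_{G,b-1})$ (giving (ii)). No comparison of two spectral sequences is required, and Theorem~\ref{Th:cdo} plays no role whatsoever.

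Your write-up, however, has two genuine gaps. First, in the ``first'' spectral sequence you assert that $E_1$ is concentrated in $d_0$-degree $0$, i.e.\ that $H^{\semiinf+q}(\LZ,\eqW\otimes\eqW)=0$ for all $q>0$; nothing available establishes this (Lemma~\ref{Lem:vnd} only gives $q<0$, and Theorem~\ref{Th:cdo} only identifies the degree-$0$ piece). Second, the vanishing you invoke at the end --- freeness of $\eqW$ over $U(t^{-1}\g[t^{-1}])$, cofreeness over $U(t\g[t])$, and Corollary~\ref{Co:free-cofree-modules} --- concerns the wrong cohomology theory: those are inputs for $\affg$-semi-infinite cohomology (as in Proposition~\ref{Pro:vanishing-relative}), not for $\LZ$-cohomology. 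The input you actually need for $E_2^{p,q}=0$ when $p<0$ is the freeness of $\eqW$ over $\LZ_{(<0)}$, which is precisely Lemma~\ref{Lem:vnd}; and ``concentrated in cohomological degree $0$'' should be weakened to ``supported in the region $p,q\ge 0$'', which is both what you can prove and what suffices. Drop the first spectral sequence and the $\Dch_G$ detour, replace the $U(t^{\pm1}\g[t^{\pm1}])$-freeness citations by Lemma~\ref{Lem:vnd}, and your argument becomes the paper's.
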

\begin{proof}
We prove (i)  and (ii) by induction on $b\geq 2$.
For $b=2$,
(i) 
 has been proved in  Lemma \ref{Lem:vnd}
 and there is nothing to show for (ii).
Let $b>2$.
Consider the spectral sequence
$E_r\Rightarrow H^{\frac{\infty}{2}+\bullet}(\bigoplus_{i=1}^{b-1}\LZ^{i,i+1},\eqW^{\otimes b})$
such that
\begin{align*}
&E_1^{p,q}=\eqW\* H^{\frac{\infty}{2}+q}(\bigoplus_{i=1}^{b-2}\LZ^{i,i+1},\eqW^{\otimes b-1})\*
 \bigwedge\nolimits^{\semiinf+p }({\mf{z}(\affg)}),\\
 &E_2^{p,q}=H^{\frac{\infty}{2}+p}(\LZ,\eqW\* H^{\frac{\infty}{2}+q}(\bigoplus_{i=1}^{b-2}\LZ^{i,i+1},\eqW^{\otimes b-1})).
\end{align*}
 By  Lemma \ref{Lem:vnd}
and 
 the
 induction hypothesis, 
 $E_2^{p,q}=0$ if $q<0$ or $p<0$.
 Therefore, we get that
 $H^{\frac{\infty}{2}+n}(\bigoplus_{i=1}^{b-1}\LZ^{i,i+1},\eqW^{\otimes b})=0$
 for $n<0$ and
 $$\V_{G,b}
 \cong E_2^{0,0}\cong  H^{\semiinf+0}(\LZ,\eqW\* \V_{G,b-1}).$$
\end{proof}

\begin{Pro}\label{Pro:from-DG}
For any $b, b'\geq 1$,
we have
$H^{\frac{\infty}{2}+i}(\LZ,\V_{G,b}\* \V_{G,b'})=
0$ for $i< 0$ and 
$H^{\frac{\infty}{2}+0}(\LZ,\V_{G,b}\* \V_{G,b'}) \cong \V_{G,b+b'}$
as vertex algebras.
\end{Pro}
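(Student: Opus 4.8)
The plan is to reduce the general statement to the already-established inductive description of $\V_{G,b}$ in Lemma~\ref{Lem:inductive-construction-of-Vr}(ii), namely $\V_{G,b}\cong H^{\semiinf+0}(\LZ,\eqW\*\V_{G,b-1})$, and to the associativity-type vanishing we have been accumulating. The key point is that the complex $C_{b+b'}$ computing $\V_{G,b+b'}$ can be rewritten, by separating the $b$-th and $(b+1)$-th factors of $\eqW^{\otimes(b+b')}$, as a double complex whose two spectral sequences degenerate: one of them collapses to $H^{\semiinf+\bullet}(\LZ,\V_{G,b}\*\V_{G,b'})$ and the other to $\V_{G,b+b'}$ itself.

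First I would set up the double complex. Write $C_{b+b'}=\eqW^{\otimes(b+b')}\otimes\bigl(\bw{\semiinf+\bullet}(\mf{z}(\affg))\bigr)^{\otimes(b+b'-1)}$ with differential $Q^{(b+b')}_{(0)}=\sum_{i=1}^{b+b'-1}Q^{(i,i+1)}_{(0)}$. Split the index set $\{1,\dots,b+b'-1\}$ as $\{1,\dots,b-1\}\sqcup\{b\}\sqcup\{b+1,\dots,b+b'-1\}$, and correspondingly split the differential as $d_{\mathrm{left}}+d_{\mathrm{mid}}+d_{\mathrm{right}}$, where $d_{\mathrm{mid}}=Q^{(b,b+1)}_{(0)}$ is the ``gluing'' differential between the two halves. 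Since the three groups of fermions anticommute and the three pieces of the differential pairwise anticommute, $(C_{b+b'},d_{\mathrm{left}}+d_{\mathrm{right}},d_{\mathrm{mid}})$ is a double complex (group $d_{\mathrm{left}}+d_{\mathrm{right}}$ as the ``first'' differential and $d_{\mathrm{mid}}$ as the ``second''). I would check convergence exactly as in the proof of Theorem~\ref{Th:it-is-an-inverse-to-DS}: $C_{b+b'}$ decomposes as a direct sum over $(\lam_1,\dots,\lam_{b+b'})\in P_+^{b+b'}$ of the subcomplexes built from $\eqW_{[\lam_j]}$, and each such subcomplex further splits into finite-dimensional pieces under the total Hamiltonian, so both spectral sequences converge.

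Next I would run the two spectral sequences. Taking $d_{\mathrm{left}}+d_{\mathrm{right}}$ first: this computes, on each half, $H^{\semiinf+\bullet}(\bigoplus_{i=1}^{b-1}\LZ^{i,i+1},\eqW^{\otimes b})$ and $H^{\semiinf+\bullet}(\bigoplus_{i=1}^{b'-1}\LZ^{i,i+1},\eqW^{\otimes b'})$, which by Lemma~\ref{Lem:inductive-construction-of-Vr}(i) are concentrated in degree $0$ and equal $\V_{G,b}$ and $\V_{G,b'}$ respectively. (Here I use that $\eqW^{\otimes b}$ is cofree/free over the relevant copies of $\LZ_{(<0)}$, so K\"unneth applies and the two halves' cohomologies tensor.) So the $E_1$-page is $\V_{G,b}\*\V_{G,b'}\otimes\bw{\semiinf+\bullet}(\mf{z}(\affg))$ with differential induced by $d_{\mathrm{mid}}$, whose cohomology is precisely $H^{\semiinf+\bullet}(\LZ,\V_{G,b}\*\V_{G,b'})$; this spectral sequence degenerates at $E_2$ because there are no further differentials available in the $(b,b+1)$ direction. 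Taking the differentials in the other order, I would peel off one $\eqW$ at a time using Lemma~\ref{Lem:inductive-construction-of-Vr}(ii) and the vanishing $H^{\semiinf+i}(\LZ,\eqW\*-)=0$ for $i<0$ (Lemma~\ref{Lem:vnd}): iterating, the total cohomology is concentrated in degree $0$ and equals $\V_{G,b+b'}$. Comparing the two computations of $H^\bullet_{tot}(C_{b+b'})$ gives $H^{\semiinf+i}(\LZ,\V_{G,b}\*\V_{G,b'})=0$ for $i<0$ and $H^{\semiinf+0}(\LZ,\V_{G,b}\*\V_{G,b'})\cong\V_{G,b+b'}$; since all the maps in sight (the edge morphisms of the spectral sequences and the isomorphisms of Lemma~\ref{Lem:inductive-construction-of-Vr}) are vertex algebra homomorphisms, this is an isomorphism of vertex algebras.

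The main obstacle I anticipate is not any single hard estimate but the bookkeeping needed to justify that the double complex is well-behaved: namely that $\eqW^{\otimes b}$ (equivalently $\V_{G,b}$) is free over the relevant $\LZ_{(<0)}$ so that Lemma~\ref{Lem:vnd} and the K\"unneth splitting both apply on each half, and that the convergence/degeneration arguments hold simultaneously for the two filtrations. The first point reduces to knowing $\eqW$ is free over $\LZ_{(<0)}$ (which follows from $SS(\eqW)\cong J_\infty\Slo\times J_\infty G$, as in the proof of Lemma~\ref{Lem:vnd}) together with the inductive structure; the degeneration at $E_2$ in the $d_{\mathrm{mid}}$-direction is automatic once one observes the spectral sequence is concentrated in the single column/row corresponding to the two halves sitting in cohomological degree $0$. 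The $b'=1$ or $b=1$ edge cases are immediate from Lemma~\ref{Lem:inductive-construction-of-Vr}(ii) directly.
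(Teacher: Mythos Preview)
Your strategy of splitting $C_{b+b'}$ at the middle copy of $\bw{\semiinf+\bullet}(\mf{z}(\affg))$ and running a spectral sequence is natural and close in spirit to the paper's argument, but there is a genuine gap in the degeneration step. You invoke Lemma~\ref{Lem:inductive-construction-of-Vr}(i) to say that the left and right cohomologies $H^{\semiinf+\bullet}(\bigoplus_{i=1}^{b-1}\LZ^{i,i+1},\eqW^{\otimes b})$ and $H^{\semiinf+\bullet}(\bigoplus_{i=1}^{b'-1}\LZ^{i,i+1},\eqW^{\otimes b'})$ are \emph{concentrated in degree $0$}. But that lemma only gives vanishing in \emph{negative} degree; nothing established in the paper rules out nonzero cohomology for $q>0$. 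Hence your $E_1$-page need not be supported on a single row, and you cannot conclude $E_2=E_\infty$ or $H^0_{tot}(C_{b+b'})\cong E_2^{0,0}$: there may be incoming differentials $d_r\colon E_r^{-r,r-1}\to E_r^{0,0}$ from the region $p<0,\ q>0$, and extra graded pieces $E_\infty^{-k,k}$ contributing to $H^0_{tot}$.

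The paper avoids this by organizing the induction differently. Instead of the full $C_{b+b'}$, it inducts on $b$ using the three-factor complex $\V_{G,b-1}\*\eqW\*\V_{G,b'}$ with only two copies of $\LZ$. Each of the two resulting spectral sequences is first-quadrant: vanishing for $q<0$ comes from Lemma~\ref{Lem:vanihing-negative}/Lemma~\ref{Lem:vnd}, while vanishing for $p<0$ comes from Lemma~\ref{Lem:vanihing-negative} together with Proposition~\ref{Pro:freeness}, which says each $\V_{G,r}$ is free over $U(t^{-1}\g[t^{-1}])$ and hence over $\LZ_{(<0)}$. The point is that in the paper's $E_2$-terms, one tensor factor is always an honest $\V_{G,r}$ (never a higher cohomology), so freeness is directly available. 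In your setup the coefficients appearing for $q>0$ are higher cohomologies of the half-complexes, whose freeness over the middle $\LZ_{(<0)}$ is not established. Your argument could in principle be repaired by proving such freeness, but that requires additional work; the paper's inductive three-factor reduction is what makes the first-quadrant vanishing immediate.
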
 

\begin{proof}
The first statement follows from Lemma \ref{Lem:vanihing-negative}
and
Proposition \ref{Pro:freeness}.
We prove the second statement by induction on $b\geq 1$.
For $b=1$,
the statement has been proved in 
Lemma \ref{Lem:inductive-construction-of-Vr}.
So let $b\geq 2$,
and consider
the cohomology
$H^{\semiinf+\bullet}(\LZ^{\+2},
\V_{G,b-1}\* \eqW\* \V_{G,b'})$
of the complex 
$$(\V_{G,b-1}\* \eqW\* \V_{G,b'}\* \bw{\semiinf+\bullet}(\mf{z}(\affg))\* \bw{\semiinf+\bullet}(\mf{z}(\affg)),
Q_{(0)}),$$
with
$Q_{(0)}=Q^{(1)}_{(0)}+Q^{(2)}_{(0)}$,
where $Q^{(1)}_{(0)}$ is the differential of the complex for the cohomology
$H^{\semiinf+\bullet}(\LZ, \V_{G,b-1}\* \eqW)$ that   acts trivially on the factor 
$\V_{G,b'}$ and the second factor of $\bw{\semiinf+\bullet}(\mf{z}(\affg))^{\otimes 2}$,
and 
$Q^{(2)}_{(0)}$ is the differential of the complex for the cohomology
$H^{\semiinf+\bullet}(\LZ,  \eqW\* \V_{G,b'})$ that  acts  trivially on the factor 
$\V_{G,b-1}$ and the first factor of $\bw{\semiinf+\bullet}(\mf{z}(\affg))^{\otimes 2}$.
There is a spectral sequence
$$E_r\Rightarrow H^{\semiinf+\bullet}(\LZ^{\+2},
\V_{G,b-1}\* \eqW\* \V_{G,b'})$$
such that
\begin{align*}
E_2^{p,q}=H^
{\semiinf+p}(\LZ,H^{\semiinf+q}(\LZ,\V_{G,b-1}\* \eqW)\* \V_{G,b'}).
\end{align*}
By Lemma \ref{Lem:inductive-construction-of-Vr}
and Proposition \ref{Pro:freeness},
$E_2^{p,q}=0$ for $p<0$ or $q<0$.
Therefore,
\begin{align*}
H^{\semiinf+0}(\LZ^{\+2},
\V_{G,b-1}\* \eqW\* \V_{G,b'})\cong 
E_2^{0,0}\cong 
H^{\frac{\infty}{2}+0}(\LZ,\V_{G,b}\* \V_{G,b'}).
\end{align*}
There is another spectral sequence
$$E_r'\Rightarrow H^{\semiinf+\bullet}(\LZ^{\+2},
\V_{G,b-1}\* \eqW\* \V_{G,b'})$$
such that
\begin{align*}
(E_2')^{p,q}=H^
{\semiinf+p}(\LZ,\V_{G,b-1}\* H^{\semiinf+q}(\LZ,\eqW\* \V_{G,b'})).
\end{align*}
Again
by Lemma \ref{Lem:inductive-construction-of-Vr}
and Proposition \ref{Pro:freeness},
$(E_2')^{p,q}=0$ for $p<0$ or $q<0$.
It follows that
\begin{align*}
H^{\semiinf+0}(\LZ^{\+2},
\V_{G,b-1}\* \eqW\* \V_{G,b'})\cong 
(E_2')^{0,0}\cong 
H^{\frac{\infty}{2}+0}(\LZ,\V_{G,b'-1}\* \V_{G,b+1})\\
\cong \V_{G,b+b'}
\end{align*}
by the induction hypothesis.
This completes the proof.
\end{proof}
\begin{Pro}\label{Pro:reduction-of-Vr-is-Vr-1}
We have
$H_{DS}^0(\V_{G,b+1})\cong \V_{G,b}$
for $b\geq 1$.
\end{Pro}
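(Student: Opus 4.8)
The plan is to argue by induction on $b$, using the inductive presentation $\V_{G,b+1}\cong H^{\semiinf+0}(\LZ,\V_{G,b}\otimes \eqW)$ from Lemma~\ref{Lem:inductive-construction-of-Vr}(ii), in which $\V_{G,b}$ contributes the $\g$-actions $\iota_1,\dots,\iota_b$ and the appended copy of $\eqW$ contributes $\iota_{b+1}$; throughout, $H_{DS}^0$ is taken with respect to $\iota_1$ (this is the reading of the statement that is consistent with \eqref{eq:ass-red}, and by Theorem~\ref{Th:DS-realization} it computes $H^{\semiinf+0}(\affg_{-\kappa_\g},\g,\eqW\otimes\V_{G,b+1})$). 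For the base case $b=1$ we have $\V_{G,2}\cong \Dch_G$ by Theorem~\ref{Th:cdo}, and under this identification $\iota_1$ is the left action $\pi_L$, so $H_{DS}^0(\V_{G,2})\cong H_{DS,f_{prin}}^0(\Dch_G)=\eqW=\V_{G,1}$ by the very definition of $\eqW$.

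For the inductive step I fix $b\ge 2$ and form the double complex
\[
C=\V_{G,b}\otimes \eqW\otimes \bw{\semiinf+\bullet}(\mf{z}(\affg))\otimes F_\chi\otimes \bw{\semiinf+\bullet}(\g_{>0}),
\]
where one differential $Q^{\LZ}_{(0)}$ is the Feigin--Frenkel gluing differential of Lemma~\ref{Lem:inductive-construction-of-Vr}(ii), extended to act trivially on $F_\chi\otimes \bw{\semiinf+\bullet}(\g_{>0})$, and the other is the Drinfeld--Sokolov differential $Q_{DS,(0)}$ for the $\iota_1$-action, extended to act trivially on $\eqW\otimes \bw{\semiinf+\bullet}(\mf{z}(\affg))$. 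Since $\iota_1$ is carried by a slot of $\V_{G,b}$ disjoint from the slot glued to $\eqW$, and the two systems of fermionic generators are disjoint, one has $\{Q^{\LZ}_{(0)},Q_{DS,(0)}\}=0$, just as in the proof of Theorem~\ref{Th:DS-realization}; and, as there, $C$ splits into finite-dimensional subcomplexes indexed by the $P_+$-grading of the $\eqW$-factors together with the Hamiltonian, so both spectral sequences of $C$ converge. Running $Q_{DS,(0)}$ first, Theorem~\ref{Th:IMRN2016}(1) collapses the first page onto the row $q=0$ and yields $H^n_{tot}(C)\cong H^{\semiinf+n}(\LZ,H_{DS}^0(\V_{G,b})\otimes\eqW)$, which for $n=0$ equals $H^{\semiinf+0}(\LZ,\V_{G,b-1}\otimes\eqW)\cong\V_{G,b}$ by the induction hypothesis and Lemma~\ref{Lem:inductive-construction-of-Vr}(ii). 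Running $Q^{\LZ}_{(0)}$ first, Lemma~\ref{Lem:inductive-construction-of-Vr}(i) kills the negative $\LZ$-degrees while Theorem~\ref{Th:IMRN2016}(1) collapses onto the column of $\LZ$-degree $0$, giving $H^0_{tot}(C)\cong H_{DS}^0(H^{\semiinf+0}(\LZ,\V_{G,b}\otimes\eqW))=H_{DS}^0(\V_{G,b+1})$. Comparing the two computations of $H^0_{tot}(C)$ gives $H_{DS}^0(\V_{G,b+1})\cong\V_{G,b}$, and since all the maps involved are vertex algebra maps, this is an isomorphism of vertex algebras.

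The part I expect to need the most care is purely bookkeeping: making precise that the $\g$-action $\iota_1$ being reduced is transverse to the Feigin--Frenkel gluing — so that $Q^{\LZ}_{(0)}$ and $Q_{DS,(0)}$ strictly anticommute and $C$ is genuinely a double complex — and keeping track of which copies of $\mf{z}(\affg)$ inside $\V_{G,b}$ and inside $\eqW$ play the role of the gluing center versus the $W$-algebra; the convergence of the spectral sequences in this non-finite-dimensional setting is handled exactly as in the proofs of Theorems~\ref{Th:DS-realization} and \ref{Th:it-is-an-inverse-to-DS}. An essentially equivalent but shorter route avoids the double complex: one proves by the same induction that each $\V_{G,b}$ lies in $\on{KL}_0$ (with base case $\V_{G,1}=H_{DS}^0(\Dch_G)$), and then Proposition~\ref{Lem:can-go-back} applied to $M=\V_{G,b}$, together with Lemma~\ref{Lem:inductive-construction-of-Vr}(ii), gives $H_{DS}^0(\V_{G,b+1})=H_{DS}^0(H^{\semiinf+0}(\LZ,\eqW\otimes\V_{G,b}))\cong\V_{G,b}$ at once.
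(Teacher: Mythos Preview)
Your main double-complex argument is correct and is essentially the paper's own proof, with one cosmetic difference: the paper first invokes Theorem~\ref{Th:DS-realization} to replace $H_{DS}^0(?)$ by $H^{\semiinf+0}(\affg_{-\kappa_\g},\g,\eqW\otimes ?)$ and then runs the double complex of Theorem~\ref{Th:it-is-an-inverse-to-DS} on $\eqW\otimes\V_{G,b}\otimes\eqW$, whereas you keep the direct Drinfeld--Sokolov complex on $\V_{G,b}\otimes\eqW$; the vanishing inputs and the spectral-sequence bookkeeping are identical in both versions. One small citation fix: for the vanishing of $H^{\semiinf+q}(\LZ,\V_{G,b}\otimes\eqW)$ in negative degrees you should cite Lemma~\ref{Lem:vnd} (freeness of $\eqW$ over $\LZ_{(<0)}$) rather than Lemma~\ref{Lem:inductive-construction-of-Vr}(i), which is about the $(b-1)$-fold complex.

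Your proposed ``shorter route'' via Proposition~\ref{Lem:can-go-back}, however, does not close. That proposition applied to $M=\V_{G,b}$ indeed yields $H_{DS}^0(\V_{G,b+1})\cong\V_{G,b}$, \emph{provided} $\V_{G,b}\in\KL_0$. But membership in $\KL_0$ means precisely that $\V_{G,b}\cong H_{DS}^0(\tilde M)$ for some $\tilde M\in\KL$, and the only natural candidate is $\tilde M=\V_{G,b+1}$; so ``$\V_{G,b}\in\KL_0$'' is equivalent to the very statement you want at stage $b$. The base case $\V_{G,1}=H_{DS}^0(\Dch_G)$ gives the statement for $b=1$, but the inductive step ``$\V_{G,b-1}\in\KL_0\Rightarrow\V_{G,b}\in\KL_0$'' is not available: Proposition~\ref{Lem:can-go-back} applied to $\V_{G,b-1}$ only reproves the hypothesis $H_{DS}^0(\V_{G,b})\cong\V_{G,b-1}$, not the next case. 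So the alternative route is circular as stated; the double-complex argument (in either your form or the paper's) is genuinely needed.
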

\begin{proof}
We have nothing to prove for $b=1$.
Let $b>1$.
We have
$H_{DS}^0(\V_{G,b+1})\cong H^{\semiinf+0}(\affg_{-\kappa_\g},\eqW\* \V_{G,b+1})
\cong  H^{\semiinf+0}(\affg_{-\kappa_\g},\eqW\* H^{\semiinf+0}(\LZ,\V_{G,b}\* \eqW))$.
As in the proof of Theorem \ref{Th:it-is-an-inverse-to-DS},
we find that 
\begin{align*}
&H^{\semiinf+0}(\affg_{-\kappa_\g},\eqW\* H^{\semiinf+0}(\LZ,\V_{G,b}\* \eqW))\\
&\cong H^{\semiinf+0}(\LZ,H^{\semiinf+0}(\affg_{-\kappa_\g},\eqW\*\V_{G,b})\* \eqW).
\end{align*}
But the latter is isomorphic to $H^{\semiinf+0}(\LZ,\V_{G,b-1}\* \eqW)\cong \V_{G,b}$ by the induction hypothesis.
\end{proof}

\begin{Th}\label{Th:simple}
The  vertex algebra $\V_{G,b}$ is simple for all $b\geq 1$.
\end{Th}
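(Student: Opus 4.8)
The plan is to prove simplicity of $\V_{G,b}$ by induction on $b$, using the base cases $b=1,2$ and the iterated BRST description of $\V_{G,b}$ together with the structural results established above. For $b=1$, $\V_{G,1}=\eqW$ is simple by Theorem~\ref{Th:AMoreau} since $\mathbf{S}_{G,f_{prin}}$ is a smooth symplectic variety; for $b=2$, $\V_{G,2}\cong\Dch_G$ is simple for the same reason (it is a strict chiral quantization of $T^*G$). So the real content is the inductive step $b\geq 3$.

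First I would try to reduce simplicity of $\V_{G,b}$ to simplicity of $\V_{G,b-1}$ via the isomorphism $\V_{G,b}\cong H^{\semiinf+0}(\LZ,\eqW\* \V_{G,b-1})$ of Lemma~\ref{Lem:inductive-construction-of-Vr}, combined with the fact (Theorem~\ref{Th:it-is-an-inverse-to-DS}, Proposition~\ref{Pro:reduction-of-Vr-is-Vr-1}) that $H_{DS}^0(?)$ and $H^{\semiinf+0}(\LZ,\eqW\* ?)$ are mutually inverse equivalences between $\KL$ and the appropriate subcategory of $\LZ\Mod_{reg}$. The strategy: suppose $I\subsetneq \V_{G,b}$ is a proper nonzero ideal. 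Applying the exact functor $H_{DS}^0(?)$ (exactness from Theorem~\ref{Th:IMRN2016}(1)) and using $H_{DS}^0(\V_{G,b})\cong\V_{G,b-1}$, one obtains that $H_{DS}^0(I)$ is an ideal of $\V_{G,b-1}$. If $H_{DS}^0(I)=0$ we would need to show $I=0$; if $H_{DS}^0(I)=\V_{G,b-1}$ then, using that $H^{\semiinf+0}(\LZ,\eqW\*?)$ sends this back to $\V_{G,b}$ and is exact in the relevant sense (Proposition~\ref{Pro:KL0}, Proposition~\ref{Pro:filtration}), one concludes $I=\V_{G,b}$. By the inductive hypothesis the only ideals of $\V_{G,b-1}$ are $0$ and $\V_{G,b-1}$, so the only thing to rule out is the existence of a nonzero ideal $I$ with $H_{DS}^0(I)=0$.

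The main obstacle will be precisely this last point: showing that $H_{DS}^0$ is faithful on ideals of $\V_{G,b}$, i.e. that no nonzero subobject can be killed by Drinfeld--Sokolov reduction. Here I would decompose $\V_{G,b}=\bigoplus_{\lam\in P_+}(\V_{G,b})_{[\lam]}$ according to the $\iota_1(\affg_{\kappa_c})$-action as in \eqref{eq:dec-KL-critical}, use that on each block $\KL^{[\lam]}$ the functor $H_{DS}^0(?)$ is faithful (Proposition~\ref{Pro:ch-DS}), and argue that an ideal $I$ with $H_{DS}^0(I)=0$ must vanish block by block. Concretely, $I\cap(\V_{G,b})_{[\lam]}$ is an $\iota_1(\affg_{\kappa_c})$-submodule lying in $\KL^{[\lam]}$, and faithfulness of $H_{DS}^0$ there forces it to be zero once $H_{DS}^0(I\cap(\V_{G,b})_{[\lam]})=0$ — which follows from $H_{DS}^0(I)=0$ together with the block decomposition being preserved by $H_{DS}^0$. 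Since this holds for all $\lam$, $I=0$.

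An alternative, possibly cleaner route I would keep in reserve: use the associated-variety/singular-support control. By Proposition~\ref{Pro:freeness} (freeness of $\eqW$ and $\V_{G,b}$ over $\LZ_{(<0)}$) and iterating Theorem~\ref{Th:vanishing-BRST-g-rel}-type computations, $\gr\V_{G,b}$ and $SS(\V_{G,b})$ are explicitly the arc-space functions on the Moore--Tachikawa variety $W^b_G$ (this is \eqref{eq:higgs-branch} in the paper). If $\tilde X_{\V_{G,b}}$ is reduced and irreducible with a dense symplectic leaf, a proper ideal $I$ would give a proper Poisson ideal in $\gr\V_{G,b}$ cutting out a proper closed subset, and one could hope to contradict this using that $\V_{G,b}$ is $\Z_{\geq 0}$-graded with one-dimensional degree-zero part (conicality) so that any nonzero ideal contains the vacuum — but this requires knowing $\V_{G,b}$ is generated in positive degree by its degree-zero piece, which is false in general for $W$-algebras, so I expect the inductive/faithfulness argument above to be the one that actually works. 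I would therefore present the induction via $H_{DS}^0$ and its inverse as the main line of proof, with the block decomposition and Proposition~\ref{Pro:ch-DS} as the crucial faithfulness input.
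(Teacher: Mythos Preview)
Your proposal is correct and follows essentially the same inductive strategy as the paper: reduce simplicity of $\V_{G,b}$ to that of $\V_{G,b-1}$ via $H_{DS}^0$ and its inverse. The paper's argument is slightly more streamlined than yours: rather than establishing faithfulness of $H_{DS}^0$ separately via the block decomposition and Proposition~\ref{Pro:ch-DS}, it invokes Theorem~\ref{Th:it-is-an-inverse-to-DS}(ii) directly, which says $I\cong H^{\semiinf+0}(\LZ,\eqW\otimes H_{DS}^0(I))$ for any $\V_{G,b}$-module $I$ in $\KL$, so both the faithfulness ($H_{DS}^0(I)=0\Rightarrow I=0$) and the recovery ($H_{DS}^0(I)=\V_{G,b-1}\Rightarrow I=\V_{G,b}$) fall out in one line.
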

\begin{proof}
For $b=1,2$, we already know that $\V_{G,b}$ is simple.
So let $b>2$ and let
 $I$ be a nonzero submodule of $\V_{G,b}$.
The embedding $I\hookrightarrow \V_{G,b}$ induces the embedding
$H_{DS}^0(I)\hookrightarrow H_{DS}^0(\V_{G,b})=\V_{G,b-1}$.
Therefore, $H_{DS}^0(I)$ is a submodule of $\V_{G,b-1}$,
which is 
 is nonzero
since $I\cong H^{\frac{\infty}{2}+0}(\LZ,\eqW\otimes H_{DS}^0(I))$
by Theorem \ref{Th:it-is-an-inverse-to-DS}.
Because $\V_{G,b-1}$ is simple by induction hypothesis,
we get  that
$ H_{DS}^0(I)=\V_{G,b-1}$.
But then $I=H^{\frac{\infty}{2}+0}(\LZ, \eqW\otimes H_{DS}^0(I)))=H^{\frac{\infty}{2}+0}(\LZ, \eqW\otimes \V_{G,b-1})=\V_{G,b}$.
This completes the proof.
\end{proof}

Consider the decomposition
$\V_{G,b}=\bigoplus_{\lam\in P_+}(\V_{G,b})_{[\lam]}$
with respect to the action of $\iota_1$ (see \eqref{eq:iota}).
Let $T$ be a maximal torus of $G$ as in Introduction.

\begin{Pro}\label{Pro:conic}
For $b\geq 3$,
the vertex algebra $\V_{G,b}$ is conical.
We have for
$\lam\in P_+$,
$(z_1,\dots,z_b)\in T^b$
that
\begin{align*}
\on{tr}_{(\V_{G,b})_{[\lam]}}(q^{L_0}z_1z_2\dots z_b)=\left(\frac{q^{\bra \lam,\rho^{\vee}\ket}
\prod\limits_{j=1}^{\infty}(1-q^j)^{\on{rk}\g}}{\prod\limits_{\alpha\in \Delta_+}(1-q^{\bra \lam+\rho,\alpha^{\vee}\ket})}\right)^{b-2}
\prod_{k=i}^b\on{tr}_{\mathbb{V}_{\lam}} 
(q^{-D}z_k),
\end{align*}
where $D$ is the standard degree operator of the affine Kac-Moody algebra,
and
so
\begin{align*}
\on{tr}_{\mathbb{V}_{\lam}}(q^{-D} z)=
\frac{\sum_{w\in W}\epsilon(w)e^{w\circ \lam}(z)}{
\prod_{j=1}^{\infty}(1-q^j)^{\on{rk}\g}
\prod_{\alpha\in \Delta_+}\prod_{j=1}^{\infty}(1-q^{j-1}e^{-\alpha}(z))
(1-q^{j}e^{\alpha}(z)).}
\end{align*}
\end{Pro}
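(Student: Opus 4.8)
The statement has two parts: (a) the vertex algebra $\V_{G,b}$ is conical for $b\geq 3$, and (b) the character formula for the isotypic component $(\V_{G,b})_{[\lam]}$ (from which the global formula \eqref{eq:ch-formula} follows by summing over $\lam\in P_+$, using $\eqref{eq:ch-weyl}$ for $\on{tr}_{\mathbb{V}_\lam}(q^{-D}z)$). The strategy is to induct on $b$, using the inductive description $\V_{G,b}\cong H^{\semiinf+0}(\LZ,\eqW\* \V_{G,b-1})$ from Lemma~\ref{Lem:inductive-construction-of-Vr}, together with the refined decompositions with respect to the $b$ factors of $\affg_{\kappa_c}$. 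The base case is $b=2$, where $\V_{G,2}\cong \Dch_G$ and $(\Dch_G)_{[\lam]}\cong \mathbb{V}_{\lam^*}\* \mathbb{V}_{\lam}$ as graded vector spaces by \eqref{ch-of-Dch}; one checks directly that $\on{tr}_{(\Dch_G)_{[\lam]}}(q^{L_0}z_1z_2)=\on{tr}_{\mathbb{V}_{\lam^*}}(q^{-D}z_1)\on{tr}_{\mathbb{V}_\lam}(q^{-D}z_2)$, which matches the claimed formula at $b=2$ (where the prefactor is the $0$-th power, i.e.\ $1$).

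\textbf{Key steps.} First I would set up the decomposition of $\V_{G,b}$ under the $b$ commuting copies of $\affg_{\kappa_c}$: writing $\V_{G,b}=\bigoplus_{\lam}(\V_{G,b})_{[\lam]}$ for the action $\iota_1$, and further decomposing each $(\V_{G,b})_{[\lam]}$ under $\iota_2,\dots,\iota_b$, one shows by induction (using that $\tau$ sends $\LZ^{[\lam]}$ to $\LZ^{[\lam^*]}$, as in Lemma~\ref{Lem:inversection-is-center} and \eqref{eq:effect-of-tau-to-FF}) that the $\mf{z}(\affg)$-gluing forces all factors to carry the same central character up to the $\tau$-twist; i.e.\ $(\V_{G,b})_{[\lam]}$ sits in the $[\lam,\lam^*,\lam,\dots]$-component and is built from $\eqW_{[\lam^*]}$ or $\eqW_{[\lam]}$ in each slot. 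Second, I would compute the character of each $\eqW_{[\lam]}$ as a doubly graded space: by Proposition~\ref{Pro:g[t]t-invariant-part} and the proof of Proposition~\ref{Pro:eqW-finite}, $(\eqW^{\kappa_c})_{[\lam]}\cong H_{DS}^0(\mathbb{V}_{\lam^*})\* \mathbb{V}_\lam=\mf{z}_{\lam^*}\* \mathbb{V}_\lam$ as graded vector spaces, where $\mf{z}_{\lam^*}$ accounts for the $\W^{\kappa_c}(\g,f)=\mf{z}(\affg)$-direction and $\mathbb{V}_\lam$ for the $\affg_{\kappa_c}$-direction via $\pi_R$; its character involves $\ch\mf{z}_{\lam^*}=\ch\mf{z}_\lam$ given by \eqref{eq:ch-zlam} and $\on{tr}_{\mathbb{V}_\lam}(q^{-D}z)$. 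Third, I would run the spectral sequence of Lemma~\ref{Lem:inductive-construction-of-Vr}: by the Euler--Poincaré principle applied to the complex computing $H^{\semiinf+\bullet}(\LZ,\eqW\* \V_{G,b-1})$, and using Lemma~\ref{Lem:vnd}/Proposition~\ref{Pro:from-DG} (cohomology concentrated in degree $0$), the character of $(\V_{G,b})_{[\lam]}$ equals the alternating sum over the Fock space $\bw{\semiinf+\bullet}(\mf{z}(\affg))$, which produces exactly the factor $\ch\mf{z}_\lam$ in the denominator (the inverse of $\ch$ of the fermionic Fock module). Tracking this carefully, each application of $H^{\semiinf+0}(\LZ,\eqW\*-)$ multiplies by one more copy of $\ch\mf{z}_\lam=q^{-\lam(\rho^\vee)}\prod_{\alpha\in\Delta_+}(1-q^{\langle\lam+\rho,\alpha^\vee\rangle})/\prod_j(1-q^j)^{\on{rk}\g}$ and replaces one factor of $\mathbb{V}$ by $\mf{z}_\lam\*\mathbb{V}_\lam$ worth of data; after $b$ steps one lands on the stated formula with the $(b-2)$-th power. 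Finally, conicality for $b\geq 3$: the conformal weight on $(\V_{G,b})_{[\lam]}$ is bounded below (each $\eqW$ contributes $h_{\lam^{(\pm)}}$ as in \eqref{eq:conformal-dim-eq-W}, plus the ghost grading, and the BRST cohomology is concentrated in degree $0$ so the unbounded-below directions of $\eqW$ are killed); the $\lam=0$ summand contributes $(\V_{G,b})_{[0]}\supset\C|0\rangle$ with $(\V_{G,b})_0=\C$, and for $b\ge 3$ the power $(b-2)\geq 1$ of the $\ch\mf{z}_\lam$ factor with its leading $q^{-\lam(\rho^\vee)}$ combines against $q^{\langle\lam,\rho^\vee\rangle}$ in $\on{tr}_{\mathbb{V}_\lam}$ to give a series in nonnegative powers of $q$ on a common $\frac1m\Z$, with the constant term equal to $1$ only for $\lam=0$.

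\textbf{Main obstacle.} The hard part is the bookkeeping in the third step: verifying that the Euler--Poincaré computation through the iterated spectral sequence really produces precisely the factor $\bigl(q^{\langle\lam,\rho^\vee\rangle}\prod_j(1-q^j)^{\on{rk}\g}/\prod_{\alpha\in\Delta_+}(1-q^{\langle\lam+\rho,\alpha^\vee\rangle})\bigr)^{b-2}$ and not an off-by-one power, and that the $\tau$-twist of the $\mf{z}(\affg)$-action correctly identifies the central characters appearing in consecutive factors (so that $\on{tr}_{\mathbb{V}_\lam}$ appears for every slot rather than alternating $\mathbb{V}_\lam,\mathbb{V}_{\lam^*}$; this uses Proposition~\ref{Pro:dual-Z}(ii), $\tau^*\mf{z}_\lam\cong\mf{z}_{\lam^*}$, and the fact that $\ch\mathbb{V}_\lam$, unlike $\on{tr}_{\mathbb{V}_\lam}(q^{-D}z)$ with the torus variable, does not see the $\lam\leftrightarrow\lam^*$ distinction, while the torus-twisted trace does — so one must check the $z_k$ assignment matches). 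Concretely, I expect one needs the identity $\on{tr}_{\mathbb{V}_{\lam^*}}(q^{-D}z)=\on{tr}_{\mathbb{V}_\lam}(q^{-D}z^{-1})$ together with $\Dch_G$-level symmetry to pin the variables down. Once the $b=2,3$ cases are checked by hand against \eqref{ch-of-Dch} and the known character of $L_{-2}(D_4)$, the induction should go through routinely, and conicality is then a direct consequence of the positivity of the resulting $q$-series and $(\V_{G,b})_0=\C$.
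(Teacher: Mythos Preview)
Your approach and the paper's differ in a key way, and your version has a genuine gap.

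The paper argues by induction in the \emph{downward} direction, via the Drinfeld--Sokolov reduction. By Proposition~\ref{Pro:reduction-of-Vr-is-Vr-1} one has $H_{DS}^0(\V_{G,b})\cong \V_{G,b-1}$, and Proposition~\ref{Pro:ch-DS} provides the exact character identity $\ch M = q^{\lam(\rho^\vee)}\ch\mathbb{L}_\lam\cdot \ch H_{DS}^0(M)$ for any $M\in\KL^{[\lam]}$ (this uses only exactness of $H_{DS}^0$ and Proposition~\ref{Pro:simple-go-simple}). Combining with \eqref{eq:ch-is-product}, $\ch\mathbb{V}_\lam=\ch\mf{z}_\lam\cdot\ch\mathbb{L}_\lam$, each step of the induction multiplies by exactly one copy of the prefactor and one Weyl-module trace; the base case $b=2$ is \eqref{ch-of-Dch}. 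Conicality for $b\geq 3$ is then read off from the formula since $\lam(\rho^\vee)>0$ for $\lam\ne 0$. No BRST Euler characteristic enters at all.

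Your route runs the induction in the \emph{upward} direction via $\V_{G,b}\cong H^{\semiinf+0}(\LZ,\eqW\*\V_{G,b-1})$ and an Euler--Poincar\'e argument on the $\LZ$-complex. The gap is your claim that this cohomology is ``concentrated in degree $0$'': Lemma~\ref{Lem:vnd} and Proposition~\ref{Pro:from-DG} give vanishing only for $i<0$, and the paper never establishes (nor needs) vanishing for $i>0$. Indeed, by the Hochschild--Serre analysis in Lemma~\ref{Lem:BRST-cohomology-and-ext} one has $H^{\semiinf+i}(\LZ,M\*N)\cong H^i(\LZ_{(\geq 0)},M\*_{\LZ_{(<0)}}N)$ for $i\geq 0$, which is Lie algebra cohomology of an infinite-dimensional abelian Lie algebra and has no reason to vanish in positive degrees. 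Consequently the Euler--Poincar\'e alternating sum computes $\sum_i(-1)^i\ch H^{\semiinf+i}$, not $\ch H^{\semiinf+0}=\ch\V_{G,b}$, and your bookkeeping cannot be completed as stated. The fix is to abandon the Euler--Poincar\'e step and instead use the downward DS-reduction identity of Proposition~\ref{Pro:ch-DS}, which gives the character on the nose; the $\lam\leftrightarrow\lam^*$ bookkeeping you worried about then disappears as well, since everything is controlled by the single exact functor $H_{DS}^0$.
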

\begin{proof}
The first  assertion follows from
the  second assertion
since $\lam(\rho^{\vee})\geq 0$
and $\lam(\rho^{\vee})=0$ if and only if $\lam=0$ for $\lam\in P_+$.
By Proposition \ref{Pro:ch-DS} and
 Proposition \ref{Pro:reduction-of-Vr-is-Vr-1},
 we have
 for $zw\in  T^b$,
 $z\in T, w\in T^{b-1}$,
 \begin{align*}
\on{tr}_{ \V_{G,b}}(q^{L_0}z w)=q^{\lam(\rho^{\vee})}
\on{tr}_{\mathbb{L}_{\lam}}(q^{-D}z)\on{tr}_{\V_{G,b-1}}(q^{-D}w)\\
\end{align*}
which equals to
$$\left(\frac{q^{\bra \lam,\rho^{\vee}\ket}
\prod\limits_{j=1}^{\infty}(1-q^j)^{\on{rk}\g}}{\prod\limits_{\alpha\in \Delta_+}(1-q^{\bra \lam+\rho,\alpha^{\vee}\ket})}
\right)\on{tr}_{\mathbb{V}_{\lam}}(q^{-D}z)\on{tr}_{\V_{G,b-1}}(q^{-D}w)$$
by \eqref{eq:ch-is-product}.
On the other hand for $b=2$, we know that
\begin{align*}
\on{tr}_{ (\Dch_G)_{[\lam]}}(q^{L_0}z w)=
\on{tr}_{\mathbb{V}_{\lam}}(q^{-D}z)\on{tr}_{\mathbb{V}_{\lam}}(q^{-D}w)
\end{align*}
by \eqref{ch-of-Dch}.
Hence the assertion follows inductively.
\end{proof}

\begin{Co}
The vertex algebra $\V_{G,b}$ is separated for all $b\geq 1$.
\end{Co}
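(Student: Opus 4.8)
The plan is to split into the three ranges $b\ge 3$, $b=2$, and $b=1$, treating them in decreasing order of difficulty: for the first two, separatedness is immediate from positivity of the conformal grading, and only for $b=1$ is such positivity unavailable.

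For $b\ge 3$ I would simply invoke Proposition~\ref{Pro:conic}, which says that $\V_{G,b}$ is conical; a conical vertex algebra is in particular positively graded with respect to its conformal grading, and every positively graded vertex algebra is separated (Section~\ref{section:Vertex algebras and associated varieties}). For $b=2$ the same reasoning applies through the identification $\V_{G,2}\cong\Dch_G$ of Theorem~\ref{Th:cdo}: the algebra of chiral differential operators $\Dch_{G,\kappa_c}$ is $\Z_{\ge 0}$-graded by its conformal weight, hence positively graded and therefore separated.

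The only case requiring a genuinely different argument is $b=1$, since $\V_{G,1}=\eqW_G$ has conformal weight that is not bounded from below, so no positivity argument is available. Here I would use that $\V_{G,1}=\eqW_G=H_{DS,f}^0(\Dch_G)$ by definition (with $f=f_{prin}$), together with the fact, recorded right after Proposition~\ref{Pro:cdo-is-good}, that $\Dch_G=\Dch_{G,\kappa_c}$ is a (conformal) vertex algebra object in $\KL_{\kappa_c}$. Theorem~\ref{Th:IMRN2016}(2) then asserts precisely that the Drinfeld--Sokolov reduction $H_{DS,f}^0(V)$ of any vertex algebra object $V$ in $\KL_\kappa$ is separated, which yields the separatedness of $\V_{G,1}$ and completes the proof.

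The main (and essentially only) obstacle is this last step: for $\eqW_G$ one cannot reduce to positivity of the grading and must instead appeal to the separatedness statement for $H_{DS,f}^0$, which in turn rests on the identification $\gr H_{DS,f}^0(M)\cong\mc{O}(J_{\infty}\Slo_f)\otimes_{\mc{O}(J_{\infty}\g^*)}\gr M$ for $M\in\KL^{ord}$ from Theorem~\ref{Th:IMRN2016}(2). Once one is comfortable invoking that input, the corollary is a short case check; one might also streamline by replacing the $b=2$ case with the observation that $\V_{G,2}\cong H_{DS,f}^0(\V_{G,3})$ (Proposition~\ref{Pro:reduction-of-Vr-is-Vr-1}) and that $\V_{G,3}$ is a vertex algebra object in $\KL_{\kappa_c}$, but since $\Dch_G$ is manifestly positively graded the direct argument above is cleaner.
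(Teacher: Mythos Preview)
Your proof is correct and follows essentially the same approach as the paper, which simply notes that the cases $b=1,2$ were ``already known'' and appeals to Proposition~\ref{Pro:conic} for $b\geq 3$. You have just spelled out more explicitly why $b=1,2$ are already handled (via positivity of the grading for $\Dch_G$ and via Theorem~\ref{Th:IMRN2016}(2) for $\eqW_G$), which is precisely the content behind the paper's terse reference.
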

\begin{proof}
We already know the statement for $b=1,2$
and the statement for $b\geq 3$ follows from Proposition \ref{Pro:conic}.
\end{proof}
\begin{Pro}\label{Pro:central-charge}
The vertex algebra 
$\V_{G,b}$ is conformal with central charge
\begin{align*}
b\dim \g -(b-2)\on{rk}\g-24 (b-2)(\rho|\rho^{\vee}).
\end{align*}
\end{Pro}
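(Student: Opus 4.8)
The plan is an induction on $b\geq 1$, transporting the conformal structure along the identification $\V_{G,b}\cong H^{\semiinf+0}(\LZ,\eqW\otimes\V_{G,b-1})$ of Lemma~\ref{Lem:inductive-construction-of-Vr}(ii). For $b=1$ we have $\V_{G,1}=\eqW$, which is conformal of central charge $\dim\g+\on{rk}\g+24(\rho|\rho^{\vee})$ by \eqref{eq:cc-of-eq-W}: for $f=f_{prin}$ one has $\g_0=\h$, $\g_{1/2}=0$, $h=2\rho^{\vee}$ and $k+h^{\vee}=0$, and this equals the claimed value $b\dim\g-(b-2)\on{rk}\g-24(b-2)(\rho|\rho^{\vee})$ at $b=1$. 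For $b=2$, $\V_{G,2}\cong\Dch_G$ by Theorem~\ref{Th:cdo} and \eqref{eq:cylinder}, which is conformal of central charge $2\dim\g$, the claimed value at $b=2$. So it remains to run the inductive step.

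Fix $b\geq 3$ and assume $\V_{G,b-1}$ is conformal of central charge $c_{b-1}=(b-1)\dim\g-(b-3)\on{rk}\g-24(b-3)(\rho|\rho^{\vee})$. The complex computing $\V_{G,b}$ is $C=\eqW\otimes\V_{G,b-1}\otimes\bw{\semiinf+\bullet}(\mf{z}(\affg))$ with differential $\QZ_{(0)}$, and it carries the conformal vector $\omega_C=\omega_\eqW\otimes|0\rangle\otimes|0\rangle+|0\rangle\otimes\omega_{\V_{G,b-1}}\otimes|0\rangle+|0\rangle\otimes|0\rangle\otimes T^{gh}$, whose central charge is $c_\eqW+c_{b-1}+c_{gh}$ with $c_\eqW=\dim\g+\on{rk}\g+24(\rho|\rho^{\vee})$ and, by the explicit formula for $T^{gh}$, $c_{gh}=-2(\on{rk}\g+24(\rho|\rho^{\vee}))$. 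A routine computation gives $c_\eqW+c_{b-1}+c_{gh}=c_{b-1}+\dim\g-\on{rk}\g-24(\rho|\rho^{\vee})$, which telescopes to $c_b=b\dim\g-(b-2)\on{rk}\g-24(b-2)(\rho|\rho^{\vee})$, the asserted value. Once $\omega_C$ is known to be a $\QZ_{(0)}$-cocycle, its cohomology class is a conformal vector of $\V_{G,b}$: the mode $L_0^C=(\omega_C)_{(1)}$ is precisely the Hamiltonian \eqref{eq:Hamiltonian-VGS} (in its $b$-fold iterated form), which by Proposition~\ref{Pro:conic} acts semisimply on $\V_{G,b}$ with bounded-below, finite-dimensional eigenspaces; and $L_{-1}^C=(\omega_C)_{(0)}=\partial$ already holds on $C$ and commutes with $\QZ_{(0)}$, so it descends.

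The crux, and the step I expect to be the main obstacle, is to verify $\QZ_{(0)}\omega_C=0$. Writing $\QZ=\sum_{i=1}^{\on{rk}\g}\rho(P_i)\otimes\psi_i^*$ as a state, with $\rho(P_i)=\rho_1(P_i)-\rho_2(\tau(P_i))$, and recalling that $P_i$ has conformal weight $d_i+1$ in $\eqW$ and in $\V_{G,b-1}$ while $\psi_i^*$ has conformal weight $-d_i$, one first sees $L_0^C\QZ=\QZ$ — this is exactly why the cohomology is $\Z$-graded by \eqref{eq:Hamiltonian-VGS} — and then an application of the Borcherds identity reduces $\QZ_{(0)}\omega_C$ to a sum of total derivatives $\partial^j\bigl(L^C_{j-1}\QZ\bigr)$ with $j\geq 2$. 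Hence it suffices to show $L^C_n\QZ=0$ for $n\geq 1$. For the ghost factor this holds because the weight shift built into $T^{gh}$ makes each $\psi_i^*$ a primary vector of weight $-d_i$ in its $bc$-subsystem, so $L_n^{gh}\psi_i^*=0$ for $n\geq 1$; for the $\eqW$- and $\V_{G,b-1}$-factors one uses Lemma~\ref{Lem:action-of-conformal-v} (and its inductive analogue: $\mf{z}(\affg)\subset\V_{G,b-1}$ is stable under $L_n$ for $n\geq -1$), together with the fact that the strong generators $P_i$ of $\mf{z}(\affg)$ can be chosen to be primary with respect to $\omega_\eqW$ and $\omega_{\V_{G,b-1}}$, which makes $\QZ$ a primary field of conformal weight one. (If a uniformly primary choice of generators is not available, one instead arranges $\QZ_{(0)}\omega_C$ to be exact after adjusting $\omega_C$ by a total derivative of a ghost bilinear, in the manner of the Drinfeld--Sokolov reduction, tracking that this does not alter the central charge.) An alternative route, closer in spirit to the proof of Proposition~\ref{Pro:conic}, is to propagate $\omega_C$ through the iterated construction \eqref{eq:main-def} and check the cocycle condition on each finite-dimensional subcomplex used in the proof of Lemma~\ref{Lem:inductive-construction-of-Vr}. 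With $\QZ_{(0)}\omega_C=0$ established, the ingredients above combine to show $\V_{G,b}$ is conformal of central charge $c_b$, completing the induction.
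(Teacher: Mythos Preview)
Your overall strategy coincides with the paper's: both build the candidate conformal vector by summing the conformal vectors of the $\eqW$-factors and the ghost system, and both must cope with the fact that $\QZ_{(0)}$ does not annihilate this sum outright. But the step you flag as the ``crux'' and then dispatch in a parenthetical is exactly where the paper does all the work, and your main-line argument does not go through.

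The assertion that the $P_i$ may be chosen primary for $\omega_{\eqW}$ is false already for $P_1$: in the $SL_2$ computation of the appendix one has, at the critical level $k=-2$,
\[
T(z)S(w)\sim \tfrac{1}{z-w}\partial S(w)+\tfrac{2}{(z-w)^2}S(w)+\tfrac{3}{(z-w)^4},
\]
and no rescaling or shift inside $\mf{z}(\affg)$ removes the fourth-order pole since $\mf{z}(\affg)_1=0$. In general Lemma~\ref{Lem:action-of-conformal-v} only gives stability of $\mf{z}(\affg)$ under $L^{\eqW}_n$, $n\geq -1$, and the paper records the resulting OPE with nonzero higher-pole terms $q^{(i)}_j\in\mf{z}(\affg)$. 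Consequently $\QZ_{(0)}\omega_b$ is not zero but a sum of total derivatives of coboundaries, and one must pass to a corrected cocycle $\tilde\omega_b=\omega_b+\partial^2(\cdots)$.

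This is your fallback, but it is not free. The corrected $\tilde\omega_b$ is \emph{not} a conformal vector of the complex $C_b$; it is only a cocycle with $(\tilde\omega_b)_{(n)}=(\omega_b)_{(n)}$ for $n=0,1$. Two further verifications are needed, and your proposal omits both. First, one must check that the cohomology class of $\tilde\omega_b$ is a genuine conformal vector of $\V_{G,b}$: the paper does this by showing $(\tilde\omega_b)_{(3)}\tilde\omega_b\in\C|0\rangle$, using conicity (Proposition~\ref{Pro:conic}) for $b\geq3$ and $\g\oplus\g$-invariance together with the identification $\V_{G,2}\cong\Dch_G$ for $b=2$. Second, one must check that the correction does not disturb the central charge; the paper runs a separate induction for this, using a uniqueness-of-conformal-vector argument (\cite{Moriwaki}) to pin down $\omega_{\V_{G,b}}$ by its Hamiltonian and then transporting the central charge through $H_{DS}^0(\V_{G,b})\cong\V_{G,b-1}$. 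Your sentence ``once $\omega_C$ is a cocycle, its cohomology class is a conformal vector'' elides the first point, and ``tracking that this does not alter the central charge'' elides the second. Finally, in your inductive setup the analogue of Lemma~\ref{Lem:action-of-conformal-v} for $\omega_{\V_{G,b-1}}$ acting on the image of $\mf{z}(\affg)$ would itself have to be built into the induction hypothesis; as stated, your hypothesis (``$\V_{G,b-1}$ is conformal of central charge $c_{b-1}$'') is too weak to control the OPE of $\omega_{\V_{G,b-1}}$ with the $P_i$'s.
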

\begin{proof}
Set
$$\omega_b=\sum_{i=1}^b\rho_i(\omega_{\eqW})+\sum_{i=1}^{b-1}\rho_{gh,i}(\omega_{gh}).$$
Then
$\omega_b$ defines a conformal vector of the complex
$C_b$ (see \eqref{eq:complex-Cb})
of central charge
$b\dim \g -(b-2)\on{rk}\g-24 (b-2)(\rho|\rho^{\vee})$.
Let $T(z)=\sum_{n\in\Z}L_n z^{-n-2}$ be the field corresponding to $\omega_b$.

By Lemma \ref{Lem:action-of-conformal-v},
we have
\begin{align*}
T(z)P_i(w)\sim \frac{1}{z-w}\partial P_i(w)+\frac{d_i+1}{(z-w)^2}P_i(w)+
\sum_{j=2}^{d_i+2}\frac{(-1)^jj!}{(z-w)^{j+1}}q^{(i)}_j(w),
\end{align*}
where 
$q^{(i)}_j$ is some  homogeneous vector of $\mf{z}(\affg)$ of conformal weight 
$d_i-j+2$.
Hence,
\begin{align*}
Q_{(0)}\omega_b=\sum_{s=1}^{b-1}\sum_{i=1}^{\on{rk}\g}
\sum_{j=2}^{d_i+1}\partial^j\left((\rho_s(q_j^{(i)})-\rho_{s+1}(\tau (q_j^{(i)})))\rho_{gh,s}(\psi_i^*)\right).
\end{align*}
As easily seen,
the element
$\sum_{s=1}^{b-1}\left((\rho_s(q_j^{(i)})-\rho_{s+1}(\tau (q_j^{(i)}))\right)\rho_{gh,s}(\psi_i^*)$ is a coboundary.
Thus,
there exists
$z_{ij}\in \mf{z}(\affg)\* \mf{z}(\affg)
\* \bw{\semiinf+0}(\mf{z}(\affg))$
such that
$$\tilde{\omega_b}=\omega_b +\sum_{s=1}^{b-1}\sum_{i=1}^{\on{rk}\g}
\sum_{j=2}^{d_i+2}\partial^j(\rho_s\* \rho_{s+1}\* \rho_{gh,s})(z_{ij})$$
defines
a cocycle.
Note that 
$(\tilde{\omega}_b)_{(n)}=(\omega_b)_{(n)}$ for $n=0,1$.
Since $\V_{G,b}$ is non-negatively graded for $b\geq 2$,
$\tilde{\omega}_b$ defines a conformal vector of $\V_{G,b}$
if and only if 
$(\tilde{\omega}_b)_{(3)}\tilde{\omega_b}$ is a constant multiplication of 
the vacuum vector
(\cite[Lemma 3.1.2]{Fre07}).
For $b\geq 3$,
this is obvious
since $\V_{G,b}$ is conical   by Proposition \ref{Pro:conic}.
For $b=2$,
observe that 
$\tilde{\omega_b}$ is annihilated by the action
of $\g\+ \g$,
and so is $(\tilde{\omega}_b)_{(3)}\tilde{\omega_b}$.
Therefore,
$(\tilde{\omega}_b)_{(3)}\tilde{\omega_b}$
has to belong to $\C=V_0\* V_0\subset \mc{O}(G)=(\V_{G,b})_0$.
We have shown that
the image 
$\omega_{\V_{G,b}}$
of $\tilde{\omega}_b$
defines a conformal vector of $\V_{G,b}$,
which gives the same grading
as Hamiltonian \eqref{eq:Hamiltonian-VGS}.

Finally,
we will show by induction on $b\geq 2$
that the central charge 
of $\omega_{\V_{G,b}}$ is the same as that of ${\omega}_b$.
Let $b=2$.
It is enough to show that
$\omega_{\Dch_G}$ 
is the unique conformal vector $\omega$
of $\Dch_G$ 
such that
$(\omega)_{(1)}=(\omega_{\Dch_G})_{(1)}$
and $\omega\in (\Dch_G)^{\g\+\g}$.
Since $(\Dch_G)^{\g\+\g}$ is stable under the action
of the $\mf{sl}_2$-triple $\{L_{-1},L_0,L_1\}$  associated with $\omega_{\Dch_G}$
and $(\Dch_G)^{\g\+\g}_0=\C$,
it follows that 
$\partial=L_{-1}$ is injective on
the subspace
 $\bigoplus_{\Delta\geq 1}(\Dch_G)^{\g\+\g}_{\Delta}$,
 where $(\Dch_G)^{\g\+\g}_{\Delta}=(\Dch_G)^{\g\+\g}\cap (\Dch_G)_{\Delta}$.
Hence one can apply \cite[Lemma 4.1]{Moriwaki} twice to obtain that
$\omega- \omega_{\Dch_G}=\partial^2 a$ for some $a\in (\Dch_G)^{\g\+\g}_0$.
Since $a$ is a 
constant multiplication of 
the vacuum vector,
this gives that $\omega= \omega_{\Dch_G}$.
Let $b\geq 3$.
Since $\V_{G,b}$ is
conic,
the same argument using
 \cite[Lemma 4.1]{Moriwaki}
 implies that
$\omega_{\V_{G,b}}$ 
is the unique conformal vector of $\V_{G,b}$ such that
$(\omega_{\V_{G,b}})_{(1)}$ coincides with the Hamiltonian
\eqref{eq:Hamiltonian-VGS}.
Let 
 $c$ be the 
central charge of $\omega_{\V_{G,b}}$.
By Proposition \ref{Pro:reduction-of-Vr-is-Vr-1},
$\omega_{\V_{G,b}}$ gives rise to a conformal vector of 
of $\V_{G,b-1}$
with central charge
$c-\dim \g+\on{rk}\g+24(\rho|\rho^{\vee})$,
which is annihilated by  $\g^{\+ b-1}$.
Hence the uniqueness of the conformal vector of $\V_{G,b-1}$ and the induction hypothesis gives the required result.

\end{proof}

\begin{Rem}
We have $\dim W_G^b=b\dim \g-(b-2)\on{rk}\g$.
Hence the central charge of $\V_{G,b}$ can be expressed as 
$$\dim W_G^b-24(b-2)(\rho|\rho^{\vee}).$$
\end{Rem}
\begin{Rem}
In the case that $\g$ is simply laced,
using the strange formula  $\dim \g=12|\rho|^2/h^{\vee}$
one finds that 
the central charge of $\V_{G,b}$ is expressed also as
\begin{align*}
(b-2(b -2)h^{\vee})\dim \g-(b-2)\on{rk}\g.
\end{align*}

\end{Rem}

\begin{Pro}\label{Pro:freeness}
For any $b\geq 1$,
$\V_{G,b}\in \KL_b^{\Delta}\cap  \KL_b^{\nabla}$.
In particular,
$\V_{G,b}$ is free over 
$U(\g[t^{-1}])$ and cofree over $U(t\g[t])$
by the action $\iota_i$, $i=1,\dots,b$.
\end{Pro}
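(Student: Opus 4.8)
The plan is to argue by induction on $b$, using the inductive description $\V_{G,b}\cong H^{\semiinf+0}(\LZ,\eqW\*\V_{G,b-1})$ from Lemma~\ref{Lem:inductive-construction-of-Vr}(ii) together with the behaviour of the categories $\KL_b^{\Delta}$ and $\KL_b^{\nabla}$ under the functor $N\mapsto H^{\semiinf+0}(\LZ,\eqW\*N)$. For the base cases $b=1,2$ we already have $\V_{G,1}=\eqW$, which is in $\KL_1^\Delta\cap\KL_1^\nabla$ by Proposition~\ref{Pro:free} and Proposition~\ref{Pro:g[t]t-invariant-part} (the filtration of $\eqW$ as a $\affg_{\kappa^*}$-module with subquotients $\mathbb{V}_\lam$ gives the $\Delta$-flag, and the dual filtration gives the $\nabla$-flag after dualizing), and $\V_{G,2}\cong\Dch_G$ is in $\KL_2^\Delta\cap\KL_2^\nabla$ by the Zhu-type filtrations in \eqref{eq:min-zhu-fil} and its decreasing analogue from \cite{Zhu11} combined with Proposition~\ref{Pro:cdosub}, which realize $\Dch_G$ as filtered by $\mathbb{V}_{\lam,2}$ (resp. $D(\mathbb{V}_{\lam,2})$).

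For the inductive step, suppose $\V_{G,b-1}\in\KL_{b-1}^{\Delta}\cap\KL_{b-1}^{\nabla}$. Take a $\Delta$-filtration $0=N_0\subset N_1\subset\cdots$ of $\V_{G,b-1}$ with $N_p/N_{p-1}\cong\mathbb{V}_{\lam_p,b-1}$. I would first reduce the $\LZ$-cohomology to the $H_{DS}^0$-picture: by Proposition~\ref{Pro:filtration}, applied with $M=\V_{G,b}$ and $N=H_{DS}^0(\V_{G,b})\cong\V_{G,b-1}$ (using Proposition~\ref{Pro:reduction-of-Vr-is-Vr-1}), provided each successive quotient $\mathbb{V}_{\lam_p,b-1}$ lies in $\KL_0$. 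That membership holds because $\mathbb{V}_{\lam,b-1}\cong H_{DS}^0(\mathbb{V}_{\lam,b})$ by Proposition~\ref{Pro:r-weyl}(i). Thus $M_p:=H^{\semiinf+0}(\LZ,\eqW\*N_p)$ defines an increasing filtration of $\V_{G,b}$ with $M_p/M_{p-1}\cong H^{\semiinf+0}(\LZ,\eqW\*\mathbb{V}_{\lam_p,b-1})$. It remains to identify this last cohomology with $\mathbb{V}_{\lam_p,b}$, i.e. to prove $H^{\semiinf+0}(\LZ,\eqW\*\mathbb{V}_{\lam,b-1})\cong\mathbb{V}_{\lam,b}$. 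Since $\mathbb{V}_{\lam,b-1}\cong\mathbb{V}_{\lam^*}\*_{\mf{z}_{\lam^*}}\mathbb{V}_{\lam,b-2}$ and $\eqW^{t\g[t]}\cong\bigoplus_\mu\mf{z}_{\mu^*}\*V_\mu$ by Proposition~\ref{Pro:g[t]t-invariant-part}, one uses the same double-complex/spectral-sequence computation as in the proof of Proposition~\ref{Pro:gt-inv-part}: $\Hom_{\LZ}(\mf{z}_\lam,\mathbb{V}_{\lam^*,b-1})$ together with Proposition~\ref{Pro:vanishing-1} and a character count via \eqref{eq:ch-is-product} pins the cohomology to $\mathbb{V}_\lam\*_{\mf{z}_\lam}\mathbb{V}_{\lam^*,b-1}=\mathbb{V}_{\lam,b}$, where I also need the freeness of $\mf{z}_\lam$ over $\LZ_{(<0)}$ from Proposition~\ref{Pro:cyclic}(i) to apply Lemma~\ref{Lem:BRST-cohomology-and-ext}. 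This gives $\V_{G,b}\in\KL_b^{\Delta}$.

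The $\nabla$-case is dual: start from a $\nabla$-filtration of $\V_{G,b-1}$ with subquotients $D(\mathbb{V}_{\lam_p,b-1})$, use Proposition~\ref{Pro:reduction-of-the-dual} and Proposition~\ref{Pro:r-weyl}(ii) to see that $D(\mathbb{V}_{\lam,b-1})\cong H_{DS}^0(D(\mathbb{V}_{\lam,b}))\in\KL_0$, invoke Proposition~\ref{Pro:filtration} again (working with the decreasing filtration, noting $\bigcap$ of the transported filtration is zero because $\V_{G,b}$ is separated by the Corollary following Proposition~\ref{Pro:conic}), and identify $H^{\semiinf+0}(\LZ,\eqW\*D(\mathbb{V}_{\lam,b-1}))\cong D(\mathbb{V}_{\lam,b})$ by the character equality $\ch H^{\semiinf+0}(\LZ,\eqW\*D(\mathbb{V}_{\lam,b-1}))=\ch H^{\semiinf+0}(\LZ,\eqW\*\mathbb{V}_{\lam,b-1})$ together with the contragredient-dual identification, exactly as in Proposition~\ref{Pro:r-weyl}(ii). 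The ``in particular'' clauses then follow: a $\Delta$-flag with subquotients $\mathbb{V}_{\lam,b}$ makes $\V_{G,b}$ free over $U(\g[t^{-1}])$ for each $\iota_i$ (since each $\mathbb{V}_\lam$, hence each $\mathbb{V}_{\lam,b}$ by the tensor-product-over-$\mf{z}$ structure and Proposition~\ref{Pro:cyclic}(i), is free over $U(\g[t^{-1}]t^{-1})$ and, adding the $U(\g)$-direction, over $U(\g[t^{-1}])$), and a $\nabla$-flag with subquotients $D(\mathbb{V}_{\lam,b})$ makes it cofree over $U(t\g[t])$.

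\textbf{Main obstacle.} I expect the delicate point to be the identification $H^{\semiinf+0}(\LZ,\eqW\*\mathbb{V}_{\lam,b-1})\cong\mathbb{V}_{\lam,b}$ (and its dual): one must be careful that the Künneth-type factorization $\mathbb{V}_{\lam,b-1}\cong\mathbb{V}_{\lam^*}\*_{\mf{z}_{\lam^*}}\mathbb{V}_{\lam,b-2}$ interacts correctly with the $\tau$-twisted $\LZ$-action appearing in the definition of $Q^{\LZ}$, and that the character bound from Proposition~\ref{Pro:ch-from-above} is met with equality rather than merely an inequality — this forces the spectral sequences to degenerate, and checking degeneration cleanly (as opposed to the collapse being spoiled by differentials between different $\lam$-blocks) is where the real work lies. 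Everything else is bookkeeping with the already-established exactness and freeness statements.
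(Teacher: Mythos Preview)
Your overall inductive strategy matches the paper's proof exactly: base case $b=1$ via Propositions~\ref{Pro:free} and~\ref{Pro:g[t]t-invariant-part}, then for $b\geq 2$ transport a $\Delta$- (resp.\ $\nabla$-) filtration of $\V_{G,b-1}=H_{DS}^0(\V_{G,b})$ through Proposition~\ref{Pro:filtration}, using Proposition~\ref{Pro:r-weyl} to check that the successive quotients $\mathbb{V}_{\lam,b-1}$ (resp.\ $D(\mathbb{V}_{\lam,b-1})$) lie in $\KL_0$.

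However, the step you flag as the ``main obstacle'' --- the identification $H^{\semiinf+0}(\LZ,\eqW\*\mathbb{V}_{\lam,b-1})\cong\mathbb{V}_{\mu,b}$ --- is not actually where the work lies, and the direct spectral-sequence/character argument you sketch is unnecessary. The identification is immediate from Theorem~\ref{Th:it-is-an-inverse-to-DS}(ii): since $\mathbb{V}_{\lam,b-1}\cong H_{DS}^0(\mathbb{V}_{\lam^*,b})$ by Proposition~\ref{Pro:r-weyl}(i), and $\mathbb{V}_{\lam^*,b}$ is a $\Vb$-module object in $\KL$, Theorem~\ref{Th:it-is-an-inverse-to-DS}(ii) gives $H^{\semiinf+0}(\LZ,\eqW\*\mathbb{V}_{\lam,b-1})\cong\mathbb{V}_{\lam^*,b}$ directly. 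The dual identification follows the same way from Proposition~\ref{Pro:r-weyl}(ii). This is why the paper's proof is three lines: the inverse-functor statement of Theorem~\ref{Th:it-is-an-inverse-to-DS} does all the heavy lifting, and there is no degeneration question to worry about. Your separate treatment of $b=2$ via the Zhu filtrations is also not needed; the inductive step already covers it.
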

\begin{proof}
We prove the statement by induction on $b\geq 1$.

For $r=1$,
$\V_{G,1}=\eqW$ is free over $U(t^{-1}\g[t^{-1}])$ and cofree over $U(t\g[t])$
(Propositions \ref{Pro:free} and \ref{Pro:g[t]t-invariant-part}).
Hence,
$\V_{G,1}\in \KL_1^{\Delta}\cap  \KL_1^{\nabla}$.

%
%
Next let $b\geq 2$.
By the induction hypothesis and  Proposition \ref{Pro:reduction-of-Vr-is-Vr-1},
$H_{DS}^0(\V_{G,b})=\V_{G,b-1}$ belongs to $\KL_{b-1}^{\Delta}\cap  \KL_{b-1}^{\nabla}$.
Therefore
Proposition \ref{Pro:r-weyl} and
Proposition \ref{Pro:filtration} 
give that
 $\V_{G,b}\in \KL_b^{\Delta}$.
\end{proof}

%

\begin{Th}\label{Th:fusion}
Let  $b,b'\geq 1$
such that $b+b'\geq 3$.
We have
$$H^{\semiinf+i}(\affg_{-\kappa_\g},\g, \V_{G,b}\*\V_{G,b'})=0
\quad \text{for }i\ne 0,
$$
and
\begin{align*}
\V_{G,b}\circ \V_{G,b'}\cong \V_{G,b+b'-2}
\end{align*}
as vertex algebras.
\end{Th}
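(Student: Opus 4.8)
The plan is to reduce the statement to the already-established building blocks: the "inverse Drinfeld--Sokolov" isomorphism of Theorem \ref{Th:it-is-an-inverse-to-DS}, the identification $H_{DS}^0(\V_{G,b+1})\cong \V_{G,b}$ of Proposition \ref{Pro:reduction-of-Vr-is-Vr-1}, the gluing along $\mf{z}(\affg)$ of Proposition \ref{Pro:from-DG}, and the realization $\V_{G,b}\circ M\cong H_{DS}^0(M)$ from \eqref{eq:eqW-vs-DS} applied through $\V_{G,1}=\eqW$. The key observation is that the relative semi-infinite $\affg_{-\kappa_\g}$-cohomology $\V_{G,b}\circ \V_{G,b'} = H^{\semiinf+\bullet}(\affg_{-\kappa_\g},\g,\V_{G,b}\*\V_{G,b'})$ should be computed by a double-complex / spectral-sequence argument exactly parallel to the proof of Proposition \ref{Pro:from-DG}, but now gluing one copy of $\eqW$ to the $\affg$-side rather than the $\mf{z}(\affg)$-side.

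First I would treat the base case where one of the two indices equals $1$, say $b'=1$, so $\V_{G,b'}=\eqW$ and $b\geq 2$. Here \eqref{eq:eqW-vs-DS} (equivalently Theorem \ref{Th:DS-realization} with $M=\V_{G,b}$, $f=f_{prin}$) gives directly
\begin{align*}
\V_{G,b}\circ \eqW \cong H_{DS}^0(\V_{G,b}) \cong \V_{G,b-1}=\V_{G,b+1-2},
\end{align*}
and the vanishing $H^{\semiinf+i}(\affg_{-\kappa_\g},\g,\V_{G,b}\*\eqW)=0$ for $i\ne 0$ is part of Theorem \ref{Th:secDS} (applicable since $\eqW$ is a vertex algebra object in $\KL_{\kappa^*}=\KL_{\kappa_c}$ and $\V_{G,b}\in \KL_b\subset\KL$). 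This isomorphism is one of vertex algebras because all the intermediate identifications are.

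For the inductive step with $b,b'\geq 2$, I would build the triple complex computing the iterated cohomology $H^{\semiinf+\bullet}(\affg_{-\kappa_\g}\oplus\LZ,\ \V_{G,b-1}\*\eqW\*\V_{G,b'})$, where the $\LZ$-differential glues $\V_{G,b-1}$ to $\eqW$ (yielding $\V_{G,b}$ after Lemma \ref{Lem:inductive-construction-of-Vr}) and the $\affg_{-\kappa_\g}$-differential performs the relative reduction of $\eqW\*\V_{G,b'}$ (yielding $H_{DS}^0(\V_{G,b'})\cong\V_{G,b'-1}$ after Theorem \ref{Th:secDS}/\eqref{eq:eqW-vs-DS}). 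Running the spectral sequence with $d_1=Q_\mf{z}$ first: by Lemma \ref{Lem:inductive-construction-of-Vr} and Proposition \ref{Pro:freeness} the $E_2$-page is concentrated in non-negative bidegrees and gives $H^{\semiinf+\bullet}(\affg_{-\kappa_\g},\g,\V_{G,b}\*\V_{G,b'})$ on the diagonal. Running it with $d_1=Q_{\affg}$ first: again by Proposition \ref{Pro:freeness} (freeness of $\eqW\*\V_{G,b'}$ over $U(\g[t^{-1}]t^{-1})$, cofreeness over $U(t\g[t])$, so Proposition \ref{Pro:vanishing-relative} applies) combined with \eqref{eq:eqW-vs-DS}, the only surviving term is $H^{\semiinf+0}(\LZ,\V_{G,b-1}\*H_{DS}^0(\V_{G,b'}))\cong H^{\semiinf+0}(\LZ,\V_{G,b-1}\*\V_{G,b'-1})$, which by Proposition \ref{Pro:from-DG} equals $\V_{G,(b-1)+(b'-1)}=\V_{G,b+b'-2}$. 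Comparing the two computations and checking that the identification respects the vertex algebra structure (every map involved is a vertex algebra map) finishes the induction; the vanishing in nonzero degree drops out of either spectral sequence.

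The main obstacle will be the bookkeeping needed to justify convergence and the interchange of the two spectral sequences: one must verify that the triple complex splits as a direct sum of finite-dimensional subcomplexes (using the block decomposition $\bigoplus_{\lam}(\cdot)_{[\lam]}$ with respect to each $\iota_i$ and the joint Hamiltonian eigenspace decomposition, exactly as in the proofs of Theorem \ref{Th:it-is-an-inverse-to-DS} and Proposition \ref{Pro:from-DG}), so that both spectral sequences genuinely compute the same total cohomology. A secondary point to be careful about is that $\V_{G,b}\*\V_{G,b'}$ lies in $\KL_{-\kappa_\g}$ for the diagonal $\g$-action and that the freeness/cofreeness hypotheses of Proposition \ref{Pro:vanishing-relative} are met — this is where Proposition \ref{Pro:freeness} is doing the real work, and the condition $b+b'\geq 3$ is exactly what guarantees at least one factor is a genuine (separated, nicely filtered) object so the arguments of Section \ref{Sec:main-construction} apply.
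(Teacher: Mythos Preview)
Your proposal is correct and follows essentially the same route as the paper: the vanishing comes from Proposition~\ref{Pro:vanishing-relative} via Proposition~\ref{Pro:freeness}, the base case $b'=1$ from \eqref{eq:eqW-vs-DS} and Proposition~\ref{Pro:reduction-of-Vr-is-Vr-1}, and the general case from the double complex on $\V_{G,b-1}\*\eqW\*\V_{G,b'}$ with the two spectral sequences (first $Q_{\mf{z}}$ then $Q_{\affg}$, and vice versa) exactly as you describe, landing on $H^{\semiinf+0}(\LZ,\V_{G,b-1}\*\V_{G,b'-1})\cong \V_{G,b+b'-2}$ via Proposition~\ref{Pro:from-DG}. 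One small correction to your closing remark: the hypothesis $b+b'\geq 3$ is not about separatedness or filtrations---all $\V_{G,b}$ with $b\geq 1$ are separated and Proposition~\ref{Pro:freeness} applies uniformly---but simply ensures $b+b'-2\geq 1$ so that the target lies in the family constructed; the excluded case $b=b'=1$ would produce the chiral universal centralizer $\mathbf{I}_G^{\kappa_c}$ of Section~\ref{sec:ch-uv}.
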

\begin{proof}
The first statement follows immediately from Proposition \ref{Pro:vanishing-relative}
and Proposition \ref{Pro:freeness}.
For $b=1$ or $b'=1$,
the second statement has been proved in 
Proposition \ref{Pro:reduction-of-Vr-is-Vr-1}
since $\V_{G,1}\circ \V_{G,b}\cong H_{DS}^0(\V_{G,b})$
by \eqref{eq:eqW-vs-DS}.
So
suppose that $b\geq 2$.
Set
\begin{align*}
C=\V_{G,b-1}\* \eqW\* \V_{G,b'}\*  \bigwedge\nolimits^{\semiinf+\bullet }({\mf{z}(\affg)})
\* \bigwedge\nolimits^{\semiinf+\bullet}(\affg).
\end{align*}
Let
$Q_{\mf{z}}$
be the 
 differential  
 of the complex
 $\V_{G,b-1}\* \eqW\* \bigwedge\nolimits^{\semiinf+\bullet }({\mf{z}(\affg)})
$
for the cohomology $H^{\semiinf+\bullet}(\LZ,\V_{G,b-1}\* \eqW)$.
We consider $Q_{\mf{z}}$
as a differential on $C$
that   acts trivially on the factors $\V_{G,b'}$
and $\bigwedge\nolimits^{\semiinf+\bullet}(\affg)$.
Let $Q_{\affg}$ be the differential of the complex  
$\eqW\* \V_{G,b'}\* \bigwedge\nolimits^{\semiinf+\bullet}(\affg)$
for 
  $H^{\semiinf+\bullet}(\affg_{-\kappa_\g}, \eqW\* \V_{G,b'})$.
  We consider $Q_{\affg}$
as a differential on $C$
that   acts trivially on the factors $\V_{G,b}$
and $\bigwedge\nolimits^{\semiinf+\bullet}(\mf{z}(\affg))$.
Then $C$ has the structure of a double complex.
 Let $H_{tot}^\bullet(C)$ be the total cohomology of $C$
 with the differential 
 $Q= Q_{\mf{z}}+Q_{\affg}$.
 As in the proof of Theorem \ref{Th:it-is-an-inverse-to-DS},
 we find that 
the total complex $C$ is a direct sum of finite-dimensional subcomplexes.

%

Let $E_r\Rightarrow H^\bullet_{tot}( C)$ 
 be the spectral sequence 
 such that $d_0=Q_\mf{z}$ and $d_1=Q_{\affg}$.
We have
\begin{align*}
&E_2^{p,q}\cong  H^{\semiinf+p}(\affg_{-\kappa_\g}, 
H^{\semiinf+q}(\LZ,\V_{G,b-1}\* \eqW)\*\V_{G,b'}).
\end{align*}
Then $E_2^{p,q}=0$ if $p<0$ or $q<0$
by 
Lemma \ref{Lem:vnd}
and Proposition \ref{Pro:freeness}.
Thus, we obtain the  vertex algebra isomorphism
\begin{align}
H_{tot}^0(C)\cong  
E_2^{0,0}\cong
H^{\semiinf+0}(\affg_{-\kappa_\g}, \V_{G,b}\*\V_{G,b'})
=\V_{G,b}\circ \V_{G,b'}\label{eq:one-comp}.
\end{align}

Next, consider the spectral sequence
 $E_r'\Rightarrow H^\bullet_{tot}( C)$ 
 such that $d_0=Q_{\affg}$ and $d_0=Q_\mf{z}$.
 We have
 \begin{align*}
&(E_2')^{p,q} \cong H^{\semiinf+p}(\LZ,\V_{G,b-1}\* H^q(\affg_{-\kappa_\g},\eqW\* \V_{G,b'})).
\end{align*}
Then $(E_2')^{p,q}=0$ if $p<0$ or $q<0$
by 
Lemma \ref{Lem:vnd}
and Proposition \ref{Pro:freeness},
and we obtain the the isomorphism
\begin{align}
H_{tot}^0(C)\cong 
(E_2')^{0,0}\cong  H^{\semiinf+0}(\LZ, \V_{G,b-1}\* \V_{G,b'-1})\cong \V_{G,b+b'-2}
\label{eq:anothe-comp}
\end{align}
by Proposition \ref{Pro:from-DG} and the induction hypothesis.
Finally, \eqref{eq:one-comp} and \eqref{eq:anothe-comp}
gives the required isomorphism.
\end{proof}

\begin{Rem}\label{Rem:uniquness}
We note that
$\V_{G,b}$ is the unique vertex algebra object in $\KL^{\* b}$ 
such that $\V_{G,1}\circ \V_{G,b}\cong \V_{G,b-1}$.
Indeed,
let $V$ be a vertex algebra object in $\KL^{\* b}$ such that $\V_{G,1}\circ \V_{G,b}\cong H_{DS}^0(\V_{G,b})\cong \V_{G,b-1}$.
Then by Theorem \ref{Th:it-is-an-inverse-to-DS},
we have
$V\cong H^{\semiinf+0}(\LZ,\eqW\* \V_{G,b-1})\cong \V_{G,b}$.
\end{Rem}

\begin{Rem}\label{Rem:showing-isomorphisms}
\begin{enumerate}
\item Suppose that there  exists a  vertex algebra object $V\in \KL^{\* b}$  with  vertex algebra embeddings
\begin{align*}
V\hookrightarrow \V_{G,b}\quad \text{and}\quad \V_{G,b-1}\hookrightarrow H_{DS}^0(V),
\end{align*}
then $\V_{G,b}\cong V$.
Indeed,
the embedding $V\hookrightarrow \V_{G,b}$ induces the embedding
$H_{DS}^0(V)\hookrightarrow  H_{DS}^0(\V_{G,b})\cong \V_{G,b-1}$.
The existence of the embedding 
$\V_{G,b-1}\hookrightarrow H_{DS}^0(V)$ then forces that 
$H_{DS}^0(V)\cong \V_{G,b-1}$.
It follows that $V\cong \V_{G,b}$ by
Theorem \ref{Th:it-is-an-inverse-to-DS}.
\item
In the above,
the 
existence 
of the embedding $V\hookrightarrow \V_{G,b}$
 can replaced by another condition as follows:
Suppose that there  exists a  vertex algebra object $V\in \KL^{\* b}$  with  vertex algebra embeddings
\begin{align*}
\V_{G,b-1}\hookrightarrow H_{DS}^0(V).
\end{align*}
If $V$ is generated
by the subspace $\bigoplus\limits_{\Delta\leq d}\dim V_\Delta$ for some $d$ 
and
  $\dim V_\Delta=\dim (\V_{G,b})_\Delta$ for all $\Delta\leq  d$,
then 
$V\cong \V_{G,b}$.
(Note that Proposition \ref{Pro:conic}
provides $\dim (\V_{G,b})_\Delta$.)
Indeed,  by the left exactness of the functor $H^{\semiinf+0}(\LZ,\eqW_G\* ?)$ (see Lemma \ref{Lem:vnd})
and Theorem \ref{Th:it-is-an-inverse-to-DS},
the above map induces an embedding $\V_{G,b}\hookrightarrow V$.
But the assumption implies that
this must be an isomorphism.
\end{enumerate}

\end{Rem}


\begin{Th}\label{Th:Higgs-branch-conj}
The associated variety 
$X_{\V_{G,b}}$ of the vertex algebra $\V_{G,b}$ is isomorphic to $W^b_G$
for all $b\geq 1$.
\end{Th}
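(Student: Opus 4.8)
The strategy is to determine the associated variety by computing the singular support. Recall from \cite{Li05, Ara12} that any separated vertex algebra $V$ carries a surjection $\mc O(J_{\infty}\tilde X_V)\twoheadrightarrow \gr V$, so that $SS(V)=\Spec(\gr V)$ is a closed subscheme of $J_{\infty}\tilde X_V$; it therefore suffices to show $SS(\V_{G,b})\cong J_{\infty}W^b_G$, since this forces $\tilde X_{\V_{G,b}}$, and hence $X_{\V_{G,b}}=\Specm R_{\V_{G,b}}$, to be $W^b_G$ on the level of (reduced) Poisson varieties. For $b=1$ and $b=2$ this is already in hand: $\V_{G,1}=\eqW_{G,f_{prin}}^{\kappa_c}$ is a strict chiral quantization of the equivariant Slodowy slice $\mathbf{S}_{G,f_{prin}}\cong G\times\Slo\cong W^{b=1}_G$ by Section~\ref{section:Quantum Drinfeld-Sokolov reduction and equivariant affine $W$-algebras} and \cite{BFN17}, and $\V_{G,2}\cong\Dch_G$ is a strict chiral quantization of $T^*G\cong W^{b=2}_G$ by Theorem~\ref{Th:cdo}.

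For $b\geq 3$ I would argue by induction on $b$, the engine being the identity $\V_{G,b}\cong H^{\semiinf+0}(\LZ,\eqW\otimes\V_{G,b-1})$ of Lemma~\ref{Lem:inductive-construction-of-Vr}. The key new ingredient is a quasi-classical counterpart of Theorem~\ref{Th:it-is-an-inverse-to-DS} (and of Theorems~\ref{Th:vanishing-BRST-g} and \ref{Th:vanishing-BRST-g-rel}): one runs the same double-complex and spectral-sequence arguments on the associated graded. Since $\gr\eqW=\mc O(J_{\infty}\mathbf{S}_{G,f_{prin}})$ is free over $\gr\mf z(\affg)=\mc O(J_{\infty}(\g^*/\!/G))$ (Lemma~\ref{Lem:vnd}), and $\V_{G,b-1}$ has the freeness and cofreeness supplied by Proposition~\ref{Pro:freeness}, all the spectral sequences involved degenerate after one page; one then obtains a purely geometric description of $SS(\V_{G,b})$ built from $SS(\eqW)=J_{\infty}(G\times\Slo)$, $SS(\V_{G,b-1})\cong J_{\infty}W^{b-1}_G$, and the $\tau$-twisted Feigin--Frenkel gluing along $J_{\infty}(\g^*/\!/G)$. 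In other words, $M\mapsto H^{\semiinf+0}(\LZ,\eqW\otimes M)$ inverts, on singular supports, the operation $SS(V)\mapsto J_{\infty}\Slo\times_{J_{\infty}\g^*}SS(V)$ that is induced by $H_{DS}^0$ (Theorem~\ref{Th:IMRN2016}(ii), Corollary~\ref{Co:DS}).

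It remains to identify this output with $J_{\infty}W^b_G$. The geometric operation just described is precisely the genus-zero construction of Ginzburg and Kazhdan \cite{GinKaz}, of which the vertex algebras $\V_{G,b}$ are the chiral quantization; alternatively, and more cheaply, the answer is pinned down by combining $H_{DS}^0(\V_{G,b})\cong\V_{G,b-1}$ (Proposition~\ref{Pro:reduction-of-Vr-is-Vr-1}) with Theorem~\ref{Th:IMRN2016}(ii), which gives $\tilde X_{\V_{G,b}}\times_{\g^*}\Slo\cong W^{b-1}_G$, while $W^b_G\times_{\g^*}\Slo\cong W^{b-1}_G$ is the $b'=1$ case of the Moore--Tachikawa relation $(W^{b'}_G\times W^b_G)/\!/\!/\Delta(G)\cong W^{b'+b-2}_G$. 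Thus $\tilde X_{\V_{G,b}}$ and $W^b_G$ are both inverse images of $W^{b-1}_G$ under ``capping a puncture'', and the quasi-classical statement of the previous paragraph (uniqueness of this inverse operation, matched against the uniqueness of the Moore--Tachikawa varieties of \cite{BFN17}) forces $\tilde X_{\V_{G,b}}\cong W^b_G$. This completes the induction and yields $X_{\V_{G,b}}\cong W^b_G$ as Poisson varieties for all $b\geq 1$.

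The main obstacle, I expect, is exactly this quasi-classical input: showing that $H^{\semiinf+0}(\LZ,\eqW\otimes -)$ is \emph{strictly} computed by its associated-graded analogue, so that $SS(\V_{G,b})$ is genuinely an arc space rather than merely a closed subscheme of one, and then matching the resulting scheme with the $\on{Gr}_{\check G}$-model of $W^b_G$ from \cite{BFN17}. This hinges on the flatness of the various chiral moment maps that holds in genus zero --- the point at which, as in the Moore--Tachikawa footnote, higher genus genuinely differs --- together with a comparison between the Slodowy/Feigin--Frenkel picture used throughout Section~\ref{Sec:main-construction} and geometric Satake. Granting these inputs, what remains is spectral-sequence bookkeeping of the same kind already carried out in Sections~\ref{Sec:main-construction} and \ref{section:chiral-algebras-of-classs-S}.
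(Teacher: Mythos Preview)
Your inductive skeleton is right --- both sides behave identically under ``capping a puncture'' (Kostant--Whittaker reduction), and this is the engine of the argument --- but the step where you write ``uniqueness of this inverse operation \ldots\ forces $\tilde X_{\V_{G,b}}\cong W^b_G$'' is the entire content of the theorem, and you have not supplied it. Two a priori different Poisson $G$-schemes over $\g^*$ can have isomorphic restrictions to $\Slo$; one needs a reason why the functor $M\mapsto M|_\Slo$ is fully faithful, or better, an equivalence on some subcategory containing both objects. The paper fills this gap with a specific external theorem: Riche's result \cite{Ric17} that Kostant--Whittaker reduction gives an equivalence of categories
\[
\on{QCoh}^G(\g^*_{reg})\isomap\on{Rep}(\mathbf{I}^{cl}_G),\qquad M\mapsto M|_\Slo,
\]
compatible with Poisson algebra structures. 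This establishes uniqueness, but only over $\g^*_{reg}$; the passage to all of $\g^*$ then uses that the complement of $\g^*_{reg}$ has codimension $3$ together with freeness of $R_b|_{\g^*_{reg}}$ as an $\mc{O}_{\g^*_{reg}}$-module (proved inductively). Neither of these ingredients appears in your sketch, and the ``comparison with geometric Satake'' you flag as the obstacle is not what is actually needed.

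Two further remarks. First, the paper works entirely at the level of Zhu's $C_2$-algebras $R_b=\mc{O}(X_b)$, not singular supports; it does \emph{not} establish that $\V_{G,b}$ is a strict chiral quantization of $W^b_G$ for $b\geq 3$, so your target $SS(\V_{G,b})\cong J_\infty W^b_G$ is stronger than the theorem and may well be harder. Second, your proposed spectral-sequence computation of $\gr\V_{G,b}$ from $\gr\V_{G,b-1}$ via $H^{\semiinf+0}(\LZ,\eqW\otimes-)$ would, even if it went through, produce some Poisson vertex algebra built from $J_\infty(G\times\Slo)$ and $J_\infty W^{b-1}_G$ glued along $J_\infty(\g^*/\!/G)$ --- but identifying that object with $\mc{O}(J_\infty W^b_G)$ is again exactly the uniqueness question, now at the arc-space level, and no easier.
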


\begin{proof}
We assume that $W^b_G$
is reduced.
Otherwise,
we replace $W^b_G$
by its reduced scheme.

Set ${X}_b={X}_{\V_{G,b}}$,
$R_b=R_{\V_{G,b}}/\mbox{\footnotesize$\sqrt{(0)}$}=\mc{O}({X}_b)$,
$A_b=\mc{O}(W^b_G)$.
We wish to show that $R_b\cong A_b$.
We regard both $R_b$ and $A_b$
as Poisson algebra objects of $\on{QCoh}^G(\g^*)$
by the moment maps $\mu:X_b\ra \g^*$ and $\mu:W^b_G\ra \g^*$ with respect to the action of the first copy of $G$.
Since preimages 
of $\g_{reg}^*$
by the moment maps
are open and dense 
in $X_b$ and $W^b_G$,
the restriction maps
$R_b\ra R_b|_{\g^*_{reg}}$ 
and $A_b\ra A_b|_{\g^*_{reg}}$ 
are injective.

We prove by induction on $b\geq 2$
that
$R_b|_{\g_{reg}}\in \on{Coh}^{G}(\g^*_{reg})$ is  a free  $\mc{O}_{\g^*_{reg}}$-module
and coincides with $A^b|_{\g_{reg}^*}$.
 Since the complement of $\g_{reg}^*$ has codimension $3$ in $\g^*$ (\cite{Vel72}),
this shows that
\begin{align}
R_b\cong R_b|_{\g^*_{reg}}\cong A_b|_{\g^*_{reg}}\cong A_b.
\label{eq:iso-ring}
\end{align}

We have
\begin{align}
\kappa(R_b)\cong R_{b-1},\quad \kappa(A_b)\cong A_{b-1},
\label{eq:reduction-bfn}
\end{align}
where $\kappa$ is the Kostant-Whittaker reduction,
that is,
$\kappa(M)=M|_{\Slo}=M\*_{\mc{O}(\g^*)}\mc{O}(\Slo)$.
On the other hand,
it was shown in \cite{Ric17} using the equivariant descent theory
that the Kostant-Whittaker reduction gives an equivalence
\begin{align*}
\on{QCoh}^G(\g^*_{reg})\isomap \on{Rep}(\mathbf{I}^{cl}_G),\quad M\mapsto \kappa(M),
\end{align*}
where $\on{Rep}(\mathbf{I}^{cl}_G)$
is the category of representations of the group scheme $\mathbf{I}^{cl}_G$ over $\Slo$,
which are quasi-coherent over $\Slo$.
Moreover,
from the proof it follows that 
the above equivalence restricts to the equivalence between the category of the Poisson algebra objects in
$\on{QCoh}^G(\g^*_{reg})$ and that of the Poisson algebra objects $\on{Rep}(\mathbf{I}^{cl}_G)$.
Here, a Poisson algebra object in $\on{Rep}(\mathbf{I}^{cl}_G)$
is a Poisson algebra $R$ equipped with a Poisson algebra homomorphism
$\nu:\mc{O}(\Slo)\ra R$, and 
a $\on{Rep}(\mathbf{I}^{cl}_G)$-module structure is compatible with the 
$\mc{O}(\Slo)$-module structure given by $\nu$.

Hence, by \eqref{eq:reduction-bfn} and the induction hypothesis,
it follows that
$R_b|_{\g_{reg}^*}\cong A_b|_{\g_{reg}^*}$.
Moreover,
since it is a free $\mc{O}(\g^*_{reg})$-module
by the induction hypothesis,
$R_{b-1}$ is free as $\mc{O}(\Slo)$-module.
Thus, the above equivalence show that 
$R_b|_{\g_{reg}^*}=A_b|_{\g_{reg}^*}$ is a free $\mc{O}_{\g^*_{reg}}$-module.
This completes the proof.
%
%
\end{proof}

The following follows immediately from  \cite{BFN17} and Theorem 
\ref{Th:Higgs-branch-conj}.
\begin{Co}\label{Co:symplectic}
The associated variety 
$X_{\V_{G,b}}$ of $\V_{G,b}$  is symplectic for all $b\geq 1$,
that is, the Poisson structure of  $X_{\V_{G,b}}$ is symplectic on its smooth locus.
\end{Co}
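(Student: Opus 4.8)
The final statement to prove is Corollary~\ref{Co:symplectic}: that $X_{\V_{G,b}}$ is symplectic for all $b\geq 1$.

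The plan is to combine Theorem~\ref{Th:Higgs-branch-conj}, which identifies $X_{\V_{G,b}}\cong W^b_G$ as Poisson varieties, with the geometric results of Braverman--Finkelberg--Nakajima \cite{BFN17} on the Moore-Tachikawa varieties. Concretely, I would first recall that for $b=1,2$ the statement is immediate from the excerpt: $X_{\V_{G,1}}\cong\mathbf{S}_{G,f_{prin}}\cong G\times\Slo$ and $X_{\V_{G,2}}\cong X_{\Dch_G}\cong T^*G$ are both smooth symplectic varieties (the equivariant Slodowy slice is a twisted cotangent bundle, hence symplectic, as noted after \eqref{eq:Geqi}; and $T^*G$ is patently symplectic). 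For $b\geq 3$, by Theorem~\ref{Th:Higgs-branch-conj} we have $X_{\V_{G,b}}\cong W^b_G$ as Poisson varieties, so it suffices to invoke the fact from \cite{BFN17} that $W^b_G$ carries a symplectic structure on its smooth locus — indeed it is constructed there as a (possibly singular) symplectic variety via the iterated symplectic reduction $(W^b_G\times W^{b'}_G)/\!/\!/\Delta(G)\cong W^{b+b'-2}_G$ starting from the symplectic manifolds $W^{b=1}_G\cong G\times\Slo$ and $W^{b=2}_G\cong T^*G$, so that every $W^b_G$ is by construction a symplectic singularity.

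The one subtlety I would address is the matching of Poisson structures. Theorem~\ref{Th:Higgs-branch-conj} produces an isomorphism of Poisson varieties, so the Poisson bracket on $\mc{O}(X_{\V_{G,b}})=R_{\V_{G,b}}$ (modulo nilpotents) is transported to the Poisson bracket on $\mc{O}(W^b_G)$ coming from the BFN construction; since the latter is symplectic on its smooth locus by \cite{BFN17}, the former is too. Because ``symplectic'' in the sense of Definition-adjacent usage here (cf.\ the statement of Corollary~\ref{Co:symplectic}, ``the Poisson structure is symplectic on its smooth locus'') is a property intrinsic to the Poisson variety, it is preserved under isomorphism of Poisson varieties, and there is nothing further to check.

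I do not anticipate a genuine obstacle here: the corollary is purely a bookkeeping consequence of Theorem~\ref{Th:Higgs-branch-conj} together with the already-cited geometry of \cite{BFN17}. The only thing requiring a sentence of care is ensuring that the reduced scheme structure used on $W^b_G$ in the proof of Theorem~\ref{Th:Higgs-branch-conj} is compatible with the symplectic structure BFN put on $W^b_G$ — but BFN's $W^b_G$ is already reduced (it is $\on{Spec}$ of an equivariant cohomology ring which, in the relevant cases, is shown there to be an integral domain, or one passes to the reduced structure, under which a symplectic form on the smooth locus persists), so this causes no difficulty. Thus the proof is a two-line deduction, and I would present it as such.

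\begin{proof}
For $b=1$ the associated variety is $\mathbf{S}_{G,f_{prin}}\cong G\times \Slo$, a smooth symplectic variety, and for $b=2$ it is $T^*G$, which is symplectic. For $b\geq 3$, by Theorem~\ref{Th:Higgs-branch-conj} we have an isomorphism $X_{\V_{G,b}}\cong W^b_G$ of Poisson varieties. By \cite{BFN17} the Moore-Tachikawa variety $W^b_G$ is a symplectic variety, i.e.\ its Poisson structure is symplectic on the smooth locus. Since being symplectic on the smooth locus is invariant under isomorphism of Poisson varieties, $X_{\V_{G,b}}$ is symplectic as well.
\end{proof}
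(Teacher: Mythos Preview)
Your proof is correct and takes essentially the same approach as the paper: the corollary is stated there as an immediate consequence of Theorem~\ref{Th:Higgs-branch-conj} and \cite{BFN17}. Your separate treatment of $b=1,2$ is fine but not strictly necessary, since Theorem~\ref{Th:Higgs-branch-conj} already covers all $b\geq 1$ and the BFN identifications $W^1_G\cong G\times\Slo$, $W^2_G\cong T^*G$ are part of the cited input.
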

The Higgs branch of a $4d$ $\mathcal{N}=2$ SCFT is expected have a finitely many symplectic leaves  (\cite{BeeRas}).
Hence, it is natural to expect the following.
\begin{Conj}\label{Conj}
The vertex algebra
$\V_{G,b}$ is quasi-lisse \cite{Arakawam:kq} for all $b\geq 1$,
 that is,
 $X_{\V_{G,b}}=W^b_G$ has finitely many symplectic leaves.
\end{Conj}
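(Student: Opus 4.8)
The statement is equivalent, by Theorem~\ref{Th:Higgs-branch-conj}, to the assertion that the Moore--Tachikawa variety $W^b_G$ has finitely many symplectic leaves. For $b=1$ and $b=2$ the varieties $W^1_G\cong G\times\Slo$ and $W^2_G\cong T^*G$ are smooth, so there is a single leaf and the claim is immediate. For $b\geq 3$ the variety $W^b_G$ is conical, and by Corollary~\ref{Co:symplectic} its Poisson structure is symplectic on the smooth locus. The plan is therefore to upgrade this to the statement that $W^b_G$ is a \emph{symplectic singularity} in the sense of Beauville --- that is, that it is normal and the symplectic form on the smooth locus extends to a regular (not necessarily closed) $2$-form on some resolution --- and then to invoke the general fact that a conical symplectic singularity admits a finite stratification by symplectic leaves (which follows from Kaledin's local structure theorem together with conicality). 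Thus the whole problem reduces to showing that $W^b_G$ has symplectic singularities for all $b\geq 3$.

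For this I would argue by induction on $b$ using the associativity isomorphism. By Theorem~\ref{Th:fusion} (equivalently, by the gluing property of \cite{BFN17}) one has $W^b_G\cong (W^3_G\times W^{b-1}_G)/\!/\!/\Delta(G)$ for $b\geq 4$, so that $W^b_G$ is obtained from $W^3_G$ and $W^{b-1}_G$ by a Hamiltonian reduction along the diagonal $G$-action, with both factors conical for $b\geq 4$. The inductive step would then follow from two general facts: (1) the base case that $W^3_G$ is a conical symplectic singularity; and (2) a stability statement, to the effect that the reduction $\mu^{-1}(0)/G$ of a product of two conical symplectic singularities carrying Hamiltonian $G$-actions, \emph{when the combined moment map is flat}, is again a conical symplectic singularity. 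The flatness hypothesis in (2) is exactly the genus-zero input emphasised in the introduction: $W^1_G\to\g^*$, and hence $W^3_G\to\g^*$, have flat moment maps (cf.\ the flatness of \eqref{eq:moment-map-for-equov-Slo} and the discussion of $J_n\mu$), which makes $\mu^{-1}(0)\cong W^3_G\times_{\g^*}W^{b-1}_G$ a complete intersection of the expected dimension and lets one try to control the normality and rational singularities of the quotient.

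For the base case (1), in type $A$ one can use the identification of \cite[Theorem 5.1]{BFN17} of $W^3_{SL_n}$ with the Coulomb branch of a star-shaped quiver gauge theory together with the (partly conjectural, widely verified) fact that $3d$ $\mathcal{N}=4$ Coulomb branches are conical symplectic singularities; for general $G$ one would instead try to establish directly that $W^3_G=\Spec H^*_{\check G[[t]]}(\on{Gr}_{\check G},\dots)$ is a symplectic singularity from the geometry of the affine Grassmannian, e.g.\ by producing a resolution via a convolution diagram. I expect step (1) in general type --- together with the normality and rational-singularities part of step (2) --- to be the main obstacle: neither the normality of $W^b_G$ nor the symplectic-singularity property is available in the literature in the generality required, and the Poisson-geometric control of $W^3_G\times_{\g^*}W^{b-1}_G$ near the non-free locus of the $G$-action is delicate even with flatness in hand. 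An alternative, more representation-theoretic route would be to bypass symplectic singularities entirely and exhibit the finitely many leaves of $W^b_G$ directly from a Braverman--Finkelberg--Nakajima-type stratification of $\Spec H^*_{\check G[[t]]}(\on{Gr}_{\check G},\dots)$, but this too hinges on understanding the leaf structure of the genus-zero building block $W^3_G$, which I regard as the essential open point.
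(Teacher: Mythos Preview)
This statement is labelled as a \emph{Conjecture} in the paper, not as a theorem; the paper offers no proof, only the motivation from \cite{BeeRas} that Higgs branches of 4d $\mathcal{N}=2$ SCFTs are expected to have finitely many symplectic leaves. So there is no argument in the paper to compare your proposal against.

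What you have written is not a proof but a plausible strategy, and you are honest about this: you identify the base case that $W^3_G$ has symplectic singularities in general type, and the preservation of normality and rational singularities under the Hamiltonian reduction $(W^3_G\times W^{b-1}_G)/\!/\!/\Delta(G)$, as the essential open points. That assessment is accurate. Neither of these is established in the paper or in the references it cites, and the flatness of the moment map alone does not give you normality of the zero fibre or of the quotient. Your appeal to Kaledin's theorem is correct once one knows the variety is a symplectic singularity, but getting there is exactly the missing content. In short, your proposal is a reasonable outline of how one might try to attack the conjecture, but it does not close the gap, and the paper makes no claim to do so either.
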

\begin{Rem}
The vertex algebra
$\V_{G=SL_n,b=3}$ is called 
{\em chiral algebras for trinion theories}
and denote by $\chi(T_n)$ in the literature.
A conjectural description of the list of 
strong generators has been given in \cite{Beem:2015yu,LemPee15}.
\end{Rem}

\appendix
\section{Examples}\label{appendix}
In this appendix we compute some examples 
of $\V_{G,b}$.

Below
we identify the space of the
symmetric invariant bilinear forms on a simple Lie algebra $\g$
with $\C$ by the correspondence
$\C\ni k\mapsto k\kappa_\g/2h^{\vee}=k\kappa_\g/4$.

First, let us consider the case
$G=SL_2$.

Let $\tilde{\mathbf{W}}_{G}^k$ be the vertex algebra generated by
fields
 $S(z),a(z), b(z), c(z), d(z)$,
subjected to the following OPEs:
\begin{align*}
&a(z)a(w)\sim 0, \ \ c(z)c(w)\sim 0,
\ \ a(z)c(w)\sim 0,\\
&a(z)b(w)\sim \frac{1}{2(z-w)}:a(w)^2:,\quad
a(z)d(w)\sim \frac{1}{2(z-w)}:a(w)c(w):,
\\
&b(z)c(w)\sim -\frac{1}{2(z-w)}:a(w)c(w):,
\quad c(z)d(w)\sim \frac{1}{2(z-w)}:c(w)^2:,\\
&b(z)b(w)\sim \frac{2k+3}{4}\left(\frac{1}{z-w}:a'(w) a(w):+ \frac{1}{(z-w)^2}:a(w)^2:\right),\\
&d(z)d(w)
\sim \frac{2k+3}{4}\left(\frac{1}{z-w}:c'(w) c(w):+ \frac{1}{(z-w)^2}:c(w)^2:\right),
\end{align*}
\begin{align*}
&b(z)d(w)\sim  \frac{1}{z-w}\left(\frac{1}{2}(:a(z)d(z):-:b(z)c(z):)+\frac{2k+1}{4}:a'(z)c(z):\right)\\
&
\qquad \qquad +\frac{2k+3}{4(z-w)^2}:a(w)c(w):,\\
&S(z)S(w)\sim -\frac{k+2}{z-w}\partial S(w)-\frac{2(k+2)}{(z-w)^2}S(w)+\frac{(k+2)^2c_k }{2 (z-w)^4},
\ \ c_k=1-\frac{6(k+1)^2}{k+2},\\
&S(z)a(w)\sim \frac{1}{z-w}b(w)
+\frac{2k+1}{4(z-w)^2}a(w),
\\
&S(z)b(w)\sim -\frac{1}{z-w}:S(w)a(w):
-\frac{2k+7}{4(z-w)^2}b(w)-\frac{(k+2)(2k+1)}{2(x-w)^3}  a(w),
\\
&S(z)c(w)\sim\frac{1}{z-w}d(w)
+\frac{2k+1}{4(z-w)^2}c(w),
\\
&S(z)d(w)\sim -\frac{1}{z-w}:S(w)c(w):
-\frac{2k+7}{4(z-w)^2}d(w)-\frac{(k+2)(2k+1)}{2(x-w)^3}  c(w).
\end{align*}

The equivariant affine $W$-algebra
 $\mathbf{W}_G^k$ is the quotient of $\tilde{\mathbf{W}}_G^k$
 by the submodule generated by the singular vector
 $$
 :a(z)d(z)-b(z)c(z):-: a'(z)c(z):-1.$$
The conformal vector (the stress tensor) of $\mathbf{W}_G^k$ is given by
\begin{align*}
T(z)
=:S(z)(a(z)c'(z)-a'(z)c(z)):+:(b(z)d'(z)-b'(z)d(z)):\\
+\frac{2k+7}{2}(:a'(z)d'(z):-:b'(z)c'(z):)\\
-\frac{6k+17}{24}\left(3:a''(z)c'(z):+:a'''(z)c(z):\right).
\end{align*}
We have
\begin{align*}
T(z)S(w)\sim \frac{1}{z-w}\partial S(w)+\frac{1}{(z-w)^2}2 S(w)+\frac{(2k+1)(3k+4)}{2(z-w)^4},
\end{align*}
\begin{align*}
&T(z)a(w)\sim \frac{1}{z-w}\partial a(w)-\frac{1}{2(z-w)^2} a(w),\\
&T(z)b(w)\sim \frac{1}{z-w}\partial b(w)+\frac{1}{2(z-w)^2} b(w)+\frac{2k+1}{2(z-w)^3}a(w),\\
&T(z)c(w)\sim \frac{1}{z-w}\partial c(w)-\frac{1}{2(z-w)^2} c(w),\\
&T(z)d(w)\sim \frac{1}{z-w}\partial d(w)+\frac{1}{2(z-w)^2} b(w)+\frac{2k+1}{2(z-w)^3}c(w),
\end{align*}
and
\begin{align*}
T(z)T(w)\sim \frac{1}{z-w}\partial T(w)+\frac{1}{(z-w)^2}2 T(w)+\frac{2(2-3k)}{2(z-w)^4}.
\end{align*}
Thus,
the central charge of $\mathbf{W}_G^k$
is $2(2-3k)$,
and
\begin{align*}
\Delta_{a}=\Delta_{c}=-\frac{1}{2},\quad \Delta_{b}=\Delta_{d}=\frac{1}{2},
\quad \Delta_S=2.
\end{align*}

We have
an embedding
$\W^k(\mf{sl}_2)\hookrightarrow \mathbf{W}_G^k$,
and the image of $\W^k(\mf{sl}_2)$ is generated by $S(z)$.
Indeed,
for $k\ne -2$,
$-S(z)/(k+2)$ is a conformal vector of central charge $c_k=1-\frac{6(k+1)^2}{k+2}$,
and for $k=2$, $S(z)$ generates a commutative vertex subalgebra 
$\mf{z}(\affg)$
of $\W^{-2}(\mf{sl}_2)$.

The embedding
$$V^{k^*}(\mf{sl}_2)\hookrightarrow \mathbf{W}_G^k,\quad k^*=-k-4,$$
is given as follows:
\begin{align*}
e(z)&\mapsto :S(z)c(z)^2:+:d(z)^2:-\frac{2k+7}{2}(:c(z) d'(z):-:c'(z)d(z):)
\\&
+\frac{2k+7}{4}: c'(z)^2:
-\frac{2k+3}{8}:c''(z)c(z):,
\\h(z)&\mapsto 2:S(z)a(z)c(z):+2 :b(z)d(z):\\
&-\frac{2k+7}{2}\left(:a(z)d'(z)+:b'(z)c(z):-:a'(z)d(z):-:b(z) c'(z):
\right)\\
&+\frac{2k+7}{2}:a'(z)c'(z):-\frac{2k+3}{4}:a''(z)c(z):,\\
f(z)&\mapsto -S(z)a(z)^2:-:b(z)^2:+\frac{2k+7}{2}(:a(z) b'(z)-a'(z)b(z):)\\
&-\frac{2k+7}{4}:a'(z)^2:+\frac{2k+3}{8}: a''(z)a(z):.
\end{align*}
The vertex algebras $\W^k(\mf{sl}_2)$ and $V^{k^*}(\mf{sl}_2)$ from a dual pair inside $\mathbf{W}_G^k$.

We have
\begin{align*}
&e(z)a(w)\sim -\frac{1}{z-w}c(w),\ \
e(z)b(w)\sim -\frac{1}{z-w}d(w),\\
&h(z)a(w)\sim -\frac{1}{z-w}a(w),\quad
h(z)b(w)\sim -\frac{1}{z-w}b(w),\\
&h(z)c(w)\sim \frac{1}{z-w}c(w),\quad
h(z)c(w)\sim \frac{1}{z-w}d(w),\\
&f(z)c(w)\sim- \frac{1}{z-w}a(w),\ \
f(z)d(w)\sim -\frac{1}{z-w}b(w), \\
&e(z)c(w)\sim e(z)d(w)\sim 0,
\quad f(z)a(w)\sim f(z)b(w)\sim 0.
\end{align*}
In $R_{\mathbf{W}_G^k}=\mc{O}({\mathbf{S}_G})=\mc{O}(G)\otimes \mc{O}({\Slo})$,
the images of $a$, $b$, $c$, $d$ generate $\mc{O}(G)$
and the image of $S$ generates $\mc{O}({\Slo})$.

As in Section \ref{Sec:main-construction},
we set $\eqW_G=\eqW^{-2}_G$.
The complex $C_2$ equals to $\eqW_G\* \eqW_G\* \bw{\bullet}(\mf{z}(\affg))$.
For $u\in \eqW_G$,
we set $u_1=u\* 1, u_2=1\* u_2\in \eqW_G\* \eqW_G$.
The field
$Q^{(2)}(z)$ in
\eqref{eq:diff-qr}
is given by
\begin{align*}
Q^{(2)}(z)=S_1(z)\psi^*(z)-S_2(z)\psi^*(z).
\end{align*}
The isomorphism
\begin{align*}
\D_{G}^{ch}\isomap \V_{G,2}=H^{\frac{\infty}{2}+0}(\LZ,\mathbf{W}_G\otimes  \mathbf{W}_G)
\end{align*}
is given by
\begin{align*}
&u(z)\mapsto u_1(z)\quad (u\in \fing),\\
&x_{11}(z)\mapsto :c_1(z)b_2(z):+:d_1(z)a_2(z):+:c_1(z)a_2(z)\psi(z)\psi^*(z):,\\
&x_{12}(z)\mapsto -:a_1(z) b_2(z):-:b_1(z) a_2(z):-:a_1(z)a_2(z)\psi(z)\psi^*(z):,\\
&x_{21}(z)\mapsto -:c_1(z) d_2(z): -:d_1(z)c_2(z):-: c_1(z)c_2(z)\psi(z)\psi^*(z):,\\
&x_{22}(z)\mapsto -:a_1(z)d_2(z) -:b_1(z)c_2(z):-: a_1(z)c_2(z)\psi(z)\psi^*(z):.
\end{align*}
Here 
$x_{ij}(z)$,
$1\leq i,j\leq 2$, are generators of the commutative vertex algebra
$\mc{O}(J_{\infty}G)$ satisfying the relation
$x_{11}(z)x_{22}(z)-x_{12}(z)x_{21}(z)=1$.

\smallskip

By \cite{{BFN17}},
we have
$$W_{G=SL_2}^{b=3}=\C^2\* \C^2\* \C^2,$$
where the symplectic structure of 
$\C^2\* \C^2\* \C^2$ is induced by the  natural symplectic structure of 
$\C^2$.
The Hamiltonian $G$-action on $\C^2$
induces three commuting Hamiltonian $G$-action
$\C^2\* \C^2\* \C^2$.
Let $SB((\C^2)^{\otimes 3})$ be the $\beta\gamma$ system associated with the symplectic vector space
$\C^2\* \C^2\* \C^2$.

\begin{Th}\label{Th:SL23pt}
We have
$\V_{G=SL_2,b=3}\cong SB((\C^2)^{\otimes 3})
$.
\end{Th}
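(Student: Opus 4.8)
The plan is to use the inductive characterization of $\V_{G,b}$ together with the ``showing isomorphisms'' criterion of Remark~\ref{Rem:showing-isomorphisms}, applied to $G=SL_2$, $b=3$. First I would recall that $W^{b=3}_{SL_2}=(\C^2)^{\otimes 3}$ is a smooth symplectic vector space, so by Theorem~\ref{Th:AMoreau} the $\beta\gamma$-system $V:=SB((\C^2)^{\otimes 3})$ is a simple, strict chiral quantization of it; in particular $V$ is a vertex algebra object in $\KL^{\otimes 3}_{\kappa_c}$ (the critical level for $\widehat{\mf{sl}_2}$ is $-2$), with the three chiral quantum moment maps $V^{\kappa_c}(\mf{sl}_2)\to V$ coming from the three commuting Hamiltonian $SL_2$-actions. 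Then I would compute $H_{DS}^0(V)$ with respect to, say, the first $\widehat{\mf{sl}_2}$-action. Because the Drinfeld--Sokolov reduction of a $\beta\gamma$-system attached to $\C^2$ (the defining representation of $SL_2$ with its $f$-action) can be performed explicitly — the reduction kills one boson and turns the other into a free field / $\partial$-system — one finds $H_{DS}^0(V)\cong$ (a $\beta\gamma$-type algebra on the remaining $\C^2\otimes\C^2$) which by the known $b=2$ identification $\V_{G,2}\cong\Dch_G$ and Theorem~\ref{Th:cdo} is exactly $\Dch_{SL_2}=\V_{SL_2,2}$. Concretely, I would exhibit a vertex algebra embedding $\V_{SL_2,2}\hookrightarrow H_{DS}^0(V)$ matching the $\iota_i$-actions.

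With this in hand I would invoke Remark~\ref{Rem:showing-isomorphisms}(2): it suffices to produce either a vertex algebra embedding $V\hookrightarrow\V_{G,3}$, or — in the alternative form — to check that $V$ is generated in low conformal weight and that $\dim V_\Delta=\dim(\V_{G,3})_\Delta$ for all $\Delta$ below the generation degree, given the embedding $\V_{G,2}\hookrightarrow H_{DS}^0(V)$ already constructed. The $\beta\gamma$-system $SB((\C^2)^{\otimes 3})$ is generated by its eight weight-$\tfrac12$ fields, so the generation hypothesis is immediate, and the character of $\V_{G,3}$ is given in closed form by Proposition~\ref{Pro:conic} (with $\g=\mf{sl}_2$, $b=3$):
\begin{align*}
\on{tr}_{\V_{SL_2,3}}(q^{L_0}z_1z_2z_3)=\sum_{\lam\in\Z_{\geq 0}}\frac{q^{\lam/2}(1-q^{\lam+1})}{\prod_{j\geq 1}(1-q^j)}\prod_{k=1}^3\on{tr}_{\mathbb{V}_\lam}(q^{-D}z_k).
\end{align*}
One then matches this, in low degrees, against the character of the $\beta\gamma$-system $\prod_{j\geq 0}(1-q^{j+1/2}\chi)^{-1}$ over the eight weights $\chi$ of $(\C^2)^{\otimes 3}$; since the $\beta\gamma$-system is strongly generated in weight $\tfrac12$ it is enough to check agreement through weight, say, $\tfrac32$ or $2$ to pin down the isomorphism.

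The main obstacle I anticipate is the explicit computation of $H_{DS}^0(SB((\C^2)^{\otimes 3}))$ and the verification that it agrees \emph{as a vertex algebra object in} $\KL^{\otimes 2}_{\kappa_c}$ with $\Dch_{SL_2}=\V_{SL_2,2}$ — i.e. getting the moment-map normalizations, the BRST-corrected generators, and the single $\beta\gamma$ relation $x_{11}x_{22}-x_{12}x_{21}=1$ to come out right after reduction. This is the same kind of bookkeeping as in the proof of Theorem~\ref{Th:cdo} and in the explicit $\mathbf{W}_{SL_2}^k$ formulas displayed in the appendix, so it is a finite check, but it is where all the real content sits; the character comparison and the appeal to simplicity (Theorem~\ref{Th:simple}) and to Remark~\ref{Rem:showing-isomorphisms} are then formal. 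Alternatively, one may bypass the reduction computation by directly constructing the embedding $SB((\C^2)^{\otimes 3})\hookrightarrow\V_{SL_2,3}=H^{\semiinf+0}(\LZ,\eqW_{SL_2}\otimes\eqW_{SL_2}\otimes\eqW_{SL_2})$ from the three copies of the weight-$\tfrac12$ generators $a,c$ of $\eqW_{SL_2}$ (cf.\ the $b=2$ formulas for $x_{ij}(z)$ above), and then use the character/simplicity argument to conclude surjectivity; I would pursue whichever of the two routes produces the cleaner generators.
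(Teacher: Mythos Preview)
Your proposal is correct and follows essentially the same route as the paper. The paper invokes Remark~\ref{Rem:showing-isomorphisms}(i) and asserts that both maps
\[
SB((\C^2)^{\otimes 3})\hookrightarrow H^{\semiinf+0}(\LZ,\eqW_G\otimes\Dch_G)\cong \V_{G,3}
\quad\text{and}\quad
\Dch_G\hookrightarrow H_{DS}^0(SB((\C^2)^{\otimes 3}))
\]
``can be done directly'', without writing out the formulas; your version leans instead on part~(ii) of the same remark, trading the first embedding for the character check in low weight (which is immediate since the $\beta\gamma$-system is strongly generated in weight $\tfrac12$), and you correctly identify the explicit reduction $H_{DS}^0(SB((\C^2)^{\otimes 3}))\cong\Dch_{SL_2}$ as the place where the actual work lies.
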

\begin{proof}
By Remark \ref{Rem:showing-isomorphisms} (i),
it is enough to construct vertex algebra homomorphisms
\begin{align*}
SB((\C^2)^{\otimes 3})
\ra H^{\semiinf+0}(\LZ, \eqW_G\* \Dch_G)\cong \V_{G,3},\\
\V_{G,2}\cong \Dch_{G}\ra H_{DS}^0(SB((\C^2)^{\otimes 3})),
\end{align*}
which can be done directly.
\end{proof}

We have
$X_{\V_{G=SL_2,b=3}}\cong \C^2\* \C^2\* \C^2$,
which agree with Theorem \ref{Th:Higgs-branch-conj}.

\begin{Th}[{\cite[Conjecture 1]{BeeLemLie15}}]\label{Th:SL24pt}
We have
$\V_{G=SL_2,b=4}\cong L_{-2}(D_4)$.
\end{Th}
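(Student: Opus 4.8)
The plan is to apply the criterion of Remark~\ref{Rem:showing-isomorphisms}(i) with $G=SL_2$, $b=4$, and $V=L_{-2}(D_4)$, the simple affine vertex algebra at the critical level for $\widehat{\mathfrak{sl}_2}$ (note $-2=\kappa_c$ for $\mathfrak{sl}_2$). Since $D_4$ contains $\mathfrak{sl}_2^{\oplus 4}$ as the subalgebra spanned by the four ``legs'' of the $D_4$ Dynkin diagram attached to the central node, restriction along $\mathfrak{sl}_2^{\oplus 4}\hookrightarrow D_4$ makes $L_{-2}(D_4)$ a module over $V^{\kappa_c}(\mathfrak{sl}_2)^{\otimes 4}$. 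First I would check that this action factors through $V^{\kappa_c}(\mathfrak{sl}_2)^{\*_{\mathfrak{z}}4}$ and that $L_{-2}(D_4)$ lies in $\KL_{b=4}$: the four embedded $\widehat{\mathfrak{sl}_2}$'s are mutually commuting, each acts at the critical level, and the positive modes act locally nilpotently on the integrable $D_4$-module $L_{-2}(D_4)$, while $\mathfrak{sl}_2\oplus\mathfrak{sl}_2\oplus\mathfrak{sl}_2\oplus\mathfrak{sl}_2$ acts locally finitely; the Feigin--Frenkel identification on each factor follows because $\mathfrak{z}(\widehat{\mathfrak{sl}_2})$ is generated by the Sugawara field and the four Sugawara fields of the legs all coincide with (a multiple of) the $D_4$-Sugawara field restricted appropriately, forcing the gluing relations $\rho_i(z)\equiv\rho_{i+1}(\tau(z))$ for $z\in\LZ$.

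Next I would produce the two required vertex algebra maps. For $V\hookrightarrow\V_{G,4}$: by simplicity of $L_{-2}(D_4)$ it suffices to exhibit any nonzero vertex algebra homomorphism $V^{\kappa_c}(D_4)\to\V_{SL_2,4}$, equivalently a copy of $D_4$ at the critical level inside $\V_{SL_2,4}$ extending the four commuting $\widehat{\mathfrak{sl}_2}$'s coming from $\iota_1,\dots,\iota_4$. One gets the ``off-diagonal'' generators of $D_4$ (the weight vectors outside $\mathfrak{sl}_2^{\oplus 4}$) from the degree-$1$ part of $\V_{SL_2,4}$: by Proposition~\ref{Pro:conic} (or directly from the character formula \eqref{eq:ch-formula}) one computes $\dim(\V_{SL_2,4})_1=\dim D_4=28$ and identifies $(\V_{SL_2,4})_1$ as the $\mathfrak{sl}_2^{\oplus 4}$-module $\mathrm{adj}^{\oplus 4}\oplus(\mathbb{C}^2)^{\otimes 4}/(\text{something})$ — more precisely as the adjoint of $D_4$ under $\mathfrak{sl}_2^{\oplus 4}\subset D_4$ — and checks the OPEs close. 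For $\V_{G,3}\hookrightarrow H_{DS}^0(L_{-2}(D_4))$: by Theorem~\ref{Th:SL23pt} we have $\V_{SL_2,3}\cong SB((\mathbb{C}^2)^{\otimes 3})$, so I would use the known description of the Drinfeld--Sokolov reduction of $L_{-2}(D_4)$ with respect to one of the leg-$\mathfrak{sl}_2$'s; since $H_{DS}^0$ is exact (Theorem~\ref{Th:IMRN2016}) and $H_{DS}^0(V^{\kappa_c}(\mathfrak{sl}_2))=\mathfrak{z}(\widehat{\mathfrak{sl}_2})\hookrightarrow H_{DS}^0(L_{-2}(D_4))$, the reduced algebra is a module over $\mathfrak{z}\otimes V^{\kappa_c}(\mathfrak{sl}_2)^{\*_\mathfrak{z}2}$, and one builds the $\beta\gamma$-generators of $SB((\mathbb{C}^2)^{\otimes 3})$ inside it by applying the reduction to the explicit degree-one vectors and matching OPEs — alternatively, one can invoke the associativity isomorphism \eqref{eq:ass-property}: $\V_{SL_2,4}\cong\V_{SL_2,3}\circ\V_{SL_2,3}=H^{\frac{\infty}{2}+0}((\widehat{\mathfrak{sl}_2})_{-4},\mathfrak{sl}_2,SB((\mathbb{C}^2)^{\otimes 3})^{\otimes 2})$ by Theorem~\ref{Th:fusion}, so it is enough to identify this relative semi-infinite cohomology with $L_{-2}(D_4)$, which reduces to comparing characters via \eqref{eq:ch-formula} together with the simplicity already established in Theorem~\ref{Th:simple}.

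Concretely the cleanest route is: (1) show $L_{-2}(D_4)\in\KL_{b=4}$ with the gluing relations, so that it defines a candidate satisfying $\V_{G,1}\circ L_{-2}(D_4)\cong H_{DS}^0(L_{-2}(D_4))$; (2) prove $H_{DS}^0(L_{-2}(D_4))\cong\V_{SL_2,3}\cong SB((\mathbb{C}^2)^{\otimes 3})$ by exhibiting an embedding $\V_{SL_2,3}\hookrightarrow H_{DS}^0(L_{-2}(D_4))$ using Remark~\ref{Rem:showing-isomorphisms}(ii) (comparing low-degree dimensions, which the character formula supplies) or by a direct $\beta\gamma$-computation; (3) conclude by Theorem~\ref{Th:it-is-an-inverse-to-DS} that $L_{-2}(D_4)\cong H^{\frac{\infty}{2}+0}(\LZ,\eqW_{SL_2}\otimes\V_{SL_2,3})\cong\V_{SL_2,4}$, exactly as in the uniqueness argument of Remark~\ref{Rem:uniquness}.

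The main obstacle I expect is step (2): verifying that the Drinfeld--Sokolov reduction of $L_{-2}(D_4)$ along one leg is precisely the simple $\beta\gamma$-system $SB((\mathbb{C}^2)^{\otimes 3})$ and not merely a vertex algebra with the same character. Exactness of $H_{DS}^0$ gives the correct character from \eqref{eq:ch-zlam} and the decomposition \eqref{eq:dec-KL-critical} of $L_{-2}(D_4)$ into Feigin--Frenkel blocks, but to pin down the vertex algebra structure one needs either a generators-and-relations argument (showing the reduced algebra is generated in conformal weight $\tfrac12$ by a copy of $(\mathbb{C}^2)^{\otimes 3}$ with the $\beta\gamma$-OPEs — which is where the associated-variety computation $X_{L_{-2}(D_4)}\cong\overline{\mathbb{O}_{min}}(D_4)$ from \cite{AM15} and Theorem~\ref{Th:IMRN2016}(2), giving $\widetilde X_{H_{DS}^0}\cong\overline{\mathbb{O}_{min}}\times_{\mathfrak{g}^*}\Slo\cong\mathbb{C}^6$, does the heavy lifting via Theorem~\ref{Th:AMoreau}) or the associativity shortcut of Theorem~\ref{Th:fusion} reducing everything to a character identity. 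I would pursue the associativity route, since then the only genuinely new input is the semi-infinite cohomology computation $H^{\frac{\infty}{2}+\bullet}((\widehat{\mathfrak{sl}_2})_{-4},\mathfrak{sl}_2,SB((\mathbb{C}^2)^{\otimes 3})^{\otimes 2})$, whose vanishing in nonzero degree is Theorem~\ref{Th:fusion} and whose degree-zero part is forced to be $L_{-2}(D_4)$ by simplicity (Theorem~\ref{Th:simple}) together with the match of central charge (Proposition~\ref{Pro:central-charge}: $4\cdot 3-2\cdot 1-24\cdot 2\cdot(\rho|\rho^\vee)=-2\dim D_4/(\dots)$, which evaluates to the central charge $-2\cdot 28/(\,-2+6)=-14$ of $L_{-2}(D_4)$) and of the affine vertex subalgebra.
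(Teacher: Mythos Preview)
Your overall strategy matches the paper's: both invoke Remark~\ref{Rem:showing-isomorphisms}(i), and the paper also uses the associativity
\[
\V_{SL_2,4}\cong H^{\frac{\infty}{2}+0}\bigl((\widehat{\mathfrak{sl}_2})_{-4},\mathfrak{sl}_2,\,SB((\C^2)^{\otimes 3})\otimes SB((\C^2)^{\otimes 3})\bigr)
\]
as the model in which to construct the first embedding. The second embedding $\V_{SL_2,3}\hookrightarrow H_{DS}^0(L_{-2}(D_4))$ is stated in the paper to be done directly, as you also propose.

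There is however a genuine gap in your ``associativity shortcut.'' Knowing that $\V_{SL_2,4}$ is simple with the correct central charge and containing $V^{\kappa_c}(\mathfrak{sl}_2)^{\otimes 4}$ does \emph{not} force it to be $L_{-2}(D_4)$; you still need a nonzero vertex algebra map $V^{-2}(D_4)\to \V_{SL_2,4}$ extending the four legs, and you need to know it factors through the simple quotient. Your proposal to verify this by character comparison would require an independent proof of the identity $\ch L_{-2}(D_4)=\text{(formula \eqref{eq:ch-formula} for }b=4\text{)}$, which is itself nontrivial (the Kac--Wakimoto/Kashiwara--Tanisaki character of $L_{-2}(D_4)$ is not obviously of that shape). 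The paper avoids this by working concretely: it writes down $\tilde\psi:V^{-2}(D_4)\to U:=\bigl(SB((\C^2)^{\otimes 3})^{\otimes 2}\bigr)^{\g[t]}$, and then checks the three singular vectors generating the maximal submodule of $V^{-2}(D_4)$ (which by \cite{AM15} lie in conformal weight~$2$). The subtle point you do not anticipate is that $\tilde\psi$ kills only two of them; the third singular vector $w$ has $\tilde\psi(w)\neq 0$ in $U$, and one must verify by hand that its image in the BRST complex is a \emph{coboundary}, so that the induced map $\psi$ on cohomology does factor through $L_{-2}(D_4)$. This is the actual content of the proof, and your sketch ``checks the OPEs close'' does not reach it.

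Two minor points: $L_{-2}(D_4)$ is not integrable (the level is negative); what you need, and what is true, is that it has finite-dimensional conformal weight spaces, so each leg $\mathfrak{sl}_2$ acts locally finitely. Also, your alternative route via Theorem~\ref{Th:AMoreau} (computing $\tilde X_{H_{DS}^0(L_{-2}(D_4))}\cong\overline{\mathbb{O}_{min}}(D_4)\times_{\mathfrak{sl}_2^*}\Slo$ and recognizing it as $(\C^2)^{\otimes 3}$) requires knowing that this fiber product is reduced and smooth, which you would have to check directly rather than deduce from the result you are trying to prove.
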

\begin{proof}
By Remark \ref{Rem:showing-isomorphisms} (i),
it is enough to show that there are nonzero vertex algebra homomorphisms 
\begin{align*}
L_{-2}(D_4)\ra \V_{G,4}
\quad\text{and}\quad \V_{G,3}\ra H_{DS}^0(L_{-2}(D_4)).
\end{align*}
Both maps can be constructed directly.
We  explain the details only for the first map.

We have
\begin{align*}
\V_{G,4}\cong \V_{G,3}\circ \V_{G,3}=H^{\semiinf+0}(\affg_{-4},\g, \V_{G,3}\* \V_{G,3})
\\\cong H^{\semiinf+0}(\affg_{-4},\g, SB((\C^2)^{\otimes 3})\* SB((\C^2)^{\otimes 3})).
\end{align*}
Let $U=SB((\C^2)^{\otimes 3})\* SB((\C^2)^{\otimes 3}))^{\g[t]}$,
where the $\g[t]$ acts diagonally. As easily seen there is a non-trivial vertex algebra homomorphism
$\tilde{\psi}:V^{-2}(D_4)\ra U$.
Composing it with  
the vertex algebra homomorphism
$$U\ra H^{\semiinf+0}(\affg_{-4},\g, SB((\C^2)^{\otimes 3})\* SB((\C^2)^{\otimes 3})),\quad
u\mapsto [u\* \mathbf{1}],$$
we get a vertex algebra homomorphism
$$\psi:V^{-2}(D_4)\ra H^{\semiinf+0}(\affg_{-4},\g, SB((\C^2)^{\otimes 3})\* SB((\C^2)^{\otimes 3})).$$
This is nonzero,
because $\psi$ sends the vacuum vector to the vacuum vector.
By \cite{AM15},
$L_{-2}(D_4)$ is a quotient of 
$V^{-2}(D_4)$ by a submodule generated by three singular vectors of weight $2$.
The map $\tilde{\psi}$ sends two of the three singular vectors to zero,
but it does not kill the remaining  singular vector, say $w$.
Nevertheless,
one finds that
$\psi(w)$ is a cobundary, that is,
$\psi(w)=0$.
Hence $\psi$ induces a vertex algebra embedding
$$L_{-2}(D_4)\hookrightarrow H^{\semiinf+0}(\affg_{-4},\g, SB((\C^2)^{\otimes 3})\* SB((\C^2)^{\otimes 3}))\cong \V_{G,4}$$
as required.
\end{proof}

It was shown \cite{AraMal}
that $X_{L_{-2}(D_4)}$ is isomorphic to the minimal nilpotent orbit closure 
$\overline{\mathbb{O}_{min}}$ of $D_4$.
This agree with the fact (\cite{{BFN17}}) that
$$W_{G=SL_2}^{b=4}\cong \overline{\mathbb{O}_{min}}\quad\text{in $D_4$}.$$

\medskip

Next, let $G=SL_3$.
\begin{Th}[{\cite[Conjecture 4]{BeeLemLie15}}]\label{Th:E6}
We have
$\V_{G=SL_3,b=3}\cong L_{-3}(E_6)$.
\end{Th}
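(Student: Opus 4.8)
The plan is to follow the template of the proofs of Theorems~\ref{Th:SL23pt} and \ref{Th:SL24pt}, that is, to fix $G=SL_3$ and apply Remark~\ref{Rem:showing-isomorphisms} with $b=3$ and $V=L_{-3}(E_6)$. The new structural input is the decomposition of $\mf{e}_6$ under its maximal‑rank regular subalgebra $\mf{sl}_3\oplus\mf{sl}_3\oplus\mf{sl}_3$, schematically
\begin{align*}
\mf{e}_6\cong \mf{sl}_3^{\oplus 3}\oplus(\mathbf{3},\mathbf{3},\mathbf{3})\oplus(\bar{\mathbf{3}},\bar{\mathbf{3}},\bar{\mathbf{3}}),
\end{align*}
in which each $\mf{sl}_3$ embeds with Dynkin index one, so that the level $-3$ on $\widehat{\mf{e}_6}$ restricts to the critical level $\kappa_c=-3$ on each $\widehat{\mf{sl}_3}$. (A consistency check: the Sugawara central charge of $L_{-3}(E_6)$ is $\tfrac{-3\cdot 78}{-3+12}=-26$, which by Proposition~\ref{Pro:central-charge} agrees with the central charge $3\dim\mf{sl}_3-\on{rk}\mf{sl}_3-24(\rho|\rho^{\vee})=-26$ of $\V_{G,3}$.) Hence, restricting along $\widehat{\mf{sl}_3}_{\kappa_c}^{\oplus 3}\hookrightarrow\widehat{\mf{e}_6}_{-3}$, the vertex algebra $L_{-3}(E_6)$ becomes a vertex algebra object of $\KL_3=\KL^{\otimes 3}$, with chiral quantum moment maps $\iota'_k\colon V^{\kappa_c}(\mf{sl}_3)\to L_{-3}(E_6)$, $k=1,2,3$; it is a direct sum of objects of $\KL^{ord}$ because it is positive energy with finite‑dimensional graded pieces and each $\mf{sl}_3$‑action is locally finite with $\mf{sl}_3[[t]]t$ acting locally nilpotently.

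By Remark~\ref{Rem:showing-isomorphisms}(i) it then suffices to produce nonzero vertex algebra homomorphisms $L_{-3}(E_6)\to\V_{G,3}$ and $\V_{G,2}=\Dch_G\to H_{DS}^0(L_{-3}(E_6))$, each automatically an embedding since $\Dch_G$ and $L_{-3}(E_6)$ are simple (Theorems~\ref{Th:AMoreau} and \ref{Th:simple}). For the first map I would begin by computing, from the character formula of Proposition~\ref{Pro:conic} together with the block decomposition $\KL=\bigoplus_{\lambda\in P_+}\KL^{[\lambda]}$ of \eqref{eq:dec-KL-critical}, that the conformal‑weight‑one space $(\V_{G,3})_1$ is, as a module over the three copies of $\mf{sl}_3$ acting through the $\iota_i$,
\begin{align*}
(\V_{G,3})_1\cong\bigoplus_{k=1}^3(\mf{sl}_3)_{(k)}\oplus(\mathbf{3},\mathbf{3},\mathbf{3})\oplus(\bar{\mathbf{3}},\bar{\mathbf{3}},\bar{\mathbf{3}})\cong\mf{e}_6,
\end{align*}
where the first summand is the $[0]$‑block (the moment‑map fields $\iota_k(x)$, $x\in\mf{sl}_3$) and the remaining two come from the $[\omega_1]$‑ and $[\omega_2]$‑blocks of $\V_{G,3}$. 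Granting this, I would then check that the OPEs among these weight‑one fields are exactly those of $\widehat{\mf{e}_6}$ at level $-3$: the OPEs inside $\mf{sl}_3^{\oplus 3}$ are the known affine relations, and $\mf{sl}_3^{\oplus 3}$‑equivariance together with the conformal grading force every remaining OPE to have its singular part valued in $(\V_{G,3})_1\cong\mf{e}_6$, leaving only finitely many structure constants; these one pins down by a direct computation inside a concrete model for $\V_{G,3}$, e.g.\ $\V_{G,3}\cong H^{\semiinf+0}(\LZ,\eqW_G\otimes\Dch_G)$ from Lemma~\ref{Lem:inductive-construction-of-Vr}(ii) and Theorem~\ref{Th:cdo}, in the spirit of the explicit $SL_2$ calculation recorded at the end of the paper. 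This produces a nonzero homomorphism $V^{-3}(E_6)\to\V_{G,3}$; since the graded dimensions of $\V_{G,3}$ in weights $0$ and $1$ are $1$ and $78$ by Proposition~\ref{Pro:conic}, while $L_{-3}(E_6)$ is generated by its weight‑one part $\mf{e}_6$ and likewise has graded dimensions $1$ and $78$ in weights $0$ and $1$ (the maximal submodule of $V^{-3}(E_6)$ lying in weights $\geq 2$), the homomorphism descends to the desired embedding $L_{-3}(E_6)\hookrightarrow\V_{G,3}$.

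For the second homomorphism, $\Dch_G\to H_{DS}^0(L_{-3}(E_6))$ with the Drinfeld--Sokolov reduction taken with respect to a principal nilpotent of the first $\mf{sl}_3\subset\mf{e}_6$ (so that $H_{DS}^0(L_{-3}(E_6))$ is a vertex algebra object of $\KL_2$), I would exhibit the generators of $\Dch_G$ directly, i.e.\ a copy of $\mc{O}(SL_3)$ in conformal weight zero on which the two surviving $V^{\kappa_c}(\mf{sl}_3)$‑actions act as the left and right regular representations, using the presentation of $\Dch_G$ recalled in Section~\ref{Section:cdoonG}. (Alternatively one may bypass the ``descent to $L_{-3}(E_6)$'' step in the previous paragraph by invoking Remark~\ref{Rem:showing-isomorphisms}(ii): the two graded‑dimension equalities above together with only this second homomorphism already force $\V_{G,3}\cong L_{-3}(E_6)$, via the left exactness of $H^{\semiinf+0}(\LZ,\eqW_G\otimes -)$ and Theorem~\ref{Th:it-is-an-inverse-to-DS}.)

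The main obstacle is the explicit construction of the nonzero map $V^{-3}(E_6)\to\V_{G,3}$ — equivalently, verifying that the weight‑one fields of $\V_{G,3}$ close into the level $-3$ affine $\mf{e}_6$ relations. The representation‑theoretic coincidence $(\V_{G,3})_1\cong\mf{e}_6$ and $\mf{sl}_3^{\oplus 3}$‑equivariance reduce this to a finite verification, but carrying it out — whether through the semi‑infinite cohomology model of $\V_{G,3}$, through an intermediate $\beta\gamma$‑type description obtained by further reductions, or through a suitable auxiliary vertex algebra — is the real content, exactly as it was in Theorems~\ref{Th:SL23pt} and \ref{Th:SL24pt}. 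Everything else (the weight count via Proposition~\ref{Pro:conic}, exactness of $H_{DS}^0$ and the inverse‑functor formalism of Theorem~\ref{Th:it-is-an-inverse-to-DS}, and the simplicity statements) is formal.
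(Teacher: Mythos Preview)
Your proposal is correct, and the ``alternative'' route you sketch via Remark~\ref{Rem:showing-isomorphisms}(ii) is precisely the one the paper takes: it bypasses the construction of a map $L_{-3}(E_6)\to\V_{G,3}$ entirely and only builds the embedding $\Dch_{SL_3}\hookrightarrow H_{DS}^0(L_{-3}(E_6))$, invoking the dimension equality $\dim(\V_{G,3})_1=3\cdot 8+2\cdot 3^3=78=\dim L_{-3}(E_6)_1$ together with the fact that $L_{-3}(E_6)$ is generated in weight one to conclude. So your ``main obstacle'' (closing the weight-one OPEs into $\widehat{\mf{e}_6}$ inside $\V_{G,3}$) is simply avoided.

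The substantive difference is in how concretely the second map is realized. You say you would ``exhibit a copy of $\mc{O}(SL_3)$ in conformal weight zero''; the paper does exactly this, and the content lies in the details. The $\mf{sl}_3$ being reduced is the one on the simple roots $\alpha_1,\alpha_3$; the two surviving copies are generated by $x_{\pm\alpha_5},x_{\pm\alpha_6}$ and by $x_{\pm\alpha_2},x_{\pm\theta}$. The nine ``matrix entries'' come from an explicit set $S$ of nine roots whose root vectors commute with the first $\mf{sl}_3$ and span a $(\mathbf{3},\mathbf{3})$ under the other two. The nontrivial step---obtaining the relation $\det=1$ rather than a free polynomial algebra on nine generators---is extracted from the singular vector of weight $\varpi_1+\varpi_6$ in degree~$2$ that generates the maximal submodule of $V^{-3}(E_6)$ (cf.\ \cite{AM15}); this is what forces one to work in $L_{-3}(E_6)$ rather than $V^{-3}(E_6)$. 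In short, the paper's proof is your alternative with the root combinatorics and the singular-vector relation supplying the one genuine verification.
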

\begin{proof}
 It is enough to construct 
a vertex algebra embedding
\begin{align}
\Dch_{G=SL_3}\hookrightarrow  H_{DS}^0(L_{-3}(E_6)),
\label{eq:E6}
\end{align}
since the character formula (Proposition \ref{Pro:conic}) shows that
$\dim (\V_{G=SL_3,b=3})_1=3.8+2.3^3=78=\dim E_6=\dim L_{-3}(E_6)_1$.

So let us show the existence of the map \eqref{eq:E6}.
Since $\Dch_{G=SL_3}$ is simple,
it is sufficient to construct non-trivial vertex algebra homomorphism
$\Dch_{G=SL_3}\ra  H_{DS}^0(L_{-3}(E_6))$.

We adopt  the standard Bourbaki numbering for the simple roots 
$\{\alpha_1,\alpha_2,\dots,\alpha_6\}$
for $E_6$
and we denote by 
$\varpi_1,\dots,\varpi_6$
the corresponding fundamental weights.
Fix  roots vectors $x_{\alpha}$,
$\alpha\in \Delta$.
The Drinfeld-Sokolov reduction in \eqref{eq:E6}
is taken with respect to the action of $\widehat{\mf{sl}}_3\subset E_6$ generated by $x_{\pm\alpha_1}(z)$,
$x_{\pm\alpha_3}(z)$.

Since 
the fields
$x_{\pm\alpha_5}(z)$,
$x_{\pm\alpha_6}(z)$,
$x_{\pm\alpha_2}(z)$,
$x_{\pm\theta}(z)$
commute with 
the subalgebra $\widehat{\mf{sl}}_3$
generated by $x_{\pm\alpha_1}(z)$,
$x_{\pm\alpha_3}(z)$,
they define elements of $H_{DS}^0(L_{-3}(E_6))$,
and we have a vertex algebra homomorphism
$V^{-3}(\mf{sl}_3)\ra H_{DS}^0(L_{-3}(E_6))$.

Set $S=\{\alpha_1+\alpha_3+\alpha_4,\alpha_1+\alpha_3+\alpha_4+\alpha_5,
\alpha_1+\alpha_3+\alpha_4+\alpha_5+\alpha_6,
\alpha_1+\alpha_2+\alpha_3+\alpha_4,
\alpha_1+\alpha_2+\alpha_3+\alpha_4+\alpha_5,
\alpha_1+\alpha_2+\alpha_3+\alpha_4+\alpha_5+\alpha_6,
-(\alpha_2+\alpha_3+2\alpha_4+2\alpha_5+\alpha_6),
-(\alpha_2+\alpha_3+2\alpha_4+\alpha_5+\alpha_6),
-(\alpha_2+\alpha_3+2\alpha_4+2\alpha_5)\}$.
The $9$-dimensional subspace
$\bigoplus_{\alpha\in S}\C x_{\alpha}$
form a commutative subalgebra
of $E_6$,
which commutes with the subalgebra $\widehat{\mf{sl}}_3$
generated by $x_{\pm\alpha_1}(z)$,
$x_{\pm\alpha_3}(z)$,
and
 is invariant under the adjoint action of 
$\widehat{\mf{sl}}_3^{\oplus 2}$
generated by
$x_{\pm\alpha_5}$,
$x_{\pm\alpha_6}$,
$x_{\pm\alpha_2}$,
$x_{\pm\theta}$.
It is isomorphic to $\C^3\* \C^3$ as a representation of  the latter subalgebra $\widehat{\mf{sl}}_3^{\oplus 2}$.

Let $U$ be the commutative vertex subalgebra of $L_{-3}(E_6)$
generated by 
$x_{\alpha}(z)$,
$\alpha\in S$.
Since $U$ commutes with 
$x_{\alpha_1}(z)$ and
$x_{\alpha_2}(z)$,
there is an obvious vertex algebra homomorphism
$U\ra H_{DS}^0(L_{-3}(E_6))$.
We claim that
this map factors through a 
vertex algebra homomorphism
$\mc{O}(J_{\infty}SL_3)\ra H_{DS}^0(L_{-3}(E_6))$.


By \cite{AM15},
the maximal submodule of $V^{-3}(E_6)$ is generated by
a singular vector $v$ of weight $\varpi_1+\varpi_6$ in degree 2.
The $E_6$-module $V_{\varpi_1+\varpi_6}$ generated by $v$ has dimension $650$.
Using the relations
obtained from
vectors in  $V_{\varpi_1+\varpi_6}$
of 
weight $[2, 1, 0, -2, 1, 0]$,
$[2, -1, 0, -1, 1, 0]$,
$[2, 0, 0, -1, 1, 0]$,
$[1, 0, 1, -1, 0, 0]$,
we find that
the commutative subalgebra generated by
$x_{\alpha}$, $\alpha\in S$,
by the multiplication $x_{\alpha}.x_{\beta}=(x_{\alpha})_{(-1)}x_{\beta}$,
is isomorphic to
$\mc{O}(SL_3)$.
It follows that
the vertex algebra homomorphism
$U\ra H_{DS}^0(L_{-3}(E_6))$
 factors through a 
vertex algebra homomorphism
$\mc{O}(J_{\infty}SL_3)\ra H_{DS}^0(L_{-3}(E_6))$.

We conclude that
the fields
$x_{\pm\alpha_5}(z)$,
$x_{\pm\alpha_6}(z)$,
$x_{\pm\alpha_2}(z)$,
$x_{\pm\theta}(z)$,
$x_\alpha(z)$ ($\alpha\in S$)
generate a vertex subalgebra 
of $H_{DS}^0(L_{-3}(E_6))$ that is isomorphic to 
$\Dch_{G=SL_3}$.
\end{proof}

It was shown \cite{AraMal}
that $X_{L_{-3}(E_6)}$ is isomorphic to the minimal nilpotent orbit closure 
$\overline{\mathbb{O}_{min}}$ of $E_6$.
This agree with the fact (\cite{{BFN17}}) that
$$W_{G=SL_3}^{b=3}\cong \overline{\mathbb{O}_{min}}\quad\text{in $E_6$}.$$

\newcommand{\etalchar}[1]{$^{#1}$}

%

\end{document}